\documentclass[11pt,a4paper]{article}
\usepackage[margin=1in]{geometry}
\usepackage[T1]{fontenc}
\usepackage{lmodern}
\usepackage{authblk}
\usepackage{amsmath,amsthm,amssymb,amsfonts,mathtools}
\usepackage[expansion=false]{microtype}
\usepackage{cite}
\usepackage{fancyhdr}
\usepackage[hidelinks,unicode]{hyperref}
\allowdisplaybreaks[2]
\newtheorem{theorem}{Theorem}[section]
\newtheorem{proposition}[theorem]{Proposition}
\newtheorem{lemma}[theorem]{Lemma}
\newtheorem{corollary}[theorem]{Corollary}
\theoremstyle{definition}

\theoremstyle{remark}
\newtheorem{remark}[theorem]{Remark}
\theoremstyle{definition}
\newtheorem{example}[theorem]{Example}
\numberwithin{equation}{section}

\renewenvironment{abstract}{\small\noindent\textbf{\abstractname.}\enspace\ignorespaces}{\par}
\hypersetup{pdftitle={Universal weighted sampling of positive and normal operator orbits},pdfauthor={Jian Wu}}

\title{Universal weighted sampling of positive and normal operator orbits}
\author{Jian Wu\thanks{Corresponding author: \texttt{xingxingwu2022@163.com}}}
\affil{}

\date{}

\begin{document}
\maketitle

\begin{abstract}
We study common weighted sampling designs for operator systems that are
exactly observable at nonnegative integer times. For positive operators with a prescribed bound $\kappa$ on the original frame condition number, any prescribed accuracy in relative Gramian approximation can be achieved by a common weighted integer sampling design using $O(\log R)$ distinct times up to $R$. We refer to sampling with this counting bound as logarithmic sampling. For uniform preservation of the frame property, the optimal logarithmic counting coefficient satisfies $C^*(\kappa)\sim\pi^{-2}\log\kappa$ as $\kappa\to\infty$. Real and integer sampling times yield the same infimum, and we also determine its asymptotics for nearly tight frames as $\kappa\downarrow1$. Prescribing a finite upper bound on the sampled condition number strictly increases the optimal logarithmic counting coefficient. For tight frames, we obtain an exact relation between sampling density and the condition number and a design that attains the optimum, whereas the optimum without a prescribed bound on the sampled condition number is never attained. For positive operators, a single design valid for every finite bound on the original frame condition number requires superlogarithmic counting, although the ratio to $\log R$ can diverge arbitrarily slowly. For unrestricted normal operators, every common design under a fixed bound $\kappa>1$ on the original frame condition number has a counting function that grows at least linearly. If the nonzero spectral values satisfy $|\arg\lambda|\le M(-\log|\lambda|)$ for a fixed $M\ge0$, common logarithmic sampling is possible with a sharp leading coefficient. Under suitable assumptions on the radial distribution of a fixed Borel spectral set, we characterize when common logarithmic sampling is possible and determine the minimum upper counting exponent, which is attained by an integer sampling design. Examples show that the conclusions for general spectral sets can fail without the radial density assumption.
\end{abstract}

\medskip
\noindent\textbf{Keywords:} dynamical sampling; operator orbit frames;
weighted sampling; positive and normal operators; condition numbers; spectral geometry

\smallskip
\noindent\textbf{Mathematics Subject Classification (2020):}
Primary 42C15; Secondary 47A60, 47B15, 93B07, 94A20.

\section{Introduction}
\label{sec:introduction}

Dynamical sampling studies the recovery of an initial state from observations of its evolution. Measurements taken at different times can compensate for incomplete information at any one time, and frame inequalities express the stability of this recovery. Once stable recovery has been established, a further question is how many observation times are actually needed. This question is particularly natural for systems observed over an unbounded time interval: sampling at every nonnegative integer may use far more observation times than are necessary to preserve stability. A recent survey by Aldroubi, Cabrelli, Krishtal and Molter \cite{ACKMSurvey2026} reviews dynamical sampling and its connections with frame theory, including discrete and continuous evolutions.

Early work on dynamical sampling studied the tradeoff between
spatial and temporal sampling \cite{ADK2013}. Subsequent work
established sufficient conditions for reconstructing signals in
$\ell^2(\mathbb Z)$ and in shift invariant subspaces of $L^2(\mathbb R)$ from samples taken during their evolution under convolution operators \cite{ADK2015}. The connection between dynamical sampling and frames generated by operator orbits was studied in \cite{ACMT}. Frames generated by a single operator orbit were characterized in \cite{CHPorbits}. The frame properties of orbits of normal operators were studied
using the spectral theorem in \cite{ACCMP}, and frames generated by iterating an operator on finitely many vectors were characterized in \cite{CMMP}. Frames formed by the union of finitely many orbits of a normal operator were characterized using model subspaces of $H^2(\mathbb D)$ and the pseudohyperbolic metric on $\mathbb D$ \cite{CMSmodel}. Cabrelli, Molter and Su\'arez \cite{CMSiterations} also characterized frames formed by nonnegative iterates of a bounded operator acting on at most countably many vectors, using model spaces in vector valued Hardy spaces. Dynamical sampling in continuous time and the discretization of
the time variable were studied in \cite{AHP,DMMcontinuous}.
The connection between dynamical sampling and observability
was discussed in \cite{DMMcontrol}. A systematic treatment of observation operators, admissibility and exact observability for operator semigroups is given by Tucsnak and Weiss \cite{TucsnakWeiss2009}.
A central question in this literature is when observations of a
given evolution allow stable recovery of the initial state.

Here we seek a single choice of sampling times and positive
weights that ensures stable recovery for every system in a given
class. The design must be independent of the particular spectrum,
its multiplicities, the observation operator and the state space
dimension. It may depend on a prescribed upper bound on the
original frame condition number and, when the spectrum is
restricted to a fixed set, on that set.

This requirement distinguishes our problem from the discretization or sparsification of a given frame. The discretization of continuous frames was studied by Fornasier and Rauhut \cite{FR} and Freeman and Speegle \cite{FS}. More recently, Bownik and Yu \cite{BownikYu2026} obtained nearly tight frames by sampling bounded continuous tight frames on uniformly discrete sets, under suitable assumptions on the underlying metric measure space. In finite dimensions, sparsification of sums of positive operators \cite{BSS,dCHS2016} and constructive weighted subsampling of finite frames \cite{BSU2023} provide quantitative control of the resulting frame bounds. In these results, the sampling choices may depend on the particular frame being discretized or sparsified. The requirement of a common sampling set also arises in universal discretization. Temlyakov \cite{Temlyakov2018} constructs common sampling sets for discretizing integral norms on certain collections of finite dimensional spaces of multivariate trigonometric polynomials. We ask how slowly the number of distinct observation times up to $R$ can grow as $R\to\infty$.

Related questions arise in actuator and sensor scheduling for
linear systems in finite dimensions. For a fixed controllable
system in discrete time, Siami, Olshevsky and Jadbabaie
\cite{SOJ2021} construct sparse weighted actuator schedules.
Over a sufficiently long finite time interval, these schedules
yield a relative approximation of the original controllability
Gramian. By duality, their results
also apply to sensor scheduling and the observability Gramian.
These methods use the system matrices to select and weight
individual channels at different times. Within dynamical sampling, Carleson frames whose eigenvalue moduli form a strictly increasing Carleson sequence remain frames when sampled at times $Nk$, $k\geq0$, for any positive integer $N$ \cite{CHPSmystery}. For positive eigenvalues, this conclusion extends to sampling times $Nk+j_k$ with arbitrary real $j_k\in[0,N)$ \cite{KM}. In our setting, all observation channels are retained at each
selected time.

Relations between sampling density and stability also arise in
other settings. Aldroubi, Gr\"ochenig, Huang, Jaming, Krishtal
and Romero \cite{AGHJKR2021} show that, under suitable assumptions
on the convolution kernels, the reconstruction condition number
controls the maximal spatial sampling gap for evolving bandlimited
functions. Bownik and van Velthoven \cite{BownikVanVelthoven2025}
construct exponential frames with density arbitrarily close to
the critical density and with quantitative estimates for the
frame bounds. In our setting, we seek sharp bounds on the growth of the number of distinct observation times.

To formulate the problem, let $H$ and $Y$ be separable complex Hilbert
spaces, let $A\in\mathcal B(H)$ be positive and selfadjoint, and let
$C\in\mathcal B(H,Y)$. We assume exact observability at the nonnegative
integers:
\begin{equation}\label{eq:frame}
 a\|x\|^2\leq\sum_{n=0}^{\infty}\|CA^nx\|^2\leq b\|x\|^2
 \qquad x\in H,\qquad 0<a\leq b<\infty.
\end{equation}
If $(e_j)$ is an orthonormal basis of $Y$ and $f_j=C^*e_j$, then
\eqref{eq:frame} is equivalent to the frame inequalities for
$(A^nf_j)_{n,j}$ with bounds $a$ and $b$. With $B=C^*C$, the
observation Gramian at integer times is
\begin{equation}\label{eq:gramian}
 S=\sum_{n=0}^{\infty}A^nBA^n,\qquad aI\leq S\leq bI,
\end{equation}
where the sum converges strongly. We assume that $b/a\leq\kappa$;
the frame bounds $a$ and $b$ need not be optimal.

A sampling design is a positive atomic measure
$\mu=\sum_{t\in T}w_t\delta_t$, where $T\subset[0,\infty)$ is locally
finite and $0<w_t<\infty$ for every $t\in T$. Here local finiteness
means that $T\cap[0,R]$ is finite for every $R\geq0$.
Its sampled energy, cumulative mass and counting function are
\[
 \langle G_\mu x,x\rangle=\sum_{t\in T}w_t\|CA^tx\|^2,
 \qquad M_\mu(R)=\sum_{t\in T\cap[0,R]}w_t,
 \qquad N_\mu(R)=\#(T\cap[0,R]).
\]
At each selected time $t$, the full observation $CA^tx\in Y$ is
retained. Thus $N_\mu(R)$ counts each selected time in $[0,R]$
once, regardless of the dimension of $Y$.

For $\kappa\geq1$, let $\mathcal U_w(\kappa)$ be the class of sampling
designs $\mu$ such that, for every $0<a\leq b<\infty$ with
$b/a\leq\kappa$, there exist constants $0<\alpha\leq\beta<\infty$,
depending only on $\mu,a,b$, for which
$\alpha I\leq G_\mu\leq\beta I$ for every positive system with
$aI\leq S\leq bI$. Thus, for fixed $a$ and $b$, the same frame bounds
after sampling apply to every such system. A design in
$\mathcal U_w=\bigcap_{\kappa\geq1}\mathcal U_w(\kappa)$ is called
\emph{fully universal}. The same sampling times and weights must
work for every finite $\kappa\geq1$. The frame bounds $\alpha$
and $\beta$ after sampling may depend on the original frame
bounds $a$ and $b$. We measure sparsity by
\[
 D(\mu)=\limsup_{R\to\infty}\frac{N_\mu(R)}{\log R},
 \qquad C^*(\kappa)=\inf_{\mu\in\mathcal U_w(\kappa)}D(\mu).
\]
Restricting the support to $\mathbb N_0$ gives the corresponding
infimum $C^*_{\mathbb Z}(\kappa)$.

The unweighted universal problem in \cite{Wu} provides a useful
comparison: even for tight frames, a common unweighted sampling set
must satisfy $N_T(R)\asymp R$. Positive weights allow the number of
sampling times to grow more slowly than the cumulative mass.
Proposition~\ref{prop:tight} shows that a sampling design that works
for all positive systems with tight frames must satisfy
$M_\mu(R)\asymp R$. Such designs can be chosen so that
$N_\mu(R)=O(\log R)$ as $R\to\infty$.
Our first objective is to determine the optimal coefficient
$C^*(\kappa)$. Our second is to quantify the increase in this
coefficient when a bound on the condition number after sampling
is prescribed. Our third is to determine how spectral geometry
affects the growth of the counting function required for common
sampling designs on normal systems.

For positive systems, we first prove that, for every fixed
$\kappa\geq1$ and $0<\varepsilon<1$, there exists a sampling design
$\mu$ supported on $\mathbb N_0$ with $N_\mu(R)=O(\log R)$.
The same design satisfies
\(
 (1-\varepsilon)S\leq G_\mu\leq(1+\varepsilon)S
\)
for every positive system whose original frame bounds satisfy
$b/a\leq\kappa$. This follows from
Theorems~\ref{thm:relative}, \ref{s4-integer} and \ref{s6:joint}.

The optimal coefficients satisfy
$C^*_{\mathbb Z}(\kappa)=C^*(\kappa)$ for every $\kappa\geq1$,
with the asymptotic behavior
\begin{align*}
 C^*(\kappa)&\sim\frac{\log\kappa}{\pi^2}
 &&\kappa\to\infty,\\
 \frac{1}{C^*(\kappa)}&=2\log\frac{1}{\kappa-1}+O(1)
 &&\kappa\downarrow1.
\end{align*}
Moreover, $C^*(1)=0$ and $C^*(\kappa)>0$ for $\kappa>1$;
see Section~\ref{sec:fixed}.
Neither the infimum over real sampling times nor the infimum
over integer sampling times is attained for any finite
$\kappa\geq1$ (Theorem~\ref{s5-nonattainment}).

Fixing an upper bound on the original frame condition number is
essential for obtaining a common sampling design with
$N_\mu(R)=O(\log R)$. Every fully universal design satisfies
$N_\mu(R)/\log R\to\infty$ as $R\to\infty$.
Conversely, for every nondecreasing function $h$ with
$h(R)\to\infty$ as $R\to\infty$, there exists a fully universal
design supported on $\mathbb N_0$ such that
$N_\mu(R)\leq h(R)\log R$ for all sufficiently large $R$.
Thus the ratio $N_\mu(R)/\log R$ must tend to infinity, but its
divergence can be arbitrarily slow; see
Corollary~\ref{s4-superlog-necessary}
and Theorem~\ref{s4-universal-design}.

We next ask how the counting problem changes when an upper bound
on the condition number after sampling is prescribed.
For a bounded positive invertible operator $G$, its condition
number is defined by
\(
 \operatorname{cond}(G)=\|G\|\|G^{-1}\|.
\)
When $G$ is a frame operator, its optimal lower and upper frame
bounds are $\|G^{-1}\|^{-1}$ and $\|G\|$, respectively.
Thus $\operatorname{cond}(G)$ is the ratio of the optimal upper
frame bound to the optimal lower frame bound.
For $\kappa,\Gamma\geq1$, let $\mathcal U_w(\kappa;\Gamma)$
consist of the designs $\mu\in\mathcal U_w(\kappa)$ for which
$\operatorname{cond}(G_\mu)\leq\Gamma$ for every positive system
with $\operatorname{cond}(S)\leq\kappa$.
Denote the infima of $D(\mu)$ over these designs with real and
integer sampling times by $C^*(\kappa;\Gamma)$ and
$C^*_{\mathbb Z}(\kappa;\Gamma)$, respectively.
No such design exists if $\Gamma<\kappa$.
If $\Gamma=\kappa$, the only possible designs are
$\mu=c\sum_{n=0}^{\infty}\delta_n$ with $c>0$.
If $\Gamma>\kappa$, such designs with $N_\mu(R)=O(\log R)$ exist.
Writing $X=\kappa(\Gamma+\kappa)/(\Gamma-\kappa)$, we have
\[
 C^*(\kappa;\Gamma)\sim C^*_{\mathbb Z}(\kappa;\Gamma)
 \sim\frac{\log X}{\pi^2},
 \qquad X\to\infty.
\]
These asymptotics hold along every parameter family with
$\kappa\geq1$, $\Gamma>\kappa$ and $X\to\infty$;
see Theorem~\ref{s6:joint}.
Moreover, prescribing any finite bound $\Gamma>\kappa$ strictly
increases the optimal coefficient:
\(
 C^*(\kappa;\Gamma)>C^*(\kappa).
\)
Consequently, for fixed $\kappa$, if
$\mu_n\in\mathcal U_w(\kappa)$ and $D(\mu_n)\to C^*(\kappa)$,
then the supremum of $\operatorname{cond}(G_{\mu_n})$ over all
positive systems with $\operatorname{cond}(S)\leq\kappa$
tends to infinity; see Theorem~\ref{s6-strict-gap} and
Corollary~\ref{s6-strict-conditioning}.

For tight frames, Subsection~\ref{s7-exact} determines the exact value of $C^*(1;\Gamma)$ for every $\Gamma>1$.
The real and integer infima agree. For each fixed $\Gamma>1$,
there exists a sampling design whose sampling times are
nonnegative integers and which attains both infima.
Theorem~\ref{s7-local-counting} also gives uniform lower bounds
for the number of sampling times in intervals $(R,e^\ell R]$,
with the same sharp asymptotic coefficient as $\ell\to\infty$.
Here $\Gamma$ is a prescribed upper bound on the condition number
after sampling. Without such a prescribed bound, the infimum
$C^*(1)=0$ is not attained.

The extension to normal operators shows how the arguments of
spectral values affect sampling. In this setting, the observation
Gramian is $S=\sum_{n=0}^{\infty}(A^n)^*BA^n$.
For tight frames, the classes of common sampling designs for
positive and normal operators coincide;
see Proposition~\ref{nrm-tight}.
For every fixed $\kappa>1$, however, any sampling design common
to all normal systems with $\operatorname{cond}(S)\leq\kappa$
must satisfy $N_\mu(R)\gtrsim R$.
This lower bound still holds when the spectra are restricted
to a fixed sector with positive opening angle.
Common designs with $N_\mu(R)=O(\log R)$ exist under the
logarithmic sector condition
$|\arg\lambda|\leq M(-\log|\lambda|)$ for $0<|\lambda|<1$,
with any fixed $M\geq0$.
The corresponding optimal coefficients for real and integer
sampling times satisfy
\[
 C_M^*(\kappa)=C_{M,\mathbb Z}^*(\kappa)
 \sim\frac{\log\kappa}{\pi(\pi-2\arctan M)},
 \qquad \kappa\to\infty.
\]
Section~\ref{nrm-section} proves the linear lower bound and
the sharp asymptotic formula under this spectral restriction.

Let $\mathbb D$ denote the open unit disk.
For a fixed Borel set $\Omega\subset\mathbb D$, we consider
normal systems satisfying $E_A(\mathbb C\setminus\Omega)=0$,
where $E_A$ is the spectral measure of $A$.
Section~\ref{sec:spectral-geometry} relates sampling growth
to the geometry of $\Omega$.
For $j\geq1$, let $m_j(\Omega)$ be the minimum number of circular
arcs of length $2^{-j}/10$ needed to cover the arguments of
nonzero points $\lambda\in\Omega$ satisfying
$2^{-j}\leq-\log|\lambda|<2^{1-j}$.
Set $m_j(\Omega)=0$ when there are no such points, and put
$\mathcal A_\Omega(J)=1+\sum_{j=1}^Jm_j(\Omega)$.
For every fixed $\kappa\geq1$, the condition
$\mathcal A_\Omega(J)=O(J)$ guarantees common designs with
integer sampling times, $N_\mu(R)=O(\log R)$, and arbitrarily
small prescribed relative error in the Gramian approximation.
Under the radial density assumption stated in that section,
this condition is also necessary for every fixed $\kappa>1$;
see Theorem~\ref{geo:classification}.
For the enlarged spectral set $\Omega\cup[0,1)$, the same
condition is necessary and sufficient for common sampling with
$N_\mu(R)=O(\log R)$ at any fixed $\kappa>1$;
see Corollary~\ref{geo:positive-completion}.
Under the same radial density assumption and an additional
regularity condition on the growth of $\mathcal A_\Omega$,
Theorem~\ref{geo:matching-growth} identifies the optimal counting
order as $\mathcal A_\Omega(\lfloor\log_2R\rfloor)$ for every
fixed $\kappa>1$.
Without this additional growth condition, but retaining the
radial density assumption, Theorem~\ref{geo:optimal-exponent}
determines the minimum upper counting exponent.
This minimum is attained by a design with integer sampling times
and any prescribed relative Gramian accuracy.
Examples involving spectral curves and angular sets with
prescribed upper box dimension give optimal growth rates that
are powers of $R$ or $\log R$.
Examples with gaps in the radial distribution show that the
necessity statements can fail without the radial density
assumption.

For positive operators, the Stein identity $B=S-ASA\geq0$
relates the original frame bounds to the weighted Gramian.
To obtain upper bounds on counting growth, we use Laplace
quadrature in logarithmic time, based on approximation by
exponential sums \cite{BM} and the trapezoidal rule for functions
analytic in a strip \cite{TrefethenWeideman2014}.
Related scalar approximation estimates are discussed in
\cite{Hackbusch2019}.
The main difficulty is that $A$ and $B$ need not commute.
Uniform estimates for the observation energy at complex times
give quadrature error bounds valid for every system in the class.
Observations at finitely many initial integer times control
the part of the spectrum away from $1$.
For lower bounds, geometric sequences and the Cayley transform
give systems with scalar observations and Toeplitz Gramians.
A dimension argument on finite spectral subspaces, together
with estimates for the energy outside a given time interval,
gives a lower bound on the number of sampling times in that
interval.
We also prove that each fixed common design remains valid
after a sufficiently small increase in the bound on the
original frame condition number.
Combining this fact with the change of variables
$t\mapsto t^p$, $p>1$, and suitable changes in the weights
proves nonattainment of the infima and the strict increase
in the optimal counting coefficient when a finite bound on
the condition number after sampling is prescribed.

For tight frames, the analysis reduces to a convolution problem
after a logarithmic change of the time variable.
The theory of variation diminishing transformations
\cite{Schoenberg1951,Schoenberg1953,MSW,Hoa1987}
gives sharp bounds for periodic sampling measures.
A periodization argument extends these bounds to general
sampling designs.
For general spectral sets, we represent the observations using
a common family of scalar functions determined by the prescribed
spectral set. Sparsification estimates in terms of stable rank
\cite{CNW2016} then yield a common choice of sampling times and
positive weights. The definition of stable rank is given in
Subsection~\ref{geo:covering-subsection}.
Under the radial density assumption, finite dimensional systems
constructed from spectral points with separated arguments yield
the necessary counting bounds. Their original frame condition
numbers are bounded by the prescribed $\kappa$.
The functional calculus and sectorial results in
\cite{KMMfunctional,KMMsectorial} give conditions under which
sampled operator orbits form frames.
Here we optimize counting growth using a common choice of
sampling times and positive weights, with frame bounds uniform
over the prescribed class of systems.

Section~\ref{sec:preliminaries} provides the basic estimates.
Section~\ref{sec:fixed} establishes sharp asymptotic formulas
for the optimal counting coefficient for positive operators.
Section~\ref{s4-section} treats sampling at integer times,
fully universal designs, and the stability argument used to
prove nonattainment.
Section~\ref{sec:output} quantifies the increase in the optimal
counting coefficient when a bound on the condition number after
sampling is prescribed, and determines $C^*(1;\Gamma)$ exactly
for $\Gamma>1$.
Sections~\ref{nrm-section} and \ref{sec:spectral-geometry}
treat normal operators and the dependence of sampling growth
on spectral geometry, respectively.
The appendices contain results on prescribed sampling sets,
sharpness results for a fixed model based on the Cayley transform,
arithmetic sampling examples, and details of the scalar and
spectral constructions.

\section{Stein identities and weighted energy estimates}
\label{sec:preliminaries}

We use the Stein identity to estimate the weighted observation
energy in terms of the original frame bounds.
We first derive the energy estimates needed for the constructions
and perturbation arguments. We then characterize common sampling
designs for tight frames by requiring the associated scalar
function to be bounded above and bounded away from zero.
Throughout Sections~\ref{sec:preliminaries}--\ref{sec:output}
and Appendices~\ref{app:arithmetic-designs}, \ref{app:fixed-model},
and \ref{app:prescribed-supports}, $A$ is positive and selfadjoint.
The conventions for normal operators are given in
Section~\ref{nrm-section}.
Write $\mathbb N=\{1,2,\ldots\}$ and
$\mathbb N_0=\{0,1,\ldots\}$.
For $L\geq0$, $\tau_L\mu$ denotes the translation of $\mu$ by $L$,
so that $\tau_L\mu=\sum_{t\in T}w_t\delta_{t+L}$.
All logarithms are natural unless another base is specified.
For positive quantities $u$ and $v$, the notation $u\asymp v$
means that $u/v$ is bounded above and below by positive constants
throughout the stated range.
Inner products are linear in the first variable.
For $t\geq0$, the powers $A^t$ are defined by spectral calculus,
with $A^0=I$, including on $\ker A$.

The weights $w_t$ need not be bounded uniformly in $t$.
The counting function $N_\mu$ records distinct sampling times,
and all observation channels are retained at each selected time.
All frame inequalities concern the full sampling family.
Positive sums and integrals, including the expression defining
$G_\mu$, are initially interpreted through their nonnegative
quadratic forms, allowing the value $+\infty$.
Once such a form is bounded above by $K\|x\|^2$ for all $x\in H$,
with a finite constant $K$, it defines a bounded positive operator.

The definition of $\mathcal U_w(\kappa)$ requires frame bounds
after sampling that are uniform over systems with the same
original frame bounds $a,b$.
For a fixed design $\mu$, this uniformity already follows if
the sampled family is a frame for every such system.
Indeed, if a common finite upper bound or a common positive
lower bound did not exist, a countable orthogonal direct sum
of suitably chosen systems would still satisfy $aI\leq S\leq bI$,
while its sampled family would violate the corresponding
frame inequality.
The direct sum operators are bounded: each $A$ is a contraction
by Lemma~\ref{lem:stein}, and $\|C\|^2\leq b$ follows from
$B\leq S\leq bI$ for every component system.

\begin{lemma}
\label{lem:stein}
Under \eqref{eq:frame}, the integer Gramian satisfies
$B=S-ASA\geq0$.
Moreover, $0\leq A\leq I$ and $A^n\to0$ strongly.
In particular, $E_A(\{1\})=0$.
The continuous observation Gramian satisfies
\begin{equation}\label{eq:continuous}
 G_c:=\int_0^\infty A^tBA^t\,dt
 =\int_0^1 A^sSA^s\,ds\leq bI.
\end{equation}
If $b/a\leq\kappa$, set
$V=\int_{1/2}^\infty A^tBA^t\,dt$ and $G_*=B+V$.
Then $\kappa^{-1}S\leq G_*\leq2bI$.
\end{lemma}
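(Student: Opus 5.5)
Everything follows from the Stein identity, applied as a telescoping statement about the integer Gramian. Since $S=\sum_{n\ge0}A^nBA^n$ converges strongly and $A$ is bounded, conjugating by $A$ gives
\[
ASA=\sum_{n\ge1}A^nBA^n \qquad\text{(strongly)},
\]
so $S-ASA=B\ge0$.

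\emph{Contraction and strong decay.} Positivity $A\ge0$ is a hypothesis, so only $A\le I$ needs proof. If $Ax=\lambda x$ approximately with $\lambda>1$, the orbit energy would blow up. Cleanly: from $ASA\le S\le bI$ and $S\ge aI$ we get $a\|A^n x\|^2\le\langle S A^nx,A^nx\rangle\le\langle Sx,x\rangle$ by iterating $A S A\le S$. Hence the powers $A^n$ are uniformly bounded, which forces $\sigma(A)\subset[0,1]$ and so $A\le I$. Then $A^n\to E_A(\{1\})$ strongly by spectral calculus. If $x\in\operatorname{ran}E_A(\{1\})$, then $A^nx=x$ and $\sum\|CA^nx\|^2=\sum\|Cx\|^2$ is finite only if $Cx=0$, which contradicts $a\|x\|^2\le0$ unless $x=0$. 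So $E_A(\{1\})=0$ and $A^n\to0$ strongly.

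\emph{Continuous Gramian.} Write $t=n+s$ with $s\in[0,1)$. Since $A^t=A^sA^n=A^nA^s$ (functional calculus of commuting powers),
\[
\int_0^\infty A^tBA^t\,dt=\int_0^1A^s\Bigl(\sum_n A^nBA^n\Bigr)A^s\,ds=\int_0^1A^sSA^s\,ds.
\]
The interchange of sum and integral is justified by monotone convergence of the nonnegative quadratic forms. Since $\|A^s\|\le1$, each integrand is $\le bA^{2s}\le bI$, giving $G_c\le bI$.

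\emph{Bounds on $G_*$.} For the upper bound, $B\le S\le bI$ and $V\le G_c\le bI$, so $G_*\le2bI$. For the lower bound, using the previous step, $V=\int_{1/2}^\infty A^tBA^t\,dt$, and shifting time by splitting $t=n+s$ with $s\in[1/2,3/2)$ gives
\[
V=\int_{1/2}^{3/2}A^{s}\Bigl(\sum_{n\ge0}A^nBA^n\Bigr)A^s\,ds=\int_{1/2}^{3/2}A^sSA^s\,ds\ \ge\ a\int_{1/2}^{3/2}A^{2s}\,ds\ \ge\ aA^{3}.
\]
Also $B=S-ASA\ge aI-bA^2$. Pass to the spectral picture at a spectral value $\lambda\in[0,1)$. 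Heuristically $G_*\gtrsim \max(a-b\lambda^2,\,a\lambda^3)$, but this is not sharp enough. A better route: $G_*\ge\int_0^{1}\!\ldots$ doesn't directly compare with $S$ because $B$ and $A$ don't commute.

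The main obstacle is that lower bound, $G_*\ge\kappa^{-1}S$. The plan is to use the identity $\int_{1/2}^{3/2}A^sSA^s\,ds\ge\int_{1/2}^{1}A^sSA^s\,ds$ together with $B=S-ASA$, and to compare each term with $a\ge b/\kappa\cdot$, i.e.\ to show $G_*\ge aI\ge\kappa^{-1}bI\ge\kappa^{-1}S$. So it suffices to show $\langle G_*x,x\rangle\ge a\|x\|^2$. Write $\langle Bx,x\rangle=\langle Sx,x\rangle-\langle SAx,Ax\rangle$. For $V$, use $\langle A^sSA^sx,x\rangle\ge\langle A\,SA\,x,x\rangle$ for $s\in[1/2,1]$? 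That monotonicity fails in general, so instead I would try the exact telescoping in continuous time: $\frac{d}{ds}$ is unavailable, but $\sum_n$ gives
\[
B+\int_{1/2}^\infty A^tBA^t\,dt\ \ge\ \int_{0}^{1/2}\!\!\ldots
\]
replaced by the observation that $V\ge\int_{1}^{\infty}A^tBA^t\,dt=\int_0^1A^{s}(ASA)A^s\,ds$. Then, for unit $x$ with $y_s=A^sx$, one has $\langle Bx,x\rangle+\langle V x,x\rangle\ge\langle Sx,x\rangle-\langle SAx,Ax\rangle+\int_0^1\langle SAy_s,Ay_s\rangle\,ds$. I would then bound $\langle SAx,Ax\rangle$ by $b\|Ax\|^2$ and the integral below by $a\int_0^1\|A^{1+s}x\|^2\,ds$. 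Finally I would optimize pointwise in $\lambda$: the resulting scalar inequality $a-b\lambda^2+a\lambda^2(1-\lambda^2)/(-\log\lambda^2)\cdots$ has to dominate $a$, and verifying this (or finding the right splitting point $1/2$ that makes it work) is where I expect the real work to lie.
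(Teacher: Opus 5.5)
Everything through the identity $V=\int_{1/2}^{3/2}A^sSA^s\,ds$ and the bound $G_*\le 2bI$ is correct and follows the paper. Your proof of $E_A(\{1\})=0$, via $Cx=0$ on the fixed space contradicting the lower frame bound, is a harmless variant of the paper's tail-sum argument.

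\textbf{The gap is the lower bound $G_*\ge\kappa^{-1}S$.} You do not prove it, and the route you propose cannot work. Your reduction to $G_*\ge aI$ asks for something that is false in general when $a$ is the optimal lower bound. Here is a counterexample:
\begin{itemize}
\item Take $A=\operatorname{diag}(e^{-\epsilon},e^{-2})$ and $S=\begin{pmatrix}1&\sigma\\ \sigma&2\end{pmatrix}$ with $\sigma^2=5\epsilon$.
\item For small $\epsilon$ one checks $B=S-ASA\ge0$, so $S$ is the integer Gramian of the admissible system $(A,B^{1/2})$.
\item In the eigenbasis of $A$, $(G_*)_{ij}=S_{ij}\,k(\lambda_i\lambda_j)$ with $k(z)=1-z+\int_{1/2}^{3/2}z^s\,ds$.
\item First-order perturbation then gives $\lambda_{\min}(S)=1-5\epsilon+O(\epsilon^2)$ but $\lambda_{\min}(G_*)=1-5.08\ldots\,\epsilon+O(\epsilon^2)$.
\end{itemize}
Consistently with this, the scalar inequality you plan to verify also fails. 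Your lower bound $a-b\lambda^2+a\lambda^2(1-\lambda^2)/(-\log\lambda^2)$ tends to $2a-b<a$ as $\lambda\uparrow1$ whenever $b>a$. Replacing $\langle Sx,x\rangle$ by $a\|x\|^2$ throws away exactly the information needed; $G_*$ must be compared with $S$ itself.

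\textbf{The missing idea is to bound $S$ from above}, rather than bounding $G_*$ from below by a constant.
\begin{itemize}
\item Since $A$ is selfadjoint, $ASA\le\|S\|A^2\le bA^2$, so $S=B+ASA\le B+bA^2$.
\item Your inequality $V\ge a\int_{1/2}^{3/2}A^{2s}\,ds$ was the right start, but bounding it by $aA^3$ loses too much. The map $s\mapsto v^s$ is convex and the window $[1/2,3/2]$ has mean $1$; this is why $V$ starts at $1/2$. Jensen therefore gives $\int_{1/2}^{3/2}v^s\,ds\ge v$ for $0\le v\le1$, and the spectral theorem yields $V\ge aA^2$.
\item Then $bA^2\le\kappa aA^2\le\kappa V$. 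Since $\kappa\ge1$,
\[
 S\le B+\kappa V\le\kappa(B+V)=\kappa G_*.
\]
\end{itemize}
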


\begin{proof}
Splitting off the term $n=0$ in the series for $S$ gives
$S=B+ASA$.
For every $x\in H$,
\[
 a\|A^nx\|^2
 \leq\langle SA^nx,A^nx\rangle
 =\sum_{j=n}^\infty\|CA^jx\|^2
 \longrightarrow0,
 \qquad n\to\infty.
\]
Thus $A^n\to0$ strongly and $\|A^n\|\leq\sqrt{b/a}$.
Since $A$ is positive and selfadjoint,
$\|A\|^n=\|A^n\|$, so $\|A\|\leq1$.
The identity $A^nE_A(\{1\})=E_A(\{1\})$ and strong convergence
give $E_A(\{1\})=0$.

For every $x\in H$, Tonelli's theorem gives
\[
 \int_0^\infty\|CA^tx\|^2\,dt
 =\int_0^1\sum_{n=0}^\infty\|CA^{n+s}x\|^2\,ds
 =\int_0^1\langle SA^sx,A^sx\rangle\,ds
 \leq b\|x\|^2.
\]
This proves \eqref{eq:continuous}.
The same argument, with the integral starting at $1/2$, gives
$V=\int_{1/2}^{3/2}A^sSA^s\,ds$.
Jensen's inequality gives
$\int_{1/2}^{3/2}v^s\,ds\geq v$ for $0<v\leq1$,
and the inequality also holds for $v=0$.
Since $S\geq aI$, the spectral theorem yields
\[
 V\geq a\int_{1/2}^{3/2}A^{2s}\,ds\geq aA^2.
\]
Consequently,
\(
 S=B+ASA
 \leq B+bA^2
 \leq B+\kappa V
 \leq\kappa G_*.
\)
Finally, $B\leq S\leq bI$ and $V\leq G_c\leq bI$
give $G_*\leq2bI$.
\end{proof}

The observation at time zero is essential when $\ker A\neq\{0\}$,
since $A^tx=0$ for every $x\in\ker A$ and $t>0$.
On $H_+=(\ker A)^\perp$, define
$L=-\log(A|_{H_+})$ by spectral calculus.
The operator $L$ is nonnegative and selfadjoint but need not be
bounded.
For each fixed $t>0$, the functions $\lambda^t$ and
$(-\log\lambda)\lambda^t$ are bounded on $(0,1]$.
Thus $A^t$ and $LA^t$ are bounded on $H_+$.
We extend both operators by zero on $\ker A$.

\begin{lemma}
\label{lem:energy}
For $x\in H$, set $F_x(t)=\|CA^tx\|^2$ for $t>0$.
Then
\[
 \int_0^\infty\|CA^tx\|^2\,dt\leq b\|x\|^2,
 \qquad
 \int_0^\infty t^2\|CLA^tx\|^2\,dt
 \leq\frac b2\|x\|^2.
\]
Moreover, $F_x$ is continuously differentiable on $(0,\infty)$
and satisfies
$\int_0^\infty t|F_x'(t)|\,dt\leq\sqrt2\,b\|x\|^2$.
\end{lemma}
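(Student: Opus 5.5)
The first inequality is just \eqref{eq:continuous} from Lemma~\ref{lem:stein}, so the plan concentrates on the weighted estimate for $LA^t$ and then on the derivative bound.

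\textbf{The weighted estimate.} I will use a spectral inequality that turns the factor $t^2L^2$ into an average of $A^{2s}$ over $s\in[0,t]$. For $0<\lambda\le1$, put $\ell=-\log\lambda\ge0$. Since $\ell e^{-\ell u}$ is nonincreasing in $u$,
\[
 t^2\ell^2\lambda^{2t}\le\Bigl(\ell\int_0^t\lambda^{s}\,ds\Bigr)^2\lambda^{t}.
\]
Working at the level of vectors avoids operator Cauchy--Schwarz issues with the noncommuting $B$. Write
\[
 tLA^tx=\int_0^t LA^{t-s}A^{s}x\,ds,
\]
or more directly use $\frac{d}{dt}(A^tx)=-LA^tx$ for $t>0$. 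Then
\[
 \int_t^{2t}LA^u x\,du=A^tx-A^{2t}x,
\]
but this gives differences rather than the factor $t$. A cleaner route is the identity
\[
 \int_0^\infty t^2\|CLA^tx\|^2dt=\int_0^\infty\!\int_0^\infty \langle B L A^{t}x, LA^t x\rangle t^2\,dt.
\]
In spectral terms this equals
\[
 \iint \frac{2\,\ell(\lambda)\ell(\lambda')}{(\ell+\ell')^{3}}\,d\langle E(\lambda)Bx... \rangle .
\]
Concretely, with $\phi(\ell,\ell')=\int_0^\infty t^2\ell\ell' e^{-t(\ell+\ell')}dt=2\ell\ell'/(\ell+\ell')^3$, the quantity is a Schur product of $B$ (in the spectral representation of $L$) with the kernel $\phi$. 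The unweighted energy has kernel $\psi=1/(\ell+\ell')$. So I aim to show the following, where $K_f(\ell,\ell')=f(\ell)f(\ell')$ is rank one and positive:
\[
 \phi=\tfrac12\,\psi\cdot K,\qquad K(\ell,\ell')=\frac{4\ell\ell'}{(\ell+\ell')^2}.
\]
The key is that $K$ is a positive definite kernel bounded by $1$ on the diagonal, since
\[
 \frac{4\ell\ell'}{(\ell+\ell')^2}=4\ell\ell'\int_0^\infty ue^{-u(\ell+\ell')}du .
\]
This is a superposition of rank-one positive kernels $(2\sqrt u\,\ell e^{-u\ell})(2\sqrt u\,\ell' e^{-u\ell'})$. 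The Schur product theorem then gives
\[
 \phi\cdot B\le\tfrac12\sup_{\ell}K(\ell,\ell)\cdot(\psi\cdot B)\ \text{in form}.
\]
To make this rigorous without Schur products of unbounded kernels, I will write
\[
 \int_0^\infty t^2\|CLA^tx\|^2dt=\int_0^\infty\!\!\int_0^\infty 2u\,\|C\,L A^{t}e^{-uL}\,x\|^2\ldots
\]
The cleanest implementation is through the vector identity
\[
 \int_0^\infty t^2 \|CLA^tx\|^2dt=\int_0^\infty\|C\,g_\tau(L)x\|^2 d\tau\text{-type}
\]
and then Lemma~\ref{lem:stein} applied to the vectors $y_u=2\sqrt u\,LA^{u}x$. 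This gives
\[
 \int_0^\infty\|CA^ty_u\|^2dt\le b\|y_u\|^2,
\]
and
\[
 \int_0^\infty\|y_u\|^2du=\langle 4L^2\textstyle\int_0^\infty uA^{2u}du\,x,x\rangle=\|x\|^2
\]
on $H_+$, because $\int_0^\infty 4u\ell^2e^{-2u\ell}du=1$. I expect the bookkeeping of this Fubini/kernel identity to be the main obstacle, specifically matching the constant to $b/2$. The factor $1/2$ should come from comparing $\int t^2\ldots$ with the decomposition $t=\int_0^t ds$ after symmetrization.

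\textbf{The derivative bound.} For $t>0$, the term $CA^tx$ is $C^1$ with derivative $-CLA^tx$, because $\lambda^t\ell$ is bounded uniformly for $t$ in compact subsets of $(0,\infty)$. Hence
\[
 F_x'(t)=-2\operatorname{Re}\langle CLA^tx,CA^tx\rangle,
\]
which is continuous. Cauchy--Schwarz then gives
\[
 \int_0^\infty t|F_x'|\,dt\le2\Bigl(\int t^2\|CLA^tx\|^2\Bigr)^{1/2}\Bigl(\int\|CA^tx\|^2\Bigr)^{1/2}\le2\sqrt{b/2}\sqrt b\,\|x\|^2=\sqrt2\,b\|x\|^2 .
\]
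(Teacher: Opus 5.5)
Your final route for the weighted estimate is correct and gives exactly $b/2$: apply \eqref{eq:continuous} to $y_u=2\sqrt u\,LA^ux$ and integrate in $u$. The bookkeeping you flagged is one application of Tonelli's theorem. Since $A^ty_u=2\sqrt u\,LA^{t+u}x$, integrate $\int_0^\infty\|CA^ty_u\|^2\,dt\le b\|y_u\|^2$ over $u>0$ and substitute $s=t+u$:
\[
 2\int_0^\infty s^2\|CLA^sx\|^2\,ds
 =\int_0^\infty\Bigl(\int_0^s4u\,du\Bigr)\|CLA^sx\|^2\,ds
 \le b\int_0^\infty 4u\|LA^ux\|^2\,du
 \le b\|x\|^2 .
\]
The last step holds because $\int_0^\infty4u\ell^2e^{-2u\ell}\,du=1$ for $\ell>0$, and the integrand vanishes for $\ell=0$ and on $\ker A$. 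So the factor $1/2$ is just $\int_0^s4u\,du=2s^2$; it is not a symmetrization effect.

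You should delete the intermediate Schur-product claim that $\phi\cdot B\le\frac12\sup_\ell K(\ell,\ell)\,(\psi\cdot B)$ holds \emph{in the form sense}. That would say $(1-K)\cdot(\psi\cdot B)\ge0$. But $1-K$ vanishes on the diagonal, so for any finite-dimensional admissible system with two distinct eigenvalues coupled by $B$, this is a nonzero Hermitian matrix with zero diagonal, hence indefinite. What is true is the norm version $\phi\cdot B\le\frac12\sup_\ell K(\ell,\ell)\,\|\psi\cdot B\|\,I\le\frac b2 I$, which is exactly what the $y_u$ computation proves. The exploratory attempts before that step can be removed as well.

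Compared with the paper: the paper uses the Stein identity pointwise in time. It writes $\|CLA^tx\|^2=g(t)-g(t+1)$ with $g(t)=\langle SLA^tx,LA^tx\rangle$, shifts time, and uses $t^2-(t-1)_+^2\le2t$ together with $S\le bI$. Because this subtracts two integrals, it first restricts to the spectrally truncated vectors $x_m$, which is where $E_A(\{1\})=0$ enters, and then passes to the limit with Fatou's lemma.

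Your argument treats $G_c\le bI$ as a black box and involves only nonnegative integrands. So it needs no truncation, no finiteness check and no use of $E_A(\{1\})=0$. It also shows that the time weight $t^2$ is produced simply by averaging the unweighted continuous-time estimate over shifted initial states. The same device with $y_u=2u^{3/2}L^2A^ux$ reproduces the constant $3b/2$ in the fourth-moment estimate inside the proof of Lemma~\ref{s4-rt-derivative}.

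Your first inequality and your derivative bound coincide with the paper's.
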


\begin{proof}
The first inequality follows from \eqref{eq:continuous}.
For the second, first take $x\in H_+$ and set
$x_m=E_L([m^{-1},m])x$ for $m\in\mathbb N$.
Since $E_A(\{1\})=0$, we have $x_m\to x$.

Using $B=S-ASA$ and shifting the time variable in the term
containing $ASA$, we obtain
\[
\begin{aligned}
 \int_0^\infty t^2\|CLA^tx_m\|^2\,dt
 &=\int_0^\infty
   \bigl(t^2-(t-1)_+^2\bigr)
   \langle SLA^tx_m,LA^tx_m\rangle\,dt\\
 &\leq b\int_0^\infty2t\|LA^tx_m\|^2\,dt\\
 &=\frac b2\|x_m\|^2.
\end{aligned}
\]
Here $(t-1)_+=\max\{t-1,0\}$.
The spectral restriction makes the integrals in the change
of variables finite.
The inequality uses
$0\leq t^2-(t-1)_+^2\leq2t$ and $S\leq bI$.
The last equality follows from the spectral theorem and
$\int_0^\infty2t\lambda^2e^{-2t\lambda}\,dt=1/2$
for $\lambda>0$.

For each $t>0$, the operator $LA^t$ is bounded.
Fatou's lemma therefore gives the second estimate for $x\in H_+$.
Since $LA^t$ vanishes on $\ker A$, the estimate holds for
every $x\in H$.

Finally, differentiation for $t>0$ gives
$F_x'(t)=-2\operatorname{Re}
\langle CLA^tx,CA^tx\rangle$.
Hence $|F_x'(t)|\leq2\|CA^tx\|\,\|CLA^tx\|$.
The Cauchy--Schwarz inequality and the two energy estimates yield
\[
 \int_0^\infty t|F_x'(t)|\,dt
 \leq
 2\left(\int_0^\infty\|CA^tx\|^2\,dt\right)^{1/2}
 \left(\int_0^\infty t^2\|CLA^tx\|^2\,dt\right)^{1/2}
 \leq\sqrt2\,b\|x\|^2.
\]
\end{proof}

We use the preceding energy estimates to bound the sampled
Gramian in terms of the cumulative mass of the sampling measure.

\begin{proposition}
\label{prop:mass}
Let $\mu$ be a positive Borel measure on $[0,\infty)$ such that
$w_0=\mu(\{0\})<\infty$ and
$K_+=\sup_{R>0}\mu((0,R])/R<\infty$.
For every system satisfying \eqref{eq:frame}, the quadratic form
$\langle G_\mu x,x\rangle
=\int_{[0,\infty)}\|CA^tx\|^2\,d\mu(t)$
defines a bounded positive operator satisfying
\(
 G_\mu\leq b(w_0+\sqrt2\,K_+)I.
\)
\end{proposition}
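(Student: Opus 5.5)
The plan is to split $\mu$ into its atom at $0$ and its restriction $\mu_+$ to $(0,\infty)$. The atom contributes $w_0\|Cx\|^2\le w_0\|B\|\|x\|^2\le w_0 b\|x\|^2$, since $B\le S\le bI$ by Lemma~\ref{lem:stein}. For the remaining part, fix $x\in H$ and use the function $F_x(t)=\|CA^tx\|^2$ of Lemma~\ref{lem:energy}. The goal is
\[
 \int_{(0,\infty)}F_x\,d\mu\le\sqrt2\,K_+\,b\|x\|^2 .
\]
Once this quadratic form bound holds, the convention stated before Lemma~\ref{lem:stein} makes $G_\mu$ a bounded positive operator with the asserted bound.

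The key step is integration by parts against the distribution function $m(t)=\mu((0,t])$, which satisfies $m(t)\le K_+t$. Since $F_x$ is $C^1$ on $(0,\infty)$, for $0<\epsilon<T$ we have
\[
 F_x(t)=F_x(T)-\int_t^T F_x'(s)\,ds,\qquad \epsilon<t\le T .
\]
Integrating over $\mu$ on $(\epsilon,T]$ and applying Tonelli (after bounding by $|F_x'|$) gives
\[
 \int_{(\epsilon,T]}F_x\,d\mu\le F_x(T)\,m(T)+\int_\epsilon^T |F_x'(s)|\,m(s)\,ds
 \le K_+T F_x(T)+K_+\int_0^\infty s|F_x'(s)|\,ds .
\]
By Lemma~\ref{lem:energy}, the last term is at most $\sqrt2\,K_+b\|x\|^2$. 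Letting $\epsilon\downarrow0$ by monotone convergence then gives the bound on $(0,T]$ up to the boundary term.

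The main obstacle is the boundary term $TF_x(T)$: we must find $T_k\to\infty$ with $T_kF_x(T_k)\to0$. Since $\int_0^\infty F_x<\infty$ by Lemma~\ref{lem:energy}, $\liminf_{T\to\infty}TF_x(T)=0$. Indeed, otherwise $F_x(T)\ge c/T$ for large $T$, and this is not integrable. Choosing $T_k$ along such a sequence and using monotone convergence in $T$ yields the claimed estimate.

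An alternative route avoids this issue. Note that $tF_x(t)\le\int_0^t (sF_x(s))'\,ds$, and $(sF_x)'=F_x+sF_x'$ is integrable, so $tF_x(t)$ has a limit. That limit must be $0$ by the $\liminf$ argument. This also justifies passing to the limit along all $T$.
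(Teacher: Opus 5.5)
Your argument is correct and is essentially the paper's: you split off the atom at zero, integrate by parts against $\mu((0,t])\le K_+t$, and invoke $\int_0^\infty t|F_x'(t)|\,dt\le\sqrt2\,b\|x\|^2$ from Lemma~\ref{lem:energy}. The only difference is the boundary term at infinity, where the paper shows $tF_x(t)\le\int_t^\infty s|F_x'(s)|\,ds\to0$ using $F_x(t)\to0$ from strong stability. Your $\liminf_{T\to\infty}TF_x(T)=0$ (from $\int_0^\infty F_x<\infty$) combined with monotone convergence along $T_k$ works equally well, and your one-sided estimate also avoids the boundary term at zero.
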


\begin{proof}
Fix $x\in H$ and set $W(t)=\mu((0,t])$.
Since $A$ is a contraction and $A^n\to0$ strongly,
$F_x(t)\to0$ as $t\to\infty$.
Lemma~\ref{lem:energy} therefore gives
$F_x(t)=-\int_t^\infty F_x'(s)\,ds$ for $t>0$, and hence
\[
 tF_x(t)\leq\int_t^\infty s|F_x'(s)|\,ds
 \longrightarrow0,
 \qquad t\to\infty.
\]

For $0<\delta<R$, Stieltjes integration by parts gives
\[
 \int_{(\delta,R]}F_x(t)\,d\mu(t)
 =W(R)F_x(R)-W(\delta)F_x(\delta)
  -\int_\delta^R W(t)F_x'(t)\,dt.
\]
Since $W(t)\leq K_+t$, the boundary term at infinity vanishes.
The boundary term at zero also vanishes because
$F_x(t)\leq b\|x\|^2$.
Moreover,
$W(t)|F_x'(t)|\leq K_+t|F_x'(t)|$ is integrable by
Lemma~\ref{lem:energy}.
Letting $\delta\downarrow0$ and $R\to\infty$, we obtain
\[
 \int_{(0,\infty)}F_x(t)\,d\mu(t)
 =-\int_0^\infty W(t)F_x'(t)\,dt
 \leq K_+\int_0^\infty t|F_x'(t)|\,dt
 \leq\sqrt2\,K_+b\|x\|^2.
\]
Finally, the contribution at time zero is
$w_0\|Cx\|^2=w_0\langle Bx,x\rangle\leq w_0b\|x\|^2$.
Adding this term proves the assertion.
\end{proof}

When the original frame is tight, $S=aI$ and the Stein identity
gives $B=a(I-A^2)$.
The resulting scalar criterion relates preservation of the frame
property to the growth of the cumulative mass $M_\mu$, while
the counting function $N_\mu$ records the number of sampling
times carrying this mass.

\begin{proposition}
\label{prop:tight}
Let $\mu$ be a positive atomic measure on $[0,\infty)$
with locally finite support and finite weights, and write
$w_0=\mu(\{0\})$.
The following conditions are equivalent:
\begin{enumerate}
\item $\mu\in\mathcal U_w(1)$;
\item $w_0>0$ and $M_\mu(R)\asymp R$ as $R\to\infty$;
\item the function
$\phi_\mu(z)=(1-z)\int_{[0,\infty)}z^t\,d\mu(t)$,
defined for $0\leq z<1$ with $0^0=1$, satisfies
\[
 0<\inf_{0\leq z<1}\phi_\mu(z)
 \leq\sup_{0\leq z<1}\phi_\mu(z)<\infty.
\]
\end{enumerate}
Every measure satisfying these conditions also satisfies the
hypotheses of Proposition~\ref{prop:mass}.
For a system with $S=aI$, we have $G_\mu=a\phi_\mu(A^2)$.
\end{proposition}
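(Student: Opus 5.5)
First I would record the identity $G_\mu=a\phi_\mu(A^2)$, since everything else rests on it. For a system with $S=aI$, Lemma~\ref{lem:stein} gives $B=S-ASA=a(I-A^2)$. Because $A$ commutes with $B$, the spectral theorem yields
\[
\langle G_\mu x,x\rangle=\int_{[0,\infty)}\langle A^tBA^tx,x\rangle\,d\mu(t)=a\int_{[0,1]}(1-\lambda^2)\int\lambda^{2t}\,d\mu(t)\,d\langle E_Ax,x\rangle(\lambda).
\]
At $\lambda=0$ only the term $t=0$ survives, with the convention $0^0=1$. The value $\lambda=1$ carries no mass because $E_A(\{1\})=0$. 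Hence the form equals $a\langle\phi_\mu(A^2)x,x\rangle$ as a quadratic form with values in $[0,\infty]$, and Tonelli justifies the interchange.

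For (1)$\Leftrightarrow$(3) I would use the fact that every $A$ with $0\le A\le I$ and $E_A(\{1\})=0$ occurs. Given such $A$, take $B=a(I-A^2)$ and $C=B^{1/2}$. The partial sums $\sum_{n<N}A^nBA^n=a(I-A^{2N})$ then increase strongly to $aI$, so $S=aI$. Taking $A=zI$ with $z\in[0,1)$ gives $G_\mu=a\phi_\mu(z^2)$. Thus (1), with the uniform bounds $\alpha,\beta$ that are built into $\mathcal U_w(1)$, forces $\alpha/a\le\phi_\mu\le\beta/a$ on $[0,1)$. Conversely, (3) gives $a\inf\phi_\mu\le G_\mu\le a\sup\phi_\mu$ through the functional calculus. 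This also holds for every pair with $b/a\le1$, since then $S=aI$.

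For (2)$\Leftrightarrow$(3) I would use an Abelian/Tauberian comparison. Write $z=e^{-s}$ and integrate by parts:
\[
\int z^t\,d\mu=w_0+s\int_0^\infty e^{-st}W(t)\,dt,\qquad W(t)=\mu((0,t]).
\]
Here $1-z\asymp s$ for $s\le1$, and $\phi_\mu(0)=w_0$.
\begin{itemize}
\item \emph{(2)$\Rightarrow$(3).} Suppose $W(t)\le Kt$ for $t\ge1$, and that $M_\mu$ is finite on bounded sets, which holds because the support is locally finite. Then $s^2\int e^{-st}W\,dt\le K'$, which gives the upper bound. For the lower bound, $M_\mu(R)\ge cR$ gives $\int z^t\,d\mu\ge z^R M_\mu(R)\ge e^{-1}c/s$ with $R=1/s$ when $s$ is small. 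On the remaining compact range of $z$, positivity follows from $w_0>0$.
\item \emph{(3)$\Rightarrow$(2).} We have $w_0=\phi_\mu(0)>0$. Also $e^{-1}M_\mu(R)\le\int z^t\,d\mu$ at $s=1/R$, which gives $M_\mu(R)\lesssim R$.
\end{itemize}

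The main obstacle is the lower bound in (3)$\Rightarrow$(2), where a Laplace transform of size $\gtrsim1/s$ must be converted into a mass bound $M_\mu(R)\gtrsim R$. I would split
\[
\int z^t\,d\mu\le M_\mu(\eta R)+\int_{t>\eta R}e^{-t/R}\,d\mu.
\]
Using the already established bound $W(t)\le Kt$, the tail is at most $K R\int_\eta^\infty e^{-u}(u+1)\,du$. This can be made smaller than half of $c R$, where $c R$ is the lower bound from (3), by taking $\eta$ large. That yields $M_\mu(\eta R)\ge cR/2$, which is the required linear lower bound.

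Finally, the conditions in (2) give $K_+<\infty$ once one also uses local finiteness near $0$. This is exactly the hypothesis of Proposition~\ref{prop:mass}, which proves the last assertion.
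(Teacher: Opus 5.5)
Your proposal is correct and follows essentially the same route as the paper. You use scalar systems $A=zI$ for (1)$\Rightarrow$(3), and the identity $B=a(I-A^2)$ with the spectral theorem both for the converse and for $G_\mu=a\phi_\mu(A^2)$. For (3)$\Rightarrow$(2) you use the same truncation of $\int e^{-t/R}\,d\mu(t)$ at $\eta R$, with the tail controlled by the previously obtained linear upper bound on the mass. The only minor difference is in (2)$\Rightarrow$(3): you get the lower bound directly from $\int z^t\,d\mu(t)\geq z^RM_\mu(R)$ with $z=e^{-1/R}$, whereas the paper uses a Tonelli formula expressing the Laplace transform through $M_\mu$.
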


\begin{proof}
Assume (1).
For $0\leq r<1$, take $H=Y=\mathbb C$,
$A=rI$ and $C=\sqrt{1-r^2}I$.
Then $S=I$ and $G_\mu=\phi_\mu(r^2)I$.
The uniform frame bounds after sampling therefore imply (3).

Next assume (3).
Since $\phi_\mu(0)=w_0$, we have $w_0>0$.
For $u>0$, set
$Q(u)=\int_{[0,\infty)}e^{-ut}\,d\mu(t)$.
The identity $\phi_\mu(e^{-u})=(1-e^{-u})Q(u)$ gives
$c/u\leq Q(u)\leq C/u$ for $0<u\leq u_0$, with suitable
constants $c,C,u_0>0$.
Since $M_\mu(R)\leq eQ(1/R)$, it follows that
$M_\mu(R)\leq C_1(1+R)$ for all $R\geq0$.

For $K\geq1$ and $R\geq1$, integration by parts yields
\[
 \int_{(KR,\infty)}e^{-t/R}\,d\mu(t)
 \leq\frac1R\int_{KR}^\infty e^{-t/R}M_\mu(t)\,dt
 \leq C_1R(K+2)e^{-K}.
\]
Choose $K$ so large that $C_1(K+2)e^{-K}\leq c/2$.
For sufficiently large $R$, we then have
\[
 M_\mu(KR)
 \geq Q(1/R)-\int_{(KR,\infty)}e^{-t/R}\,d\mu(t)
 \geq\frac c2R.
\]
Replacing $KR$ by $s$ gives $M_\mu(s)\geq cs/(2K)$
for all sufficiently large $s$.
Together with the upper bound, this proves (2).

Conversely, assume (2).
Choose positive constants $c,K,R_0$ such that
$M_\mu(R)\geq cR$ for $R\geq R_0$ and
$M_\mu(R)\leq K(1+R)$ for all $R\geq0$.
Tonelli's theorem gives
$Q(u)=u\int_0^\infty e^{-uR}M_\mu(R)\,dR$, and hence
\[
 c(R_0+u^{-1})e^{-uR_0}
 \leq Q(u)\leq K(1+u^{-1}),
 \qquad u>0.
\]
These estimates bound $(1-e^{-u})Q(u)$ above and away from
zero for sufficiently small $u>0$.
For $u\geq u_0>0$, we have
\[
 (1-e^{-u_0})w_0
 \leq(1-e^{-u})Q(u)\leq Q(u_0).
\]
Together with $\phi_\mu(0)=w_0$, this proves (3).

Finally, suppose (3) holds and $S=aI$.
The Stein identity gives $B=a(I-A^2)$.
The spectral theorem and Tonelli's theorem therefore yield
$G_\mu=a\phi_\mu(A^2)$.
Any finite value may be assigned to $\phi_\mu(1)$,
since $E_A(\{1\})=0$.
Consequently,
\[
 a\inf_{0\leq z<1}\phi_\mu(z)\,I
 \leq G_\mu
 \leq a\sup_{0\leq z<1}\phi_\mu(z)\,I,
\]
which proves (1).

To verify the hypotheses of Proposition~\ref{prop:mass},
note that $w_0<\infty$ and local finiteness gives
$\delta>0$ such that $\mu((0,\delta])=0$.
Combining this with $M_\mu(R)\leq C_1(1+R)$ gives
$\sup_{R>0}\mu((0,R])/R<\infty$.
\end{proof}

\begin{remark}
If $S=aI$, then $C^*C=a(I-A^2)$, and strong stability gives
$\ker C=\ker(I-A^2)=\{0\}$.
Thus $C$ is injective, and
$\operatorname{rank}C=m<\infty$ implies $\dim H=m$.
In particular, if a single orbit $(A^nf)_{n\geq0}$ is a tight
frame for $H$, then $\dim H\leq1$.
When $H$ is infinite dimensional, the results for tight frames
therefore concern observation operators of infinite rank.
\end{remark}

Proposition~\ref{prop:tight} gives $M_\mu(R)\asymp R$
for every $\mu\in\mathcal U_w(1)$, but does not require
the same growth for $N_\mu(R)$.
If $w_t\leq W$ for all $t\in T$, then
$M_\mu(R)\leq WN_\mu(R)$ implies $N_\mu(R)\gtrsim R$
for sufficiently large $R$.
Thus, for any $\kappa\geq1$, a design
$\mu\in\mathcal U_w(\kappa)$ with $N_\mu(R)=o(R)$
must have unbounded weights.

For later comparison, take $\mu\in\mathcal U_w(1)$
and consider an operator with finite spectrum,
$A=\sum_{i=1}^m\lambda_iP_i$, where $0\leq\lambda_i<1$
and the $P_i$ are its spectral projections.
Writing $S_{ij}=P_iSP_j$ and $B_{ij}=P_iBP_j$, we obtain
\[
 B_{ij}=(1-\lambda_i\lambda_j)S_{ij},
 \qquad
 P_iG_\mu P_j=\phi_\mu(\lambda_i\lambda_j)S_{ij}.
\]
If $A$ and $B$ commute, then $A$ and $S$ also commute,
and $G_\mu=\phi_\mu(A^2)S$.
The scalar criterion characterizes $\mathcal U_w(1)$,
but does not characterize $\mathcal U_w(\kappa)$
for a prescribed $\kappa>1$.
In the general case, the estimates must also account for
the blocks with $i\neq j$.
The arguments below use both the original frame bounds
$aI\leq S\leq bI$ and the constraint $S-ASA\geq0$.

Appendix~\ref{app:arithmetic-designs} gives a separate sufficient
criterion for full universality and applies it to weighted prime
sampling. Appendix~\ref{app:prescribed-supports} treats the different
problem of assigning weights to a prescribed set of times.

\section{Optimal density under a fixed condition number bound}
\label{sec:fixed}

For each fixed upper bound $\kappa$ on the original frame condition
number, there is a common sampling design with
$N_\mu(R)=O(\log R)$. We construct such designs by quadrature and
derive lower bounds on the number of sampling times from a geometric
spectral model. Together, these estimates yield
$C^*(\kappa)\sim\pi^{-2}\log\kappa$ as $\kappa\to\infty$.
Combining the lower bounds with a relative perturbation estimate
also determines the asymptotic behavior as $\kappa\downarrow1$.
Section~\ref{s4-section} shows that restricting the sampling times
to integers does not change the optimal coefficient.
Appendix~\ref{app:fixed-model} determines the exact optimum for
the geometric model, and Appendix~\ref{app:prescribed-supports}
studies sampling designs with prescribed supports.

\subsection{Relative Gramian approximation}

We construct a common sampling design for all systems in the class by approximating the continuous Gramian with logarithmic quadrature and then adding observations at finitely many initial integer times to approximate the integer Gramian. Approximation of scalar functions by sums of exponentials, including approximation of $1/x$ on unbounded
intervals, is studied in \cite{BM2005,BraessHackbusch2005,BM}.
Here the error must be controlled uniformly over the class in the sense of operator quadratic forms, even when $A$ and $B$ do not commute. We therefore begin with a uniform energy estimate for complex times.
Set $K_0=4(3+2\sqrt2)$.

Recall that $C\in\mathcal B(H,Y)$ is the observation operator
and $B=C^*C$.
\begin{lemma}
\label{lem:complex-energy}
For $|\theta|<\pi/2$ and $x\in H$,
\[
 \int_0^\infty\|Ce^{-te^{i\theta}L}x\|^2\,dt
 \leq\frac{K_0b}{\cos\theta}\|x\|^2.
\]
For $t>0$, the operator $e^{-te^{i\theta}L}$ is extended by zero
on $\ker A$.
\end{lemma}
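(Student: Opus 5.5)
The plan is to reduce the complex-time energy to real-time quantities through the Stein identity of Lemma~\ref{lem:stein}. Since $C$ does not commute with $e^{-ite^{i\theta}\sin\theta\,L}$, the sampling or Tonelli argument used on the real axis cannot be applied directly along the ray. I would first work on $H_+$ with $x=E_L([m^{-1},m])x$, so that every integral below is finite. At the end I would pass to general $x$ by Fatou's lemma, exactly as in the proof of Lemma~\ref{lem:energy}. The part on $\ker A$ contributes nothing, because the operator is extended by zero there.

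\emph{Step 1: Stein decomposition along the ray.} Write $z=te^{i\theta}$, $y_z=e^{-zL}x$ and $q(z)=\langle Sy_z,y_z\rangle$. The identity $B=S-ASA$ and $Ay_z=y_{z+1}$ give
\[
 \|Cy_z\|^2=q(z)-q(z+1)=-\int_0^1\partial_s q(z+s)\,ds,
 \qquad
 \partial_s q(z+s)=-2\operatorname{Re}\langle Sy_{z+s},Ly_{z+s}\rangle .
\]
Hence, using $S\leq bI$,
\[
 \|Cy_z\|^2\leq 2b\int_0^1\|y_{z+s}\|\,\|Ly_{z+s}\|\,ds .
\]
The unitary factor $e^{-it\sin\theta L}$ commutes with $L$, so it disappears from these norms. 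Thus $\|y_{z+s}\|=\|e^{-(t\cos\theta+s)L}x\|$, and similarly for $\|Ly_{z+s}\|$.

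\emph{Step 2: integration in $t$.} Set $u=t\cos\theta+s$ and $f(u)=\|e^{-uL}x\|\,\|Le^{-uL}x\|$. Fubini gives
\[
 \int_0^\infty\!\!\int_0^1 f(t\cos\theta+s)\,ds\,dt
 =\frac1{\cos\theta}\int_0^\infty \min(u,1)\,f(u)\,du .
\]
This is where the factor $1/\cos\theta$ comes from. It remains to show that
\[
 \int_0^\infty\min(u,1)\,f(u)\,du\leq c\|x\|^2
\]
with an absolute constant $c$; the target is $2c=K_0=4(1+\sqrt2)^2$.

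\emph{Step 3 (main obstacle): a spectrally uniform bound.} The difficulty is that $f$ is a product of two norms rather than a single spectral integral. I would split $f(u)\leq\tfrac12\bigl(\alpha(u)\|e^{-uL}x\|^2+\alpha(u)^{-1}\|Le^{-uL}x\|^2\bigr)$ with a weight $\alpha(u)$ chosen piecewise on $(0,1]$ and $(1,\infty)$. The spectral theorem then reduces the problem to bounding, uniformly in $\lambda>0$, the scalar quantity
\[
 \int_0^\infty\min(u,1)\bigl(\alpha(u)+\alpha(u)^{-1}\lambda^2\bigr)e^{-2u\lambda}\,du .
\]
Large $\lambda$ is harmless, since $\int_0^\infty\lambda e^{-2u\lambda}\,du=\tfrac12$. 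Small $\lambda$ is dangerous, because the term $\int\min(u,1)\alpha(u)e^{-2u\lambda}\,du$ diverges as $\lambda\to0$ unless $\alpha(u)$ decays like $u^{-2}$ for large $u$; that choice in turn costs a factor $u^2\lambda^2$ in the other term. If no choice of weight makes this uniform, I would replace Step~1 by a two-sided version. Its aim would be to keep the cancellation in $q(z)-q(z+1)$ for small spectral values, where the difference is small, instead of bounding its modulus. Concretely, I would apply the real-axis bound $\int_0^\infty\|CA^tx\|^2\,dt\leq b\|x\|^2$ of \eqref{eq:continuous} to the low-frequency part $E_L([0,1])x$. On that part $|e^{-it\sin\theta L}-I|$ can be controlled over a $t$-range of length comparable to $1/\cos\theta$. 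The Step~2 estimate would then be used only for the high-frequency part. Finally, I would optimize the splitting parameter between the two pieces, expecting the optimum to produce the constant $(1+\sqrt2)^2$.
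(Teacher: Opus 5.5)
Your reduction has a genuine gap. The inequality you arrive at in Step~3,
\[
 \int_0^\infty\min(u,1)\,\|e^{-uL}x\|\,\|Le^{-uL}x\|\,du\leq c\|x\|^2,
\]
is false, so no choice of the weight $\alpha(u)$ can rescue it. The logarithmic divergence you noticed is a symptom of this, not an artifact of the splitting. Take $x$ with spectral measure $\|E_L(d\lambda)x\|^2=\mathbf 1_{[\epsilon,1]}(\lambda)\,d\lambda/\lambda$. This is realized, with compact spectral support, by the tight system $A=e^{-L}$, $C=(I-A^2)^{1/2}$. Then $\|x\|^2=\log(1/\epsilon)$. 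For $1\leq u\leq1/(2\epsilon)$ one has $\|e^{-uL}x\|^2\geq e^{-2}\log\frac1{u\epsilon}$ and $\|Le^{-uL}x\|^2\geq\frac38e^{-2}u^{-2}$. Hence the left side is at least a constant times $\int_{\log2}^{\log(1/\epsilon)}\sqrt v\,dv\asymp(\log\frac1\epsilon)^{3/2}$.

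The loss happens in Step~1. Bounding $|q(z)-q(z+1)|$ by $2b\|y\|\|Ly\|$ uses only $\|S\|\leq b$ and never $B\geq0$. It also lets one spectral scale supply $\|y\|$ while a different scale supplies $\|Ly\|$, and these cross-scale terms accumulate over the $\log(1/\epsilon)$ dyadic scales.

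Your fallback does not repair this, for three reasons. The counterexample lies entirely in $E_L([0,1])H$, so splitting at $\lambda=1$ changes nothing. On the times $t\sim1/(\lambda\cos\theta)$ that carry the energy at frequency $\lambda$, the phase $e^{-it\lambda\sin\theta}$ turns by about $\tan\theta$, so it is not close to $1$ except for small $|\theta|$. Finally, the real-axis bound cannot be transported through that phase, because it does not commute with $C$.

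The missing ingredient is the decay of $B$ across spectral scales, and this comes from positivity of $B$. The paper decomposes $H_+$ with $P_j=E_L([2^j,2^{j+1}))$, $j\in\mathbb Z$.
\begin{itemize}
\item The compressed Stein identity $B_{jj}=(I-A_j)S_{jj}+A_jS_{jj}(I-A_j)$ gives $\|B_{jj}\|\leq2b\|I-A_j\|\leq4b2^j$.
\item Positivity $B\geq0$ gives $\|B_{ij}\|\leq\|B_{ii}\|^{1/2}\|B_{jj}\|^{1/2}\leq4b2^{(i+j)/2}$.
\item Since $\|e^{-te^{\pm i\theta}L}P_j\|\leq e^{-t2^j\cos\theta}$, estimate each block of $R_\theta=\int_0^\infty e^{-te^{-i\theta}L}Be^{-te^{i\theta}L}\,dt$ by the integral of its norm. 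This discards the unitary phases harmlessly and gives
\[
 \|(R_\theta)_{ij}\|\leq\frac{4b}{\cos\theta}\,\frac{2^{(i+j)/2}}{2^i+2^j}\leq\frac{4b}{\cos\theta}\,2^{-|i-j|/2}.
\]
\item The block Schur test with $\sum_{j\in\mathbb Z}2^{-|j|/2}=3+2\sqrt2$ then gives $K_0=4(3+2\sqrt2)$ on vectors with finitely many spectral components.
\end{itemize}
Your density, Fatou and $\ker A$ steps then finish the proof. Your Stein identity along the ray and the $1/\cos\theta$ bookkeeping in Step~2 are correct. They cannot, however, replace this blockwise estimate.
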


\begin{proof}
On $H_+$, let $P_j=E_L([2^j,2^{j+1}))$ for $j\in\mathbb Z$.
Set $A_j=A|_{P_jH}$ and write $X_{ij}=P_iX|_{P_jH}$ for the
blocks of a bounded operator $X$. The diagonal blocks of the Stein
identity satisfy
$B_{jj}=(I-A_j)S_{jj}+A_jS_{jj}(I-A_j)$.
Thus $\|B_{jj}\|\leq2b\|I-A_j\|\leq4b2^j$.
Since $B$ is positive,
$\|B_{ij}\|\leq\|B_{ii}\|^{1/2}\|B_{jj}\|^{1/2}
\leq4b2^{(i+j)/2}$.

Initially interpret
$R_\theta=\int_0^\infty
e^{-te^{-i\theta}L}Be^{-te^{i\theta}L}\,dt$
as a positive quadratic form on vectors with finitely many nonzero
spectral components. Its blocks satisfy
\[
 \|(R_\theta)_{ij}\|
 \leq\frac{4b}{\cos\theta}\frac{2^{(i+j)/2}}{2^i+2^j}
 \leq\frac{4b}{\cos\theta}2^{-|i-j|/2}.
\]
The Schur estimate for operator blocks, together with
$\sum_{j\in\mathbb Z}2^{-|j|/2}=3+2\sqrt2$, gives the claimed bound
for these vectors. Density and Fatou's lemma extend the bound
to all of $H_+$. The extension by zero on $\ker A$ then gives
the result on $H$.
\end{proof}

The estimate at complex times allows us to apply the trapezoidal
rule in logarithmic time with an error bound uniform over
the systems under consideration.

\begin{proposition}
\label{prop:quadrature}
For $h,\sigma>0$, the series of positive operators
\(
 Q_{h,\sigma}=\sum_{j\in\mathbb Z}h\sigma e^{jh}
 A^{\sigma e^{jh}}BA^{\sigma e^{jh}}
\)
converges strongly to a bounded operator. For $0<d<\pi/2$,
\begin{equation}\label{eq:quadrature}
 \|Q_{h,\sigma}-G_c\|\leq bE(h,d),\qquad
 E(h,d)=\frac{2K_0}{\cos d\,(e^{2\pi d/h}-1)}.
\end{equation}
\end{proposition}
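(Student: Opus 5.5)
The plan is to substitute $t=\sigma e^{s}$, which turns $G_c$ into an integral over the real line in logarithmic time. Then the difference $Q_{h,\sigma}-G_c$ becomes the error of the trapezoidal rule with step $h$, applied to an operator-valued function of $s$. That function extends analytically to the strip $|\operatorname{Im}s|<\pi/2$, and the error bound should follow from the classical estimate for the trapezoidal rule on a strip \cite{TrefethenWeideman2014}. Lemma~\ref{lem:complex-energy} is exactly what controls the strip norm uniformly over the class.

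\emph{Setup.} Fix $x\in H$. Since $A^t x=0$ on $\ker A$ for $t>0$, we may work on $H_+$ and write $A^t=e^{-tL}$. Define
\[
 g(s)=\sigma e^{s}\,\|Ce^{-\sigma e^{s}L}x\|^2 .
\]
Then $\langle G_cx,x\rangle=\int_{\mathbb R}g(s)\,ds$ and $\langle Q_{h,\sigma}x,x\rangle=h\sum_{j}g(jh)$. To get the complex extension, we replace the squared norm by the bilinear pairing
\[
 g(z)=\sigma e^{z}\,\bigl\langle Ce^{-\sigma e^{z}L}x,\;Ce^{-\sigma e^{\bar z}L}x\bigr\rangle ,
\]
which is analytic in $z=s+i\theta$ for $|\theta|<\pi/2$ because $\operatorname{Re}(\sigma e^{z})>0$ there. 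On the line $\operatorname{Im}z=\theta$, Cauchy--Schwarz gives
\[
 |g(s+i\theta)|\le \sigma e^{s}\,\|Ce^{-\sigma e^{s}e^{i\theta}L}x\|\,\|Ce^{-\sigma e^{s}e^{-i\theta}L}x\|.
\]
We integrate over $s$, substitute $t=\sigma e^{s}$, apply Cauchy--Schwarz in $t$, and use Lemma~\ref{lem:complex-energy} for both $\pm\theta$. This yields
\[
 \int_{\mathbb R}|g(s+i\theta)|\,ds\le \frac{K_0b}{\cos\theta}\|x\|^2 ,
\]
so $g$ is uniformly integrable on every line inside the strip $|\theta|\le d$.

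\emph{Trapezoidal error.} The standard contour-shifting argument expresses the error as integrals along $\operatorname{Im}z=\pm d$ against $1/(e^{\mp 2\pi i z/h}-1)$. Equivalently, by Poisson summation, the error is $\sum_{k\ne0}\hat g(2\pi k/h)$ with $|\hat g(\omega)|\le e^{-|\omega|d}\int|g(\cdot\pm id)|$. This gives
\[
 \Bigl|h\sum_j g(jh)-\int g\Bigr|\le \frac{2M_d}{e^{2\pi d/h}-1},\qquad M_d=\frac{K_0b}{\cos d}\|x\|^2 .
\]
That is the bound $bE(h,d)\|x\|^2$ for the quadratic forms.

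\emph{Completing the proof.} Strong convergence and boundedness of $Q_{h,\sigma}$ follow from this estimate. The partial sums are increasing positive operators, and their quadratic forms are bounded by $\langle G_cx,x\rangle+bE\|x\|^2\le b(1+E)\|x\|^2$. By Vigier's monotone convergence theorem they converge strongly to a bounded operator. Since $Q_{h,\sigma}-G_c$ is selfadjoint, a bound on its quadratic form gives the operator norm bound \eqref{eq:quadrature}.

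\emph{Main obstacle.} The hardest step is justifying the contour shift or the Poisson summation. This needs decay of $g(s+i\theta)$ as $|s|\to\infty$, uniformly for $|\theta|\le d$, and that decay can fail for unbounded $L$. I would first carry out the argument for $x$ with spectral support in $E_L([m^{-1},m])$. Such $x$ give decay at both ends: exponential as $s\to-\infty$, and doubly exponential as $s\to+\infty$. For these $x$ the argument is rigorous, and the constant $M_d$ does not depend on $m$. I would then pass to general $x$ by density, using boundedness of $Q_{h,\sigma}$ and $G_c$; Fatou's lemma gives the form bound for $Q_{h,\sigma}$ along the way.
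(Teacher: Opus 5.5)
Your proposal is correct and follows essentially the same route as the paper. Both arguments use the logarithmic substitution and the analytic extension $\sigma e^{z}\langle Ce^{-\sigma e^{z}L}x,Ce^{-\sigma e^{\bar z}L}x\rangle$, bounded on each horizontal line via Lemma~\ref{lem:complex-energy}, then apply the strip trapezoidal estimate to vectors with compact spectral support in $(0,\infty)$, and finish by density and monotone convergence of the positive partial sums. One small imprecision: since $|g(s+i\theta)|\le b\sigma e^{s}\|x\|^2$ always, decay as $s\to-\infty$ is automatic, so the restriction to $E_L([m^{-1},m])H$ is needed only for $s\to+\infty$, which reflects spectrum of $L$ near $0$ rather than unboundedness of $L$; your restriction handles this either way.
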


\begin{proof}
Let $x\in H_+$ have spectral support in
$[\alpha,\beta]\subset(0,\infty)$ with respect to $L$, and set
$f_x(z)=e^z\langle A^{e^z}BA^{e^z}x,x\rangle$.
This function is analytic in $|\operatorname{Im}z|<\pi/2$.
For $z=s+i\theta$ and $t=e^s$,
$|f_x(s+i\theta)|\leq
t\|CA^{te^{i\theta}}x\|\,\|CA^{te^{-i\theta}}x\|$.
Integrating this bound and applying Lemma~\ref{lem:complex-energy}
gives
\[
 \int_{\mathbb R}|f_x(s+i\theta)|\,ds
 \leq\frac{K_0b}{\cos\theta}\|x\|^2.
\]
Since $B\leq bI$, we also have
$|f_x(s+i\theta)|
\leq be^s\exp(-2\alpha e^s\cos d)\|x\|^2$
for $|\theta|\leq d$.
Thus $f_x(s+i\theta)\to0$ uniformly for $|\theta|\leq d$
as $s\to\pm\infty$.

Applying the trapezoidal estimate for analytic functions in a strip
\cite[Theorem~5.1]{TrefethenWeideman2014}
to the function $z\mapsto f_x(z+\log\sigma)$ yields
\[
 \left|h\sum_{j\in\mathbb Z}f_x(\log\sigma+jh)
       -\int_{\mathbb R}f_x(s)\,ds\right|
 \leq bE(h,d)\|x\|^2.
\]
The integral equals $\langle G_cx,x\rangle$.
Since $G_c\leq bI$ and every summand is positive, each finite
partial sum has quadratic form at most
$b(1+E(h,d))\|x\|^2$ for these vectors.
Such vectors are dense in $H_+$, and all summands vanish on
$\ker A$. Hence every finite partial sum is bounded above by
$b(1+E(h,d))I$ on $H$.
Monotone convergence of positive operators gives strong convergence
of the series. Finally, the quadratic form error estimate extends
by density to $H_+$ and, since both operators vanish on $\ker A$,
to $H$. This proves \eqref{eq:quadrature}.
\end{proof}

The quadrature nodes in Proposition~\ref{prop:quadrature} accumulate at zero, so the full quadrature measure is not an admissible sampling design. Truncating the sequence of nodes removes this accumulation. We then translate the remaining nodes and add observations at finitely many initial integer times to approximate the original
integer Gramian.

\begin{theorem}
\label{thm:relative}
For every $\kappa\geq1$ and $0<\varepsilon<1$, there is a positive
atomic measure $\mu$ with locally finite support such that
$(1-\varepsilon)S\leq G_\mu\leq(1+\varepsilon)S$
for every positive system whose original frame bounds satisfy
$b/a\leq\kappa$.
More precisely, for any $0<d<\pi/2$, choose
\[
 M\in\mathbb N,\qquad M\geq\frac{4\kappa}{\varepsilon},\qquad
 \sigma=\frac{\varepsilon}{4\kappa},\qquad
 h=\frac{2\pi d}{\log(1+8K_0\kappa/(\varepsilon\cos d))},
\]
and set
\(
 \mu=\sum_{n=0}^{M-1}\delta_n
       +\sum_{j\geq0}h\sigma e^{jh}\delta_{M+\sigma e^{jh}}.
\)
This measure satisfies the stated relative bounds and
$N_\mu(R)=h^{-1}\log R+O_{M,\kappa,\varepsilon,d}(1)$
as $R\to\infty$.
\end{theorem}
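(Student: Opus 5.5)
The plan is to compare $G_\mu$ with $S$ through the decomposition
\[
 S=\sum_{n=0}^{M-1}A^nBA^n+A^MSA^M ,
\qquad
 G_\mu=\sum_{n=0}^{M-1}A^nBA^n+A^MQ'A^M,
\]
where
\[
 Q'=\sum_{j\geq0}h\sigma e^{jh}A^{\sigma e^{jh}}BA^{\sigma e^{jh}}
\]
is the truncated quadrature operator. The first identity iterates the Stein identity $S=B+ASA$ from Lemma~\ref{lem:stein}. The second uses $A^{M+t}=A^MA^t$, since everything commutes in the spectral calculus of $A$. Hence $G_\mu-S=A^M(Q'-S)A^M$. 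Because $S\geq aI$, it suffices to prove
\[
 \bigl|\langle (Q'-S)A^Mx,A^Mx\rangle\bigr|\leq\varepsilon a\|x\|^2 .
\]
This gives $(1-\varepsilon)S\leq G_\mu\leq(1+\varepsilon)S$. I split
\[
 Q'-S=(Q_{h,\sigma}-G_c)-(Q_{h,\sigma}-Q')+(G_c-S)
\]
and bound the three pieces, the last one only after conjugation by $A^M$.

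\emph{Quadrature error.} By Proposition~\ref{prop:quadrature} and the choice of $h$, we have $e^{2\pi d/h}-1=8K_0\kappa/(\varepsilon\cos d)$. Hence $E(h,d)=\varepsilon/(4\kappa)$ and $\|Q_{h,\sigma}-G_c\|\leq\varepsilon b/(4\kappa)\leq\varepsilon a/4$.

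\emph{Discarded nodes.} $Q_{h,\sigma}-Q'$ is the sum over $j<0$. Each term satisfies $A^tBA^t\leq B\leq bI$ because $A$ is a contraction. The total weight is
\[
 h\sigma\sum_{j<0}e^{jh}=\frac{h\sigma}{e^h-1}\leq\sigma .
\]
So this piece is a positive operator of norm at most $\sigma b=\varepsilon b/(4\kappa)\leq\varepsilon a/4$.

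\emph{Sum versus integral.} This is the step I expect to be the real point, since $G_c-S$ itself is not small. After conjugation,
\[
 \langle A^M(S-G_c)A^Mx,x\rangle=\sum_{n\geq M}F_x(n)-\int_M^\infty F_x(t)\,dt ,
\]
with $F_x(t)=\|CA^tx\|^2$. The error of comparing a sum with an integral on unit intervals is at most $\int_M^\infty|F_x'(t)|\,dt$. Since $t\geq M$ on this range, this is at most $M^{-1}\int_0^\infty t|F_x'(t)|\,dt$. Lemma~\ref{lem:energy} bounds the latter by $\sqrt2\,b\|x\|^2/M$. With $M\geq4\kappa/\varepsilon$ this is at most $\sqrt2\,\varepsilon a\|x\|^2/4$. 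Here one must check that $F_x\to0$ at infinity and that the tail sum converges; both hold as in the proof of Proposition~\ref{prop:mass}.

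Adding the three pieces gives the total bound $\varepsilon a(2+\sqrt2)/4\,\|x\|^2<\varepsilon a\|x\|^2$.

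\emph{Support and counting.} The nodes $M+\sigma e^{jh}$ with $j\geq0$ lie in $[M+\sigma,\infty)$ and increase geometrically, so the support is locally finite and the weights are finite. A node lies in $[0,R]$ iff $j\leq h^{-1}\log((R-M)/\sigma)$. Together with the $M$ integer times, this gives $N_\mu(R)=h^{-1}\log R+O_{M,\kappa,\varepsilon,d}(1)$.
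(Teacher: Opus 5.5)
Your proof is correct and follows the paper's argument: it uses the same telescoped Stein identity giving $G_\mu-S=A^M(Q'-S)A^M$, the same three error terms with the same parameter choices, and the same counting. The only variation is that you bound $A^M(S-G_c)A^M$ by a sum-versus-integral comparison using the derivative estimate of Lemma~\ref{lem:energy}, which gives $\sqrt2\,b/M$. The paper instead writes $G_c=\int_0^1A^sSA^s\,ds$ and uses $\|A^{M+s}-A^M\|\le s/(eM)$, which gives $b/(eM)$; both bounds fit within the $\varepsilon a$ budget.

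One justification should be fixed. The inequality $A^tBA^t\le B$ is false in general when $A$ and $B$ do not commute. What you actually need is $\|A^tBA^t\|\le\|A^t\|^2\|B\|\le b$, and this holds because $A$ is a contraction and $B\le S\le bI$.
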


\begin{proof}
The Stein identity gives
$S=\sum_{n=0}^{M-1}A^nBA^n+A^MSA^M$.
For $0\leq s\leq1$, spectral calculus yields
\[
 \|A^{M+s}-A^M\|
 \leq\sup_{\ell\geq0}e^{-M\ell}(1-e^{-s\ell})
 \leq\frac{s}{eM}.
\]
Consequently, \eqref{eq:continuous} gives
$\|A^M(G_c-S)A^M\|\leq b/(eM)\leq b/M$.

Removing the terms with $j<0$ from $Q_{h,\sigma}$ discards total
weight $h\sigma/(e^h-1)\leq\sigma$.
Since $\|A^tBA^t\|\leq b$ for $t>0$, the discarded sum has norm
at most $b\sigma$.
Translating the remaining sampling times by $M$ and adding
observations at $0,\ldots,M-1$ gives the stated measure.
The preceding bounds and \eqref{eq:quadrature} imply
$\|G_\mu-S\|\leq b\bigl(M^{-1}+\sigma+E(h,d)\bigr)$.
With the chosen parameters,
$E(h,d)=\sigma=\varepsilon/(4\kappa)$ and
$M^{-1}\leq\varepsilon/(4\kappa)$.
Hence
$\|G_\mu-S\|\leq3\varepsilon b/(4\kappa)\leq\varepsilon a$.
Since $S\geq aI$, this proves the relative bounds.

The support is locally finite. For $R\geq M+\sigma$,
its counting function is
$N_\mu(R)=M+1+\lfloor h^{-1}\log((R-M)/\sigma)\rfloor$,
which gives the stated asymptotic formula.
\end{proof}

\begin{corollary}
\label{cor:large-upper}
For every $\kappa\geq1$,
\[
 C^*(\kappa)\leq
 \inf_{0<d<\pi/2}
 \frac{\log(1+16K_0\kappa/\cos d)}{2\pi d}.
\]
In particular,
$C^*(\kappa)\leq
(\log\kappa+\log\log\kappa+O(1))/\pi^2$
as $\kappa\to\infty$.
\end{corollary}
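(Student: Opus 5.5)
The plan is to apply Theorem~\ref{thm:relative} with a fixed accuracy, say $\varepsilon=1/2$, and then read off $D(\mu)$. For that $\mu$ we have $\tfrac12 S\leq G_\mu\leq\tfrac32 S$ for every positive system with $b/a\leq\kappa$. Hence $\tfrac a2 I\leq G_\mu\leq\tfrac{3b}2 I$, with bounds depending only on $a,b$, and so $\mu\in\mathcal U_w(\kappa)$. The counting formula $N_\mu(R)=h^{-1}\log R+O(1)$ gives
\[
 D(\mu)=\frac1h=\frac{\log(1+8K_0\kappa/(\varepsilon\cos d))}{2\pi d}
 =\frac{\log(1+16K_0\kappa/\cos d)}{2\pi d}.
\]
Since $d\in(0,\pi/2)$ is arbitrary, taking the infimum over $d$ yields the first inequality.

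For the asymptotic statement, I would choose $d$ close to $\pi/2$ in a $\kappa$-dependent way, namely $d=\pi/2-\eta$ with $\eta=1/\log\kappa$. Then $\cos d=\sin\eta\geq 2\eta/\pi$. This gives
\[
 \log\Bigl(1+\frac{16K_0\kappa}{\cos d}\Bigr)
 \leq\log\kappa+\log\log\kappa+O(1)
\]
for $\kappa\geq e$.

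The denominator is $2\pi d=\pi^2-2\pi\eta$, so
\[
 \frac1{2\pi d}=\frac1{\pi^2}\Bigl(1+O\bigl(\tfrac1{\log\kappa}\bigr)\Bigr).
\]
Multiplying the two estimates, the relative error term contributes $O(1)\cdot(\log\kappa)/\log\kappa=O(1)$. The bound therefore becomes $(\log\kappa+\log\log\kappa+O(1))/\pi^2$, as claimed.

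The only delicate point is balancing $\eta$. The loss $\log(1/\cos d)\approx\log(1/\eta)$ must be traded against the loss $\eta\log\kappa$ from the denominator. The choice $\eta\asymp1/\log\kappa$ makes both of order at most $\log\log\kappa$ and $O(1)$, respectively, which is the stated form.
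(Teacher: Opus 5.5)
Your proposal is correct and matches the paper's proof: apply Theorem~\ref{thm:relative} with $\varepsilon=1/2$, so that $D(\mu)=1/h$ with $h=2\pi d/\log(1+16K_0\kappa/\cos d)$, take the infimum over $d$, and then set $d=\pi/2-1/\log\kappa$. Your explicit check that $\mu\in\mathcal U_w(\kappa)$ and your bound $\cos d\geq 2\eta/\pi$ fill in details the paper leaves implicit.
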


\begin{proof}
The first bound follows from Theorem~\ref{thm:relative}
by taking $\varepsilon=1/2$ and then the infimum over $d$.
For sufficiently large $\kappa$, choose
$d=\pi/2-1/\log\kappa$.
Then $\cos d\sim1/\log\kappa$ and
$2\pi d=\pi^2-2\pi/\log\kappa$.
Substituting this choice of $d$ gives the asymptotic estimate.
\end{proof}

\subsection{Lower bounds from a geometric spectral model}

The construction above gives upper bounds. To obtain lower bounds, we use a single operator orbit with eigenvalues $\lambda_j$ chosen so that $(1-\lambda_j)/(1+\lambda_j)$ forms a geometric sequence. We show that every sufficiently long interval in logarithmic time must contain enough sampling times to distinguish vectors in the corresponding spectral subspace.

\begin{proposition}
\label{prop:cayley}
Fix $h>0$. Let $(e_j)_{j\geq1}$ be the standard orthonormal basis
of $H=\ell^2(\mathbb N)$. Set
$d_j=e^{-hj}/2$, $\lambda_j=(1-d_j)/(1+d_j)$, and
$f_j=\sqrt{1-\lambda_j^2}$.
Define $Ae_j=\lambda_je_j$ and $f=(f_j)_{j\geq1}$.
Then $f\in H$, $A$ is a positive invertible contraction, and
$(A^nf)_{n\geq0}$ is a frame for $H$.
Its frame operator $S_h$ has matrix entries
\[
 (S_h)_{ij}=\frac{f_if_j}{1-\lambda_i\lambda_j}
 =\frac{2\sqrt{d_id_j}}{d_i+d_j}
 =\operatorname{sech}\frac{(i-j)h}{2}.
\]
The associated Toeplitz symbol satisfies
\[
 p_h(\theta)
 =\sum_{n\in\mathbb Z}\operatorname{sech}(nh/2)e^{in\theta}
 =\frac{2\pi}{h}\sum_{k\in\mathbb Z}
   \operatorname{sech}\frac{\pi(\theta+2\pi k)}h>0
 \qquad \theta\in\mathbb R.
\]
For $0<h\leq1$, we have
$(2\pi/h)e^{-\pi^2/h}I\leq S_h\leq(4\pi/h)I$
and hence $\operatorname{cond}(S_h)\leq2e^{\pi^2/h}$.
\end{proposition}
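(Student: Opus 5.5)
The plan is to establish the elementary facts first, then compute the Gram entries, then obtain the symbol by Poisson summation, and finally read off the spectral bounds from the symbol.

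\emph{Elementary facts.} Since $0<d_j\le e^{-h}/2<1$, each $\lambda_j\in(0,1)$, and $\lambda_j\ge(1-\tfrac12)/(1+\tfrac12)=1/3$. Hence $A$ is a positive invertible contraction. Next, $1-\lambda_j^2=4d_j/(1+d_j)^2\le 4d_j$, and $\sum_j d_j<\infty$, so $f\in H$.

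\emph{Gram entries.} Since $A$ is diagonal, the frame operator of $(A^nf)_{n\ge0}$ has entries
\[
 (S_h)_{ij}=\sum_{n\ge0}\lambda_i^n\lambda_j^n f_if_j=\frac{f_if_j}{1-\lambda_i\lambda_j}.
\]
The Cayley parametrization gives $f_i=2\sqrt{d_i}/(1+d_i)$ and
\[
 1-\lambda_i\lambda_j=\frac{2(d_i+d_j)}{(1+d_i)(1+d_j)},
\]
so the entries equal $2\sqrt{d_id_j}/(d_i+d_j)$. Writing $d_i=e^{-hi}/2$, this is $\operatorname{sech}((i-j)h/2)$.

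\emph{Symbol.} I would apply Poisson summation to $g(x)=\operatorname{sech}(xh/2)$, using the Fourier transform
\[
 \int_{\mathbb R}\operatorname{sech}(ax)e^{-ix\xi}\,dx=\frac{\pi}{a}\operatorname{sech}\frac{\pi\xi}{2a}, \qquad a=h/2.
\]
Summing over the lattice gives
\[
 p_h(\theta)=\sum_k\hat g(\theta+2\pi k)=\frac{2\pi}{h}\sum_k\operatorname{sech}\frac{\pi(\theta+2\pi k)}{h},
\]
which is positive. Poisson summation is justified by the exponential decay of $g$ and $\hat g$.

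\emph{Spectral bounds.} A Toeplitz matrix on $\ell^2(\mathbb N)$ with a continuous symbol $p_h$ satisfies $\min p_h\,I\le S_h\le\max p_h\,I$, since its quadratic form is $\frac1{2\pi}\int p_h|\hat x|^2\,d\theta$. These bounds imply boundedness and the frame property for $(A^nf)$, because the frame operator is this Gram-type matrix in the basis $(e_j)$.

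\emph{Upper bound.} For $0<h\le1$, I need $\sum_k\operatorname{sech}(\pi(\theta+2\pi k)/h)\le2$. Using $\operatorname{sech}\le1$ for the nearest $k$ and $\operatorname{sech}(y)\le2e^{-|y|}$ for the others, the remaining terms sum to at most $4\sum_{m\ge1}e^{-\pi^2(2m-1)/h}$ or similar. This is tiny, since $e^{-\pi^2}$ is negligible, so the sum is at most $2$ and $S_h\le(4\pi/h)I$.

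\emph{Lower bound.} By periodicity and evenness, take $\theta\in[0,\pi]$. Keeping only the $k=0$ term gives
\[
 \operatorname{sech}(\pi\theta/h)\ge \operatorname{sech}(\pi^2/h)\ge e^{-\pi^2/h},
\]
using $\operatorname{sech}y\ge e^{-y}$. This yields $S_h\ge(2\pi/h)e^{-\pi^2/h}I$, and the condition number bound $\operatorname{cond}(S_h)\le 2e^{\pi^2/h}$ follows.

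The main obstacle is making the Poisson summation and the Fourier transform constant fully rigorous. The crude tail estimate for the upper bound is routine.
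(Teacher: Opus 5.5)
Your proposal is correct and follows essentially the same route as the paper. Both arguments use the Cayley parametrization for the entries, Poisson summation with the Fourier transform of $\operatorname{sech}$ for the symbol, the Toeplitz quadratic form identity on finitely supported vectors extended by density, and the $k=0$ term for the lower bound. The only difference is minor and concerns the upper bound. The paper first uses positivity of the Fourier coefficients to get $p_h(\theta)\le p_h(0)$ and then estimates the tail at $\theta=0$. Your uniform tail bound $4\sum_{m\ge1}e^{-\pi^2(2m-1)/h}$ over $\theta\in[-\pi,\pi]$ works equally well.
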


\begin{proof}
Since $f_j^2=4d_j/(1+d_j)^2\leq4d_j$ and
$\sum_{j\geq1}d_j<\infty$, we have $f\in H$.
The inequalities $0<\lambda_1\leq\lambda_j<1$ show that $A$ is
a positive invertible contraction. Dominated convergence also gives
$A^n\to0$ strongly.
Summing the sampling energy at integer times for finitely supported
vectors gives the stated matrix entries of $S_h$.

With the Fourier transform convention
$\widehat g(\xi)=\int_{\mathbb R}g(x)e^{-i\xi x}\,dx$,
Euler's beta integral gives
\[
 \int_{\mathbb R}\operatorname{sech}(cx)e^{-i\xi x}\,dx
 =\frac{\pi}{c}\operatorname{sech}\frac{\pi\xi}{2c}
 \qquad c>0,\quad \xi\in\mathbb R.
\]
Poisson summation gives the formula for $p_h$.
For $-\pi\leq\theta\leq\pi$, the term with $k=0$ gives
$p_h(\theta)\geq(2\pi/h)e^{-\pi^2/h}$.
Since the Fourier coefficients are positive,
$p_h(\theta)\leq p_h(0)$, and
\[
 p_h(0)\leq\frac{2\pi}{h}
 \left(1+\frac{4}{e^{2\pi^2/h}-1}\right)
 \leq\frac{4\pi}{h}
 \qquad 0<h\leq1.
\]
For every fixed $h>0$, the symbol is continuous and strictly positive
on $[-\pi,\pi]$, so it is bounded above and away from zero.
The Toeplitz quadratic form identity
\[
 \langle S_hx,x\rangle
 =\frac1{2\pi}\int_{-\pi}^{\pi}p_h(\theta)
 \left|\sum_{j\geq1}x_je^{ij\theta}\right|^2\,d\theta
\]
gives the frame inequalities for finitely supported vectors.
These inequalities extend to all of $H$ by density and also give
the stated bounds for $S_h$.
\end{proof}

For a locally finite set $T\subset[0,\infty)$, define its lower
logarithmic density by
\begin{equation}\label{eq:lower-log-density}
 \underline D_{\log}(T)
 =\liminf_{L\to\infty}\inf_{R\geq1}
   \frac{\#(T\cap(R,e^LR])}{L}.
\end{equation}
This value may be $+\infty$. The infimum over $R$ ensures that
the asymptotic lower bound as $L\to\infty$ holds uniformly
for $R\geq1$.

\begin{theorem}
\label{thm:local-density}
Fix $h>0$ and consider the system in Proposition~\ref{prop:cayley}.
If a positive atomic measure $\mu$ with locally finite support
gives a frame for this system, then there is a constant
$C_{\mu,h}>0$ such that
\begin{equation}\label{eq:local-density}
 \#\bigl(\operatorname{supp}\mu\cap(R,e^LR]\bigr)
 \geq L/h-C_{\mu,h}
 \qquad R\geq1,\quad L>0.
\end{equation}
In particular,
$\liminf_{R\to\infty}N_\mu(R)/\log R\geq1/h$.
\end{theorem}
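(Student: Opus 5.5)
The plan is a dimension argument. Suppose $\mu$ gives a frame for the system of Proposition~\ref{prop:cayley}, with frame bounds $\alpha\leq\beta$. Fix $R\geq1$ and $L>0$. The time $t$ is sampled with energy $|\langle A^tf,x\rangle|^2=|\sum_j x_jf_j\lambda_j^t|^2$. Since $\lambda_j=(1-d_j)/(1+d_j)$, we have $-\log\lambda_j\approx 2d_j=e^{-hj}$. Hence the eigenvector $e_j$ is "active" only for $t$ of order $e^{hj}$. The energy at $e_j$ decays like $e^{-2te^{-hj}}$ for $t\gg e^{hj}$, and is suppressed by the factor $f_j^2 t$-scale $\sim e^{-hj}$ weighting for $t\ll e^{hj}$.

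First, choose a window of indices. Let $J$ be the set of $j$ with $e^{hj}\in(e^{c}R,e^{L-c}R]$, for a margin $c$ to be fixed depending on $\mu$ and $h$. Then $\#J\geq (L-2c)/h-1$. Let $E=\operatorname{span}\{e_j:j\in J\}$.

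Second, argue by contradiction. If $N=\#(\operatorname{supp}\mu\cap(R,e^LR])<\#J$, then there is a unit vector $x\in E$ orthogonal to $A^tf$ for every sampled $t$ in $(R,e^LR]$. For this $x$ we get
\[
 \alpha\leq\langle G_\mu x,x\rangle
 =\int_{[0,R]}|\langle A^tf,x\rangle|^2\,d\mu
 +\int_{(e^LR,\infty)}|\langle A^tf,x\rangle|^2\,d\mu .
\]

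Third, show that both tail terms are at most $\alpha/3$ once $c$ is large, uniformly in $R$ and $x\in E$. For the late tail, use $\lambda_j^t\leq e^{-te^{-hj}}$ with $t>e^{c}e^{hj}$. This gives $|\langle A^tf,x\rangle|^2\leq (\sum_j f_j^2)\,e^{-2e^{-hj_{\max}}t}$, up to harmless factors. Then integrate against $\mu$. Proposition~\ref{prop:mass} requires $\mu\bigl((0,s]\bigr)\lesssim s$, and this follows from the upper frame bound. The idea is to test the frame upper bound against vectors concentrated near $e_j$: the sampled energy is comparable to $\mu$-mass in $[0,e^{hj}]$ times $e^{-hj}$. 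This bounds $\mu$ linearly, and so the late tail is $O(\beta e^{-c'e^{c}})$.

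For the early tail, $t\leq R\leq e^{-c}e^{hj_{\min}}$, we have $|\langle A^tf,x\rangle|^2\leq\|P_EA^tf\|^2\leq\sum_{j\in J}f_j^2\lesssim e^{-hj_{\min}}$. The $\mu$-mass of $[0,R]$ is at most $w_0+K_+R$, so the early contribution is $\lesssim (w_0+K_+)e^{-c}$.

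A cleaner way to run the third step is an operator version. Write $G_\mu=G_{\leq R}+G_{\mathrm{mid}}+G_{>e^LR}$. Then show $\|P_EG_{\leq R}P_E\|$ and $\|P_EG_{>e^LR}P_E\|$ are small by the scaling above, using the Toeplitz/geometric structure that makes everything depend only on $\log t-hj$. Meanwhile $G_{\mathrm{mid}}$ has rank at most $N$. Then $P_EG_\mu P_E\geq\alpha P_E$ forces $N\geq\dim E$, hence $N\geq L/h-C_{\mu,h}$ with $C_{\mu,h}=2c/h+1$. The "in particular" follows by taking $R=1$ and $L=\log R'$.

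The main obstacle is the uniform linear mass bound $\mu\bigl((0,s]\bigr)\leq K(1+s)$, which is needed for both tails and should come from the upper frame bound applied to single basis vectors $e_j$. I would try to get it as follows. We have $\langle G_\mu e_j,e_j\rangle=f_j^2\int\lambda_j^{2t}\,d\mu\geq f_j^2e^{-2}\,\mu\bigl([0,e^{hj}/2]\bigr)$, roughly. With $f_j^2\asymp e^{-hj}$, this gives $\mu\bigl([0,e^{hj}]\bigr)\lesssim\beta e^{hj}$. Monotonicity in $s$ then interpolates between the geometric points.
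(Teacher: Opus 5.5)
Your architecture is the paper's. The linear mass bound comes from testing the upper frame bound on a single $e_j$ with $d_jR\asymp1$. The dimension argument uses that each sampling time contributes only one scalar functional. The early tail is bounded by $M_\mu(R)\sum_{j\in J}f_j^2\lesssim e^{-c}$. All three match the paper.

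\textbf{The gap is in the late tail.} Your pointwise bound
\[
 |\langle A^tf,x\rangle|^2\le\Bigl(\sum_{j\in J}f_j^2\Bigr)e^{-2s_{j_{\max}}t},
 \qquad s_j=-\log\lambda_j,
\]
pairs $\sum_{j\in J}f_j^2\asymp e^{-hj_{\min}}\asymp e^{-c}/R$, which is dominated by the \emph{smallest} index, with the decay rate of the \emph{largest} index. With $M_\mu(t)\le Kt$, integration by parts gives $\int_{(Q,\infty)}e^{-2st}\,d\mu(t)\le K(Q+1/(2s))e^{-2sQ}$. Taking $Q=e^LR$ and $s=s_{j_{\max}}$, so that $sQ\asymp e^c$, the product is of order $Ke^{L-c}e^{-2e^c}$.

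This is not $O(e^{-c'e^c})$ uniformly in $L$. You lose a factor $e^{h(j_{\max}-j_{\min})}\asymp e^{L-2c}$, which is not harmless. With $c$ fixed the argument does not close. You would need $e^c\gtrsim L$, and that gives only $\#(\operatorname{supp}\mu\cap(R,e^LR])\ge L/h-O(\log L)$. This still yields the final $\liminf$ assertion, but not \eqref{eq:local-density} with a constant independent of $L$.

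\textbf{The fix is the paper's.} Keep Cauchy--Schwarz in the form $|\langle A^tf,x\rangle|^2\le\sum_{j\in J}f_j^2e^{-2s_jt}$ and integrate each index separately. Integration by parts together with $f_j^2\le2s_j$ gives
\[
 f_j^2\int_{(Q,\infty)}e^{-2s_jt}\,d\mu(t)\le K(1+2s_jQ)e^{-2s_jQ}.
\]
Here $f_j^2$ cancels the factor $1/s_j$ coming from the mass integral, so each scale contributes at most $O(K)$. Since $2s_jQ\ge2e^ce^{h(j_{\max}-j)}$, these contributions decay doubly exponentially in $j_{\max}-j$. The sum over $J$ is therefore at most $2K\sum_{r\ge0}\exp(-e^ce^{rh})$, which is independent of $\#J$ and hence of $L$; compare \eqref{eq:model-right-energy}.

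Your operator variant, with $G_{\mathrm{mid}}$ of rank at most $N$, is a valid reformulation of the contradiction. However, $\|P_EG_{>e^LR}P_E\|$ must be bounded by its trace, which is exactly this termwise sum. The appeal to the Toeplitz scaling does not by itself supply that estimate.
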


\begin{proof}
Let $a_\mu,b_\mu$ be the frame bounds after sampling, and set
$s_j=-\log\lambda_j$.
We use the elementary estimates
$2d_j\leq s_j\leq(8/3)d_j$,
$(16/9)d_j\leq f_j^2\leq4d_j$, and $f_j^2\leq2s_j$.
For sufficiently large $R$, choose the largest $j$ such that
$d_jR\geq1$. Then $1\leq d_jR<e^h$, and the upper frame bound
applied to $e_j$ gives
\[
 b_\mu\geq f_j^2\int_{[0,R]}e^{-2s_jt}\,d\mu(t)
 \geq\frac{16}{9R}e^{-(16/3)e^h}M_\mu(R).
\]
Increasing the constant to cover bounded values of $R$, we obtain
$M_\mu(R)\leq KR$ for $R\geq1$, where $K>0$ depends only on
$\mu$ and $h$.
Integration by parts and $f_j^2\leq2s_j$ then give
\begin{equation}\label{eq:model-tail}
 f_j^2\int_{(Q,\infty)}e^{-2s_jt}\,d\mu(t)
 \leq K(1+2s_jQ)e^{-2s_jQ}
 \qquad Q\geq1.
\end{equation}

Fix $D>h+\log2$, to be increased below.
For $R\geq1$, let $j_0$ be the first index such that
$d_{j_0}\leq e^{-D}/R$.
Then $j_0>1$ and
$e^{-D-h}/R<d_{j_0}\leq e^{-D}/R$.
Suppose $L\geq2D+h$, and set
$m=\lfloor(L-2D-h)/h\rfloor+1$ and
$E=\operatorname{span}\{e_{j_0},\ldots,e_{j_0+m-1}\}$.
If $(R,e^LR]$ contains fewer than $m$ sampling times, then,
since $\dim E=m$, there is a unit vector $x\in E$ whose
observations vanish at every sampling time in this interval.
The energy before the interval satisfies
\begin{equation}\label{eq:model-left-energy}
 \int_{[0,R]}|\langle x,A^tf\rangle|^2\,d\mu(t)
 \leq M_\mu(R)\sum_{j=j_0}^{j_0+m-1}f_j^2
 \leq\frac{4Ke^{-D}}{1-e^{-h}}.
\end{equation}
Set $Q=e^LR$.
Since $d_{j_0+m-1}Q>e^D$, we have
$2s_jQ>4e^De^{rh}$ for $j=j_0+m-1-r$ and
$r=0,\ldots,m-1$.
Using \eqref{eq:model-tail},
$(1+v)e^{-v}\leq2e^{-v/2}$ for $v\geq0$, and
$e^{rh}\geq1+rh$, we obtain
\begin{equation}\label{eq:model-right-energy}
 \begin{split}
 \int_{(Q,\infty)}|\langle x,A^tf\rangle|^2\,d\mu(t)
 \leq2K\sum_{r\geq0}e^{-2e^De^{rh}}
 \leq\frac{2Ke^{-2e^D}}{1-e^{-2he^D}}.
 \end{split}
\end{equation}
Both upper bounds are independent of $R,L,m$ and tend to zero
as $D\to\infty$.
Choose $D$, depending only on $\mu$ and $h$, so that their sum
is less than $a_\mu$.
Together with the vanishing observations in $(R,Q]$, this
contradicts the lower frame bound.
Thus $(R,e^LR]$ contains at least
$m\geq(L-2D-h)/h$ sampling times.
For $L<2D+h$, the same lower bound follows from the
nonnegativity of the count.
Taking $C_{\mu,h}=(2D+h)/h$ proves \eqref{eq:local-density}.

Finally, setting $R=1$ gives
$N_\mu(e^L)\geq L/h-C_{\mu,h}$.
Dividing by $L$ and letting $L\to\infty$ proves the last assertion.
\end{proof}

\begin{corollary}
\label{cor:positive-lower}
For every $\kappa>1$,
\[
 C^*(\kappa)\geq
 \frac1{2\log(1+4(\kappa+1)/(\kappa-1))}>0.
\]
\end{corollary}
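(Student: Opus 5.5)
The plan is to combine Proposition~\ref{prop:cayley} with Theorem~\ref{thm:local-density}. For a fixed $\kappa>1$, I would pick $h>0$ as small as possible subject to the model system satisfying $\operatorname{cond}(S_h)\leq\kappa$. Once $S_h$ is in the class, we can take frame bounds $a=\|S_h^{-1}\|^{-1}$ and $b=\|S_h\|$, so $b/a\leq\kappa$. Any $\mu\in\mathcal U_w(\kappa)$ then gives a frame for this system. Theorem~\ref{thm:local-density} yields $N_\mu(R)\geq\log R/h-C$, hence $D(\mu)\geq1/h$, and taking the infimum over $\mu$ gives $C^*(\kappa)\geq1/h$.

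The crude estimate $\operatorname{cond}(S_h)\leq2e^{\pi^2/h}$ goes in the wrong direction for small $\kappa$. So I would bound the condition number directly from the Toeplitz symbol: $\operatorname{cond}(S_h)\leq\sup p_h/\inf p_h$. Since the Fourier coefficients $\operatorname{sech}(nh/2)$ are positive, $\sup p_h=p_h(0)=1+2\sum_{n\geq1}\operatorname{sech}(nh/2)$ and $\inf p_h\geq 1-2\sum_{n\geq1}\operatorname{sech}(nh/2)$. Using $\operatorname{sech}u\leq2e^{-u}$, I get $\sum_{n\geq1}\operatorname{sech}(nh/2)\leq 2/(e^{h/2}-1)=:\eta$. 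Then
\[
\operatorname{cond}(S_h)\leq\frac{1+2\eta}{1-2\eta}\qquad\text{whenever }2\eta<1 .
\]
This is the regime of large $h$, which is exactly the regime relevant to $\kappa\downarrow1$. That explains the form of the bound in the statement.

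Requiring $(1+2\eta)/(1-2\eta)\leq\kappa$ means $2\eta\leq(\kappa-1)/(\kappa+1)$. This is the condition $e^{h/2}-1\geq4(\kappa+1)/(\kappa-1)$, i.e.
\[
h=2\log\Bigl(1+\frac{4(\kappa+1)}{\kappa-1}\Bigr).
\]
This choice gives $C^*(\kappa)\geq1/h$, exactly the stated bound, and positivity is immediate because $h<\infty$ for $\kappa>1$. The only step needing care is the symbol estimate: it requires the lower bound $\inf p_h\geq1-2\eta>0$, and $2\eta<1$ holds automatically with this $h$. A second point to check is that Theorem~\ref{thm:local-density} applies for every $h>0$, not just $h\leq1$; it does, since its statement fixes an arbitrary $h>0$.
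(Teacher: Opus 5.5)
Your proposal is correct and follows the paper's proof: you use the same geometric model at step $h_\kappa=2\log(1+4(\kappa+1)/(\kappa-1))$, the same bound $\sum_{n\geq1}\operatorname{sech}(nh/2)\leq 2/(e^{h/2}-1)$ giving $(1-\delta)I\leq S_{h_\kappa}\leq(1+\delta)I$ with $\delta=(\kappa-1)/(\kappa+1)$, and then Theorem~\ref{thm:local-density}. The only difference is that you bound the spectrum through the Toeplitz symbol $p_h$, while the paper applies the Schur test to the off-diagonal row sums; for this matrix the two estimates are the same.
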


\begin{proof}
Let $\delta=(\kappa-1)/(\kappa+1)$ and
$h_\kappa=2\log(1+4/\delta)$.
In each row and column of $S_{h_\kappa}$, the sum of the absolute
values of the entries outside the main diagonal is at most
\[
 2\sum_{n\geq1}\operatorname{sech}(nh_\kappa/2)
 \leq\frac4{e^{h_\kappa/2}-1}=\delta.
\]
Since the diagonal entries are one, the Schur estimate gives
$(1-\delta)I\leq S_{h_\kappa}\leq(1+\delta)I$.
Hence
$\operatorname{cond}(S_{h_\kappa})
\leq(1+\delta)/(1-\delta)=\kappa$.
Every $\mu\in\mathcal U_w(\kappa)$ must give a frame for this
system, so Theorem~\ref{thm:local-density} yields
$D(\mu)\geq1/h_\kappa$.
Taking the infimum over $\mu\in\mathcal U_w(\kappa)$ proves
the stated bound.
\end{proof}

A sharper estimate of the condition number of the geometric
model gives a lower bound with the same leading constant
as the upper bound when $\kappa\to\infty$.

\begin{theorem}
\label{thm:large-lower}
For every $\kappa\geq2$,
\begin{equation}\label{eq:large-lower-refined}
 C^*(\kappa)\geq
 \frac{\log\kappa+\log4-\kappa^{-2}}{\pi^2}.
\end{equation}
Consequently, as $\kappa\to\infty$,
\[
 \frac{\log\kappa+\log4-\kappa^{-2}}{\pi^2}
 \leq C^*(\kappa)
 \leq\frac{\log\kappa+\log\log\kappa+O(1)}{\pi^2},
\]
and $C^*(\kappa)/\log\kappa\to1/\pi^2$.
\end{theorem}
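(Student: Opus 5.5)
The plan is to combine Theorem~\ref{thm:local-density} with a sharp bound on $\operatorname{cond}(S_h)$ for the Cayley model of Proposition~\ref{prop:cayley}. By Theorem~\ref{thm:local-density}, every $\mu\in\mathcal U_w(\kappa)$ satisfies $D(\mu)\geq 1/h$ whenever $\operatorname{cond}(S_h)\leq\kappa$. It therefore suffices to find, for each $\kappa\geq2$, some $h>0$ with
\[
 \operatorname{cond}(S_h)\leq\kappa,\qquad
 \frac1h\geq\frac{\log\kappa+\log4-\kappa^{-2}}{\pi^2}.
\]
Since $S_h$ is the Toeplitz operator (on $\ell^2(\mathbb N)$) with symbol $p_h$, its spectrum lies in $[\min p_h,\max p_h]$. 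Hence $\operatorname{cond}(S_h)\leq p_h(0)/p_h(\pi)$.

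The main step is a sharp estimate of the ratio $p_h(0)/p_h(\pi)$ from the Poisson form. Write $q=e^{-\pi^2/h}$. For $\theta=0$ the terms are $\operatorname{sech}(2\pi^2k/h)$, so
\[
 \frac h{2\pi}p_h(0)=1+2\sum_{k\geq1}\operatorname{sech}(2\pi^2k/h)\leq 1+4\sum_{k\ge1}q^{2k}=1+\frac{4q^2}{1-q^2}.
\]
For $\theta=\pi$ the terms are $\operatorname{sech}(\pi^2(2k+1)/h)$, and the pairs $k$ and $-k-1$ coincide. Using $\operatorname{sech}x\geq 2e^{-x}-2e^{-3x}$ (from $1/(1+y)\ge 1-y$ with $y=e^{-2x}$), we get
\[
 \frac h{2\pi}p_h(\pi)=2\sum_{k\geq0}\operatorname{sech}(\pi^2(2k+1)/h)\geq 4q-4q^3.
\]
The remaining terms are positive and can be dropped. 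Therefore
\[
 \operatorname{cond}(S_h)\leq \frac{1+3q^2}{4q(1-q^2)^2},
\]
or some similar explicit rational function of $q$. I expect this step to be the main obstacle: the algebra must be arranged to land exactly on the stated correction $-\kappa^{-2}$, and I may need the cleaner exact form $\frac h{2\pi}p_h(\pi)=\sum_k \operatorname{sech}(\cdot)$ expanded in powers of $q$ rather than crude bounds.

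Next, choose $h$ by solving for $q$. We need $\pi^2/h=\log(1/q)\geq \log(4\kappa)-\kappa^{-2}$, that is, $q\leq e^{\kappa^{-2}}/(4\kappa)$. Set $q=e^{\kappa^{-2}}/(4\kappa)$, which is at most $1/7$ for $\kappa\geq2$. Then check that $(1+3q^2)/(4q(1-q^2)^2)\leq\kappa$. Equivalently,
\[
 e^{-\kappa^{-2}}(1+3q^2)\leq(1-q^2)^2.
\]
Since $q^2\approx 1/(16\kappa^2)$, the left side is about $1-\kappa^{-2}+3q^2$ and the right side about $1-2q^2$. So the inequality amounts to $5q^2\lesssim\kappa^{-2}$, i.e.\ $5/16<1$, with room to absorb the higher-order terms for $\kappa\geq2$. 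One must also confirm $h\leq1$ if the bounds of Proposition~\ref{prop:cayley} are invoked, though only positivity of $p_h$ is really needed. If the margin fails near $\kappa=2$, I will instead treat small $\kappa$ with a numerical check or a slightly different choice of $q$.

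Finally, take the infimum over $\mu$ to obtain \eqref{eq:large-lower-refined}. Combining this with the upper bound of Corollary~\ref{cor:large-upper} gives the two-sided asymptotic estimate. Dividing by $\log\kappa$ then yields $C^*(\kappa)/\log\kappa\to1/\pi^2$, since $\log\log\kappa/\log\kappa\to0$.
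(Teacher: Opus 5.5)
There is a genuine gap at the step ``Hence $\operatorname{cond}(S_h)\leq p_h(0)/p_h(\pi)$.'' Positivity of the Fourier coefficients gives $\max p_h=p_h(0)$. Nothing in your argument shows that $\min_{[-\pi,\pi]}p_h$ is attained at $\theta=\pi$. A lower bound for $p_h(\pi)$ controls only one value of the symbol, whereas $\operatorname{cond}(S_h)\leq\max p_h/\min p_h$ needs a lower bound valid for every $\theta$. This, not the final algebra, is the substantive step.

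For $|\theta|\leq\pi$, put $x=\pi^2/h$ and $a=\pi|\theta|/h\in[0,x]$. The two Poisson terms nearest zero are $\operatorname{sech}a$ and $\operatorname{sech}(2x-a)$, and what is needed is
\[
 \operatorname{sech}a+\operatorname{sech}(2x-a)\geq2\operatorname{sech}x,
 \qquad 0\leq a\leq x.
\]
The paper proves this for $x\geq\operatorname{arcosh}2$, which covers your choice $x=\log(4\kappa)-\kappa^{-2}$ when $\kappa\geq2$. Set $c=\cosh x$ and $z=\cosh(x-a)\in[1,c]$; the left side then equals $2cz/(c^2+z^2-1)$. Its $z$-derivative has the sign of $c^2-1-z^2$, so the minimum is at $z=1$ or $z=c$. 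The values there are $2/c$ and $2c^2/(2c^2-1)\geq1\geq2/c$. You could instead identify $p_h$ with a multiple of a Jacobi $\mathrm{dn}$ function, or use the cyclic variation diminishing property of the periodized P\'olya frequency function $\operatorname{sech}$. Either way, some such argument has to be supplied.

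Once this is in place, your scheme coincides with the paper's. You get $\frac h{2\pi}\min p_h\geq2\operatorname{sech}x=4q/(1+q^2)\geq4q-4q^3$, and your upper bound for $p_h(0)$ is the paper's. The final inequality you worried about is routine and needs no separate treatment near $\kappa=2$. With $u=\kappa^{-2}\leq1/4$ and $q^2=e^{2u}u/16<0.11\,u$,
\[
 e^{-u}(1+3q^2)\leq1-u+\tfrac{u^2}{2}+3q^2\leq1-\tfrac u2\leq1-2q^2\leq(1-q^2)^2 .
\]
You are also right that $h\leq1$ is not needed, since Theorem~\ref{thm:local-density} holds for every fixed $h>0$.
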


\begin{proof}
Consider the system in Proposition~\ref{prop:cayley}, and set
$x=\pi^2/h$ and $r=e^{-2x}$.
Positivity of the Fourier coefficients and the Poisson representation
of the symbol give
\(
 p_h(\theta)\leq p_h(0)
 \leq(2\pi/h)(1+4r/(1-r)).
\)
For the lower bound, we first show that
$\operatorname{sech}a+\operatorname{sech}(2x-a)
\geq2\operatorname{sech}x$
for $x\geq\operatorname{arcosh}2$ and $0\leq a\leq x$.
Set $c=\cosh x\geq2$ and $z=\cosh(x-a)\in[1,c]$.
The left side equals $2cz/(c^2+z^2-1)$.
Its derivative with respect to $z$ has the sign of
$c^2-1-z^2$, so its minimum on $[1,c]$ occurs at an endpoint.
At $z=1$ the value is $2/c$, and at $z=c$ it is
$2c^2/(2c^2-1)\geq1\geq2/c$.
This proves the inequality.

For $-\pi\leq\theta\leq\pi$, the two arguments closest to zero
in the Poisson sum have absolute values
$a=\pi|\theta|/h$ and $2x-a$.
Hence, for $x\geq\operatorname{arcosh}2$,
\(
 p_h(\theta)\geq(4\pi/h)\operatorname{sech}x=8\pi e^{-x}/(h(1+r)).
\)
The Toeplitz quadratic form identity therefore gives
\[
 \operatorname{cond}(S_h)
 \leq\frac{e^x}{4}
 \left(1+\frac{4r}{1-r}\right)(1+r).
\]

For $\kappa\geq2$, choose
$x=\log(4\kappa)-\kappa^{-2}$ and $h=\pi^2/x$.
Then $x\geq\log8-1/4>\operatorname{arcosh}2$ and
\(
 r=e^{2\kappa^{-2}}/(16\kappa^2)<1/(8\kappa^2)\leq1/32<1/9.
\)
For $0\leq r\leq1/9$, we have
\[
 \left(1+\frac{4r}{1-r}\right)(1+r)
 =1+5r+\frac{8r^2}{1-r}
 \leq1+6r\leq e^{6r}.
\]
Since $6r<\kappa^{-2}$, the condition number bound gives
$\operatorname{cond}(S_h)
\leq\kappa\exp(-\kappa^{-2}+6r)\leq\kappa$.
Every $\mu\in\mathcal U_w(\kappa)$ must therefore give a frame
for this system.
Theorem~\ref{thm:local-density} yields
$D(\mu)\geq1/h=x/\pi^2$.
Taking the infimum over $\mu\in\mathcal U_w(\kappa)$ proves
\eqref{eq:large-lower-refined}.
Corollary~\ref{cor:large-upper} supplies the upper bound,
and the stated limit follows.
\end{proof}

These bounds determine the leading constant but do not determine the next term in the asymptotic expansion as $\kappa\to\infty$. For the fixed geometric model, the coefficient $1/h$ in the local counting bound is optimal: Theorem~\ref{thm:fixed-model-optimum} constructs sampling designs with real or integer times that attain this coefficient. The sampling times and weights may depend on
the particular system $(A,f)$. They are not required to work for every system with the same bound on the original frame condition number.

\subsection{Tight and nearly tight frames}

We first show that the infimum $C^*(1)$ is zero but is not attained.
We then combine a relative perturbation estimate with the preceding
lower bounds to determine the asymptotic behavior of $C^*(\kappa)$
as $\kappa\downarrow1$.

\begin{theorem}
\label{thm:tight-endpoint}
We have $C^*(1)=0$, but $D(\mu)>0$ for every
$\mu\in\mathcal U_w(1)$. Moreover,
$\mathcal U_w(\kappa)\subsetneq\mathcal U_w(1)$
for every $\kappa>1$.
\end{theorem}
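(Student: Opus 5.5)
The statement has three parts, and I would prove them in the order $C^*(1)=0$, then $D(\mu)>0$, then strict inclusion. All three rest on the scalar criterion of Proposition~\ref{prop:tight}. For $\kappa=1$ the criterion says that $\mu\in\mathcal U_w(1)$ if and only if $w_0>0$ and $M_\mu(R)\asymp R$.

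\emph{$C^*(1)=0$.} Fix $q>1$ and take $\mu_q=\delta_0+\sum_{k\geq0}q^k\delta_{q^k}$.
\begin{itemize}
\item Its mass satisfies $M_{\mu_q}(R)\asymp R$, since the sum over $q^k\le R$ is a geometric sum dominated by its last term, which lies in $(R/q,R]$.
\item Since $w_0=1>0$, Proposition~\ref{prop:tight} gives $\mu_q\in\mathcal U_w(1)$.
\item Its counting function is $N_{\mu_q}(R)=\log R/\log q+O(1)$, so $D(\mu_q)=1/\log q$.
\end{itemize}
Letting $q\to\infty$ gives $C^*(1)=0$.

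\emph{$D(\mu)>0$.} This is the main point. Fix $\mu\in\mathcal U_w(1)$. I would aim for an elementary counting argument based only on the two-sided mass bound $cR\le M_\mu(R)\le KR$ for $R\ge R_0$.
\begin{itemize}
\item Choose an integer $\Lambda>K/c$, with $\Lambda\geq2$.
\item For $R\ge R_0$, the bounds give $\mu((R,\Lambda R])\ge c\Lambda R-KR>0$, so every interval $(R,\Lambda R]$ with $R\ge R_0$ contains at least one support point.
\item Apply this to the disjoint intervals $(\Lambda^iR_0,\Lambda^{i+1}R_0]$. Then $N_\mu(\Lambda^nR_0)\ge n$, which gives $D(\mu)\ge1/\log\Lambda>0$.
\end{itemize}
No uniform bound on the weights is needed. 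The upper bound $M_\mu\le K(1+R)$ holding for all $R$ is enough, which is exactly why unbounded weights do not destroy positivity of the logarithmic density. I would check that the constants from Proposition~\ref{prop:tight}(2) fit this form; the only care needed is the additive constant in the upper bound.

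\emph{Strict inclusion.} First, $\mathcal U_w(\kappa)\subseteq\mathcal U_w(1)$ holds directly from the definition, because $b/a\le1$ implies $b/a\le\kappa$. For strictness, I would exhibit an element of $\mathcal U_w(1)$ with $D(\mu)$ below the positive lower bound of Corollary~\ref{cor:positive-lower}. The design $\mu_q$ with $1/\log q<\bigl(2\log(1+4(\kappa+1)/(\kappa-1))\bigr)^{-1}$ works, since then $D(\mu_q)<C^*(\kappa)$, so $\mu_q\notin\mathcal U_w(\kappa)$. This is the only step that uses the heavier machinery, via Theorem~\ref{thm:local-density}, and it is already available.

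I expect no serious obstacle beyond the density argument in the second part. Everything else is a direct application of the earlier results.
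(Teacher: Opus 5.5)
Your proposal is correct and follows the paper's proof essentially step by step. It uses a geometric design placed in $\mathcal U_w(1)$ via the mass criterion of Proposition~\ref{prop:tight}, the observation that each interval $(R,\Lambda R]$ with $\Lambda>K/c$ contains a support point, and strict inclusion via the lower bound of Corollary~\ref{cor:positive-lower}; the only difference is your weights $q^k$ instead of the paper's $(q-1)q^k$, which is immaterial.
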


\begin{proof}
For $q>1$, set
\(
 \nu_q=\delta_0+\sum_{j\geq0}(q-1)q^j\delta_{q^j}.
\)
For $0\leq R<1$, we have $M_{\nu_q}(R)=1$.
If $q^J\leq R<q^{J+1}$ with $J\geq0$, then
$M_{\nu_q}(R)=q^{J+1}$.
Thus $R\leq M_{\nu_q}(R)\leq1+qR$ for all $R\geq0$.
Proposition~\ref{prop:tight} gives $\nu_q\in\mathcal U_w(1)$.
Since $D(\nu_q)=1/\log q$, letting $q\to\infty$ proves
$C^*(1)=0$.

For later use, define
$Q_q(u)=\int_{[0,\infty)}e^{-ut}\,d\nu_q(t)$ for $u>0$.
Integration by parts and the mass bound give
$Q_q(u)\leq1+q/u$.
Using $\phi_{\nu_q}(e^{-u})=(1-e^{-u})Q_q(u)$ and
$(1-e^{-u})/u\leq1$, together with $\phi_{\nu_q}(0)=1$,
we obtain $0<\phi_{\nu_q}(z)\leq1+q$ for $0\leq z<1$.

Now fix $\mu\in\mathcal U_w(1)$.
By Proposition~\ref{prop:tight}, there are constants
$0<c_1\leq c_2$ and $R_0\geq1$ such that
$c_1R\leq M_\mu(R)\leq c_2R$ for $R\geq R_0$.
Choose $\Lambda>c_2/c_1$.
Every interval $(R,\Lambda R]$ with $R\geq R_0$ contains
a sampling time. Otherwise,
$c_1\Lambda R\leq M_\mu(\Lambda R)=M_\mu(R)\leq c_2R$,
a contradiction.
Counting the disjoint intervals
$(\Lambda^nR_0,\Lambda^{n+1}R_0]$ for $n\geq0$ gives
$\liminf_{R\to\infty}N_\mu(R)/\log R
\geq1/\log\Lambda>0$.
Hence $D(\mu)>0$.

Finally, the inclusion
$\mathcal U_w(\kappa)\subseteq\mathcal U_w(1)$
follows from the definitions.
For a fixed $\kappa>1$, choose $q$ so large that
$1/\log q$ is smaller than the lower bound in
Corollary~\ref{cor:positive-lower}.
Then $\nu_q\notin\mathcal U_w(\kappa)$.
Since $\nu_q\in\mathcal U_w(1)$, the inclusion is strict.
\end{proof}

To determine the asymptotic behavior of $C^*(\kappa)$ as
$\kappa\downarrow1$, we compare the sampled Gramian with
$\phi_{\nu_q}(A^2)$ on the spectral subspace of $A$ near $1$.
This operator is the sampled Gramian when $S=I$.
Normalizing by this operator gives a sharper perturbation estimate than directly bounding the difference between the two Gramians.

\begin{lemma}
\label{lem:slow-scalar}
For $q\geq2$, set $d_q=\log q/(8e)$.
Then $\phi_{\nu_q}(e^{-u})\geq d_q$ for $0<u\leq1/q$.
\end{lemma}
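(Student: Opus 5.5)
We write $\phi_{\nu_q}(e^{-u})=(1-e^{-u})Q_q(u)$ with $Q_q(u)=\int e^{-ut}\,d\nu_q(t)$. Every term of $Q_q$ is nonnegative, so we bound it from below by keeping only the atoms $q^j$ with $q^j\le 1/u$. Fix $0<u\le1/q$ and let $J\ge1$ be the largest integer with $q^J\le1/u$. Such a $J$ exists because $1/u\ge q$. For $0\le j\le J$ we have $uq^j\le1$, hence $e^{-uq^j}\ge e^{-1}$. Therefore
\[
 Q_q(u)\ge e^{-1}\sum_{j=0}^{J}(q-1)q^j=e^{-1}(q^{J+1}-1).
\]
Maximality of $J$ gives $q^{J+1}>1/u$. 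Combined with $1-e^{-u}\ge u/2$ for $0<u\le1$, this yields only $\phi_{\nu_q}(e^{-u})\ge(1-u)/(2e)$, which is of constant order. Keeping one scale is therefore not enough to produce the factor $\log q$, and the argument must use many scales.

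The point is that the gap ratio $q$ allows $\log q$ distinct "scales" inside a single block. So we do not truncate at $1/u$. Instead we use the terms $j$ with $q^j$ up to about $1/u$, together with the partial contribution of the term just above. The plan is to write
\[
 Q_q(u)=1+\sum_{j\ge0}(q-1)q^je^{-uq^j}
\]
and compare it with $\int$ over $t$. The function $g(s)=e^{s}e^{-ue^{s}}$ is integrated in logarithmic time $s=\log t$, and the sum is a Riemann-type sum with step $\log q$ and weights $(q-1)q^j\approx q^{j+1}$.

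We need a lower bound of order $\log q/u$, so the estimate is $Q_q(u)\ge c\,\log q/u$. The candidate source is the single atom $j=J+1$ at $q^{J+1}\in(1/u,q/u]$. Its weight $(q-1)q^{J+1}$ is large, but its exponential factor $e^{-uq^{J+1}}$ may be tiny. We therefore split into cases according to where $1/u$ sits in the block $[q^J,q^{J+1})$. Let $v=uq^{J}\in(1/q,1]$.

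\textbf{Case $v\ge (\log q)/q$.} Then $uq^J\ge\log q/q$, and the atom at $q^J$ contributes
\[
 (q-1)q^Je^{-v}\ge\frac{(q-1)\log q}{e\,qu}\ge\frac{\log q}{2eu}.
\]

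\textbf{Case $v<(\log q)/q$.} Then $uq^{J+1}=qv<\log q$, so the atom at $q^{J+1}$ contributes
\[
 (q-1)q^{J+1}e^{-qv}.
\]
Writing $y=qv\in(1,\log q)$, this contribution is $(q-1)q^J\,y\,e^{-y}/(uq^{J})\cdot(1/y)\cdots$. The bookkeeping here must be redone: the relevant quantity is $(q-1)q^{J+1}e^{-y}=(q-1)y e^{-y}/u$.

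The main obstacle is exactly this middle regime, where $y$ is near $\log q$ and $ye^{-y}$ is small. The bound of order $\log q$ has to come from combining the two neighbouring atoms. We will verify, by minimizing over $v$, the inequality
\[
 (q-1)\bigl(v e^{-v}+qv\,e^{-qv}\bigr)\ge \frac{\log q}{8}\cdot\frac{1}{q}\cdot q\cdots ,
\]
that is, we minimize $\max\bigl(ve^{-v}q,\;qve^{-qv}q\bigr)$. If this fails, we will instead use $\sum_{j\le J}$ with the exponent $e^{-uq^j}$ kept exactly.

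Finally, we multiply by $1-e^{-u}\ge u/2$. This gives $\phi_{\nu_q}(e^{-u})\ge\log q/(4e)\cdot\frac12=d_q$. The slack between $2e$ and $8e$ absorbs $(q-1)/q\ge1/2$ and the factor $1/2$ from $1-e^{-u}\ge u/2$.
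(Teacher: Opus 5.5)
Your setup is the paper's: keep only the two atoms $q^J$ and $q^{J+1}$ that bracket $1/u$, split according to where $uq^J$ lies, and finish with $1-e^{-u}\geq u/2$. The paper writes $v=uq^j\in[1,q]$ and splits at $v=(\log q)/2$; your $v$ is the paper's $v/q$. Your first case is correct and complete, giving $uQ_q(u)\geq\log q/(2e)$.

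The gap is the second case. As written it ends in an unfinished display with ellipses, a proposed minimization, and a fallback (``If this fails\ldots''). So no lower bound is actually proved there, and the proposal is not yet a proof. The obstacle you describe is also not real, and you do not need to combine the two atoms. You already reduced the contribution of the atom at $q^{J+1}$ to $u(q-1)q^{J+1}e^{-y}=(q-1)ye^{-y}$, where $y=uq^{J+1}\in(1,\log q)$. Since $y\mapsto ye^{-y}$ is decreasing on $[1,\infty)$, you get $ye^{-y}>(\log q)e^{-\log q}=(\log q)/q$. Hence $(q-1)ye^{-y}>\frac{q-1}{q}\log q\geq\frac{\log q}{2}$. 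The smallness of $ye^{-y}$ near $y=\log q$ is exactly offset by the weight $q-1$, which is why your threshold $\log q/q$ works.

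Multiplying by $(1-e^{-u})/u\geq1/2$ gives $\phi_{\nu_q}(e^{-u})\geq\log q/4$ in this case and $\phi_{\nu_q}(e^{-u})\geq\log q/(4e)$ in the first; both are at least $d_q$. The paper's version of this case uses the cruder bound $ve^{-v}\geq e^{-v}>q^{-1/2}$, which gives $\sqrt q/2\geq(\log q)/2$. With this one line inserted, and the exploratory single-scale and Riemann-sum remarks removed, your argument becomes a complete proof.
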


\begin{proof}
Choose an integer $j\geq1$ such that $v=uq^j\in[1,q]$.
The atoms at $q^j$ and $q^{j-1}$ give
\[
 uQ_q(u)\geq(q-1)ve^{-v}
 +\frac{q-1}{q}ve^{-v/q}.
\]
If $v\geq(\log q)/2$, then $v/q\leq1$ and
$(q-1)/q\geq1/2$, so the second term is at least
$v/(2e)\geq\log q/(4e)$.
If $v<(\log q)/2$, then $v\geq1$ and $q-1\geq q/2$,
so the first term is at least
$\sqrt q/2\geq\log q/2$.
Thus $uQ_q(u)\geq\log q/(4e)$ in both cases.

Since $u\leq1/q\leq1/2$, we have $(1-e^{-u})/u\geq1/2$.
Combining this with
$\phi_{\nu_q}(e^{-u})=((1-e^{-u})/u)\,uQ_q(u)$
proves the stated bound.
\end{proof}

\begin{lemma}
\label{lem:relative-perturbation}
Suppose $S=I+E$, $E=E^*$, $\|E\|\leq\delta\leq1/2$, and
$B=S-ASA\geq0$.
Assume also that $E_A(\{1\})=0$ and
$\sigma(A)\subset[e^{-1/(2q)},1]$, where $q\geq2$.
Set $G_q=G_{\nu_q}$ and $D=\phi_{\nu_q}(A^2)$.
There is an absolute constant $C_1>0$ such that
\[
 d_qI\leq D\leq(q+1)I,\qquad
 \|D^{-1/2}G_qD^{-1/2}-I\|
 \leq C_1\sqrt\delta\,q^{1/4}.
\]
\end{lemma}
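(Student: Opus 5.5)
The two-sided bound on $D$ is scalar, and the perturbation estimate reduces to a Schur-multiplier problem, which I expect to be the hard part.

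\emph{Bounds on $D$.} Since $\sigma(A)\subset[e^{-1/(2q)},1]$ and $E_A(\{1\})=0$, the spectrum of $A^2$ lies in $[e^{-1/q},1]$. On that set we write $z=e^{-u}$ with $0<u\le 1/q$. Lemma~\ref{lem:slow-scalar} then gives $\phi_{\nu_q}(z)\ge d_q$. The proof of Theorem~\ref{thm:tight-endpoint} gives $\phi_{\nu_q}\le 1+q$ on $[0,1)$. The spectral theorem yields $d_qI\le D\le (q+1)I$.

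\emph{Reduction to a Schur multiplier.} Write $B=S-ASA=(I-A^2)+F$ with $F=E-AEA$. By the spectral theorem and Tonelli, exactly as in Proposition~\ref{prop:tight}, the $I-A^2$ part contributes $D$. Therefore $G_q=D+\Delta$, where $\Delta=\int A^tFA^t\,d\nu_q(t)$.

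Decompose $H$ spectrally into blocks $P_j=E_L([2^{j},2^{j+1}))$, as in Lemma~\ref{lem:complex-energy}, where $L=-\log A$. Heuristically, on the $(i,j)$ block $D^{-1/2}G_qD^{-1/2}-I$ acts like the kernel
\[
N(\lambda,\mu)=\frac{\phi_{\nu_q}(\lambda\mu)}{\sqrt{\phi_{\nu_q}(\lambda^2)\phi_{\nu_q}(\mu^2)}}
\]
applied to $E$. To make this rigorous, refine the dyadic blocks to a finite spectral partition of $A$ and pass to the limit by density and Fatou, as in the earlier lemmas. For a finite partition one has $P_iG_qP_j=\phi_{\nu_q}(\lambda_i\lambda_j)S_{ij}$ from the display after Proposition~\ref{prop:tight}. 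Consequently $D^{-1/2}G_qD^{-1/2}-I=N\circ E$ blockwise. The goal is $\|N\circ E\|\le C_1\sqrt\delta\,q^{1/4}$.

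\emph{Main step: bounding $N\circ E$ by a split.} Fix a cutoff $\ell$, to be optimized, and split the block pairs into near ones with $|i-j|\le\ell$ and far ones with $|i-j|>\ell$.

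\begin{itemize}
\item For far pairs I would not use $\|E\|\le\delta$. Instead I would use positivity. Since $B\ge0$, we have $\|B_{ij}\|\le\|B_{ii}\|^{1/2}\|B_{jj}\|^{1/2}\lesssim 2^{(i+j)/2}$, using $\|B_{jj}\|\lesssim\|I-A_j\|$ as in Lemma~\ref{lem:complex-energy}. The positive kernel $\int(\lambda\mu)^t\,d\nu_q$ then gives the decay $\lesssim 2^{-|i-j|/2}$ of Lemma~\ref{lem:complex-energy}, uniformly in $q$. After normalizing by $D^{-1/2}\le d_q^{-1/2}$, the Schur test bounds the far part by roughly $2^{-\ell/2}$ times a factor from $d_q^{-1}$.
\item For near pairs, $|N|\le (q+1)/d_q$ trivially. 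Better, $N$ is bounded on a band of width $\ell$ in logarithmic scale, since $\phi_{\nu_q}$ varies at most by a factor about $q$ over a factor $q$ change in $u$. The Schur test over the $2\ell+1$ diagonals gives a bound of order $\ell\,\delta$ times the sup of $|N|$ on the band.
\item An alternative for the near part is a Cauchy--Schwarz estimate. Apply $|\langle Fy,y\rangle|\le2\delta\|y\|\|(I-A)y\|$ with $y=A^tx$ and integrate against $\nu_q$. Combined with $(I-A)^2\le\frac{1}{2q}(I-A^2)$ on the spectrum, this produces $\sqrt{\delta}$-type factors.
\end{itemize}

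Balancing the two regimes should yield a bound of the form $\sqrt{\delta}\,q^{1/4}$. The square root arises from a geometric-mean optimization between the $\delta$-linear near estimate and the $\delta$-free positivity estimate.

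\emph{Main obstacle.} The hardest point is controlling the oscillation of $\phi_{\nu_q}$, whose ratio of maximum to minimum is about $q/\log q$. This must be done without losing more than $q^{1/4}$. My first attempt would be the Cauchy--Schwarz route with the weights $\|A^tx\|$ and $\|(I-A)A^tx\|$ against $D$. For the first weight, I would localize $t$ to ranges where $\int\|A^tx\|^2\,d\nu_q$ is comparable to $q\langle Dx,x\rangle$, and use $(I-A)\le 1/(2q)$ for the second. The hope is that the product of the two square roots gives $\sqrt{\delta}\,(q\cdot q^{-1/2})^{1/2}=\sqrt\delta\,q^{1/4}$.
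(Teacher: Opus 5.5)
Your architecture is right. The bounds on $D$ are correct, and the $\sqrt\delta$ does come from balancing, over $d=|i-j|$, a $\delta$-linear estimate that grows in $d$ against a positivity estimate that decays in $d$. The gap is the $q$-dependence of the two block estimates, which is the whole content of the factor $q^{1/4}$.

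\emph{Far blocks.} Your far estimate is not uniform in $q$. The kernel $\int(\lambda\mu)^t\,d\nu_q(t)=\phi_{\nu_q}(\lambda\mu)/(1-\lambda\mu)$ is essentially $\phi_{\nu_q}$ times the kernel of Lemma~\ref{lem:complex-energy}. Moreover, $\phi_{\nu_q}$ reaches order $q$, for instance near $z=e^{-q^{-j}}$. So the triangle-inequality bound is of order $q\,2^{-d/2}$, and normalizing with $D\geq d_qI$ on both sides leaves $(q/\log q)\,2^{-d/2}$. Balanced against $\delta\,2^{d/2}$, this gives only $\sqrt{\delta q/\log q}$. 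That is useless here: in the next theorem $q\asymp\delta^{-2}$, and the bound must stay below $1/2$.

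\emph{Near blocks and the closing heuristic.} Your near estimate has a related defect. A Schur product is not controlled by the supremum of its multiplier, and $|N|$ is not bounded uniformly in $q$ on the bands your optimization uses. Your closing Cauchy--Schwarz heuristic is linear in $\delta$, so it cannot be the source of $\sqrt\delta$.

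\emph{The missing cancellation.} What you need is an exact cancellation produced by the normalization. Take blocks $P_i=E_A([1-u_i,1-u_i/2))$ with $u_i=2^{-i}$, and put $U_i(t)=D_i^{-1/2}A_i^t$. Then $\int U_i(t)^2\,d\nu_q(t)=(I-A_i^2)^{-1}\leq\frac{4}{3u_i}I$, so the factor $\phi_{\nu_q}$ disappears. Combine this with two facts:
\begin{itemize}
\item the factorization bound $\bigl\|\int_F U_iXU_j\,d\nu_q\bigr\|\leq\|X\|\,\bigl\|\int_F U_i^2\,d\nu_q\bigr\|^{1/2}\bigl\|\int_F U_j^2\,d\nu_q\bigr\|^{1/2}$;
\item $\|W_{ij}\|\leq\delta(u_i+u_j)$ for $W=E-AEA$.
\end{itemize}
This gives the near bound $\|Z_{ij}\|\leq\frac83\delta\,2^{d/2}$, uniformly in $q$, where $Z=D^{-1/2}(G_q-D)D^{-1/2}$. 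This is what your Cauchy--Schwarz alternative becomes if you apply it bilinearly between blocks after normalizing by $D^{-1/2}$.

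\emph{The correct far bound.} For $i\neq j$ with $u=u_i\geq v=u_j$, apply the factorization bound to $B_{ij}$, where $\|B_{ij}\|\leq2\|S\|\sqrt{uv}$, but asymmetrically.
\begin{itemize}
\item On the truncated range $[0,T_0]$ with $T_0=8\log(q+1)/u$, use the cancellation on the $i$ factor and only $D_j\geq d_qI$ on the $j$ factor. The mass $M_{\nu_q}(T_0)\lesssim q\log(q+1)/u$ then enters only under a square root.
\item For $t>T_0$, the bound $\|A_i^t\|\leq e^{-tu/2}$ supplies a factor $(q+1)^{-4}$.
\end{itemize}
Together these give $\|Z_{ij}\|\leq C\sqrt q\,2^{-d/2}$. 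Finally, $\sum_{d\geq1}\min\{\delta2^{d/2},\sqrt q\,2^{-d/2}\}\lesssim\sqrt{\delta\sqrt q}$, and the block Schur test yields $C_1\sqrt\delta\,q^{1/4}$.
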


\begin{proof}
The mass bound for $\nu_q$ and Proposition~\ref{prop:mass}
imply that $G_q$ is bounded.
The spectral assumptions, Lemma~\ref{lem:slow-scalar}, and
the bound $\phi_{\nu_q}\leq1+q$ give the stated bounds for $D$.
If $\delta=0$, then $B=I-A^2$ and $G_q=D$, so the perturbation
estimate holds. We therefore assume $\delta>0$ below.

Set $u_i=2^{-i}$ and
$P_i=E_A([1-u_i,1-u_i/2))$ for $i\geq0$.
Since $E_A(\{1\})=0$, these projections sum strongly to $I$.
Write $A_i=A|_{P_iH}$ and $X_{ij}=P_iX|_{P_jH}$ for the blocks
of a bounded operator $X$.
Set $\beta=\|S\|\leq2$.
The Stein identity and positivity of $B$ give
$\|B_{ii}\|\leq2\beta u_i$ and
$\|B_{ij}\|\leq2\beta\sqrt{u_iu_j}$.
We also have $I-A_i^2\geq(3u_i/4)I$.
For $D_i=D|_{P_iH}$ and $U_i(t)=D_i^{-1/2}A_i^t$,
spectral calculus gives
\begin{equation}\label{eq:normalization-cancellation}
 \int_{[0,\infty)}U_i(t)^2\,d\nu_q(t)
 =(I-A_i^2)^{-1}\leq\frac4{3u_i}I.
\end{equation}
Let $X:P_jH\to P_iH$ be bounded, and let
$F\subset[0,\infty)$ be measurable.
Factoring through the associated analysis operators gives
\begin{equation}\label{eq:block-cauchy}
 \left\|\int_F U_i(t)XU_j(t)\,d\nu_q(t)\right\|
 \leq\|X\|
 \left\|\int_F U_i(t)^2\,d\nu_q(t)\right\|^{1/2}
 \left\|\int_F U_j(t)^2\,d\nu_q(t)\right\|^{1/2}.
\end{equation}
The integrals are understood as strong limits of finite sums.

Write $B=I-A^2+W$, where $W=E-AEA$, and set
$Z=D^{-1/2}(G_q-D)D^{-1/2}$.
The identity
$W_{ij}=(I-A_i)E_{ij}+A_iE_{ij}(I-A_j)$
gives $\|W_{ij}\|\leq\delta(u_i+u_j)$.
Applying \eqref{eq:block-cauchy} and
\eqref{eq:normalization-cancellation}, with $d=|i-j|$, yields
\begin{equation}\label{eq:near-block}
 \|Z_{ij}\|
 \leq\frac{4\delta(u_i+u_j)}{3\sqrt{u_iu_j}}
 \leq\frac83\delta2^{d/2}.
\end{equation}

For $i\neq j$, assume by symmetry that $u=u_i\geq v=u_j$.
Set $\ell=\log(q+1)$ and $T_0=8\ell/u$.
Apply \eqref{eq:block-cauchy}, using
\eqref{eq:normalization-cancellation} for the factor involving $i$
and $D_j\geq d_qI$ for the factor involving $j$.
This gives
\[
 \begin{split}
 \left\|\int_{[0,T_0]}U_i(t)B_{ij}U_j(t)\,d\nu_q(t)\right\|
 \leq\frac{4\beta}{\sqrt{3d_q}}
       \sqrt{vM_{\nu_q}(T_0)}
 \leq C_2\sqrt q\sqrt{v/u}.
 \end{split}
\]
Here we used $M_{\nu_q}(T_0)\leq1+qT_0$, $u\leq1$,
and $\ell/d_q\leq16e$.

For the remaining times, use $\|A_i^t\|\leq e^{-tu/2}$.
Stieltjes integration by parts and the mass bound give
\[
 \int_{(T_0,\infty)}e^{-tu/2}\,d\nu_q(t)
 \leq e^{-uT_0/2}(1+qT_0+2q/u).
\]
Consequently,
\[
 \begin{split}
 \left\|\int_{(T_0,\infty)}
 U_i(t)B_{ij}U_j(t)\,d\nu_q(t)\right\|
 \leq\frac{2\beta}{d_q}\sqrt{v/u}
       (q+1)^{-4}(u+8q\ell+2q)
 \leq C_3\sqrt q\sqrt{v/u}.
 \end{split}
\]
Since $D$ is block diagonal, these two estimates imply
$\|Z_{ij}\|\leq C_4\sqrt q\,2^{-d/2}$ for $i\neq j$,
where $C_4$ is absolute.

For $0<a_0\leq b_0$, splitting the sum near
$d=\log_2(b_0/a_0)$ gives
\(
 \sum_{d\geq1}
 \min\{a_0 2^{d/2},b_0 2^{-d/2}\}
 \leq7\sqrt{a_0b_0}.
\)
Apply this with $a_0=8\delta/3$ and $b_0=C_4\sqrt q$,
increasing $C_4$ if necessary.
By \eqref{eq:near-block}, the diagonal blocks satisfy
$\|Z_{ii}\|\leq8\delta/3$.
The Schur estimate for operator blocks therefore gives
\[
 \|Z\|\leq\frac83\delta+14\sqrt{a_0b_0}
 \leq C_1\sqrt\delta\,q^{1/4}.
\]
\end{proof}

The following lemma extends a uniform estimate on a spectral
subspace near $1$ to the whole space.

\begin{lemma}
\label{lem:spectral-completion}
Fix $\kappa\geq1$ and $s,\gamma,M>0$.
Suppose a sampling design $\nu$ satisfies $G_\nu\leq MI$ and
$PG_\nu P\geq\gamma P$ for every system with
$I\leq S\leq\kappa I$, where $P=E_A([e^{-s},1])$.
Set $\lambda=2\kappa/\gamma$ and choose an integer $m\geq1$
such that $(\kappa+\lambda M)e^{-2ms}\leq1/2$.
Then $\rho=\sum_{n=0}^{m-1}\delta_n+\lambda\tau_m\nu$
belongs to $\mathcal U_w(\kappa)$ and satisfies
$D(\rho)=D(\nu)$.
For $I\leq S\leq\kappa I$, its Gramian satisfies
$(1/2)I\leq G_\rho\leq(\kappa+\lambda M)I$.
\end{lemma}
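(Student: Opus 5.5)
The approach is to split $H$ spectrally at $e^{-s}$: the initial integer samples handle the part of $A$ away from $1$, and the translated copy $\lambda\tau_m\nu$ handles the part near $1$ via the hypothesis on $PG_\nu P$. By homogeneity we may rescale so that $I\leq S\leq\kappa I$, since $G_\rho$ scales linearly with $B$. It then suffices to prove $(1/2)I\leq G_\rho\leq(\kappa+\lambda M)I$ for such systems. Membership in $\mathcal U_w(\kappa)$ follows after undoing the scaling, with bounds depending only on $a,b$. The identity $D(\rho)=D(\nu)$ is clear: adding the $m$ atoms changes $N$ by at most $m$, translating by $m$ shifts $R$ to $R-m$, and $\log(R-m)\sim\log R$.

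For the upper bound, write
\[
G_\rho=\sum_{n=0}^{m-1}A^nBA^n+\lambda A^mG_\nu A^m,
\]
which holds because $A^{t+m}=A^tA^m$ and $A$ commutes with its powers. The first sum is at most $S\leq\kappa I$, since it is a partial sum of the Stein series. The second term is at most $\lambda M\|A^m\|^2I\leq\lambda MI$, using $\|A\|\leq1$ from Lemma~\ref{lem:stein}.

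For the lower bound, iterate the Stein identity to get
\[
\sum_{n=0}^{m-1}A^nBA^n=S-A^mSA^m\geq I-\kappa A^{2m}.
\]
Let $P=E_A([e^{-s},1])$ and $P^\perp=I-P$, so that $\|A^mP^\perp\|\leq e^{-ms}$. For the tail term, apply the hypothesis not to $G_\nu$ directly but to the system $(A,C)$ itself, which satisfies $I\leq S\leq\kappa I$. Then
\[
\lambda A^mG_\nu A^m\geq\lambda A^mPG_\nu PA^m+(\text{cross terms}).
\]
Here is the main obstacle: $G_\nu$ need not commute with $P$, so the cross terms $PG_\nu P^\perp$ cannot simply be dropped. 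I would avoid a block expansion and instead bound the quadratic form. For $x=y+z$ with $y=Px$ and $z=P^\perp x$, Cauchy--Schwarz gives
\[
\langle G_\nu A^mx,A^mx\rangle\geq\Bigl(\langle G_\nu A^my,A^my\rangle^{1/2}-\langle G_\nu A^mz,A^mz\rangle^{1/2}\Bigr)^2 .
\]
The first term is at least $\gamma\|A^my\|^2\geq\gamma e^{-2ms}\|y\|^2$, since $A^my\in PH$ and $A\geq e^{-s}$ there. The second is at most $Me^{-2ms}\|z\|^2$. The catch is that $e^{-2ms}$ now appears in the good term too, which is unacceptable.

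So I would reorganize the argument, using both pieces together rather than separately. On $PH$ the initial block may be weak, but on $P^\perp H$ it gives roughly $\|z\|^2$ minus small errors. I would use
\[
\langle G_\rho x,x\rangle\geq\|x\|^2-\kappa\|A^mx\|^2+\lambda\langle G_\nu A^mx,A^mx\rangle ,
\]
and note that $\|A^mx\|^2\leq\|A^my\|^2+e^{-2ms}\|z\|^2$, since the spectral pieces are orthogonal. Writing $w=A^mx$, it then suffices to show
\[
\lambda\langle G_\nu w,w\rangle\geq\kappa\|Pw\|^2-(\text{small}).
\]
The hypothesis gives $\langle G_\nu w,w\rangle\geq\gamma\|Pw\|^2$ if $w$ is purely in $PH$. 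For general $w$, I would expand
\[
\langle G_\nu w,w\rangle\geq\gamma\|Pw\|^2-2\sqrt{M}\,\|G_\nu^{1/2}Pw\|\,\|P^\perp w\|
\]
and absorb the cross term with a weighted AM--GM inequality. This uses $\|P^\perp w\|\leq e^{-ms}\|z\|$, $\lambda=2\kappa/\gamma$ (which yields $\lambda\gamma\|Pw\|^2=2\kappa\|Pw\|^2$, leaving a spare $\kappa\|Pw\|^2$ to absorb cross terms), and the choice $(\kappa+\lambda M)e^{-2ms}\leq1/2$ to absorb the $\|z\|^2$ errors. Together these give $\langle G_\rho x,x\rangle\geq\tfrac12\|x\|^2$. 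Verifying that the AM--GM constants close exactly with the stated $m$ is the delicate step; if they do not, I would refine by using $\langle G_\nu w,w\rangle\geq0$ together with $\langle G_\nu w,w\rangle\geq\gamma\|Pw\|^2$ in a convex combination.
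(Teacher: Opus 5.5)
Your normalization, the upper bound, the Stein step $\sum_{n<m}A^nBA^n=S-A^mSA^m\ge I-\kappa A^{2m}$, and the proof of $D(\rho)=D(\nu)$ are fine and match the paper. The gap is in the lower bound.

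\textbf{Where the AM--GM step fails.} Your final cross-term estimate discards the nonnegative term $\langle G_\nu P^\perp w,P^\perp w\rangle$. Once that term is gone, no weighted AM--GM closes with the given $m$. Write $u=\|G_\nu^{1/2}Pw\|$ and $p=\|P^\perp w\|$. The bound $2\sqrt M\,up\le\epsilon u^2+Mp^2/\epsilon$ requires $\epsilon\le1/2$ in order to keep $\lambda(1-\epsilon)\gamma\ge\kappa$. So the best possible outcome is $\lambda\langle G_\nu w,w\rangle\ge\kappa\|Pw\|^2-2\lambda Mp^2$, that is,
\[
 \langle G_\rho x,x\rangle\ge\|x\|^2-(\kappa+2\lambda M)e^{-2ms}\|z\|^2 .
\]
The hypothesis $(\kappa+\lambda M)e^{-2ms}\le1/2$ does not make this at least $\frac12\|x\|^2$; when $\lambda M\gg\kappa$ the coefficient of $\|z\|^2$ is close to $1$. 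It does give a uniform positive bound such as $\kappa e^{-2ms}$. That suffices for $\rho\in\mathcal U_w(\kappa)$, but not for the stated $G_\rho\ge\frac12 I$.

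The inequality as you displayed it is weaker still. It has $\gamma\|Pw\|^2$ in front and $\|G_\nu^{1/2}Pw\|$ in the cross term, so you must also bound $\|G_\nu^{1/2}Pw\|\le\sqrt M\|Pw\|$.

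Your fallback does not help either. The hypothesis controls only the compression $PG_\nu P$. So $\langle G_\nu w,w\rangle\ge\gamma\|Pw\|^2$ is not available for $w\notin PH$, which is exactly the case you need it for.

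\textbf{The missing idea.} It was already in your first attempt, and its ``catch'' is illusory. The good term $\gamma\|A^my\|^2=\gamma\|Pw\|^2$ only has to beat the loss $\kappa\|Pw\|^2$ coming from $-\kappa A^{2m}$, so the factor $e^{-2ms}$ cancels. Combine your reverse triangle inequality there with $(\alpha-\beta)^2\ge\frac12\alpha^2-\beta^2$. Equivalently, apply $\|f+g\|^2\ge\frac12\|f\|^2-\|g\|^2$ to the sampled images of $Pw$ and $P^\perp w$.

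This keeps the $\|g\|^2$ term and gives the operator inequality $G_\nu\ge\frac{\gamma}{2}P-M(I-P)$. With $\lambda=2\kappa/\gamma$ and $S\le\kappa I$, this becomes $\lambda G_\nu-S\ge-(\kappa+\lambda M)(I-P)$. Therefore
\[
 G_\rho=S+A^m(\lambda G_\nu-S)A^m\ge I-(\kappa+\lambda M)A^{2m}(I-P)\ge\tfrac12 I,
\]
since $A$ commutes with $P$ and $\|A^m(I-P)\|\le e^{-ms}$. This is exactly the paper's argument, and the constants match the stated choice of $m$.
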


\begin{proof}
Set $Q=I-P$.
Applying $\|u+v\|^2\geq\|u\|^2/2-\|v\|^2$
to the sampled images of $Px$ and $Qx$ gives
$G_\nu\geq(\gamma/2)P-MQ$.
Since $S\leq\kappa I$, we have
$\lambda G_\nu-S\geq-(\kappa+\lambda M)Q$.
The Stein identity and $\|AQ\|\leq e^{-s}$ therefore yield
\[
 G_\rho=S+A^m(\lambda G_\nu-S)A^m
 \geq I-(\kappa+\lambda M)e^{-2ms}I\geq\frac12I.
\]
The observations at $0,\ldots,m-1$ have Gramian at most $S$, which gives the
upper bound. All parameters are uniform over the normalized
class, so rescaling proves $\rho\in\mathcal U_w(\kappa)$.
The prefix and translated support are disjoint, and
$N_\rho(R)=m+N_\nu(R-m)$ for $R\geq m$.
Thus $D(\rho)=D(\nu)$, including when this value is infinite.
\end{proof}

We now combine the preceding estimates with the lower bound
in Corollary~\ref{cor:positive-lower} to determine the behavior
of $C^*(\kappa)$ as $\kappa\downarrow1$.

\begin{theorem}
There are absolute constants $b_0,\delta_0>0$ such that, if
$0<\delta=(\kappa-1)/(\kappa+1)<\delta_0$ and
$q=\lfloor b_0\delta^{-2}\rfloor$, then $q\geq2$ and
\[
 \frac1{2\log(1+4/\delta)}
 \leq C^*(\kappa)\leq\frac1{\log q}.
\]
There exists $\rho\in\mathcal U_w(\kappa)$ with
$D(\rho)=1/\log q$.
In particular,
$1/C^*(\kappa)=2\log(1/(\kappa-1))+O(1)$
as $\kappa\downarrow1$.
\end{theorem}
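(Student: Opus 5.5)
The lower bound is exactly Corollary~\ref{cor:positive-lower}, since $1+4(\kappa+1)/(\kappa-1)=1+4/\delta$. The work therefore lies in the upper bound. I will construct $\rho\in\mathcal U_w(\kappa)$ with $D(\rho)=1/\log q$ by applying Lemma~\ref{lem:spectral-completion} to $\nu=\nu_q$ with $s=1/(2q)$. Since $D(\nu_q)=1/\log q$ and the lemma preserves $D$, it suffices to verify its two hypotheses uniformly over systems with $I\leq S\leq\kappa I$.

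The upper hypothesis is immediate. Under the normalization $I\leq S\leq \kappa I$, Proposition~\ref{prop:mass} applies with $w_0=1$ and $K_+\leq q$ (from $M_{\nu_q}(R)\leq 1+qR$ and the absence of atoms in $(0,1)$), giving $G_{\nu_q}\leq\kappa(1+\sqrt2 q)I=:MI$.

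For the lower hypothesis on $P=E_A([e^{-1/(2q)},1])$, I first rescale. Set $S'=\tfrac{2}{1+\kappa}S$, so that $\|S'-I\|\leq\delta$. I want to apply Lemma~\ref{lem:relative-perturbation} to the compressed system on $PH$. Since $A$ commutes with $P$, I take the system $(A|_{PH},CP)$. The compression of the Stein identity gives $PBP=PSP-A(PSP)A$ on $PH$, so the compressed Gramian is $PSP$. Its spectrum lies in $[1,\kappa]$, so after the same rescaling it has the form $I+E$ with $\|E\|\leq\delta\leq 1/2$. On $PH$ we have $\sigma(A)\subset[e^{-1/(2q)},1]$ and $E_A(\{1\})=0$. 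Moreover $PG_{\nu_q}P$ is exactly the sampled Gramian of the compressed system, because $A^tP=PA^t$.

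Lemma~\ref{lem:relative-perturbation} then yields
\[
 D^{-1/2}\,PG_{\nu_q}P\,D^{-1/2}\geq (1-C_1\sqrt\delta\,q^{1/4})\,\tfrac{1+\kappa}{2}\,I
\]
on $PH$, with $D\geq d_q I$. Choosing $q=\lfloor b_0\delta^{-2}\rfloor$ gives $\sqrt\delta\,q^{1/4}\leq b_0^{1/4}$, so taking $b_0$ small with $C_1b_0^{1/4}\leq1/2$ yields $PG_{\nu_q}P\geq \tfrac{d_q}{2}P$. This gives $\gamma=d_q/2>0$. Choosing $\delta_0$ small makes $q\geq2$ and $\delta\leq1/2$, and Lemma~\ref{lem:spectral-completion} then produces $\rho$.

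The main point to check is the consistency of exponents. The perturbation error $\sqrt\delta\,q^{1/4}$ must stay bounded while $q\sim\delta^{-2}$, and this is precisely the balance achieved by the stated choice of $q$. For the asymptotics, note $\log q=2\log(1/\delta)+O(1)$ and $\log(1+4/\delta)=\log(1/\delta)+O(1)$, together with $\delta\sim(\kappa-1)/2$. Then both bounds give $1/C^*(\kappa)=2\log(1/(\kappa-1))+O(1)$.
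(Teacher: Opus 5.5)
Your proposal is correct and follows the paper's proof essentially step for step. The lower bound comes from Corollary~\ref{cor:positive-lower}, and the upper bound comes from applying Lemma~\ref{lem:relative-perturbation} to the rescaled compression on $PH$ with $s=1/(2q)$ to get $\gamma=d_q/2$, then combining Proposition~\ref{prop:mass} and Lemma~\ref{lem:spectral-completion} for $\nu_q$; the asymptotic step implicitly uses $q\ge b_0\delta^{-2}/2$ for small $\delta$, exactly as the paper does.
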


\begin{proof}
Choose $b_0>0$ so that $C_1b_0^{1/4}\leq1/2$.
Then choose $0<\delta_0\leq1/2$ so that
$q\geq2$ and $q\geq b_0\delta^{-2}/2$
whenever $0<\delta<\delta_0$.
The definition of $q$ also gives
$C_1\sqrt\delta\,q^{1/4}\leq1/2$.

By scaling, it suffices to consider systems with
$I\leq S\leq\kappa I$.
Set $s=1/(2q)$ and $P=E_A([e^{-s},1])$.
Compress $S$ and $B$ to $PH$, and divide both operators
by $(\kappa+1)/2$.
On $PH$, the rescaled Gramian has the form $I+E$ with
$\|E\|\leq\delta$.
Applying Lemma~\ref{lem:relative-perturbation} and restoring
the factor $(\kappa+1)/2\geq1$ gives
$PG_{\nu_q}P\geq\gamma P$, where
$\gamma=d_q/2=\log q/(16e)$.

The mass bound for $\nu_q$ and Proposition~\ref{prop:mass}
give $G_{\nu_q}\leq\kappa(1+\sqrt2q)I$.
Lemma~\ref{lem:spectral-completion} therefore gives
$\rho\in\mathcal U_w(\kappa)$ with
$D(\rho)=D(\nu_q)=1/\log q$.
Since $q$ is an integer, both $\nu_q$ and the resulting $\rho$
are supported on $\mathbb N_0$.

The lower bound follows from Corollary~\ref{cor:positive-lower}.
Finally,
\[
 2\log(1/\delta)+\log(b_0/2)
 \leq\log q\leq\frac1{C^*(\kappa)}
 \leq2\log(1/\delta)+2\log(4+\delta).
\]
Since $\delta=(\kappa-1)/(\kappa+1)$, these inequalities
give the stated asymptotic formula.
\end{proof}

\section{Realization and stability of universal sampling designs}
\label{s4-section}

The preceding constructions allow real sampling times.
We first prove $C^*_{\mathbb Z}(\kappa)=C^*(\kappa)$ by rounding sufficiently late sampling times and adding observations at finitely many initial integer times.
We then consider fully universal designs, which work for every finite bound on the original frame condition number.
Every such design satisfies $N_\mu(R)/\log R\to\infty$ as
$R\to\infty$. We construct designs supported on $\mathbb N_0$
for which this divergence is arbitrarily slow.
Appendix~\ref{app:prescribed-supports} treats relative perturbations of sampling times and sparsification within prescribed supports;
the rounding argument here is independent of those results.

The final two subsections establish stability under small increases in $\kappa$ and show that the infimum $C^*(\kappa)$ is never attained. These estimates are also used in Section~\ref{sec:output} to prove that prescribing a finite upper bound on the sampled condition number strictly increases the optimal counting coefficient.

\subsection{Integer realization}
\label{s4-integer-subsection}
The following estimate allows us to round sufficiently late sampling times to integers while retaining a uniform positive lower frame bound.

\begin{lemma}
\label{s4-rounding}
Let $M>0$, and let $\tau$ be a positive measure supported on
$[M,\infty)$ such that
$W(s):=\tau([0,s])\leq Ks$ for $s\geq M$.
For a system satisfying \eqref{eq:frame}, define
$T_\tau,\widehat T_\tau:H\to L^2(\tau;Y)$ by
$(T_\tau x)(s)=CA^sx$ and
$(\widehat T_\tau x)(s)=CA^{\lceil s\rceil}x$.
Then $\|\widehat T_\tau-T_\tau\|^2\leq bK/M$.
\end{lemma}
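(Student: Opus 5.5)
The plan is to write the rounding error as an integral of the derivative along the orbit and then use the weighted energy bound $\int_0^\infty t^2\|CLA^tx\|^2\,dt\leq(b/2)\|x\|^2$ from Lemma~\ref{lem:energy}. Two features make this work: every node satisfies $s\geq M>0$, and rounding up moves each time by less than one unit.

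Fix $x\in H$. On $\ker A$ both $A^s$ and $A^{\lceil s\rceil}$ vanish, since $\lceil s\rceil\geq s\geq M>0$, so I may assume $x\in H_+$.

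\emph{Step 1: pointwise bound.} For $t>0$ the map $t\mapsto A^tx$ is differentiable with derivative $-LA^tx$, and $LA^t$ is bounded. Hence, for each $s\geq M$,
\[
 C(A^{\lceil s\rceil}-A^s)x=-\int_s^{\lceil s\rceil}CLA^tx\,dt .
\]
The interval of integration has length at most $1$ and lies in $[s,s+1]$. Cauchy--Schwarz therefore gives
\[
 \|C(A^{\lceil s\rceil}-A^s)x\|^2\leq\int_s^{s+1}\mathbf 1_{[s,\lceil s\rceil]}(t)\,g(t)\,dt\leq\int_s^{s+1}g(t)\,dt,
 \qquad g(t)=\|CLA^tx\|^2 .
\]

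\emph{Step 2: integrate against $\tau$.} Tonelli's theorem gives
\[
 \|(\widehat T_\tau-T_\tau)x\|^2
 \leq\int_M^\infty g(t)\,\tau([t-1,t]\cap[M,\infty))\,dt
 \leq\int_M^\infty g(t)\,W(t)\,dt .
\]
The hypothesis $W(t)\leq Kt$ applies because $t\geq s\geq M$. Since $t\geq M$, we also have $Kt\leq Kt^2/M$.

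\emph{Step 3: apply the energy estimate.} Lemma~\ref{lem:energy} now yields
\[
 \|(\widehat T_\tau-T_\tau)x\|^2\leq\frac KM\int_0^\infty t^2g(t)\,dt\leq\frac{bK}{2M}\|x\|^2,
\]
which is even better than the stated bound $bK/M$.

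The only delicate point is measurability and finiteness. Lemma~\ref{lem:energy} shows $g$ is locally integrable on $(0,\infty)$, so the fundamental theorem of calculus in Step 1 is legitimate. Both $T_\tau$ and $\widehat T_\tau$ are bounded: $\widehat T_\tau$ can be handled via Proposition~\ref{prop:mass} applied to the pushforward of $\tau$ under $s\mapsto\lceil s\rceil$, whose mass is still $O(R)$. In any case, the difference estimate above holds as an inequality of nonnegative quadratic forms, and that is all the statement requires.
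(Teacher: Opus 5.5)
Your argument is correct, and it gives the sharper bound $bK/(2M)$ by a different route from the paper's.

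\textbf{The paper's route.} It works in the spectral variable. For $\lambda=e^{-u}$ it uses the pointwise bound $|\lambda^{\lceil s\rceil}-\lambda^s|\le ue^{-us}$. It then integrates by parts against $W$ to get $u^2\int_{[M,\infty)}e^{-2us}\,d\tau(s)\le K(v^2+v/2)e^{-2v}/M\le K/M$ with $v=uM$. Finally it uses only $\|C\|^2\le b$ to pass from $\|(A^{\lceil s\rceil}-A^s)x\|^2$ to the observations.

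\textbf{Your route.} You work in the time variable. You write the rounding error as $-\int_s^{\lceil s\rceil}CLA^tx\,dt$ and feed it into the weighted derivative estimate $\int_0^\infty t^2\|CLA^tx\|^2\,dt\le (b/2)\|x\|^2$ of Lemma~\ref{lem:energy}. The factor $1/M$ comes from $t\ge M$. You need only the crude bound $\tau((t-1,t])\le W(t)\le Kt$, with no integration by parts.

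\textbf{The trade-off.} Your approach exploits the Stein identity through Lemma~\ref{lem:energy}, and that is where the factor $1/2$ comes from. The paper's approach uses nothing about the system beyond $\|C\|^2\le b$ and a scalar estimate on the spectrum. That is why it carries over almost verbatim to the normal, logarithmic-sector setting in Lemma~\ref{nrm-rounding-lemma}, via $|e^{-\lceil t\rceil z}-e^{-tz}|\le a_M\operatorname{Re}z\,e^{-t\operatorname{Re}z}$. Your approach would first need a normal analogue of the second estimate in Lemma~\ref{lem:energy}.

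\textbf{One correction to your justification.} The fundamental theorem of calculus in Step~1 is legitimate because $t\mapsto A^t$ is continuously differentiable in operator norm on $(0,\infty)$, with derivative $-LA^t$. This follows by spectral calculus, since $LA^t$ and $L^2A^t$ are bounded for $t>0$. Local integrability of $g$ is not what justifies it.
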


\begin{proof}
Let $\widehat\tau$ be the pushforward of $\tau$ under
$s\mapsto\lceil s\rceil$.
Its cumulative mass is no larger than that of $\tau$.
Proposition~\ref{prop:mass} therefore shows that both
$T_\tau$ and $\widehat T_\tau$ are bounded.

For $\lambda=e^{-u}\in(0,1)$, we have
$|\lambda^{\lceil s\rceil}-\lambda^s|\leq ue^{-us}$.
Stieltjes integration by parts, including a possible atom at $M$,
gives
\[
 u^2\int_{[M,\infty)}e^{-2us}\,d\tau(s)
 =2u^3\int_M^\infty e^{-2us}W(s)\,ds
 \leq Ku^2\left(M+\frac1{2u}\right)e^{-2uM}
 \leq\frac K M.
\]
The last inequality follows by setting $v=uM$ and using
$(v^2+v/2)e^{-2v}\leq1$.
At $\lambda=0$ and $\lambda=1$, the difference of the two powers
is zero.
The spectral theorem, Tonelli's theorem, and $\|C\|^2\leq b$
now give the stated operator norm bound.
\end{proof}

Recall that $\tau_L\mu$ denotes the translation of $\mu$ by $L$. We compare $C^*(\kappa)$ with $C^*_{\mathbb Z}(\kappa)$,
the corresponding infimum over sampling designs supported
on $\mathbb N_0$.

\begin{theorem}
\label{s4-integer}
For every $\kappa\geq1$,
\begin{equation*}
 C^*_{\mathbb Z}(\kappa)=C^*(\kappa).
\end{equation*}
For each $\mu\in\mathcal U_w(\kappa)$, there is
$\widehat\nu\in\mathcal U_w(\kappa)$ supported on $\mathbb N_0$
with $D(\widehat\nu)\leq D(\mu)$.
\end{theorem}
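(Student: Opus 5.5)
The inequality $C^*(\kappa)\leq C^*_{\mathbb Z}(\kappa)$ holds by definition, since an integer design is in particular a real one. So the content is the second assertion. Given $\mu\in\mathcal U_w(\kappa)$, I would build $\widehat\nu$ in three steps: use $\mu$ itself at late times, round those times up to integers, and control the resulting error with Lemma~\ref{s4-rounding}. The lost lower bound is then restored by a prefix of integer observations, exactly as in Lemma~\ref{lem:spectral-completion}.

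First I normalize to systems with $I\leq S\leq\kappa I$, which is allowed by scaling. By the uniformity built into $\mathcal U_w(\kappa)$, there are $0<\alpha\leq\beta$ with $\alpha I\leq G_\mu\leq\beta I$ for all such systems. The tight case already needs the structure: $\mu\in\mathcal U_w(1)$, so Proposition~\ref{prop:tight} gives $M_\mu(R)\leq K(1+R)$. Next I translate. Set $\nu=\tau_L\mu$ with $L\geq1$ to be chosen. For $s\geq L$ this gives $\nu([0,s])\leq K(1+s)\leq 2Ks$, and $\nu$ is supported on $[L,\infty)$. Its Gramian is $G_\nu=A^LG_\mu A^L$, so $G_\nu\leq\beta I$ and $G_\nu\geq\alpha A^{2L}$.

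The lower bound $\alpha A^{2L}$ is only useful on the spectral part near $1$. Let $P=E_A([e^{-s},1])$ for a fixed $s$, say $s=1$. Then $PG_\nu P\geq\alpha e^{-2Ls}P$. The catch is that this lower bound decays in $L$, while Lemma~\ref{s4-rounding} requires $L$ large for the rounding error $\sqrt{2\beta'K/L}$ to be small. Here $\beta'=\kappa$ bounds $b$.

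This conflict is the main obstacle, and I would resolve it by not translating the lower bound away. Instead I would round $\mu$ directly on its tail. Split $\mu=\mu|_{[0,L)}+\mu|_{[L,\infty)}$, with the head finite since the support is locally finite. Replace the tail by its pushforward under $t\mapsto\lceil t\rceil$, and replace the head by $\sum_{n=0}^{L'}\delta_n$ with $L'$ large.

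The lower bound must then be checked on the two spectral pieces. On the part of the spectrum far from $1$, the integer prefix $\sum_{n<m}\delta_n$ recovers $S-A^mSA^m\geq I-\kappa e^{-2ms}$ in the style of Lemma~\ref{lem:spectral-completion}. On the part near $1$, with $u\leq s$, the head of $\mu$ on $[0,L)$ carries sampled energy that can be bounded by $M_\mu(L)$ times $\sup_{t\leq L}\|CA^tx\|^2$. That energy is comparable to $\|Cx\|^2$ up to the factor $e^{2uL}$.

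Cleaner is to apply Lemma~\ref{lem:spectral-completion} with $\nu$ replaced by the rounded version $\widehat\tau$ of $\mu|_{[L,\infty)}$. It suffices to show $P\,G_{\widehat\tau}P\geq\gamma P$ for $P=E_A([e^{-s},1])$ with $s$ small. To get this I write
\[
\|T_{\widehat\tau}x\|\geq\|T_\mu x\|-\|T_{\mu|_{[0,L)}}x\|-\|\widehat T-T\|\,\|x\|.
\]
Here $\|T_\mu x\|^2\geq\alpha\|x\|^2$, and the rounding term is at most $\sqrt{2\kappa K/L}\,\|x\|$, which I make at most $\sqrt\alpha/4$ by choosing $L$ large. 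The head term is bounded by $M_\mu(L)\,\|BP\|$. Since $\|B_{jj}\|\lesssim\kappa(1-\lambda)$ from the Stein identity, as in Lemma~\ref{lem:complex-energy}, this is at most $2\kappa M_\mu(L)(1-e^{-s})$. Choosing $s$ small after $L$ makes it at most $\alpha/16$.

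Then $PG_{\widehat\tau}P\geq(\alpha/4)P$, and Proposition~\ref{prop:mass} gives $G_{\widehat\tau}\leq MI$. Lemma~\ref{lem:spectral-completion} now yields $\widehat\nu=\sum_{n<m}\delta_n+\lambda\tau_m\widehat\tau\in\mathcal U_w(\kappa)$, supported on $\mathbb N_0$. Finally, rounding up is at most one-to-one on distinct times, so $N_{\widehat\tau}(R)\leq N_\mu(R)$, and hence $D(\widehat\nu)=D(\widehat\tau)\leq D(\mu)$.
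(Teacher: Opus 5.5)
Your final argument (the \emph{cleaner} version) is correct, but it is organized differently from the paper's. You round the untranslated tail $\mu|_{[L,\infty)}$ and pay for discarding the head only on $PH$, where $P=E_A([e^{-s},1])$. The cost there is controlled by the compressed Stein bound $\|PBP\|\le 2\kappa(1-e^{-s})$, choosing $L$ first and then $s$. You then invoke Lemma~\ref{lem:spectral-completion} to recover a global lower bound.

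The paper instead translates first. With $\eta\ge\max\{1,\kappa/\alpha\}$, the design $\nu_M=\sum_{j<M}\delta_j+\eta\tau_M\mu$ satisfies $G_{\nu_M}=S-A^MSA^M+\eta A^MG_\mu A^M\ge S\ge I$ for \emph{every} $M$. So the delay can be taken as long as Lemma~\ref{s4-rounding} requires, and rounding the translated tail costs at most $(\eta\kappa K/M)^{1/2}\le 1/2$. In other words, the \emph{conflict} you identified does not arise once the integer prefix and the weight $\eta$ are added before rounding. That conflict was between the decay of $\alpha A^{2L}$ and the need for a long delay. The prefix-and-weight step is the same identity $G_\rho=S+A^m(\lambda G_\nu-S)A^m$ that drives Lemma~\ref{lem:spectral-completion}.

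The paper's route is a single construction with explicit bounds $\tfrac14 I\le G_{\widehat\nu_M}\le(\sqrt{\kappa+\eta\beta}+\tfrac12)^2I$ and no spectral splitting. Yours needs an extra lemma and a two-parameter choice, and it still ends with a translation. It does, however, make visible that rounding only needs the lower bound of $\mu$ on the spectral region near $1$.

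When you write it up:
\begin{enumerate}
\item Drop the abandoned middle paragraph about the factor $e^{2uL}$.
\item State the head estimate as $\|T_{\mu|_{[0,L)}}x\|^2\le M_\mu(L)\,\|PBP\|\,\|x\|^2$ for $x\in PH$. As written, it is ambiguously phrased via $\|BP\|$ and not as a bound on the squared norm.
\end{enumerate}
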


\begin{proof}
Fix $\mu=\sum_{t\in T}w_t\delta_t\in\mathcal U_w(\kappa)$.
By scaling, it suffices to consider systems with
$I\leq S\leq\kappa I$.
There are constants $\alpha,\beta>0$, depending only on
$\mu$ and $\kappa$, such that
$\alpha I\leq G_\mu\leq\beta I$ throughout this class.
Since $\mu\in\mathcal U_w(1)$, Proposition~\ref{prop:tight}
and local finiteness give
$M_\mu(R)\leq K(1+R)$ for all $R\geq0$,
where $K>0$ depends only on $\mu$.

Choose $\eta\geq\max\{1,\kappa/\alpha\}$ and, for an integer
$M\geq1$, set
$\nu_M=\sum_{j=0}^{M-1}\delta_j+\eta\tau_M\mu$.
The Stein identity gives
\(
 G_{\nu_M}
 =S-A^MSA^M+\eta A^MG_\mu A^M
 \geq S\geq I,
\)
and $G_{\nu_M}\leq(\kappa+\eta\beta)I$.
Rounding the translated sampling times gives
\[
 \widehat\nu_M
 =\sum_{j=0}^{M-1}\delta_j
 +\eta\sum_{t\in T}w_t\delta_{M+\lceil t\rceil}.
\]
The measure $\eta\tau_M\mu$ is supported on $[M,\infty)$
and has cumulative mass at most
$\eta K(1+s-M)\leq\eta Ks$ for $s\geq M$.

Retaining the original atom indices places the two analysis
operators in a common space.
Lemma~\ref{s4-rounding} bounds the norm of their difference
by $(\eta\kappa K/M)^{1/2}$.
Choose $M$ so that $\eta\kappa K/M\leq1/4$.
The triangle inequality and the bounds for $G_{\nu_M}$ yield
\[
 \frac14 I\leq G_{\widehat\nu_M}
 \leq\left(\sqrt{\kappa+\eta\beta}+\frac12\right)^2I.
\]
Combining weights at coincident rounded times preserves the
sampling energy.
All parameters depend only on $\mu$ and $\kappa$, so scaling
back gives $\widehat\nu_M\in\mathcal U_w(\kappa)$.

For $R\geq M$, we have
$N_{\widehat\nu_M}(R)\leq M+N_\mu(R-M)\leq M+N_\mu(R)$.
Hence $D(\widehat\nu_M)\leq D(\mu)$, and taking infima gives
$C^*_{\mathbb Z}(\kappa)\leq C^*(\kappa)$.
The reverse inequality follows by restricting the infimum
defining $C^*(\kappa)$ to designs supported on $\mathbb N_0$.
\end{proof}

The asymptotic formulas in Section~\ref{sec:fixed} as
$\kappa\downarrow1$ and $\kappa\to\infty$ therefore also hold
for $C^*_{\mathbb Z}(\kappa)$.
Moreover, $C^*_{\mathbb Z}(1)=0$, and this infimum is not attained by Theorem~\ref{thm:tight-endpoint}.

\subsection{Fully universal designs}
\label{s4-universal-subsection}

We now require a single sampling design to work for every finite
bound on the original frame condition number.

\begin{corollary}
\label{s4-superlog-necessary}
Every $\mu\in\mathcal U_w$ satisfies
$N_\mu(R)/\log R\to\infty$ as $R\to\infty$.
\end{corollary}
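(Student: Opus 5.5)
The plan is to combine the definition of full universality with the local lower bound from the geometric spectral model. Fix $\mu\in\mathcal U_w=\bigcap_{\kappa\ge1}\mathcal U_w(\kappa)$. For every $h\in(0,1]$, Proposition~\ref{prop:cayley} gives a positive system whose frame operator satisfies $\operatorname{cond}(S_h)\le 2e^{\pi^2/h}<\infty$. Take the optimal bounds $a=\|S_h^{-1}\|^{-1}$ and $b=\|S_h\|$, and set $\kappa_h=2e^{\pi^2/h}$, so that $b/a\le\kappa_h$. Since $\mu\in\mathcal U_w(\kappa_h)$, the sampled family is a frame for this particular system.

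Theorem~\ref{thm:local-density} then applies for each $h$. It gives
\[
 \liminf_{R\to\infty}\frac{N_\mu(R)}{\log R}\ge\frac1h .
\]
The left side does not depend on $h$. Letting $h\downarrow0$ therefore forces the $\liminf$ to be $+\infty$, so $N_\mu(R)/\log R\to\infty$, which is the claim.

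No step here is a serious obstacle. The one point to check is that membership in $\mathcal U_w(\kappa)$ really yields a frame for the specific Cayley model. This holds because the model's bounds $a\le b$ satisfy $b/a\le\kappa_h$, and the definition of $\mathcal U_w(\kappa)$ quantifies over all such pairs $(a,b)$. A second point is that Theorem~\ref{thm:local-density} requires only that $\mu$ give a frame for the fixed system; the frame bounds after sampling may depend on $h$, and this does not affect the conclusion.

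Alternatively, one could use the lower bound $C^*(\kappa)\ge(\log\kappa+\log4-\kappa^{-2})/\pi^2$ from Theorem~\ref{thm:large-lower}. That only yields $D(\mu)=\infty$, i.e.\ a statement about the $\limsup$. Using Theorem~\ref{thm:local-density} directly is preferable, since it controls the $\liminf$ and so gives genuine divergence of the ratio.
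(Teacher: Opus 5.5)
Your proposal is correct and matches the paper's proof: both apply Theorem~\ref{thm:local-density} to the Cayley model of Proposition~\ref{prop:cayley}, which has finite original condition number for every $h>0$ and so must be handled by any fully universal $\mu$, and then let $h\downarrow0$. Restricting to $h\in(0,1]$ to use the explicit bound $2e^{\pi^2/h}$ is harmless, and you are right that bounding the $\liminf$, rather than only $D(\mu)$, is what gives genuine divergence of $N_\mu(R)/\log R$.
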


\begin{proof}
Fix $\mu\in\mathcal U_w$.
For each $h>0$, the system in Proposition~\ref{prop:cayley}
has a finite original frame condition number, so $\mu$ gives
a frame for this system.
Theorem~\ref{thm:local-density} therefore yields
$\liminf_{R\to\infty}N_\mu(R)/\log R\geq1/h$.
Since $h>0$ is arbitrary, the conclusion follows.
\end{proof}

The preceding growth condition is sharp:
$N_\mu(R)/\log R$ can tend to infinity arbitrarily slowly,
even for designs supported on $\mathbb N_0$.
We construct sampling grids, round their nodes to integers,
and combine translated copies with delays tending to infinity.
The resulting support is locally finite.

We first compare a sampling measure with the reference measure
$\nu_*=\delta_0+\mathbf1_{[1/2,\infty)}(t)\,dt$.
Set $G_*=G_{\nu_*}$.
Lemma~\ref{lem:stein} gives $S/\kappa\leq G_*\leq2bI$
whenever $b/a\leq\kappa$.
Keeping the quadratic form $\langle G_*x,x\rangle$ in the
error estimate gives a sharper dependence on $\kappa$.

\begin{lemma}
\label{s4-relative-discrepancy}
Suppose $b/a\leq\kappa$, where $\kappa\geq1$.
Let $\nu$ be a locally finite positive measure on $[0,\infty)$,
and set $D_\nu(t)=M_\nu(t)-M_{\nu_*}(t)$.
Assume that, for some $\varepsilon>0$,
$D_\nu(t)=0$ for $0\leq t<1/2$ and
$|D_\nu(t)|\leq\varepsilon t$ for $t\geq1/2$.
Then $G_\nu$ is bounded with
$\|G_\nu-G_*\|\leq\sqrt2\,b\varepsilon$, and, for every $x\in H$,
\[
 |\langle(G_\nu-G_*)x,x\rangle|
 \leq\varepsilon\sqrt{2b\langle G_*x,x\rangle}\,\|x\|
 \leq\sqrt2\,\varepsilon\kappa\langle G_*x,x\rangle.
\]
\end{lemma}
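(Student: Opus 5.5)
The plan is to mimic the integration-by-parts argument of Proposition~\ref{prop:mass}, applied to the signed measure $\nu-\nu_*$. Fix $x\in H$ and set $F_x(t)=\|CA^tx\|^2$, as in Lemma~\ref{lem:energy}. Since $D_\nu$ vanishes on $[0,1/2)$, the two measures agree there; in particular both carry the atom $\delta_0$ and no other mass below $1/2$. Hence
\[
 \langle(G_\nu-G_*)x,x\rangle=\int_{[1/2,\infty)}F_x(t)\,d(\nu-\nu_*)(t),
\]
provided both forms are finite. For finiteness, $|D_\nu(t)|\leq\varepsilon t$ gives $M_\nu(t)\leq(1+\varepsilon)t+1$. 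Local finiteness then yields $\sup_{R>0}\nu((0,R])/R<\infty$, so Proposition~\ref{prop:mass} applies, and $G_\nu$ is bounded.

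For the main estimate I would perform Stieltjes integration by parts on $(\delta,R]$ with $\delta<1/2$. The boundary terms are $D_\nu(R)F_x(R)$, which tends to $0$ because $tF_x(t)\to0$ (as shown in the proof of Proposition~\ref{prop:mass}), and $D_\nu(\delta)F_x(\delta)=0$. This gives
\[
 \langle(G_\nu-G_*)x,x\rangle=-\int_{1/2}^\infty D_\nu(t)F_x'(t)\,dt,
 \qquad
 |\langle(G_\nu-G_*)x,x\rangle|\leq\varepsilon\int_{1/2}^\infty t|F_x'(t)|\,dt.
\]
Lemma~\ref{lem:energy} gives $\int_0^\infty t|F_x'|\leq\sqrt2\,b\|x\|^2$ immediately. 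Since $G_\nu-G_*$ is selfadjoint, this yields the norm bound $\|G_\nu-G_*\|\leq\sqrt2 b\varepsilon$.

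For the sharper form I would redo the Cauchy--Schwarz step of Lemma~\ref{lem:energy}, but only over $[1/2,\infty)$. Using $|F_x'|\leq2\|CA^tx\|\,\|CLA^tx\|$,
\[
 \int_{1/2}^\infty t|F_x'|\leq2\Bigl(\int_{1/2}^\infty\|CA^tx\|^2dt\Bigr)^{1/2}\Bigl(\int_0^\infty t^2\|CLA^tx\|^2dt\Bigr)^{1/2}.
\]
The first factor is at most $\langle Vx,x\rangle^{1/2}\leq\langle G_*x,x\rangle^{1/2}$, since $G_*=B+V$ with $V=\int_{1/2}^\infty A^tBA^t\,dt$ in the notation of Lemma~\ref{lem:stein}; this identifies $G_*$ with $G_{\nu_*}$. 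The second factor is at most $(b/2)^{1/2}\|x\|$. Together these give $\varepsilon\sqrt{2b\langle G_*x,x\rangle}\,\|x\|$.

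The final inequality comes from $\|x\|^2\leq a^{-1}\langle Sx,x\rangle\leq(\kappa/a)\langle G_*x,x\rangle$, using $S\leq\kappa G_*$ from Lemma~\ref{lem:stein}. Then $\sqrt{2b\langle G_*x,x\rangle}\,\|x\|\leq\sqrt{2b\kappa/a}\,\langle G_*x,x\rangle\leq\sqrt2\,\kappa\langle G_*x,x\rangle$, because $b/a\leq\kappa$.

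The main technical point is justifying the integration by parts for the signed measure. The total variation of $\nu-\nu_*$ need not be controlled, so I would treat $\int F_x\,d\nu$ and $\int F_x\,d\nu_*$ separately. Each is finite by Proposition~\ref{prop:mass}, and each can be integrated by parts as in that proof. Subtracting the two identities then avoids any issue with the signed measure. I would also check that $F_x$ is $C^1$ on $[1/2,\infty)$, which Lemma~\ref{lem:energy} already provides.
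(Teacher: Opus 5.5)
Your proposal is correct and follows essentially the same route as the paper. Both arguments integrate by parts against $D_\nu$, apply Cauchy--Schwarz on $[1/2,\infty)$ with $\int_{1/2}^\infty\|CA^tx\|^2\,dt\le\langle G_*x,x\rangle$, and finish with $G_*\ge S/\kappa$ from Lemma~\ref{lem:stein}. Splitting the integration by parts into separate $\nu$ and $\nu_*$ pieces is harmless but unnecessary: the difference has finite variation on bounded intervals, and $\int_{1/2}^\infty|D_\nu(t)||F_x'(t)|\,dt<\infty$ already justifies letting $R\to\infty$.
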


\begin{proof}
The assumptions give $\nu(\{0\})=1$ and
$\nu((0,R])\leq(1+\varepsilon)R$ for $R>0$.
Proposition~\ref{prop:mass} therefore shows that $G_\nu$ is bounded.

Fix $x\in H$, and set $F(t)=\|CA^tx\|^2$ and
$v_*=\int_{1/2}^\infty F(t)\,dt$.
Lemma~\ref{lem:energy} and the Cauchy--Schwarz inequality give
\[
 \int_{1/2}^\infty t|F'(t)|\,dt
 \leq2\sqrt{v_*}
       \left(\int_{1/2}^\infty
       t^2\|CLA^tx\|^2\,dt\right)^{1/2}
 \leq\sqrt{2bv_*}\,\|x\|.
\]
Since $A^t\to0$ strongly, we have $F(t)\to0$.
Consequently,
$tF(t)\leq\int_t^\infty s|F'(s)|\,ds\to0$,
and hence $D_\nu(t)F(t)\to0$.

The two measures agree on $[0,1/2)$.
Stieltjes integration by parts on $[1/2,T]$, followed by
$T\to\infty$, yields
\[
 \int_{[0,\infty)}F(t)\,d(\nu-\nu_*)(t)
 =-\int_{1/2}^\infty D_\nu(t)F'(t)\,dt.
\]
Here $D_\nu(1/2-)=0$, and a possible atom at $1/2$
is included on the left.
The integral on the right is absolutely convergent.

It follows that
$|\langle(G_\nu-G_*)x,x\rangle|
\leq\varepsilon\sqrt{2bv_*}\,\|x\|$.
Since $v_*\leq b\|x\|^2$, this proves
$\|G_\nu-G_*\|\leq\sqrt2\,b\varepsilon$.
Also, $v_*\leq g:=\langle G_*x,x\rangle$ and
$g\geq(a/\kappa)\|x\|^2$.
Together with $b/a\leq\kappa$, these inequalities give
the stated relative estimate.

If $0<\eta<1$ and
$\varepsilon\leq\eta/(\sqrt2\kappa)$, the relative estimate gives
$G_\nu\geq(1-\eta)G_*\geq(1-\eta)S/\kappa$.
The norm estimate also gives
$G_\nu\leq b(2+\sqrt2\varepsilon)I$.
\end{proof}

\begin{lemma}
\label{s4-refining-grid}
Let $\varrho:(0,\infty)\to[1,\infty)$ be nondecreasing and
locally bounded, and let $0<\varepsilon\leq1$.
Set $t_0=1/2$ and
$t_{k+1}=t_k(1+\varepsilon/\varrho(t_k))$, and define
$\nu_\varepsilon
=\delta_0+\sum_{k\geq0}(t_{k+1}-t_k)\delta_{t_k}$.
Then $\nu_\varepsilon$ has locally finite support and satisfies
$M_{\nu_\varepsilon}(R)\leq1+2R$ for $R\geq0$.
Moreover,
\[
 N_{\nu_\varepsilon}(R)
 \leq\left(2+\frac4\varepsilon\right)\varrho(R)\log R
 \qquad R\geq e.
\]
If $\varrho(R)\to\infty$, then $M_{\nu_\varepsilon}(R)\sim R$.
If $b/a\leq\kappa$ and
$\varepsilon\leq(2\sqrt2\kappa)^{-1}$, then
\(
 S/(2\kappa)\leq G_{\nu_\varepsilon}
 \leq(2+\sqrt2)bI.
\)
\end{lemma}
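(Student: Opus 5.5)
The plan has four parts: local finiteness, a mass bound, a counting bound, and the Gramian estimate. The last one should reduce to Lemma~\ref{s4-relative-discrepancy}.

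\emph{Local finiteness.} Since $\varrho\geq1$ is locally bounded, on $[1/2,R]$ the ratio satisfies $t_{k+1}/t_k=1+\varepsilon/\varrho(t_k)\geq1+\varepsilon/\varrho(R)$ as long as $t_k\leq R$. Hence the $t_k$ increase geometrically with ratio bounded below on each bounded interval, $t_k\to\infty$, and only finitely many nodes lie in $[0,R]$.

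\emph{Mass bound.} The atom at $t_k$ has weight $t_{k+1}-t_k$, so the sum telescopes:
\[
 M_{\nu_\varepsilon}(R)=1+t_{K+1}-t_0,
\]
where $K$ is the last index with $t_K\leq R$. Then $t_{K+1}\leq(1+\varepsilon)t_K\leq2R$, which gives $M_{\nu_\varepsilon}(R)\leq1+2R$. If $\varrho\to\infty$, then $t_{K+1}/t_K\to1$, and also $t_{K+1}>R$, so $M_{\nu_\varepsilon}(R)\sim R$.

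\emph{Counting bound.} For $R\geq e$ and each $t_k\leq R$ with $k\leq K$, use
\[
 \log(t_{k+1}/t_k)=\log(1+\varepsilon/\varrho(t_k))\geq\frac{\varepsilon}{2\varrho(R)},
\]
which holds because $\varepsilon/\varrho\leq1$ and $\log(1+x)\geq x/2$ on $[0,1]$. Summing over $k\leq K-1$ gives
\[
 K\,\frac{\varepsilon}{2\varrho(R)}\leq\log(t_K/t_0)\leq\log(2R)\leq2\log R .
\]
So $K\leq4\varrho(R)\log R/\varepsilon$. Adding the atom at $0$ and the node $t_0$ accounts for at most $2\leq2\varrho(R)\log R$ more points, and the bound $N_{\nu_\varepsilon}(R)\leq(2+4/\varepsilon)\varrho(R)\log R$ follows.

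\emph{Gramian estimate.} Compare $\nu_\varepsilon$ with $\nu_*$ and apply Lemma~\ref{s4-relative-discrepancy}.
\begin{itemize}
\item For $t<1/2$ both cumulative masses equal $1$, so $D_\nu=0$ there.
\item For $t_K\leq t<t_{K+1}$ with $t\geq1/2$, we have $M_{\nu_\varepsilon}(t)=1+t_{K+1}-1/2$ and $M_{\nu_*}(t)=1+t-1/2$.
\item Therefore $0\leq D_\nu(t)=t_{K+1}-t\leq t_{K+1}-t_K=\varepsilon t_K/\varrho(t_K)\leq\varepsilon t$.
\end{itemize}
The lemma's hypothesis thus holds with this $\varepsilon$. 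Since $\varepsilon\leq(2\sqrt2\kappa)^{-1}=\eta/(\sqrt2\kappa)$ with $\eta=1/2$, the final remark in that proof gives $G_{\nu_\varepsilon}\geq\frac12G_*\geq S/(2\kappa)$. It also gives $G_{\nu_\varepsilon}\leq b(2+\sqrt2\varepsilon)I\leq(2+\sqrt2)bI$, using $\varepsilon\leq1$.

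The main obstacle is only the bookkeeping in the discrepancy step. One must check that the jump of $M_{\nu_\varepsilon}$ at $t_k$ is right-continuous, consistent with $M_\nu(t)=\nu([0,t])$, so that the sign and size of $D_\nu$ are as claimed at the nodes themselves. The atom at $t_0=1/2$ also has to be handled, and it is covered by the remark in the lemma allowing an atom at $1/2$.
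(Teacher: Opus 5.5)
Your proposal is correct and follows the paper's proof essentially step for step. You telescope the weights to get $M_{\nu_\varepsilon}(R)=1/2+t_{K+1}$, use $\log(1+x)\geq x/2$ to bound $N_{\nu_\varepsilon}(R)=K+2$, and feed the discrepancy $0\leq D_\nu(t)=t_{K+1}-t\leq\varepsilon t$ into Lemma~\ref{s4-relative-discrepancy} with $\eta=1/2$. The only cosmetic differences are that you get the mass bound from $t_{K+1}\leq 2t_K$ directly rather than from the discrepancy estimate, and you leave the trivial case $R<1/2$ implicit.
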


\begin{proof}
If $(t_k)$ were bounded, local boundedness of $\varrho$
would give $t_{k+1}/t_k\geq1+c$ for some $c>0$,
a contradiction.
Thus $t_k\to\infty$, and the support is locally finite.

For $R\geq1/2$, choose $m$ such that $t_m\leq R<t_{m+1}$.
Telescoping gives
$M_{\nu_\varepsilon}(R)=1/2+t_{m+1}$, while
$M_{\nu_*}(R)=1/2+R$.
Hence
\[
 0\leq M_{\nu_\varepsilon}(R)-M_{\nu_*}(R)
 =t_{m+1}-R
 \leq\frac{\varepsilon t_m}{\varrho(t_m)}
 \leq\varepsilon R.
\]
For $0\leq R<1/2$, both cumulative masses equal one.
These estimates give $M_{\nu_\varepsilon}(R)\leq1+2R$.
If $\varrho(R)\to\infty$, the discrepancy divided by $R$
is at most $\varepsilon/\varrho(t_m)\to0$.
Since $M_{\nu_*}(R)\sim R$, this proves the mass asymptotic.

For $R\geq e$, the inequality
$\log(1+u)\geq u/2$ for $0\leq u\leq1$ gives
\[
 \log(2R)
 \geq\sum_{k=0}^{m-1}\log\frac{t_{k+1}}{t_k}
 \geq\frac{m\varepsilon}{2\varrho(R)}.
\]
Since $N_{\nu_\varepsilon}(R)=m+2$,
$\log(2R)\leq2\log R$, and $\varrho(R)\log R\geq1$,
the counting estimate follows.

The discrepancy estimate and
Lemma~\ref{s4-relative-discrepancy} give
$G_{\nu_\varepsilon}\leq b(2+\sqrt2\varepsilon)I
\leq(2+\sqrt2)bI$ for every system.
If $b/a\leq\kappa$ and
$\varepsilon\leq(2\sqrt2\kappa)^{-1}$, the relative estimate
in that lemma gives
$G_{\nu_\varepsilon}\geq G_*/2\geq S/(2\kappa)$.
\end{proof}

For $\varrho\equiv1$, the nodes are $t_k=(1+\varepsilon)^k/2$.
Choosing $\varepsilon=(2\sqrt2\kappa)^{-1}$ gives
$\nu_\varepsilon\in\mathcal U_w(\kappa)$ with
$N_{\nu_\varepsilon}(R)=O(\kappa\log R)$.
This bound suffices for the construction of a fully universal
measure below. Section~\ref{sec:fixed} gives the sharper
dependence on $\kappa$.

We combine countably many such grids with delays tending to infinity, which ensures local finiteness.
The following estimate shows that adding an observation at time zero preserves a positive lower frame bound after translation.

\begin{lemma}
\label{s4-delay}
Suppose $G_\nu\geq\alpha I$ for some $\alpha>0$.
Then, for $L\geq1$ and $\eta>0$,
\[
 B+\eta A^LG_\nu A^L
 \geq\min\left\{\frac a2,
       \eta\alpha\left(\frac a{2b}\right)^L\right\}I.
\]
\end{lemma}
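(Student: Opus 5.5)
The plan is to work with quadratic forms and split into two cases, according to how much of the mass of a vector $x$ is contracted by $A$. Fix $x\in H$ with $\|x\|=1$. Both summands $B$ and $\eta A^LG_\nu A^L$ are positive, so it suffices to show that at least one of them is large on $x$.

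First, the hypothesis $G_\nu\geq\alpha I$ gives
\[
 \langle\eta A^LG_\nu A^Lx,x\rangle
 =\eta\langle G_\nu A^Lx,A^Lx\rangle
 \geq\eta\alpha\|A^Lx\|^2 .
\]
Second, I would extract a lower bound for $B$ from the Stein identity of Lemma~\ref{lem:stein}. Since $S=B+ASA$, $S\geq aI$ and $S\leq bI$,
\[
 a\leq\langle Sx,x\rangle=\langle Bx,x\rangle+\langle SAx,Ax\rangle
 \leq\langle Bx,x\rangle+b\|Ax\|^2 .
\]
Hence $\langle Bx,x\rangle\geq a-b\|Ax\|^2$. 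This uses only $A$ and $S$, so it is unaffected by the noncommutativity of $A$ and $B$.

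The case split is on $\|Ax\|^2$. If $\|Ax\|^2\leq a/(2b)$, then $\langle Bx,x\rangle\geq a/2$, and we are done. If $\|Ax\|^2>a/(2b)$, I would pass from $\|Ax\|^2$ to $\|A^Lx\|^2$ using positivity of $A$. Let $m_x$ be the spectral probability measure $\langle E_A(\cdot)x,x\rangle$ on $[0,1]$; by Lemma~\ref{lem:stein}, $0\leq A\leq I$. Then
\[
 \|A^Lx\|^2=\int\lambda^{2L}\,dm_x(\lambda)=\int(\lambda^2)^L\,dm_x(\lambda)
 \geq\Bigl(\int\lambda^2\,dm_x\Bigr)^L=\|Ax\|^{2L}>\Bigl(\frac a{2b}\Bigr)^L .
\]
The inequality is Jensen's inequality for the convex function $s\mapsto s^L$ on $[0,\infty)$, which is valid because $L\geq1$; $L$ need not be an integer. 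Combined with the first estimate, this gives the second term of the minimum.

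The only step requiring care is this Jensen step. It is exactly where $L\geq1$ and the positivity and self-adjointness of $A$ are used, since they identify $\|A^Lx\|^2$ with a moment of a scalar probability measure. Everything else is the Stein identity and positivity, after which homogeneity extends the unit-vector bound to all of $H$.
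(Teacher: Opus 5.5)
Your proposal is correct and follows essentially the same route as the paper: for a unit vector $x$ with $u=\|Ax\|^2$, the Stein identity gives $\langle Bx,x\rangle\geq a-bu$, and Jensen's inequality for the spectral measure gives $\|A^Lx\|^2\geq u^L$. The case split at $u=a/(2b)$ then finishes the proof. Your explicit checks, that $m_x$ is a probability measure on $[0,1]$ and that $L$ need not be an integer, are accurate.
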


\begin{proof}
For a unit vector $x$, set $u=\|Ax\|^2$.
The Stein identity and positivity of $B$ give
$\langle Bx,x\rangle\geq\max\{0,a-bu\}$.
Jensen's inequality for the spectral measure of $A^2$ gives
$\|A^Lx\|^2\geq u^L$.
Hence
\[
 \langle(B+\eta A^LG_\nu A^L)x,x\rangle
 \geq\max\{0,a-bu\}+\eta\alpha u^L.
\]
If $u\leq a/(2b)$, the first term is at least $a/2$.
Otherwise, the second term is at least
$\eta\alpha(a/(2b))^L$.
This proves the estimate.
\end{proof}

We now construct a fully universal design supported on $\mathbb N_0$ with the prescribed counting bound.

\begin{theorem}
\label{s4-universal-design}
Let $h:[1,\infty)\to(0,\infty)$ be nondecreasing with
$h(R)\to\infty$.
There is a measure $\mu\in\mathcal U_w$ supported on
$\mathbb N_0$ such that $N_\mu(R)\leq h(R)\log R$
for all sufficiently large $R$.
The measure also satisfies $\mu(\{0\})=1$,
$M_\mu(R)\sim R$, and
$\sum_{t\in\operatorname{supp}\mu\setminus\{0\}}1/t<\infty$.
\end{theorem}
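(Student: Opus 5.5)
The plan is to build $\mu$ as a countable superposition of translated, rounded refining grids, one for each $\kappa_k=2^k$. The delays $L_k$ tend to infinity fast enough that each new grid only starts contributing after the counting budget $h(R)\log R$ has room for it.

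\textbf{Step 1 (one grid per level).} Fix $k\geq1$ and set $\varepsilon_k=(2\sqrt2\,\kappa_k)^{-1}$. Choose a nondecreasing, locally bounded $\varrho:(0,\infty)\to[1,\infty)$ with $\varrho(R)\to\infty$, growing so slowly that $(2+4/\varepsilon_k)\varrho(R)\log R$ stays small compared with $h(R)\log R$ once $R$ is past the delay $L_k$. Lemma~\ref{s4-refining-grid} then gives a grid $\nu^{(k)}=\nu_{\varepsilon_k}$ with the following properties:
\[
 M_{\nu^{(k)}}(R)\leq1+2R,\qquad S/(2\kappa_k)\leq G_{\nu^{(k)}}\leq(2+\sqrt2)bI,
\]
valid whenever $b/a\leq\kappa_k$. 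Translate by an integer $L_k$ and round the times with $\lceil\cdot\rceil$. By Lemma~\ref{s4-rounding}, applied with $M=L_k$ and $K=3$, the analysis operator moves by at most $(3b/L_k)^{1/2}$. For $L_k$ large this perturbation is harmless relative to the lower bound coming from Lemma~\ref{s4-delay}; I only need a lower bound holding for each fixed $\kappa$, not uniformity in $k$.

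\textbf{Step 2 (superposition).} Set
\[
 \mu=\delta_0+\sum_{k\geq1}\eta_k\,\widehat{\tau_{L_k}\nu^{(k)}},
\]
with small weights $\eta_k$ (for example $\eta_k=2^{-k}$) and delays $L_k\uparrow\infty$.

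\emph{Upper bound.} Every summand has cumulative mass at most $1+2R$, so $M_\mu(R)\leq C(1+R)$. Proposition~\ref{prop:mass} then gives a common upper frame bound for every system.

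\emph{Lower bound.} Fix a finite $\kappa$ and pick $k$ with $\kappa_k\geq\kappa$. Keep only $\delta_0$ and the $k$-th summand, and apply Lemma~\ref{s4-delay} with $\nu=\nu^{(k)}$, $\alpha=a/(2\kappa_k)$, $\eta=\eta_k$ and $L=L_k$. Up to the rounding error, this gives a positive lower bound depending only on $a,b,k$. To keep the rounding error under control, I will round before translating, or equivalently absorb the $\mathbb N_0$ prefix argument of Theorem~\ref{s4-integer} into the $k$-th block. Hence $\mu\in\mathcal U_w(\kappa)$ for every $\kappa$. Local finiteness follows from $L_k\to\infty$.

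\textbf{Step 3 (bookkeeping).} Three quantitative claims remain.
\begin{itemize}
\item \emph{Counting bound.} We have $N_\mu(R)\leq1+\sum_{k:L_k\leq R}N_{\nu^{(k)}}(R)$. I choose $L_k$ recursively so that for $R\geq L_k$ the first $k$ grids together cost at most $h(R)\log R$. This is possible because $h\to\infty$: pick $L_k$ with $h(L_k)\geq 2^{k}\max_{j\leq k}(2+4/\varepsilon_j)$ and take $\varrho$ bounded by $2^{k}$ on $[L_k,L_{k+1}]$. The resulting sum is then at most a geometric series times $h(R)\log R$.
\item \emph{Mass asymptotics.} For $M_\mu(R)\sim R$, replace the weights by $\eta_k$ with $\sum_k\eta_k=1$. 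Each grid has $M\sim R$ by Lemma~\ref{s4-refining-grid}, since $\varrho\to\infty$. Dominated convergence over $k$, with bound $1+2R$, then gives $M_\mu(R)/R\to\sum_k\eta_k=1$.
\item \emph{Summability.} The condition $\sum 1/t<\infty$ holds if each grid contributes at most $O(L_k^{-1}N)$ in the region of interest. Concretely, I would also take $\varrho$ growing so that $\sum_t 1/t$ over the block starting at $L_k$ is at most $2^{-k}$. Because the times are geometric-like with ratio $1+\varepsilon/\varrho$, the sum over that block is about $\varrho/(\varepsilon L_k)$ times a convergent factor, and choosing $L_k$ large handles it.
\end{itemize}

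The main obstacle is reconciling these constraints on $\varrho$. It must grow to force $M\sim R$, yet its ratio to $h$ must stay controlled. I expect that taking a single $\varrho$, tuned to $h$ after the $L_k$ are fixed (for instance $\varrho=\sqrt{h}$ clipped at $1$), works. Summability of $1/t$ may then force $L_k$ larger, which is compatible with every other requirement.
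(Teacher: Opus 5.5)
Your architecture matches the paper's: grids from Lemma~\ref{s4-refining-grid} with $\varepsilon\propto1/\kappa$, integer realization, delayed superposition with weights $2^{-k}$, Lemma~\ref{s4-delay} for lower bounds, Proposition~\ref{prop:mass} for the upper bound, and dominated convergence for the mass. The genuine gap is the summability step, which fails as you argue it.

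\emph{Summability.} Each block is infinite, and its spacing near $t$ is about $\varepsilon t/\varrho(t)$. So its reciprocal sum is comparable to $\varepsilon^{-1}\int^\infty\varrho(t)t^{-2}\,dt$, not to $\varrho/(\varepsilon L_k)$. For a fixed $\varrho$, enlarging $L_k$ cannot cure a divergent tail.

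Your suggested $\varrho=\max\{1,\sqrt h\}$ fails here. For $h(R)=R^2$ it gives $\varrho(t)=t$ for $t\geq1$, so eventually $t_{i+1}=t_i+\varepsilon$. Then every rounded block contains all large integers and $\sum1/t=\infty$, even though the counting bound holds.

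The growth of $\varrho$ must be capped independently of $h$. The paper uses $\widetilde h=\min\{h(R),R^{1/4}\}$ and $\varrho=\max\{1,\sqrt{\widetilde h}/4\}$. This yields $N_\mu(R)\leq1+\widetilde h(R)\log R/4=O(R^{1/4}\log R)$, and summability follows from $\sum_{t\neq0}1/t\leq\sum_k2^{-k}N_\mu(2^{k+1})$.

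The right order of choices also resolves the tension you flag. Fix $\varrho$ from $h$ alone. Then build the blocks, which fixes their prefix lengths $M_k$ and $c_k=2+4/\varepsilon_k$. Only then choose delays with $\sum_{j\leq k}(M_j+c_j)\leq\sqrt{\widetilde h(L_k)}$. This gives the counting bound without the extra factor your recipe ($\varrho\leq2^k$, $h(L_k)\geq2^k\max_jc_j$) leaves.

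\emph{Rounding.} This also needs repair. The perturbation $(3b/L_k)^{1/2}$ in Step~1 is not harmless. The only lower bound you have for $\delta_0$ plus the delayed block comes from Lemma~\ref{s4-delay}, and it is of order $(a/(2b))^{L_k}$, which decays far faster than $L_k^{-1/2}$. Rounding before translating changes nothing, since $\lceil L_k+t\rceil=L_k+\lceil t\rceil$. Rounding the undelayed grid, which starts at $1/2$, is not controlled by Lemma~\ref{s4-rounding}.

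What works is your other option, made explicit. First form the integer design of Theorem~\ref{s4-integer}: the prefix $\sum_{j<M_k}\delta_j$ plus the rounded $\eta^{(k)}\tau_{M_k}\nu^{(k)}$. It has $G\geq I/4$ whenever $I\leq S\leq\kappa_kI$. Only then delay it.

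That construction needs $\eta^{(k)}\geq2\kappa_k^2=2\cdot4^k$, so such a block has mass about $2\cdot4^kR$. With weights $2^{-k}$, your claim that every summand has mass at most $1+2R$ then fails, and with it the upper frame bound and $M_\mu(R)\sim R$. You must divide each block by $\eta^{(k)}$, as the paper does with $\widetilde\nu_n=\eta_n^{-1}\widehat\nu_n+(1-\eta_n^{-1})\delta_0$. This costs only a $k$-dependent constant in the lower bound, which suffices since uniformity in $k$ is not needed.
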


\begin{proof}
Put $\widetilde h(R)=\min\{h(R),R^{1/4}\}$ and define
\[
 \varrho(R)=
 \begin{cases}
  1,&0<R<e,\\
  \max\{1,\sqrt{\widetilde h(R)}/4\},&R\geq e.
 \end{cases}
\]
Then $\varrho$ is nondecreasing and locally bounded,
$\varrho(e)=1$, and $\varrho(R)\to\infty$.
For all sufficiently large $R$, we have
$\varrho(R)=\sqrt{\widetilde h(R)}/4$.

For $n\geq1$, apply Lemma~\ref{s4-refining-grid} with
$\varepsilon_n=(2\sqrt2\,n)^{-1}$, and set
$\nu_n=\nu_{\varepsilon_n}$ and $c_n=2+4/\varepsilon_n$.
Then $M_{\nu_n}(R)\sim R$,
$M_{\nu_n}(R)\leq1+2R$, and
$N_{\nu_n}(R)\leq c_n\varrho(R)\log R$ for $R\geq e$.
For systems normalized by $I\leq S\leq nI$, we also have
$G_{\nu_n}\geq I/(2n)$.

In the construction of Theorem~\ref{s4-integer}, choose
$\eta_n=2n^2$ and an integer $M_n\geq1$ large enough that
\[
 \widehat\nu_n
 =\sum_{j=0}^{M_n-1}\delta_j
 +\eta_n\sum_{t\in\operatorname{supp}\nu_n}
       \nu_n(\{t\})\delta_{M_n+\lceil t\rceil}
\]
satisfies $G_{\widehat\nu_n}\geq I/4$ whenever
$I\leq S\leq nI$.
Set
$\widetilde\nu_n
=\eta_n^{-1}\widehat\nu_n+(1-\eta_n^{-1})\delta_0$.
Then $\widetilde\nu_n$ is supported on $\mathbb N_0$,
$\widetilde\nu_n(\{0\})=1$, and
$G_{\widetilde\nu_n}\geq aI/(4\eta_n)$ whenever $b/a\leq n$.

For $0\leq R<M_n$, we have
$M_{\widetilde\nu_n}(R)\leq1+R/\eta_n$.
For $R\geq M_n$,
\[
 M_{\widetilde\nu_n}(R)
 \leq1+\frac{M_n-1}{\eta_n}+M_{\nu_n}(R-M_n)
 \leq1+2R.
\]
Translation and rounding by at most one preserve the mass
asymptotic, and the finite sum at the initial integer times
does not affect it. Thus $M_{\widetilde\nu_n}(R)\sim R$.
Also, $N_{\widetilde\nu_n}(R)\leq M_n+N_{\nu_n}(R)$.
With $d_n=M_n+c_n$, this gives
$N_{\widetilde\nu_n}(R)\leq d_n\varrho(R)\log R$
for $R\geq e$.
Since $\widetilde\nu_n((0,R])\leq2R$,
Proposition~\ref{prop:mass} gives
$G_{\widetilde\nu_n}\leq(1+2\sqrt2)bI$
for every system.

Since $\widetilde h(R)\to\infty$, choose integers $L_n\geq e$
such that $L_n>L_{n-1}+1$ for $n\geq2$ and
$\sum_{j=1}^n d_j\leq\sqrt{\widetilde h(L_n)}$.
Define
\(
 \mu=\delta_0+\sum_{n\geq1}2^{-n}\tau_{L_n}\widetilde\nu_n.
\)
Every bounded interval meets only finitely many translated
supports, each containing finitely many atoms.
Thus $\mu$ has locally finite support in $\mathbb N_0$
and finite positive weights, with $\mu(\{0\})=1$.
The common operator bound ensures strong convergence of the
positive Gramian sums and gives
$G_\mu\leq(2+2\sqrt2)bI$.

Fix $\kappa\geq1$ and take $n=\lceil\kappa\rceil$.
For every system with $b/a\leq\kappa$,
Lemma~\ref{s4-delay} gives
\[
 G_\mu
 \geq a\min\left\{\frac12,
       \frac{2^{-n}}{4\eta_n}(2\kappa)^{-L_n}\right\}I.
\]
The measure depends only on $h$, and these bounds are uniform
over all systems with $b/a\leq\kappa$.
Hence $\mu\in\mathcal U_w$.

For each fixed $n$,
$M_{\tau_{L_n}\widetilde\nu_n}(R)/R\to1$.
For $R\geq1$, these quotients are bounded by three,
uniformly in $n$.
Dominated convergence in the definition of $\mu$ therefore gives
$M_\mu(R)/R\to1$.

For large $R$, let $m=\max\{n:L_n\leq R\}$.
If $R-L_n\geq e$, use the counting estimate for
$\widetilde\nu_n$.
Otherwise,
$N_{\widetilde\nu_n}(R-L_n)
\leq N_{\widetilde\nu_n}(e)\leq d_n$,
since $\varrho(e)=1$.
Thus, for all sufficiently large $R$,
\[
 N_\mu(R)
 \leq1+\varrho(R)\log R\sum_{n\leq m}d_n
 \leq1+\frac{\widetilde h(R)}4\log R
 \leq h(R)\log R.
\]
Since $\widetilde h(R)\leq R^{1/4}$, this also gives
$N_\mu(R)=O(R^{1/4}\log R)$.
Summing over the intervals $[2^k,2^{k+1})$ yields
\[
 \sum_{t\in\operatorname{supp}\mu\setminus\{0\}}\frac1t
 \leq\sum_{k\geq0}2^{-k}N_\mu(2^{k+1})<\infty.
\]
\end{proof}

The support and weights in Theorem~\ref{s4-universal-design}
are chosen together.
The obstruction in Appendix~\ref{app:prescribed-supports}
shows that a large sampling count alone does not guarantee
that a prescribed support admits a fully universal design.
When the support may be chosen, the growth condition on the
sampling count has the following equivalent formulations.

\begin{remark}
Let $F:[e,\infty)\to(0,\infty)$ be a function, not necessarily
monotone.
There exists $\mu\in\mathcal U_w$ satisfying
$N_\mu(R)\leq F(R)$ for all sufficiently large $R$
if and only if $F(R)/\log R\to\infty$.

Define
$\mathcal C(F)=\inf_{\mu\in\mathcal U_w}
\limsup_{R\to\infty}N_\mu(R)/F(R)$.
Then
\[
 \mathcal C(F)=
 \begin{cases}
  0,&F(R)/\log R\to\infty,\\
  +\infty,&\text{otherwise}.
 \end{cases}
\]
Restricting the infimum to measures supported on $\mathbb N_0$
gives the same values.
In the first case, zero is attained by such a measure.

For every $\mu\in\mathcal U_w$, there is a measure
$\nu\in\mathcal U_w$ supported on $\mathbb N_0$ such that
$N_\nu(R)=o(N_\mu(R))$.
\end{remark}

\begin{proof}
If $F(R)/\log R$ does not tend to infinity, there are $K>0$
and $R_j\to\infty$ with $F(R_j)\leq K\log R_j$.
Corollary~\ref{s4-superlog-necessary} then gives
$N_\mu(R_j)/F(R_j)\to\infty$ for every
$\mu\in\mathcal U_w$, proving necessity and
$\mathcal C(F)=+\infty$.

Otherwise, choose $R_0\geq e$ so that $F(R)/\log R\geq1$
for $R\geq R_0$, and put
$m(R)=\inf_{s\geq\max\{R,R_0\}}F(s)/\log s$ for $R\geq1$.
Then $m$ is finite, nondecreasing, at least one, and tends to
infinity. Theorem~\ref{s4-universal-design}, applied with
$h=\sqrt m$, gives a design supported on $\mathbb N_0$ with
$N_\mu(R)/F(R)\leq1/\sqrt{m(R)}\to0$ for all sufficiently
large $R$.
This proves sufficiency and attainment of zero in both classes.
Finally, apply the same construction with $F=N_\mu$ to obtain
$N_\nu=o(N_\mu)$ for each $\mu\in\mathcal U_w$.
\end{proof}

These comparisons concern sampling counts.
The new design is not required to retain the frame bounds
of the given design.

\subsection{Stability with respect to the condition number bound}
\label{s5-section}

We now fix a finite upper bound $\kappa\geq1$ on the original frame
condition number. We show that neither $C^*(\kappa)$ nor
$C^*_{\mathbb Z}(\kappa)$ is attained.

Every design in $\mathcal U_w(\kappa)$ also belongs to
$\mathcal U_w(\kappa')$ for some $\kappa'>\kappa$.
Together with a power transformation of the sampling times,
this allows us to reduce the counting coefficient of any design
for which it is finite.
The argument also provides the uniform estimates used in
Theorem~\ref{s6-strict-gap} under a prescribed upper bound on the
condition number after sampling.
Since the classes $\mathcal U_w(\kappa)$ decrease as $\kappa$
increases, $C^*(\kappa)$ is nondecreasing.

The case $\kappa=1$ requires a perturbation estimate for sampling
measures whose scalar multipliers are bounded above and bounded
away from zero. The following estimate applies to the entire
spectrum, whereas Lemma~\ref{lem:relative-perturbation} requires
the spectrum to lie near $1$.

\begin{lemma}
\label{s5-general-perturbation}
Suppose $0\leq A\leq I$, $E_A(\{1\})=0$, and $S=I+E$, where
$E=E^*$ and $\|E\|\leq\delta$ for some $0<\delta<1$.
Assume $B=S-ASA\geq0$.
Let $\mu$ be a sampling design satisfying
$m\leq\phi_\mu(r)\leq Q$ for $0\leq r<1$ and some $m,Q>0$.
Then the sampling series converges strongly to a bounded operator
$G_\mu$, and
\[
 \|G_\mu-\phi_\mu(A^2)\|
 \leq16Q\delta\left(\log_2\frac2\delta+2\right).
\]
\end{lemma}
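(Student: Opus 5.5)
We write $B=I-A^2+W$ with $W=E-AEA$, so that $G_\mu-\phi_\mu(A^2)=\int_{[0,\infty)}A^tWA^t\,d\mu(t)=:Z$. It then suffices to bound $\|Z\|$. The plan is a dyadic block decomposition in the spirit of Lemma~\ref{lem:relative-perturbation}, now covering the whole spectrum rather than only a neighbourhood of $1$.

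\emph{Blocks.} Set $u_i=2^{-i}$ and $P_i=E_A([1-u_i,1-u_i/2))$ for $i\geq1$, and $P_0=E_A([0,1/2))$. These projections sum strongly to $I$ because $E_A(\{1\})=0$. The identity $W_{ij}=(I-A_i)E_{ij}+A_iE_{ij}(I-A_j)$ gives $\|W_{ij}\|\leq\delta(u_i+u_j)$.

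\emph{Diagonal-type bound.} By the factorization \eqref{eq:block-cauchy}, with $U_i(t)=A_i^t$ and no normalization,
\[
\|Z_{ij}\|\leq\|W_{ij}\|\,\Big\|\int A_i^{2t}\,d\mu\Big\|^{1/2}\Big\|\int A_j^{2t}\,d\mu\Big\|^{1/2}.
\]
Since $\int A_i^{2t}\,d\mu=\phi_\mu(A_i^2)(I-A_i^2)^{-1}\leq Q\,(I-A_i^2)^{-1}\leq (4Q/(3u_i))I$, we get
\[
\|Z_{ij}\|\leq \tfrac43 Q\delta\,\frac{u_i+u_j}{\sqrt{u_iu_j}}\leq \tfrac83 Q\delta\,2^{|i-j|/2}.
\]
This estimate is good for $|i-j|$ small and grows with $|i-j|$.

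\emph{Far-off-diagonal bound.} For $|i-j|$ large we need decay instead. We do not split $W$; we bound the whole block. Write $Z_{ij}=P_i(G_\mu-\phi_\mu(A^2))P_j$. Since $G_\mu\geq0$, the Cauchy--Schwarz inequality for positive operators gives $\|P_iG_\mu P_j\|\leq\|P_iG_\mu P_i\|^{1/2}\|P_jG_\mu P_j\|^{1/2}$. The off-diagonal blocks of $\phi_\mu(A^2)$ vanish. Each diagonal block satisfies $\|P_iG_\mu P_i\|\leq Q(1+O(\delta))$ by the diagonal-type bound with $i=j$. This only gives $O(Q)$, with no decay. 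We therefore need a better estimate of the off-diagonal block of $\int A^tBA^t\,d\mu$. We use $\|B_{ij}\|\leq\|B_{ii}\|^{1/2}\|B_{jj}\|^{1/2}\leq 4\sqrt{u_iu_j}$, which follows from $\|S\|\leq2$. Combining this with \eqref{eq:block-cauchy} gives the bound $O(Q)\sqrt{u_iu_j}/\sqrt{u_iu_j}$, again without decay. Decay has to come from the mismatch of the time scales $1/u_i$ and $1/u_j$. Assume $u=u_i\gg v=u_j$. We split time at $T_0\approx c\log(\cdot)/u$, as in Lemma~\ref{lem:relative-perturbation}.
\begin{itemize}
\item On $[0,T_0]$ we bound the $j$-factor by $\mu([0,T_0])\lesssim QT_0$; the upper bound $\phi_\mu\leq Q$ gives $M_\mu(R)\lesssim QR$, as in Proposition~\ref{prop:tight}.
\item On $(T_0,\infty)$ we use the exponential decay $\|A_i^t\|\leq e^{-tu/2}$.
\end{itemize}
Together these should give $\|Z_{ij}\|\lesssim Q\sqrt{v/u}\,\log(u/v)\approx Q\,2^{-d/2}d$, where $d=|i-j|$. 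The logarithmic factor is what produces the $\log_2(2/\delta)$ in the final estimate. This also explains why no lower bound on $\phi_\mu$ appears in the constant: the normalization by $D^{-1/2}$ used earlier is not needed, and $m>0$ serves only to make the objects well defined.

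\emph{Summation.} Finally we apply the Schur test to the block matrix with the bound
\[
\|Z_{ij}\|\leq Q\min\{\tfrac83\delta\,2^{d/2},\,C\,2^{-d/2}\}.
\]
The crossover occurs at $d\approx\log_2(1/\delta)$. This is the step where summing the row gives $Q\delta(\log_2(2/\delta)+2)$ with explicit constant $16$. One must bound the terms with $d$ below the crossover by $\delta$ times a linear count, instead of the geometric growth that the naive estimate produces. That requires a sharper near-diagonal bound, namely $\|Z_{ij}\|\lesssim Q\delta$ uniformly in $d$. To get it, I would first try expanding $W_{ij}$ and absorbing the factor $(I-A_i)$ on the $i$ side and $(I-A_j)$ on the $j$ side, giving
\[
\|Z_{ij}\|\leq\|E_{ij}\|\,\Big(\|(I-A_i)\textstyle\int A_i^{2t}d\mu\,(I-A_i)\|^{1/2}\|\textstyle\int A_j^{2t}d\mu\|^{1/2}+\cdots\Big).
\]
The first factor is $O(Q)$, but the second is $O(Q/u_j)$, so this still needs care. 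This is the main obstacle: obtaining a uniform $O(Q\delta)$ bound per block, so that summing about $\log_2(2/\delta)$ near blocks, plus the decaying tail, yields the stated estimate. Strong convergence of the sampling series follows at the end from the resulting norm bound together with monotonicity, as in Proposition~\ref{prop:quadrature}.
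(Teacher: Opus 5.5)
Your proposal has a genuine gap, and it is the one you flag at the end: you never obtain a near-diagonal block bound of order $Q\delta$ that is uniform in $d=|i-j|$. Every estimate you try for $Z_{ij}$ goes through the factorization \eqref{eq:block-cauchy}. That factorization replaces $\sum_t w_t\|A_i^t\|\,\|A_j^t\|$ by the geometric mean of $\|\int A_i^{2t}\,d\mu\|\lesssim Q/u_i$ and $\|\int A_j^{2t}\,d\mu\|\lesssim Q/u_j$, that is, by $Q/\sqrt{u_iu_j}$. Against $\|W_{ij}\|\le\delta(u_i+u_j)$ this necessarily produces the growing factor $2^{d/2}$. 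The Schur sum of $\min\{\delta2^{d/2},2^{-d/2}\}$ is then of order $\sqrt\delta$, which is exactly what happens in Lemma~\ref{lem:relative-perturbation}. So, as written, your argument gives only $\|Z\|=O(Q\sqrt\delta)$, weaker than the claimed $16Q\delta(\log_2(2/\delta)+2)$ for small $\delta$. Your remark that $\log_2(2/\delta)$ comes from a logarithmic factor in the far-off-diagonal bound is also mistaken. It comes from counting the roughly $2\log_2(2/\delta)$ distances $d$ at which the uniform bound $O(Q\delta)$ is smaller than the decaying one.

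The missing idea is to drop Cauchy--Schwarz and use the plain operator-norm bound. With $r_i=1-u_i/2$ one has $\|A_i\|\le r_i$, so
\[
 \Bigl\|\sum_{t}w_tA_i^tW_{ij}A_j^t\Bigr\|
 \le\|W_{ij}\|\sum_t w_t(r_ir_j)^t
 =\|W_{ij}\|\frac{\phi_\mu(r_ir_j)}{1-r_ir_j}
 \le\delta(u_i+u_j)\frac{4Q}{u_i+u_j}=4Q\delta,
\]
using $1-r_ir_j\ge(u_i+u_j)/4$. The denominator is now of order $\max\{u_i,u_j\}$ rather than $\sqrt{u_iu_j}$, because $\|A_i^t\|\,\|A_j^t\|$ decays at the faster of the two rates, and this cancels $u_i+u_j$ exactly.

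The same one-line estimate works for the decaying bound. For $i\neq j$, $Z_{ij}$ equals the $B$-block sum, since $\phi_\mu(A^2)$ is block diagonal. Using $\|B_{ij}\|\le2\beta\sqrt{u_iu_j}$ and $1-r_ir_j\ge\max\{u_i,u_j\}/2$ gives $\|Z_{ij}\|\le4\beta Q\,2^{-|i-j|/2}$, so your time splitting at $T_0$ is unnecessary. Both bounds hold for every finite subsum, so the block Schur test also yields the strong convergence. Finally, sum $4Q\min\{\delta,\beta2^{-d/2}\}$ over $d$: bound the first $\lfloor2\log_2(2/\delta)\rfloor$ terms by $\delta$ and the geometric tail by a multiple of $\delta$. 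This gives the stated constant.
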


\begin{proof}
Write $\mu=\sum_{t\in T}w_t\delta_t$ and set
$K_\mu(r)=\sum_{t\in T}w_tr^t$, with $0^0=1$.
For $i\geq0$, set $u_i=2^{-i}$,
$P_i=E_A([1-u_i,1-u_i/2))$, $H_i=P_iH$, $A_i=A|_{H_i}$,
and $r_i=1-u_i/2$.
The projections $P_i$ sum strongly to $I$, and
$\|I-A_i\|\leq u_i$ and $\|A_i\|\leq r_i<1$.
For a bounded operator $X$, write $X_{ij}=P_iX|_{H_j}$.
Set $\beta=\|S\|\leq2$ and $W=E-AEA$.
Then
\(
 W_{ij}=(I-A_i)E_{ij}+A_iE_{ij}(I-A_j), \|W_{ij}\|\leq\delta(u_i+u_j).
\)
Since $1-r_ir_j\geq(u_i+u_j)/4$, each block series converges
absolutely in norm, with
\[
 \left\|\sum_{t\in T}w_tA_i^tW_{ij}A_j^t\right\|
 \leq\|W_{ij}\|K_\mu(r_ir_j)\leq4Q\delta.
\]
The Stein identity gives $\|B_{ii}\|\leq2\beta u_i$, and
positivity of $B$ implies
$\|B_{ij}\|\leq2\beta\sqrt{u_iu_j}$.
Using $1-r_ir_j\geq\max\{u_i,u_j\}/2$, we obtain
\begin{equation}\label{s5-decaying-block-bound}
 \left\|\sum_{t\in F}w_tA_i^tB_{ij}A_j^t\right\|
 \leq4\beta Q\,2^{-|i-j|/2},
 \qquad F\subset T \text{ finite}.
\end{equation}
The block Schur test bounds the positive finite sums uniformly
in $F$, so they converge strongly to a bounded operator $G_\mu$.
Its blocks are the corresponding absolutely convergent series.

Assign any finite value to $\phi_\mu(1)$ and set
$G_0=\phi_\mu(A^2)$.
Since $E_A(\{1\})=0$, this choice does not affect $G_0$.
The spectral theorem gives $mI\leq G_0\leq QI$, and
monotone convergence yields
\(
 \sum_{t\in T}w_tA^t(I-A^2)A^t=G_0
\)
in the strong operator topology.
Since $G_0$ commutes with each $P_i$, the identity
$B=(I-A^2)+W$ and the preceding block estimates imply that
$\Delta=G_\mu-G_0$ satisfies
\[
 \|\Delta_{ii}\|\leq4Q\delta,\qquad
 \|\Delta_{ij}\|\leq4Q\min\{\delta,\beta2^{-|i-j|/2}\},
 \qquad i\ne j.
\]
Set $t_\delta=\log_2(2/\delta)$ and
$D_\delta=\lfloor2t_\delta\rfloor$.
Since $\beta\leq2$,
\[
 \sum_{d\geq1}\min\{\delta,\beta2^{-d/2}\}
 \leq D_\delta\delta+2\sum_{d>D_\delta}2^{-d/2}
 \leq2t_\delta\delta+\frac{\delta}{1-2^{-1/2}}.
\]
A second application of the block Schur test yields
\[
 \|\Delta\|
 \leq4Q\delta
       +8Q\sum_{d\geq1}\min\{\delta,\beta2^{-d/2}\}
 \leq Q\delta\left(16t_\delta+4+\frac8{1-2^{-1/2}}\right)
 \leq16Q\delta(t_\delta+2),
\]
where the last inequality uses $4+8/(1-2^{-1/2})<32$.
\end{proof}

The decay estimate \eqref{s5-decaying-block-bound} uses $S-ASA\geq0$. The lemma gives no boundedness conclusion for arbitrary selfadjoint perturbations without this assumption.

The following quantitative estimate will also be used when a
uniform bound on the sampled condition number is prescribed.

\begin{lemma}
\label{lem:gramian-interpolation}
Let $\mu\in\mathcal U_w(1)$ and
$Q\geq\sup_{0\leq z<1}\phi_\mu(z)$.
Suppose $0<\delta<1$ and $\ell>0$ satisfy $G_\mu\geq\ell I$
for every system with $(1-\delta)I\leq S\leq(1+\delta)I$.
For $\delta<\delta'<1$, set $\theta=\delta/\delta'$.
Every system with $(1-\delta')I\leq S\leq(1+\delta')I$
then satisfies $G_\mu\geq[\ell-(1-\theta)Q]I/\theta$.
\end{lemma}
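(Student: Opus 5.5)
The plan is an interpolation (convexity) argument in $S$. Fix a system with $(1-\delta')I\leq S\leq(1+\delta')I$ and write $S=I+E'$ with $\|E'\|\leq\delta'$. Set $B'=S-ASA\geq0$. The identity system, with Gramian $I$ and $B_0=I-A^2\geq0$, uses the same operator $A$. Define the convex combination
\[
 S_\theta=\theta S+(1-\theta)I=I+\theta E',\qquad
 B_\theta=\theta B'+(1-\theta)(I-A^2)=S_\theta-AS_\theta A\geq0 .
\]
Then $\|\theta E'\|\leq\theta\delta'=\delta$, so $(1-\delta)I\leq S_\theta\leq(1+\delta)I$.

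Next I would realize $S_\theta$ as the integer Gramian of a genuine system with the same $A$. Take $C_\theta=B_\theta^{1/2}$, with $Y=H$ or $Y\oplus H$. Since $E_A(\{1\})=0$ (Lemma~\ref{lem:stein}) and $A^n\to0$ strongly, iterating the Stein identity gives $\sum_{n<N}A^nB_\theta A^n=S_\theta-A^NS_\theta A^N\to S_\theta$ strongly. Hence this is an admissible system in the $\delta$-class, and the hypothesis yields $G_\mu^{(\theta)}\geq\ell I$.

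Because $B\mapsto\sum_t w_tA^tBA^t$ is linear and positive, the sampled Gramian is affine in $B$ for fixed $A$:
\[
 G_\mu^{(\theta)}=\theta G_\mu+(1-\theta)G_\mu^{(0)},
 \qquad G_\mu^{(0)}=\phi_\mu(A^2).
\]
The second formula is Proposition~\ref{prop:tight} with $S=I$, and it gives $G_\mu^{(0)}\leq QI$. Therefore $\theta G_\mu\geq\ell I-(1-\theta)QI$, which is the claim.

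The main obstacle is justifying this linearity when $G_\mu$ for the $\delta'$-system is not yet known to be bounded. I would argue at the level of quadratic forms with values in $[0,\infty]$, where linearity holds termwise by Tonelli. This already gives the lower bound $\langle G_\mu x,x\rangle\geq[\ell-(1-\theta)Q]\|x\|^2/\theta$, since the $\theta$-combination is finite. Boundedness of $G_\mu$ itself follows from Proposition~\ref{prop:mass}, whose hypotheses hold for $\mu\in\mathcal U_w(1)$ by Proposition~\ref{prop:tight}. The other point to check is that the statement concerns arbitrary systems in the class rather than normalized ones. The classes are described through $S$ itself, so no rescaling is needed.
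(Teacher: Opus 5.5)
Your proposal is correct and follows the paper's proof essentially step for step. You interpolate to $S_\theta=I+\theta E$ with $B_\theta=\theta B+(1-\theta)(I-A^2)$, use the Stein identity and strong stability to place it in the $\delta$-class, and conclude via linearity in the observation form and $\phi_\mu(A^2)\le QI$. Your extended-valued quadratic-form remark only refines the paper's appeal to Propositions~\ref{prop:tight} and~\ref{prop:mass} for boundedness of $G_\mu$.
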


\begin{proof}
Write $S=I+E$ and set $S_\theta=I+\theta E$.
Its observation quadratic form is
$B_\theta=\theta B+(1-\theta)(I-A^2)\geq0$.
The Stein identity and strong stability give integer Gramian
$S_\theta$, so its sampled Gramian is at least $\ell I$.
Propositions~\ref{prop:tight} and \ref{prop:mass} ensure that
$G_\mu$ is bounded. Linearity in the observation quadratic form
gives $\theta G_\mu+(1-\theta)\phi_\mu(A^2)\geq\ell I$.
Since $\phi_\mu(A^2)\leq QI$, the assertion follows.
\end{proof}

\begin{theorem}
\label{s5-budget-stability}
For every $1\leq\kappa<\infty$,
\begin{equation}\label{s5-class-union}
 \mathcal U_w(\kappa)
 =\bigcup_{\kappa'>\kappa}\mathcal U_w(\kappa').
\end{equation}
Consequently, $C^*$ is right continuous on $[1,\infty)$.
\end{theorem}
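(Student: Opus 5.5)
The inclusion $\bigcup_{\kappa'>\kappa}\mathcal U_w(\kappa')\subseteq\mathcal U_w(\kappa)$ is immediate, since the classes decrease as $\kappa$ increases. So the plan is to fix $\mu\in\mathcal U_w(\kappa)$ and find some $\kappa'>\kappa$ with $\mu\in\mathcal U_w(\kappa')$. The upper frame bound costs nothing. Since $\mu\in\mathcal U_w(1)$, Proposition~\ref{prop:tight} verifies the hypotheses of Proposition~\ref{prop:mass}, which gives $G_\mu\leq b(w_0+\sqrt2K_+)I$ for every system. Set $Q=\sup_{0\leq z<1}\phi_\mu(z)<\infty$. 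Only a uniform positive lower bound remains to be proved. By scaling, I work with systems normalized by $I\leq S\leq\kappa'I$, and by the direct-sum remark before Lemma~\ref{lem:stein} it suffices to produce one explicit constant.

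\emph{Case $\kappa>1$.} I would use convex interpolation toward the tight system, as in the proof of Lemma~\ref{lem:gramian-interpolation}. Given a system $(A,C)$ with $I\leq S\leq\kappa'I$, I define a new system with the same $A$ and observation form
\[
 B_\theta=\theta B+(1-\theta)(I-A^2)\geq0,\qquad 0<\theta<1.
\]
By the Stein identity and strong stability, its integer Gramian is $S_\theta=\theta S+(1-\theta)I$. This satisfies $I\leq S_\theta\leq(\theta\kappa'+1-\theta)I$. I choose $\theta=(\kappa-1)/(\kappa'-1)$, so that $\operatorname{cond}(S_\theta)\leq\kappa$ and $1-\theta=(\kappa'-\kappa)/(\kappa'-1)$.

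By the definition of $\mathcal U_w(\kappa)$ there is $\alpha>0$, depending only on $\mu$ and $\kappa$, with $\theta G_\mu+(1-\theta)\phi_\mu(A^2)\geq\alpha I$ for the interpolated system. The sampled Gramian is linear in the observation form, and $\phi_\mu(A^2)$ is the sampled Gramian of $I-A^2$ by Proposition~\ref{prop:tight}. Since $\phi_\mu(A^2)\leq QI$, this gives
\[
 G_\mu\geq\frac{\alpha-(1-\theta)Q}{\theta}\,I .
\]
As $\kappa'\downarrow\kappa$ we have $1-\theta\to0$, so any $\kappa'$ close enough to $\kappa$ yields a positive bound, uniformly over the normalized class.

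\emph{Case $\kappa=1$.} Interpolating toward $I$ cannot reach condition number $1$, so the argument above fails. This is the main obstacle, and Lemma~\ref{s5-general-perturbation} handles it. Let $m=\inf_{0\leq z<1}\phi_\mu(z)>0$. For systems with $(1-\delta)I\leq S\leq(1+\delta)I$, that lemma gives
\[
 \|G_\mu-\phi_\mu(A^2)\|\leq16Q\delta\bigl(\log_2(2/\delta)+2\bigr).
\]
Choosing $\delta>0$ so small that the right side is at most $m/2$ yields $G_\mu\geq(m/2)I$. Any system with $\operatorname{cond}(S)\leq\kappa'=(1+\delta)/(1-\delta)$ can be rescaled into this form, so $\mu\in\mathcal U_w(\kappa')$, and \eqref{s5-class-union} follows.

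For right continuity, $C^*$ is nondecreasing, so $\lim_{\kappa'\downarrow\kappa}C^*(\kappa')$ exists and is at least $C^*(\kappa)$. Conversely, for any $\mu\in\mathcal U_w(\kappa)$, \eqref{s5-class-union} gives $\mu\in\mathcal U_w(\kappa')$ for some $\kappa'>\kappa$. Hence $C^*(\kappa'')\leq C^*(\kappa')\leq D(\mu)$ for all $\kappa<\kappa''\leq\kappa'$, so the limit is at most $D(\mu)$. Taking the infimum over $\mu$ gives $\lim_{\kappa'\downarrow\kappa}C^*(\kappa')=C^*(\kappa)$.
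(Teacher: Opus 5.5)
Your proof is correct and follows the paper's argument: for $\kappa>1$ you use convex interpolation toward the tight system (the content of Lemma~\ref{lem:gramian-interpolation}), for $\kappa=1$ you use Lemma~\ref{s5-general-perturbation}, and right continuity follows from monotonicity of $C^*$. The only difference is cosmetic: you interpolate under the one-sided normalization $I\leq S\leq\kappa' I$ with $\theta=(\kappa-1)/(\kappa'-1)$, whereas the paper uses the centered normalization $\|S-I\|\leq\delta'$ with $\theta=\delta/\delta'$.
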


\begin{proof}
Only the inclusion from left to right in \eqref{s5-class-union}
requires proof. Fix $\mu\in\mathcal U_w(\kappa)$.
Since $\mu\in\mathcal U_w(1)$, Proposition~\ref{prop:tight} gives
$m,Q>0$ such that $m\leq\phi_\mu(r)\leq Q$ for $0\leq r<1$.
It also ensures that Proposition~\ref{prop:mass} applies, giving
an upper bound for $G_\mu$ depending only on $\mu$ and the
original upper frame bound.
It remains to obtain a uniform lower bound for some
$\kappa'>\kappa$.

Suppose first that $\kappa>1$ and set
$\delta=(\kappa-1)/(\kappa+1)$.
Since $\mu\in\mathcal U_w(\kappa)$, there is $\ell>0$ such that
$G_\mu\geq\ell I$ for every system with
$(1-\delta)I\leq S\leq(1+\delta)I$.
For $\delta<\delta'<1$, Lemma~\ref{lem:gramian-interpolation}
gives $G_\mu\geq[\ell-(1-\theta)Q]I/\theta$ on the larger
interval $(1-\delta')I\leq S\leq(1+\delta')I$, where
$\theta=\delta/\delta'$.
Choose $\delta'$ sufficiently close to $\delta$ that this lower
bound is positive, and put
$\kappa'=(1+\delta')/(1-\delta')>\kappa$.
If $aI\leq S\leq bI$ and $b/a\leq\kappa'$, then
$\|S/c-I\|\leq\delta'$ for $c=(a+b)/2$.
Rescaling the observation by $c^{-1/2}$ also divides $G_\mu$
by $c$. Thus the lower bound above and
Proposition~\ref{prop:mass} give
$\mu\in\mathcal U_w(\kappa')$.

Now suppose $\kappa=1$.
For $0<\delta<1$, Lemma~\ref{s5-general-perturbation} gives
\[
 G_\mu\geq
 \left[m-16Q\delta\left(\log_2\frac2\delta+2\right)\right]I
\]
for every system with $S=I+E$ and $\|E\|\leq\delta$.
Choose $\delta$ so small that
$16Q\delta(\log_2(2/\delta)+2)<m/2$.
The same rescaling argument, together with the upper bound,
gives $\mu\in\mathcal U_w(\kappa')$ for
$\kappa'=(1+\delta)/(1-\delta)>1$.
This proves \eqref{s5-class-union}.

Taking the infimum of $D(\mu)$ over \eqref{s5-class-union} gives
$C^*(\kappa)=\inf_{\kappa'>\kappa}C^*(\kappa')$.
Since $C^*$ is nondecreasing, it is right continuous.
\end{proof}

The larger bound $\kappa'>\kappa$ on the original frame condition number may depend on $\mu$, even when $\kappa=1$.
Thus the identity $\mathcal U_w(1)=\bigcup_{\kappa'>1}\mathcal U_w(\kappa')$ is consistent with the strict inclusion
$\mathcal U_w(\kappa')\subsetneq\mathcal U_w(1)$ for every fixed $\kappa'>1$.

\subsection{Power transformations of sampling times and nonattainment}
\label{s5-power-subsection}

For each $\mu\in\mathcal U_w(\kappa)$,
Theorem~\ref{s5-budget-stability} gives $\kappa'>\kappa$ such that
$\mu\in\mathcal U_w(\kappa')$.
We use this margin to replace $t$ by $t^\alpha$ for some $\alpha>1$,
dividing a finite logarithmic counting coefficient by $\alpha$.
To keep the resulting design in $\mathcal U_w(\kappa)$, we compare
the original system with one obtained by taking a fractional power
of its generator. The next two lemmas give estimates independent
of the spectrum and its multiplicities.

\begin{lemma}
\label{s5-fractional}
Let $L,T\geq0$ be bounded operators, with $E_L(\{0\})=0$,
$\|L\|\leq s$ for some $0<s\leq1$, and $B=LT+TL=C^*C$.
For $1/2\leq\beta\leq1$, set $M=L^\beta$.
The operator $CL^{(\beta-1)/2}$ has a bounded extension $C_\beta$.
There are absolute constants $c_0,c_1>0$ such that
\begin{equation}\label{s5-cbeta-bound}
 \|C_\beta\|^2\leq c_0\|T\|s^\beta.
\end{equation}
The integral
$T_\beta=\int_0^\infty e^{-tM}C_\beta^*C_\beta e^{-tM}\,dt$
and the series
$S_\beta=\sum_{n\geq0}e^{-nM}C_\beta^*C_\beta e^{-nM}$
converge strongly to bounded operators and satisfy
$\|T_\beta-T\|\leq c_1\|T\|(1-\beta)$ and
$\|S_\beta-T_\beta\|\leq2\|T_\beta\|s^\beta$.
\end{lemma}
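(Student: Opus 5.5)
The plan is to reduce everything to dyadic spectral blocks of $L$, as in Lemmas~\ref{lem:complex-energy} and \ref{s5-general-perturbation}, and to use positivity of $B=LT+TL$ to get off-diagonal decay. Set $P_j=E_L([2^{-j-1}s,2^{-j}s))$ for $j\geq0$. Since $E_L(\{0\})=0$, these projections sum strongly to $I$. Write $L_j=L|_{P_jH}$ and $X_{ij}=P_iX|_{P_jH}$ for the blocks of a bounded operator $X$. The diagonal identity $B_{jj}=L_jT_{jj}+T_{jj}L_j$ gives $\|B_{jj}\|\leq2\|T\|2^{-j}s$. Positivity of $B$ then gives $\|B_{ij}\|\leq2\|T\|s\,2^{-(i+j)/2}$.

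\emph{Bound \eqref{s5-cbeta-bound}.} On vectors with finitely many nonzero spectral components, $\|CL^{(\beta-1)/2}x\|^2=\langle B_\beta x,x\rangle$, where $B_\beta=L^{(\beta-1)/2}BL^{(\beta-1)/2}$. Its blocks satisfy
\[
 \|(B_\beta)_{ij}\|\leq c\|T\|s^\beta2^{-(i+j)\beta/2}.
\]
Because $\beta\geq1/2$, the row sums $\sum_j2^{-(i+j)\beta/2}$ are bounded by an absolute constant. The block Schur test gives $\|B_\beta\|\leq c_0\|T\|s^\beta$, and density produces the bounded extension $C_\beta$.

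\emph{Estimate for $T_\beta-T$.} First note that $e^{-tL}Te^{-tL}\to0$ strongly, because $E_L(\{0\})=0$. Integrating the derivative of $e^{-tL}Te^{-tL}$ therefore gives the representation
\[
 T_{ij}=\int_0^\infty e^{-tL_i}B_{ij}e^{-tL_j}\,dt .
\]
This yields $\|T_{ij}\|\lesssim\|T\|2^{-|i-j|/2}$. Similarly,
\[
 (T_\beta)_{ij}=\int_0^\infty e^{-tM_i}(B_\beta)_{ij}e^{-tM_j}\,dt,\qquad M_i=L_i^\beta ,
\]
and this integral converges by the block bounds above. Formally, $T_\beta$ arises from $T$ by the double-operator (Schur) multiplier
\[
 m_\beta(\lambda,\nu)=\frac{(\lambda+\nu)(\lambda\nu)^{(\beta-1)/2}}{\lambda^\beta+\nu^\beta}
 =\frac{\cosh(x/2)}{\cosh(\beta x/2)},\qquad x=\log(\lambda/\nu).
\]
The multiplier $1-m_\beta$ is not globally bounded: it grows like $e^{(1-\beta)|x|/2}$. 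So the estimate must be done block by block. On the block $(i,j)$ with $d=|i-j|$, the variable $x$ ranges over an interval of length $O(1)$ around $\pm d\log2$. There I will write $1-m_\beta$ as a Fourier integral in $x$ (or in $\log\lambda$ and $\log\nu$ separately). Each exponential $e^{i\xi\log\lambda}e^{-i\xi\log\nu}$ factors as a product of unitaries on the left and right, so the block multiplier norm is bounded by the $L^1$ norm of the Fourier transform. A smooth cutoff adapted to the block keeps this finite, giving a bound of order $(1-\beta)(1+d)e^{(1-\beta)d}$. Multiplying by $\|T_{ij}\|\lesssim2^{-d/2}$ and using $1-\beta\leq1/2$ gives summable terms of size $(1-\beta)(1+d)2^{-d/4}$. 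A final block Schur test gives $\|T_\beta-T\|\leq c_1\|T\|(1-\beta)$. I expect this localized multiplier estimate, uniform in the spectrum and multiplicities, to be the main obstacle. If the Fourier route is awkward, I would instead differentiate the integral representation of $(T_\beta)_{ij}$ in $\beta$ and bound the derivative block-wise.

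\emph{Estimate for $S_\beta-T_\beta$.} Here I follow Lemma~\ref{lem:stein}. Tonelli's theorem gives
\[
 T_\beta=\int_0^1e^{-uM}S_\beta e^{-uM}\,du ,
\]
and also $T_\beta=\sum_ne^{-nM}Re^{-nM}$ with $R=\int_0^1e^{-uM}C_\beta^*C_\beta e^{-uM}\,du$. I will compare $S_\beta$ and $T_\beta$ through the differential identity
\[
 \frac{d}{du}\bigl(e^{-uM}Xe^{-uM}\bigr)=-e^{-uM}(MX+XM)e^{-uM},
\]
combined with $\|M\|\leq s^\beta$, to obtain $\|S_\beta-T_\beta\|\leq2\|T_\beta\|s^\beta$. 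If only the weaker bound with $\|S_\beta\|$ in place of $\|T_\beta\|$ comes out directly, I would absorb it using $s^\beta\leq1$ together with the energy-type bound of Proposition~\ref{prop:mass} adapted to the continuous semigroup $e^{-tM}$.
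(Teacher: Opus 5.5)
Your bound on $\|C_\beta\|$ follows the paper: dyadic bands of $L$, then $\|B_{ij}\|\le2\|T\|\sqrt{m_im_j}$ by positivity, then the weights $L^{(\beta-1)/2}$. Your Schur test replaces the paper's Cauchy--Schwarz over bands and is equivalent. Take the bands as $(2^{-j-1}s,2^{-j}s]$, however: your half-open $[2^{-j-1}s,2^{-j}s)$ miss $E_L(\{s\})$, which is nonzero when $s=\|L\|$ is an eigenvalue.

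For $\|T_\beta-T\|$ your main route genuinely differs from the paper's. The paper rescales the block by $m=m_i$ and writes $(T_\beta)_{ij}=m^{-1}\int_0^\infty X_i^{(\beta-1)/2}e^{-uX_i^\beta}B_{ij}X_j^{(\beta-1)/2}e^{-uX_j^\beta}\,du$. It then differentiates in $\beta$ under the integral to get $\|\partial_\beta(T_\beta)_{ij}\|\le c\|T\|(d+1)2^{-\beta d/2}$, integrates from $\beta$ to $1$, and shows $T_1=T$ by strong stability. This is exactly your fallback. Your multiplier $m_\beta=\cosh(x/2)/\cosh(\beta x/2)$ is correct and explains the exponents, but it needs a step the paper avoids. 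You must show that the block of $T_\beta-T$ equals $\int\widehat g(\xi)L_i^{i\xi}T_{ij}L_j^{-i\xi}\,d\xi$, for a cut-off $g$ agreeing with $1-m_\beta$ on the block's spectral rectangle. That is a uniqueness statement for double operator integrals.

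Fix also the bookkeeping. From your own growth rate $e^{(1-\beta)|x|/2}$ with $|x|\le(d+1)\log2$, the block multiplier norm is $O\bigl((1-\beta)(1+d)2^{(1-\beta)(d+1)/2}\bigr)$. The bound $(1-\beta)(1+d)e^{(1-\beta)d}$ you wrote, multiplied by $2^{-d/2}$, grows like $(e/2)^{d/2}$ at $\beta=1/2$ and does not sum. You also need the Schur test on the truncated integrals (decay $2^{-\beta d/2}$, using $\beta\ge1/2$) to get strong convergence of $T_\beta$ globally, not only blockwise.

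The estimate for $S_\beta-T_\beta$ has a genuine gap. Put $X=S_\beta$ in $T_\beta=\int_0^1e^{-uM}S_\beta e^{-uM}\,du$ and use the identity $\frac{d}{du}(e^{-uM}Xe^{-uM})=-e^{-uM}(MX+XM)e^{-uM}$ in operator norm. This gives only $\|S_\beta-T_\beta\|\le s^\beta\|S_\beta\|$, and it presupposes that $S_\beta$ is bounded, which is part of what must be proved. Absorbing through $\|S_\beta\|\le\|T_\beta\|+\|S_\beta-T_\beta\|$ gives $s^\beta(1-s^\beta)^{-1}\|T_\beta\|$. That is at most $2s^\beta\|T_\beta\|$ only when $s^\beta\le1/2$, and it is useless as $s\to1$. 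An analogue of Proposition~\ref{prop:mass} for $(M,C_\beta)$ (which first needs $B_\beta=MT_\beta+T_\beta M$) gives boundedness of $S_\beta$, but with a constant larger than $2$ for $s$ near $1$.

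The missing idea is to keep the quadratic form and use Cauchy--Schwarz twice. Set $F_x(t)=\|C_\beta e^{-tM}x\|^2$. Since $M$ commutes with $e^{-tM}$, $|F_x'(t)|\le2\|C_\beta e^{-tM}Mx\|\,\|C_\beta e^{-tM}x\|$. Hence $\int_0^\infty|F_x'(t)|\,dt\le2\langle T_\beta x,x\rangle^{1/2}\langle T_\beta Mx,Mx\rangle^{1/2}\le2\|T_\beta\|s^\beta\|x\|^2$. Comparing $F_x(n)$ with $\int_n^{n+1}F_x$ on each unit interval then bounds all partial sums uniformly, so the series converges strongly. It also yields $\|S_\beta-T_\beta\|\le2\|T_\beta\|s^\beta$ for every $0<s\le1$. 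A weaker constant would suffice in Theorem~\ref{s5-power-dilution}, where $s$ is chosen small, but it does not prove the lemma as stated.
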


\begin{proof}
Set $m_j=s2^{-j}$, $P_j=E_L((m_j/2,m_j])$, and
$L_j=L|_{P_jH}$ for $j\geq0$.
The projections $P_j$ sum strongly to $I$.
For a bounded operator $X$, write $X_{ij}=P_iX|_{P_jH}$.
Since $B=LT+TL$, we have $\|B_{jj}\|\leq2\|T\|m_j$.
Positivity of $B$ therefore gives
\begin{equation}\label{s5-b-blocks}
 \|B_{ij}\|\leq2\|T\|\sqrt{m_im_j}.
\end{equation}
Put $p=(\beta-1)/2$.
Then
$\|L_i^pB_{ij}L_j^p\|
 \leq2^{2-\beta}\|T\|m_i^{\beta/2}m_j^{\beta/2}$.
For vectors supported on finitely many bands,
\[
 \|CL^px\|^2
 \leq2^{2-\beta}\|T\|
       \left(\sum_jm_j^{\beta/2}\|P_jx\|\right)^2
 \leq2^{2-\beta}\|T\|
       \left(\sum_jm_j^\beta\right)\|x\|^2
 \leq c_0\|T\|s^\beta\|x\|^2.
\]
By spectral approximation, this estimate holds on
$\mathcal D(L^p)$.
Since this domain is dense, $CL^p$ has a unique bounded extension
$C_\beta$, satisfying \eqref{s5-cbeta-bound}.
Set $B_\beta=C_\beta^*C_\beta$.

Fix $i\leq j$ and put $d=j-i$, $m=m_i$,
$X_i=L_i/m$, and $X_j=L_j/m$.
Their spectra lie in $[1/2,1]$ and
$[2^{-d-1},2^{-d}]$, respectively.
The change of variables $u=tm^\beta$ gives the absolutely
convergent block integral
\[
 (T_\beta)_{ij}
 =m^{-1}\int_0^\infty
 X_i^{(\beta-1)/2}e^{-uX_i^\beta}B_{ij}
 X_j^{(\beta-1)/2}e^{-uX_j^\beta}\,du.
\]
The factor $m^{-1}$ is independent of $\beta$.
Using \eqref{s5-b-blocks}, $X_i^\beta\geq I/2$,
$\|X_i^{(\beta-1)/2}\|\leq2^{(1-\beta)/2}$, and
$\|X_j^{(\beta-1)/2}\|\leq2^{(d+1)(1-\beta)/2}$, we obtain
$\|(T_\beta)_{ij}\|\leq c\|T\|2^{-\beta d/2}$.
The same bound holds for truncated integrals.
Since $\beta\geq1/2$, the block Schur test bounds these positive
operators uniformly, so they converge strongly to a bounded
operator $T_\beta$.

Functional calculus gives
\[
 \frac{\partial}{\partial\beta}
 \left(X^{(\beta-1)/2}e^{-uX^\beta}\right)
 =X^{(\beta-1)/2}e^{-uX^\beta}
     \left(\frac12\log X-uX^\beta\log X\right).
\]
The last factor is bounded in norm by $c(1+u)$ for $X_i$
and by $c(d+1)(1+u)$ for $X_j$.
Together with the preceding estimates, these bounds justify
differentiation under the integral by dominated convergence and give
$\|\partial_\beta(T_\beta)_{ij}\|
 \leq c\|T\|(d+1)2^{-\beta d/2}$.
The reverse blocks satisfy the same bound by selfadjointness.
Integrating from $\beta$ to one and applying the block Schur test,
using $\sum_{d\in\mathbb Z}(|d|+1)2^{-|d|/4}<\infty$, yields
$\|T_\beta-T_1\|\leq c_1\|T\|(1-\beta)$.
Since
$\int_0^R e^{-tL}Be^{-tL}\,dt=T-e^{-RL}Te^{-RL}$
and $E_L(\{0\})=0$, strong stability gives $T_1=T$.

The defining integral, boundedness of $M$, and strong stability
also give $B_\beta=MT_\beta+T_\beta M$.
For $F_x(t)=\|C_\beta e^{-tM}x\|^2$, Cauchy--Schwarz gives
\[
 \int_0^\infty|F_x'(t)|\,dt
 \leq2\langle T_\beta x,x\rangle^{1/2}
        \langle T_\beta Mx,Mx\rangle^{1/2}
 \leq2\|T_\beta\|s^\beta\|x\|^2.
\]
For every integer $N\geq0$, comparison on each interval
$[n,n+1]$ gives
\[
 \left|\sum_{n=0}^N F_x(n)-\int_0^{N+1}F_x(t)\,dt\right|
 \leq\int_0^{N+1}|F_x'(t)|\,dt
 \leq2\|T_\beta\|s^\beta\|x\|^2.
\]
The positive partial sums are therefore uniformly bounded and
converge strongly to $S_\beta$.
Passing to the limit gives
$|\langle(S_\beta-T_\beta)x,x\rangle|
 \leq2\|T_\beta\|s^\beta\|x\|^2$,
which proves the asserted norm bound.
\end{proof}

\begin{lemma}
\label{s5-profile}
Let $M,T\geq0$ be bounded operators such that
$E_M(\{0\})=0$ and $MT+TM=C^*C$.
Let $\sigma$ be a positive atomic measure satisfying
$\sigma((0,R])\leq KR$ for every $R>0$.
For $1<\alpha\leq2$, set
$f_\alpha(z)=\sqrt\alpha\,z^{(\alpha-1)/2}e^{-z^\alpha}-e^{-z}$
for $z>0$.
There is an absolute constant $c_2>0$ such that
\[
 \sum_{t>0}\sigma(\{t\})\|Cf_\alpha(tM)x\|^2
 \leq c_2K\|T\|(\alpha-1)^2\|x\|^2,
 \qquad x\in H.
\]
\end{lemma}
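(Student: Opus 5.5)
We follow the dyadic block strategy of Lemmas~\ref{lem:complex-energy} and \ref{s5-fractional}, replacing exact integrals by the cumulative-mass estimate.

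\textbf{Block decomposition.} Set $m_j=\|M\|2^{-j}$ and $P_j=E_M((m_j/2,m_j])$ for $j\geq0$. Since $E_M(\{0\})=0$, the $P_j$ sum strongly to $I$. From $B=C^*C=MT+TM$ we get $\|B_{jj}\|\leq2\|T\|m_j$. Positivity of $B$ then gives $\|B_{ij}\|\leq2\|T\|\sqrt{m_im_j}$.

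The left-hand side of the claim is the quadratic form of
\[
 \Sigma=\sum_{t>0}\sigma(\{t\})f_\alpha(tM)Bf_\alpha(tM).
\]
Its blocks are
\[
 \Sigma_{ij}=\sum_t\sigma(\{t\})f_\alpha(tM_i)B_{ij}f_\alpha(tM_j),
 \qquad
 \|\Sigma_{ij}\|\leq2\|T\|\sqrt{m_im_j}\sum_t\sigma(\{t\})\,\|f_\alpha(tM_i)\|\,\|f_\alpha(tM_j)\|.
\]
As in the earlier lemmas, we first work on vectors with finitely many nonzero bands and pass to the limit by Fatou's lemma.

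\textbf{Scalar estimate.} The key scalar input is a pointwise bound on $f_\alpha$ for $1<\alpha\leq2$ and $z>0$. We have $f_1=0$, and
\[
 \partial_\alpha\bigl(\sqrt\alpha\,z^{(\alpha-1)/2}e^{-z^\alpha}\bigr)
 =\sqrt\alpha\,z^{(\alpha-1)/2}e^{-z^\alpha}\Bigl(\frac1{2\alpha}+\frac12\log z-z^\alpha\log z\Bigr).
\]
Integrating in $\alpha$ from $1$ should give
\[
 |f_\alpha(z)|\leq c(\alpha-1)\,g(z),\qquad
 g(z)=(1+|\log z|)\,z^{\gamma}e^{-z/2}\ \text{for } z\leq1,\qquad
 g(z)=(1+z^2\log z)\,e^{-z}\ \text{for } z\geq1.
\]
Here $\gamma\geq0$ may be taken as $0$, since the factor $z^{(\alpha-1)/2}\leq1$ for $z\leq1$. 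For $z\geq1$ we use $e^{-z^\alpha}\leq e^{-z}$. In both regimes the bound should simplify to $|f_\alpha(z)|\leq c(\alpha-1)(1+|\log z|)e^{-z/4}$.

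With $\lambda\in(m_i/2,m_i]$ we then get $\|f_\alpha(tM_i)\|\leq c(\alpha-1)\,g_i(t)$, where
\[
 g_i(t)=\bigl(1+|\log(tm_i)|+\log2\bigr)e^{-tm_i/8}.
\]

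\textbf{Summing against $\sigma$.} Take $i\leq j$, so $m_j=2^{-d}m_i$ with $d=j-i$. We need
\[
 \sqrt{m_im_j}\sum_t\sigma(\{t\})\,g_i(t)\,g_j(t)\lesssim K\,(d+1)^2\,2^{-d/2}.
\]
Stieltjes integration by parts with $\sigma((0,R])\leq KR$ bounds $\sum_t\sigma(\{t\})\phi(t)$ by $K\int_0^\infty|\phi'(t)|\,t\,dt$, or by $K\int\phi$ for monotone pieces, as in Proposition~\ref{prop:mass}. Apply this with $\phi=g_ig_j$. We bound $g_j\leq c(1+d+|\log(tm_i)|)$, since $e^{-tm_j/8}\leq1$. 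Substituting $u=tm_i$ then gives
\[
 \sum_t\sigma(\{t\})\,g_ig_j\lesssim K\,m_i^{-1}(d+1)^2.
\]
The logarithmic singularity at $u\to0$ is integrable against the factor $t\,dt$. Multiplying by $\sqrt{m_im_j}=m_i2^{-d/2}$ yields
\[
 \|\Sigma_{ij}\|\leq cK\|T\|(\alpha-1)^2(d+1)^22^{-d/2}.
\]
Since $\sum_d(|d|+1)^22^{-|d|/2}<\infty$, the block Schur test gives the claimed bound.

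\textbf{Main obstacle.} The hardest step is the integration by parts against an atomic $\sigma$ near $t\to0$. There $g_i$ has a logarithmic singularity and the mass bound controls only $\sigma((0,R])\leq KR$. We need the boundary term $\sigma((0,t])\,\phi(t)\lesssim Kt\log^2(1/t)\to0$ and integrability of $t|\phi'(t)|$, and both must be checked carefully. If the $|\log z|$ factor becomes troublesome, a fallback is to use the decay $z^{(\alpha-1)/2}$ and the exact form of $f_\alpha$ near $0$, where $f_\alpha(z)\to-1$ when $\alpha>1$. In that regime we bound $|f_\alpha|\leq c(\alpha-1)|\log z|$ only for $z\geq e^{-1/(\alpha-1)}$, and handle the tiny-$z$ range separately; there the length of the $t$-interval is of order $m_i^{-1}e^{-1/(\alpha-1)}$ and its $\sigma$-mass is correspondingly small.
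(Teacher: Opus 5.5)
Your proposal is correct and follows the paper's proof. Both use the same dyadic bands of $M$, the bound $\|B_{ij}\|\le2\|T\|\sqrt{m_im_j}$, the scalar estimate $|f_\alpha(z)|\le c(\alpha-1)(1+|\log z|)e^{-z/4}$ obtained by integrating the $\alpha$-derivative from $f_1=0$, and the block Schur test with decay of order $(1+d)^22^{-d/2}$ in $d=|i-j|$.

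The only real difference is how you sum against $\sigma$. The paper dominates by the nonincreasing integrable majorant $H(u)=\sup_{v\ge u}(1+|\log v|)^2e^{-v/8}$ and uses the level-set bound $\sum_t\sigma(\{t\})H(tm_i)\le (K/m_i)\int_0^\infty H(u)\,du$. This avoids the derivative and boundary-term checks that your integration by parts needs, although those checks do go through.

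Your fallback for tiny $z$ is unnecessary, since the logarithmic bound already holds on all of $(0,1]$, including where $f_\alpha\approx-1$.
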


\begin{proof}
We use the continuous extension $f_\alpha(0)=-1$.
Differentiating
$\sqrt\alpha\,z^{(\alpha-1)/2}e^{-z^\alpha}$ with respect to
$\alpha$ multiplies it by
$(2\alpha)^{-1}+(\log z)/2-z^\alpha\log z$.
For $0<z\leq1$, the absolute value of this derivative is at most
$c(1+|\log z|)$.
For $z\geq1$, the inequality $z^\alpha\geq z$ and exponential
decay give the bound $c(1+\log z)e^{-z/4}$.
Both bounds are uniform for $1\leq\alpha\leq2$.
Since $f_1(z)=0$ for $z>0$, integration in $\alpha$ gives
$|f_\alpha(z)|
 \leq c(\alpha-1)(1+|\log z|)e^{-z/4}$.

We may assume $q_0=\|M\|>0$.
Set $B=MT+TM$, $q_j=q_02^{-j}$, and
$P_j=E_M((q_j/2,q_j])$ for $j\geq0$.
Write $M_j=M|_{P_jH}$ and use block notation relative to
these projections.
As in \eqref{s5-b-blocks},
$\|B_{ij}\|\leq2\|T\|\sqrt{q_iq_j}$.
For $i\leq j$, put $d=j-i$ and $u=tq_i$.
The scalar estimate gives
\[
 \|f_\alpha(tM_i)\|\|f_\alpha(tM_j)\|
 \leq c(\alpha-1)^2(1+d)(1+|\log u|)^2e^{-u/8}.
\]
The function
$H(u)=\sup_{v\geq u}(1+|\log v|)^2e^{-v/8}$
is nonincreasing and integrable on $(0,\infty)$:
it is $O(1+|\log u|^2)$ near zero and decays exponentially
at infinity.
Applying the mass bound to the level sets of $H$ gives
\[
 \sum_{t>0}\sigma(\{t\})H(tq_i)
 \leq\frac K{q_i}\int_0^\infty H(u)\,du.
\]

For a finite set $F$ of positive atoms of $\sigma$, set
$E_F=\sum_{t\in F}\sigma(\{t\})
f_\alpha(tM)Bf_\alpha(tM)$.
These operators are positive, and the preceding estimates give
\[
 \|(E_F)_{ij}\|
 \leq cK\|T\|(\alpha-1)^2
       (1+|i-j|)2^{-|i-j|/2}.
\]
The block Schur test bounds $E_F$ uniformly in $F$.
Monotone convergence of the positive sums proves the stated bound.
\end{proof}

We now combine the two estimates to obtain a power transformation
of the sampling design, with a common exponent when the frame
and mass bounds are uniform.

\begin{theorem}
\label{s5-power-dilution}
Suppose $1\leq\kappa<\kappa'<\infty$ and
$\mu\in\mathcal U_w(\kappa')$.
There are $\alpha>1$ and $\rho\in\mathcal U_w(\kappa)$ such that
$D(\rho)=D(\mu)/\alpha$.
The exponent can be chosen uniformly for any collection of such
measures with common constants $0<\eta<1$ and $a_0,K>0$
satisfying $\mu((0,R])\leq KR$ for $R>0$ and $G_\mu\geq a_0I$
for every system with $(1-\eta)I\leq S\leq(\kappa+\eta)I$.
\end{theorem}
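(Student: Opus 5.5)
The plan is to dilate time by a power: replace each atom $t$ of $\mu=\sum_{t\in T}w_t\delta_t$ by an atom at $t^\alpha$, with $\alpha>1$ close to $1$. Then $N_\rho(R)=N_\mu(R^{1/\alpha})+O(1)$, which gives $D(\rho)=D(\mu)/\alpha$. The margin $\kappa'>\kappa$ is what absorbs the distortion this substitution causes. First normalize to $I\le S\le\kappa I$. By Lemma~\ref{lem:spectral-completion}, it suffices to build $\nu$ with three properties:
\begin{itemize}
\item $G_\nu\le M_0 I$;
\item $PG_\nu P\ge\gamma P$ for $P=E_A([e^{-s},1])$, with some small $s>0$;
\item $D(\nu)=D(\mu)/\alpha$.
\end{itemize}
The prefix and translation in that lemma do not change $D$. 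The upper bound comes from Proposition~\ref{prop:mass}, provided the weights of $\nu$ have linear cumulative mass. So the whole difficulty is a uniform lower bound on the spectral part near $1$.

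\textbf{Step 1: build an auxiliary system.} On $PH$ put $L=-\log A$, so $\|L\|\le s$, and let $T=\int_0^\infty e^{-tL}Be^{-tL}\,dt=G_c$. Then $B=LT+TL$. Lemma~\ref{lem:stein} gives $G_c=\int_0^1A^uSA^u\,du$. Since $\|A^u-I\|\le s$ on $PH$, we get $\|T-PSP\|=O(\kappa s)$, so $T$ lies in $[(1-O(s))I,(\kappa+O(s))I]$. Now apply Lemma~\ref{s5-fractional} with $\beta=1/\alpha$ and $M=L^\beta$. The system $(e^{-M},C_\beta)$ has integer Gramian $S_\beta$ with
\[
\|S_\beta-T\|\le c_1\|T\|(1-\beta)+2\|T_\beta\|s^\beta .
\]
For $\alpha-1$ and $s$ small, this places $S_\beta$ in $(1-\eta)I\le S_\beta\le(\kappa+\eta)I$. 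Hence $G_\mu\ge a_0I$ for this auxiliary system, and here the uniform constants $\eta,a_0$ in the statement enter. I have not checked the $PH$-compression in detail: the Stein identity on $PH$ does not quite hold for the compressed $S$. If this causes trouble, I would instead work with $T$ and $B=LT+TL$ directly, which is exactly the form the two lemmas use.

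\textbf{Step 2: compare energies.} The auxiliary energy at time $t$ is $\|CL^{(\beta-1)/2}e^{-tL^\beta}x\|^2$. I want to compare this with the original system sampled at $t^\alpha$, namely $\|Ce^{-t^\alpha L}x\|^2$, with weight proportional to $\alpha t^{\alpha-1}w_t$. This is the Jacobian of $u=t^\alpha$; up to the fixed factor $\alpha$, which does not matter, it is what $\sqrt\alpha\,z^{(\alpha-1)/2}$ in $f_\alpha$ encodes after squaring. Rewriting in the variable $z=tM$, the discrepancy between the two analysis operators is exactly $Cf_\alpha(tM)$, modulo the power of $M$ absorbed into $C_\beta$. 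Lemma~\ref{s5-profile}, applied with $\sigma=\mu$, the mass bound $K$, and the pair $(M,T_\beta)$ satisfying $B_\beta=MT_\beta+T_\beta M$, bounds the squared norm of the difference of analysis operators by $c_2K\|T\|(\alpha-1)^2$. The triangle inequality in $L^2(\mu;Y)$, as in the proof of Theorem~\ref{s4-integer}, then yields
\[
\sqrt{\langle G_\nu x,x\rangle}\ge\sqrt{a_0}\,\|x\|-O(\alpha-1)\|x\| \qquad (x\in PH),
\]
so $\gamma=a_0/4$ for $\alpha$ close to $1$.

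\textbf{Step 3: assemble.} The weights $\alpha t^{\alpha-1}w_t$ at times $t^\alpha$ have cumulative mass $\approx M_\mu(R^{1/\alpha})\cdot R^{(\alpha-1)/\alpha}=O(R)$, so Proposition~\ref{prop:mass} applies. Every choice of $\alpha$ and $s$ depends only on $\kappa,\eta,a_0,K$, which gives the uniform exponent. Lemma~\ref{lem:spectral-completion} then finishes the proof.

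\textbf{Main obstacle.} I expect the obstacle to be Step 2: matching the change of variables precisely with the profile $f_\alpha$, in particular the placement of the factors $L^{(\beta-1)/2}$ versus $M^{(\alpha-1)/2}$, and verifying that the comparison is between the \emph{same} analysis-operator target space. I would first try writing both energies as $\|Cg(tM)x\|^2$ for scalar functions $g$, so that the difference is visibly $Cf_\alpha(tM)$.
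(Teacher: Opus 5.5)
Your route is the paper's. You use the same measure with weights $\alpha t^{\alpha-1}w_t$ at $t^\alpha$, the auxiliary system $(e^{-M},C_\beta)$ of Lemma~\ref{s5-fractional} with $\beta=1/\alpha$, Lemma~\ref{s5-profile} with $\sigma=\mu$, and Lemma~\ref{lem:spectral-completion}.

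Both points you left open resolve cleanly. The compression is harmless: $P=E_A([e^{-s},1])$ commutes with $A$, so $PSP|_{PH}=\sum_nA^n(PBP)A^n$ is exactly the integer Gramian of $(A|_{PH},CP|_{PH})$. The Stein identity therefore holds there, and $E_L(\{0\})=0$ by Lemma~\ref{lem:stein}; no fallback is needed. The change of variables closes via $C=C_\beta M^{(\alpha-1)/2}$, since $\beta(\alpha-1)/2=(1-\beta)/2$. This gives $\sqrt{\alpha t^{\alpha-1}}\,Ce^{-t^\alpha L}-C_\beta e^{-tM}=C_\beta f_\alpha(tM)$ for $t>0$, with both sequences indexed by the same atoms of $\mu$.

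One step does not follow as written: the atom of $\mu$ at $t=0$. Lemma~\ref{s5-profile} only controls $t>0$, and your transformed weight at $0$ is $\alpha\cdot 0^{\alpha-1}w_0=0$. So in $L^2(\mu;Y)$ the two analysis operators also differ by $\sqrt{w_0}\,C_\beta x$ at $t=0$, and this term is not $O(\alpha-1)$. A priori, the bound $G_\mu\ge a_0I$ for the auxiliary system could be carried entirely by $w_0C_\beta^*C_\beta$, and then the positive-time comparison yields nothing.

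The repair, as in the paper, uses \eqref{s5-cbeta-bound}: $w_0\|C_\beta\|^2\le c_0w_0\kappa s^\beta$. Choose $s$ with $c_0w_0\kappa s^\beta\le a_0/4$. The auxiliary energy at positive times is then at least $3a_0\|x\|^2/4$. The triangle inequality at positive times gives $PG_\nu P\ge(a_0/4)P$ once $\sqrt{2c_2K\kappa}\,(\alpha-1)\le\sqrt{a_0}/4$; note that Lemma~\ref{s5-profile} produces $\|T_\beta\|\le 2\kappa$, not $\|T\|$.

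Consequently $s$ depends on $w_0$ as well, contrary to your remark in Step~3. This is harmless: only $\alpha$ must be uniform, and $s$ enters only through the prefix in Lemma~\ref{lem:spectral-completion}, which does not change $D$. For the non-uniform assertion, you should also say where the constants come from. Obtain $\eta,a_0$ from $\mu\in\mathcal U_w(\kappa')$ by taking $(\kappa+\eta)/(1-\eta)<\kappa'$, and $K$ from Proposition~\ref{prop:tight}.
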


\begin{proof}
Proposition~\ref{prop:tight} gives $w_0=\mu(\{0\})>0$ and
a finite constant $K>0$ such that $\mu((0,R])\leq KR$ for $R>0$.
For $1<\alpha\leq2$, to be chosen, set
\[
 \nu_\alpha=w_0\delta_0+
 \sum_{\substack{t\in\operatorname{supp}\mu\\t>0}}
 \alpha t^{\alpha-1}w_t\delta_{t^\alpha}.
\]
Its support is locally finite, and
\(
 \nu_\alpha((0,R])
 \leq\alpha R^{(\alpha-1)/\alpha}\mu((0,R^{1/\alpha}])
 \leq\alpha KR.
\)
By Proposition~\ref{prop:mass}, every system with
$I\leq S\leq\kappa I$ satisfies
$0\leq G_{\nu_\alpha}\leq B_0\kappa I$, where
$B_0=w_0+\sqrt2\alpha K$.

Choose $0<\eta<1$ and $a_0>0$ such that $G_\mu\geq a_0I$
whenever $(1-\eta)I\leq S\leq(\kappa+\eta)I$.
Such constants exist because $\mu\in\mathcal U_w(\kappa')$;
one may first take $(\kappa+\eta)/(1-\eta)<\kappa'$.
For the uniform assertion, use the prescribed common constants.
Choose $1<\alpha\leq2$ sufficiently close to one that, with
$\beta=1/\alpha$,
\(
 c_1\kappa(1-\beta)\leq\eta/3,\
 \sqrt{2c_2K\kappa}\,(\alpha-1)\leq\sqrt{a_0}/4.
\)
Then choose $0<s\leq1$ such that
$\kappa s\leq\eta/3$, $4\kappa s^\beta\leq\eta/3$, and
$c_0w_0\kappa s^\beta\leq a_0/4$.

We first prove a lower bound when $A=e^{-L}$, with
$L\geq0$, $E_L(\{0\})=0$, and $\|L\|\leq s$.
The continuous Gramian is
\[
 T=\int_0^\infty e^{-tL}Be^{-tL}\,dt
  =\int_0^1e^{-uL}Se^{-uL}\,du.
\]
Thus $\|T\|\leq\kappa$, $B=LT+TL$, and
$\|T-S\|\leq\kappa s$, since
$\|e^{-uL}Se^{-uL}-S\|\leq2\kappa us$.
Set $M=L^\beta$.
Lemma~\ref{s5-fractional} gives
$\|T_\beta\|\leq\kappa+c_1\kappa(1-\beta)\leq2\kappa$ and
\(
 \|S_\beta-S\|
 \leq\kappa s+c_1\kappa(1-\beta)+2\|T_\beta\|s^\beta
 \leq\eta.
\)
Hence the system $(e^{-M},C_\beta)$ satisfies
$(1-\eta)I\leq S_\beta\leq(\kappa+\eta)I$, so
$\sum_t w_t\|C_\beta e^{-tM}x\|^2\geq a_0\|x\|^2$.
By \eqref{s5-cbeta-bound}, the contribution at time zero is
at most $a_0\|x\|^2/4$.
The samples at positive times therefore have energy at least
$3a_0\|x\|^2/4$.

The identity $C=C_\beta M^{(\alpha-1)/2}$ holds first on finite
spectral sums and then on the whole space by boundedness.
Consequently,
\[
 \sqrt{\alpha t^{\alpha-1}}\,Ce^{-t^\alpha L}
 -C_\beta e^{-tM}=C_\beta f_\alpha(tM),
 \qquad t>0.
\]
Apply Lemma~\ref{s5-profile} to $(M,T_\beta,C_\beta)$ with
$\sigma=\mu$, using $B_\beta=MT_\beta+T_\beta M$.
Our choice of $\alpha$ bounds the $\ell^2$ norm of these
differences, weighted by $\sqrt{w_t}$, by
$\sqrt{a_0}\|x\|/4$.
The triangle inequality gives a lower bound greater than
$\sqrt{a_0}\|x\|/2$ for the norm of the transformed sample
sequence at positive times.
Adding the nonnegative contribution at zero yields
$G_{\nu_\alpha}\geq\gamma I$, where $\gamma=a_0/4$,
uniformly for $\|L\|\leq s$.

For a general system with $I\leq S\leq\kappa I$, set
$P=E_A([e^{-s},1])$.
The compressed system on $PH$ has integer Gramian
$PSP|_{PH}$ between $I_{PH}$ and $\kappa I_{PH}$, and
$L=-\log(A|_{PH})$ satisfies the preceding assumptions.
Thus $PG_{\nu_\alpha}P\geq\gamma P$.
Lemma~\ref{lem:spectral-completion} and the bound
$G_{\nu_\alpha}\leq B_0\kappa I$ give $\rho\in\mathcal U_w(\kappa)$ satisfying
$D(\rho)=D(\nu_\alpha)=D(\mu)/\alpha$.
The last equality follows from
$N_{\nu_\alpha}(R)=N_\mu(R^{1/\alpha})$ and also holds
when $D(\mu)=\infty$.
The choice of $\alpha$ uses only $\kappa,\eta,a_0,K$,
which proves the uniform assertion.
\end{proof}

The preceding results, together with Theorem~\ref{s4-integer},
imply nonattainment for both real and integer sampling times.

\begin{theorem}
\label{s5-nonattainment}
For every $1\leq\kappa<\infty$, neither $C^*(\kappa)$ nor
$C^*_{\mathbb Z}(\kappa)$ is attained.
\end{theorem}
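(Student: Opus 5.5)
We argue by contradiction, starting with real sampling times. Suppose some $\mu\in\mathcal U_w(\kappa)$ satisfies $D(\mu)=C^*(\kappa)$.

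First, $C^*(\kappa)<\infty$ by Corollary~\ref{cor:large-upper}, so $D(\mu)$ is finite. For $\kappa>1$, Corollary~\ref{cor:positive-lower} gives $C^*(\kappa)>0$. For $\kappa=1$, Theorem~\ref{thm:tight-endpoint} shows directly that $D(\mu)>0=C^*(1)$, so that case is already settled. In both cases we may therefore assume $0<D(\mu)<\infty$.

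Next, Theorem~\ref{s5-budget-stability} gives some $\kappa'>\kappa$ with $\mu\in\mathcal U_w(\kappa')$. We then apply Theorem~\ref{s5-power-dilution} to this pair $\kappa<\kappa'$. It produces $\alpha>1$ and $\rho\in\mathcal U_w(\kappa)$ with $D(\rho)=D(\mu)/\alpha$. Because $D(\mu)$ lies strictly between $0$ and $\infty$, this value is strictly smaller than $D(\mu)=C^*(\kappa)$, which contradicts the definition of $C^*(\kappa)$ as an infimum. Hence $C^*(\kappa)$ is not attained.

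For integer times we use $C^*_{\mathbb Z}(\kappa)=C^*(\kappa)$ from Theorem~\ref{s4-integer}. Suppose $\mu$ is supported on $\mathbb N_0$, belongs to $\mathcal U_w(\kappa)$, and has $D(\mu)=C^*_{\mathbb Z}(\kappa)$. Then $\mu$ is also admissible for the real problem and attains $C^*(\kappa)$, which the first part rules out. Equivalently, the measure $\rho$ from the power transformation has non-integer support, but Theorem~\ref{s4-integer} supplies $\widehat\rho$ supported on $\mathbb N_0$ with $D(\widehat\rho)\le D(\rho)<C^*_{\mathbb Z}(\kappa)$, again a contradiction.

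The only delicate point is confirming that $D(\mu)$ lies in $(0,\infty)$, so that dividing by $\alpha$ is a genuine decrease. As noted above, this comes from the positive lower bounds and from the separate treatment of the tight case $\kappa=1$. All the substantive work is already contained in Theorems~\ref{s5-budget-stability} and \ref{s5-power-dilution}.
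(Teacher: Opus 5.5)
Your proposal is correct and follows the paper's proof essentially step for step: finiteness of the infimum, $D(\mu)>0$, the margin $\kappa'>\kappa$ from Theorem~\ref{s5-budget-stability}, the power transformation of Theorem~\ref{s5-power-dilution} giving $D(\rho)=D(\mu)/\alpha<D(\mu)$, and Theorem~\ref{s4-integer} for integer supports. The only cosmetic difference is how you get $D(\mu)>0$: the paper obtains it for every $\kappa$ at once from Theorem~\ref{thm:tight-endpoint}, since $\mu\in\mathcal U_w(1)$, whereas you treat $\kappa>1$ separately via Corollary~\ref{cor:positive-lower}.
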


\begin{proof}
Fix $\mu\in\mathcal U_w(\kappa)$ with $D(\mu)<\infty$.
Since $\mu\in\mathcal U_w(1)$,
Theorem~\ref{thm:tight-endpoint} gives $D(\mu)>0$.
By Theorem~\ref{s5-budget-stability}, there is $\kappa'>\kappa$
such that $\mu\in\mathcal U_w(\kappa')$.
Theorem~\ref{s5-power-dilution} then gives $\alpha>1$ and
$\rho\in\mathcal U_w(\kappa)$ satisfying
$D(\rho)=D(\mu)/\alpha<D(\mu)$.
By Theorem~\ref{s4-integer}, there is
$\widehat\rho\in\mathcal U_w(\kappa)$ supported on $\mathbb N_0$
with $D(\widehat\rho)\leq D(\rho)<D(\mu)$.

Theorems~\ref{thm:relative} and~\ref{s4-integer} show that both
infima are finite. An attaining design would therefore have
finite counting coefficient, contradicting the strict improvement
above.
\end{proof}

The choice of $\alpha>1$ depends on the lower frame bound after
sampling and the mass bound of $\mu$.
Thus $\alpha-1$ may tend to zero along a minimizing sequence.
Nonattainment alone therefore does not imply that $C^*$ is strictly
increasing or that
$\mathcal U_w(\kappa_2)\subsetneq\mathcal U_w(\kappa_1)$
for every $1\leq\kappa_1<\kappa_2$.

For $1<\kappa<\Gamma<\infty$, a prescribed upper bound $\Gamma$
on the condition number after sampling allows a uniform reduction
of the counting coefficient after normalization and translation.
Theorem~\ref{s6-strict-gap} uses this uniformity to show that
prescribing $\Gamma$ strictly increases the optimal counting
coefficient for both real and integer sampling times.

\section{Sampling density and the sampled frame condition number}
\label{sec:output}

Casazza and Chen \cite{CasazzaChen2017} study condition number
optimization by rescaling the vectors of a given frame.
Here, each design in $\mathcal U_w(\kappa)$ gives a uniform bound on the
condition number after sampling for all positive systems whose
original frame condition number is at most $\kappa$.
This bound may depend on the design.
We now quantify the increase in the optimal counting coefficient
when the bound is prescribed in advance.

For a bounded positive invertible operator $G$, write
$\operatorname{cond}(G)=\|G\|\|G^{-1}\|$.
For $\kappa,\Gamma\geq1$, recall that
$\mathcal U_w(\kappa;\Gamma)$ consists of the designs
$\mu\in\mathcal U_w(\kappa)$ satisfying
$\operatorname{cond}(G_\mu)\leq\Gamma$
for every positive system with $b/a\leq\kappa$.
Let $C^*(\kappa;\Gamma)$ and $C^*_{\mathbb Z}(\kappa;\Gamma)$
denote the infima of $D(\mu)$ over these designs with real and
integer sampling times, respectively.
The infimum over an empty class is $+\infty$.

\subsection{Feasibility and joint asymptotics}
\label{s6-feasibility-subsection}

The smallest uniform upper bound on the condition number after
sampling equals the prescribed bound $\kappa$ on the original
frame condition number.
At this threshold, every nonnegative integer must be sampled.
For any larger bound, designs with $N_\mu(R)=O(\log R)$ exist.

We first record the scalar estimate used throughout this section.

\begin{lemma}
\label{s6:scalar}
Let $\mu\in\mathcal U_w(\kappa;\Gamma)$ and set
$m_0=\inf_{0\leq z<1}\phi_\mu(z)$ and
$M_0=\sup_{0\leq z<1}\phi_\mu(z)$.
Then $0<m_0\leq M_0<\infty$ and
$M_0/m_0\leq\Gamma/\kappa$.
With $c=(M_0+m_0)/2$ and
$\varepsilon=(\Gamma-\kappa)/(\Gamma+\kappa)$, we have
$|\phi_\mu(z)/c-1|\leq\varepsilon$ for $0\leq z<1$.
\end{lemma}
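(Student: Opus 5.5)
The plan is to test the design on systems whose Gramians can be written down explicitly. The first step gives $0<m_0\leq M_0<\infty$ at once. Since $\mu\in\mathcal U_w(\kappa)\subseteq\mathcal U_w(1)$, Proposition~\ref{prop:tight} shows that $\phi_\mu$ is bounded above and bounded away from zero on $[0,1)$.

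For the ratio bound, I would use commuting systems. Take $H=\mathbb C^2$ (or $\mathbb C$ with a direct sum), $A=\operatorname{diag}(r_1,r_2)$ with $0\leq r_i<1$, and $S=\operatorname{diag}(s_1,s_2)$ with $s_i\in\{1,\kappa\}$ chosen freely. Set $B=S-ASA=\operatorname{diag}(s_i(1-r_i^2))\geq0$ and $C=B^{1/2}$. Since everything is diagonal, the integer Gramian is indeed $S$ by strong stability, and $\operatorname{cond}(S)\leq\kappa$. From the commuting case noted after Proposition~\ref{prop:tight}, $G_\mu=\phi_\mu(A^2)S=\operatorname{diag}(\phi_\mu(r_1^2)s_1,\phi_\mu(r_2^2)s_2)$. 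Hence the condition $\operatorname{cond}(G_\mu)\leq\Gamma$ gives
\[
 \frac{\phi_\mu(z_1)\,\kappa}{\phi_\mu(z_2)}\leq\Gamma
\]
for all $z_1,z_2\in[0,1)$, after choosing $s_1=\kappa$ and $s_2=1$. Taking the supremum over $z_1$ and the infimum over $z_2$ yields $M_0/m_0\leq\Gamma/\kappa$. The design is required to satisfy the condition number bound for every positive system with $b/a\leq\kappa$, and this system qualifies with $a=1$, $b=\kappa$. If one prefers to avoid $r_i=0$, note that $z=0$ corresponds to $A=0$ on that coordinate, which is allowed with $0^0=1$; it can in any case be obtained by a limit.

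The final claim is elementary algebra. With $c=(M_0+m_0)/2$, every value $\phi_\mu(z)$ lies in $[m_0,M_0]$. Hence $|\phi_\mu(z)/c-1|\leq(M_0-m_0)/(M_0+m_0)$. Writing $\rho=M_0/m_0\leq\Gamma/\kappa$, this bound equals $(\rho-1)/(\rho+1)$. Since $\rho\mapsto(\rho-1)/(\rho+1)$ is increasing, it is at most $(\Gamma/\kappa-1)/(\Gamma/\kappa+1)=\varepsilon$.

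The only point I expect to need care is the justification that $G_\mu=\phi_\mu(A^2)S$ when $A$ and $S$ commute, including the $z=0$ term. I would verify this directly: the sampled energy on each coordinate is $\sum_t w_t r_i^{2t}s_i(1-r_i^2)=s_i\phi_\mu(r_i^2)$, and this is a scalar computation.
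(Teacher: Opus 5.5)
Your proposal is correct and follows the paper's proof essentially verbatim. You use Proposition~\ref{prop:tight} for $0<m_0\leq M_0<\infty$, the diagonal test system $A=\operatorname{diag}(r,s)$, $S=\operatorname{diag}(\kappa,1)$, $B=S-ASA$ for the ratio bound $M_0/m_0\leq\Gamma/\kappa$, and the midpoint estimate $(M_0-m_0)/(M_0+m_0)\leq\varepsilon$ for the final claim.
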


\begin{proof}
Since $\mu\in\mathcal U_w(1)$, Proposition~\ref{prop:tight} gives
$0<m_0\leq M_0<\infty$.
For $r,s\in[0,1)$, take $A=\operatorname{diag}(r,s)$,
$S=\operatorname{diag}(\kappa,1)$, and $B=S-ASA\geq0$.
Then $G_\mu=\operatorname{diag}(\kappa\phi_\mu(r^2),
\phi_\mu(s^2))$, so $\operatorname{cond}(G_\mu)\leq\Gamma$
implies $\kappa\phi_\mu(r^2)/\phi_\mu(s^2)\leq\Gamma$.
Taking the supremum over $r,s\in[0,1)$ proves the ratio bound.
Finally,
$|\phi_\mu(z)/c-1|
 \leq(M_0-m_0)/(M_0+m_0)\leq\varepsilon$.
\end{proof}

\begin{theorem}
Fix $\kappa\geq1$.
\begin{enumerate}
\item If $\Gamma<\kappa$, then
$\mathcal U_w(\kappa;\Gamma)=\varnothing$.
\item If $\Gamma=\kappa$, then
\(
 \mathcal U_w(\kappa;\kappa)
 =\left\{c\sum_{n\geq0}\delta_n:c>0\right\}.
\)
In particular, $C^*(\kappa;\kappa)=+\infty$.
\item If $\Gamma>\kappa$, there exists
$\mu\in\mathcal U_w(\kappa;\Gamma)$ with
$N_\mu(R)=O(\log R)$.
\end{enumerate}
\end{theorem}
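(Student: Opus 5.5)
Parts (1) and (2) will come from Lemma~\ref{s6:scalar}, and part (3) from Theorem~\ref{thm:relative}.

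For (1), take any $\mu\in\mathcal U_w(\kappa;\Gamma)$. Lemma~\ref{s6:scalar} gives $1\leq M_0/m_0\leq\Gamma/\kappa$, which forces $\Gamma\geq\kappa$. Hence the class is empty when $\Gamma<\kappa$.

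For (2), the inclusion $\supseteq$ is direct. For $\mu=c\sum_{n\geq0}\delta_n$ we have $G_\mu=cS$, so $\operatorname{cond}(G_\mu)=\operatorname{cond}(S)\leq\kappa$. Positivity of $S$ gives membership in $\mathcal U_w(\kappa)$.

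The inclusion $\subseteq$ is the substantive step. Lemma~\ref{s6:scalar} with $\Gamma=\kappa$ gives $M_0=m_0$, so $\phi_\mu\equiv c:=w_0$ on $[0,1)$. Thus
\[
\int z^t\,d\mu(t)=c/(1-z)=c\sum_{n\geq0}z^n \qquad 0\leq z<1.
\]
Substitute $z=e^{-u}$. This equates the Laplace transforms of $\mu$ and of $c\sum_n\delta_n$ for all $u>0$. Uniqueness of the Laplace transform for locally finite positive measures with at most linear mass growth (Proposition~\ref{prop:tight}) then gives $\mu=c\sum_n\delta_n$. This uses only scalar tight systems, so the two-dimensional systems of the lemma suffice. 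Since $N_\mu(R)=\lfloor R\rfloor+1$, we get $D(\mu)=\infty$ and therefore $C^*(\kappa;\kappa)=+\infty$.

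For (3), I would use the design of Theorem~\ref{thm:relative} with a small $\varepsilon$, to be chosen depending on $\kappa$ and $\Gamma$. That design satisfies $(1-\varepsilon)S\leq G_\mu\leq(1+\varepsilon)S$ for every positive system with $b/a\leq\kappa$. I apply this with the optimal frame bounds $a=\|S^{-1}\|^{-1}$ and $b=\|S\|$, which is allowed because the theorem holds for any admissible $a,b$. Then
\[
\operatorname{cond}(G_\mu)\leq\frac{1+\varepsilon}{1-\varepsilon}\operatorname{cond}(S)\leq\frac{1+\varepsilon}{1-\varepsilon}\,\kappa.
\]
Choose $\varepsilon$ with $(1+\varepsilon)/(1-\varepsilon)\leq\Gamma/\kappa$. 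That theorem also gives $N_\mu(R)=O(\log R)$.

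It remains to check $\mu\in\mathcal U_w(\kappa)$. For fixed $a,b$, the relative bounds give $(1-\varepsilon)aI\leq G_\mu\leq(1+\varepsilon)bI$, which are uniform frame bounds.

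The only delicate point is the uniqueness step in (2). I handle it by the Laplace-transform identification. Alternatively, one can compare power series after noting that both sides are analytic in $u$ and determine the measure.
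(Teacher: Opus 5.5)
Your proposal is correct and follows the paper's proof essentially step by step. You use Lemma~\ref{s6:scalar} for (1) and to force $\phi_\mu\equiv\mu(\{0\})$ in (2), then Laplace transform uniqueness (the paper applies it to the finite measures $e^{-t}\,d\mu(t)$ and $c\sum_{n\geq0}e^{-n}\delta_n$), the identity $G_\mu=cS$ for the converse, and Theorem~\ref{thm:relative} with $(1+\varepsilon)/(1-\varepsilon)\leq\Gamma/\kappa$ for (3). One aside needs correcting, though it does not affect your argument: Lemma~\ref{s6:scalar} relies on the non-tight diagonal systems with $S=\operatorname{diag}(\kappa,1)$, not only on scalar tight systems.
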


\begin{proof}
If $\mu\in\mathcal U_w(\kappa;\Gamma)$, Lemma~\ref{s6:scalar}
gives $1\leq M_0/m_0\leq\Gamma/\kappa$, so $\Gamma\geq\kappa$.
When $\Gamma=\kappa$, the same lemma implies
$\phi_\mu\equiv c=\phi_\mu(0)=\mu(\{0\})>0$.
Hence
\[
 \int_{[0,\infty)}e^{-ut}\,d\mu(t)
 =\frac{c}{1-e^{-u}}
 =c\sum_{n\geq0}e^{-un},
 \qquad u>0.
\]
The finite measures $e^{-t}\,d\mu(t)$ and
$c\sum_{n\geq0}e^{-n}\delta_n$ have the same Laplace transform
and therefore coincide by uniqueness
\cite[Proposition~1.2]{SchillingSongVondracek2010}.
Thus $\mu=c\sum_{n\geq0}\delta_n$.
The converse follows from $G_\mu=cS$.
Since each such measure has $D(\mu)=+\infty$,
we also obtain $C^*(\kappa;\kappa)=+\infty$.

Finally, suppose $\Gamma>\kappa$ and set
$\varepsilon=(\Gamma-\kappa)/(\Gamma+\kappa)\in(0,1)$.
Theorem~\ref{thm:relative} gives a design with
$N_\mu(R)=O(\log R)$ and
$\operatorname{cond}(G_\mu)
 \leq\kappa(1+\varepsilon)/(1-\varepsilon)=\Gamma$.
\end{proof}

An upper bound on the condition number after sampling ensures that the same design remains admissible for a larger bound on the original frame condition number.
We establish this inclusion and a scalar estimate before deriving the joint asymptotic formula.

\begin{theorem}
\label{s6:enlargement}
Let $\kappa>1$, $\Gamma>\kappa$, and
$K_{\mathrm{eff}}=\kappa(\Gamma-1)/(\Gamma-\kappa)$.
Then $\mathcal U_w(\kappa;\Gamma)\subseteq\mathcal U_w(K)$
for $1\leq K<K_{\mathrm{eff}}$. Consequently,
$C^*(\kappa;\Gamma)\geq
\sup_{1\leq K<K_{\mathrm{eff}}}C^*(K)$.
\end{theorem}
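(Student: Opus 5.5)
Fix $\mu\in\mathcal U_w(\kappa;\Gamma)$ and $1\le K<K_{\mathrm{eff}}$. By Proposition~\ref{prop:mass} and Proposition~\ref{prop:tight}, $G_\mu$ already has a uniform upper bound in terms of $b$ for every positive system. The task is therefore a uniform positive lower bound for systems with $b/a\le K$. We may assume $K>\kappa$, since $\mathcal U_w(\kappa;\Gamma)\subseteq\mathcal U_w(\kappa)\subseteq\mathcal U_w(K)$ otherwise. By scaling, normalize to $I\le S\le KI$.

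The idea is to write such an $S$ as a convex combination of a tight system and a system with condition number at most $\kappa$, keeping $A$ fixed. This works because $S\mapsto B=S-ASA$ is linear and $G_\mu$ depends linearly on $B$, as in Lemma~\ref{lem:gramian-interpolation}. Concretely, for $\theta\in(0,1]$ set $S_\theta=\theta S+(1-\theta)cI$ with $c>0$ to be chosen. Then $B_\theta=\theta B+(1-\theta)c(I-A^2)\ge0$, and by the Stein identity and strong stability the integer Gramian of $(A,B_\theta)$ is $S_\theta$. Its spectrum lies in $[\theta+(1-\theta)c,\ \theta K+(1-\theta)c]$. Choose $c$ and $\theta$ so that the ratio of these endpoints is exactly $\kappa$.

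Linearity gives $G_\mu(S_\theta)=\theta G_\mu(S)+(1-\theta)c\,\phi_\mu(A^2)$. Write $M_0=\sup\phi_\mu$ and $m_0=\inf\phi_\mu$. For the system $S_\theta$, condition number at most $\Gamma$ gives $\lambda_{\min}(G_\mu(S_\theta))\ge\|G_\mu(S_\theta)\|/\Gamma$. Testing on the diagonal models of Lemma~\ref{s6:scalar}, in which $A$ has spectral points near $0$ and near $1$, gives $\|G_\mu(S_\theta)\|\ge$ (top of the spectrum of $S_\theta$)$\cdot M_0$. To keep this uniform, embed $S_\theta$ in a direct sum with a one-dimensional system $A=r$, $S=\theta K+(1-\theta)c$, letting $r$ range over the points where $\phi_\mu(r^2)$ is near $M_0$. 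The direct sum still has condition number at most $\kappa$, and its sampled Gramian is block diagonal. Hence
\[
\lambda_{\min}(G_\mu(S_\theta))\ge\frac{(\theta K+(1-\theta)c)M_0}{\Gamma}.
\]
Subtracting $(1-\theta)c\,\phi_\mu(A^2)\le(1-\theta)cM_0$ gives
\[
\theta\,G_\mu(S)\ge M_0\Bigl[\frac{\theta K+(1-\theta)c}{\Gamma}-(1-\theta)c\Bigr]I.
\]
This bound is uniform over the class, so it suffices that the bracket is positive for some admissible pair $(c,\theta)$.

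The main step is the optimization. Set $t=(1-\theta)c/\theta\ge0$. The constraint becomes $(K+t)/(1+t)=\kappa$, that is, $t=(K-\kappa)/(\kappa-1)$. Positivity of the bracket is equivalent to $K+t>\Gamma t$, that is, $K>(\Gamma-1)t$. Substituting $t$ gives $K(\kappa-1)>(\Gamma-1)(K-\kappa)$, which rearranges to $K(\Gamma-\kappa)<\kappa(\Gamma-1)$, i.e.\ $K<K_{\mathrm{eff}}$. This matches the stated threshold and confirms the choice of splitting. The remaining care is that $\mathcal U_w(K)$ requires only the lower bound for all $b/a\le K$, which the normalization supplies, so $\mu\in\mathcal U_w(K)$. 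Taking the infimum of $D(\mu)$ then gives $C^*(\kappa;\Gamma)\ge C^*(K)$ for each such $K$, and hence the supremum bound.
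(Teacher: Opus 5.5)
Your proposal is correct and is essentially the paper's proof: the paper uses the same convex interpolation $S'=\alpha I+\beta S$ with $\alpha=(K-\kappa)/(K-1)$ and $\beta=(\kappa-1)/(K-1)$ (your $S_\theta$ with $c=1$, $\theta=\beta$, $t=\alpha/\beta$), the same direct sum with a scalar system placed at the top value of the Gramian, and the same subtraction of $\alpha\phi_\mu(A^2)\le\alpha M_0I$, arriving at the identical bound $G_\mu\ge M_0[\kappa(\Gamma-1)-K(\Gamma-\kappa)]/[\Gamma(\kappa-1)]\,I$. The only minor difference is the uniform upper bound: you take it directly from Propositions~\ref{prop:tight} and~\ref{prop:mass}, while the paper derives it from $B'\ge\beta B$ and the upper bound for the sampled Gramian of $(A,(B')^{1/2})$, and both routes are valid.
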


\begin{proof}
For $K\leq\kappa$, the inclusion follows from monotonicity.
Fix $\kappa<K<K_{\mathrm{eff}}$,
$\mu\in\mathcal U_w(\kappa;\Gamma)$, and a system with
$I\leq S\leq KI$.
Set $\alpha=(K-\kappa)/(K-1)$,
$\beta=(\kappa-1)/(K-1)$, and $S'=\alpha I+\beta S$.
Then $I\leq S'\leq\kappa I$ and
$B'=S'-AS'A=\alpha(I-A^2)+\beta B\geq0$.
Strong stability and the Stein identity show that $S'$ is the
integer Gramian of $(A,(B')^{1/2})$.

Let $G'$ be its sampled Gramian.
Since $\mu\in\mathcal U_w(\kappa)$, $G'$ has an upper bound
independent of the system.
The inequality $B'\geq\beta B$ bounds the positive sampling sums
defining $G_\mu$ by $G'/\beta$.
Thus these sums converge strongly, and $G_\mu$ has a uniform
upper bound.
Set $M_0=\sup_{0\leq z<1}\phi_\mu(z)$, which is positive and
finite by Lemma~\ref{s6:scalar}.
With $G_0=\phi_\mu(A^2)\leq M_0I$, we have
$G'=\alpha G_0+\beta G_\mu$.

Take the direct sum with the scalar system
$A_r=r$, $S_r=\kappa$, and $B_r=\kappa(1-r^2)$,
where $0\leq r<1$.
Its integer Gramian still lies between $I$ and $\kappa I$,
so its sampled Gramian
$G'\oplus\kappa\phi_\mu(r^2)$ has condition number at most
$\Gamma$.
Hence $G'\geq\kappa\phi_\mu(r^2)I/\Gamma$.
Taking the supremum over $r$ gives
$\beta G_\mu=G'-\alpha G_0
\geq M_0(\kappa/\Gamma-\alpha)I$.
Substituting $\alpha$ and $\beta$, we obtain
\[
 G_\mu\geq M_0
 \frac{\kappa(\Gamma-1)-K(\Gamma-\kappa)}
      {\Gamma(\kappa-1)}I.
\]
The coefficient is positive because $K<K_{\mathrm{eff}}$.
Together with the uniform upper bound, this proves
$\mu\in\mathcal U_w(K)$ after rescaling.
Taking infima of $D(\mu)$ gives the final inequality.
\end{proof}

The interpolation argument in Theorem~\ref{s6:enlargement}
requires $\kappa>1$. The following estimate also applies at $\kappa=1$ and shows that small relative oscillation of the scalar sampling multiplier requires a large counting coefficient.

\begin{lemma}
\label{s6:small-oscillation}
Suppose $\mu\in\mathcal U_w(1)$ and
$|\phi_\mu(z)/c-1|\leq\varepsilon$ for $0\leq z<1$,
where $c>0$ and $0<\varepsilon<e^{-\pi^2}/2$.
Then
\[
 \liminf_{R\to\infty}\frac{N_\mu(R)}{\log R}
 \geq\frac{\log(1/(2\varepsilon))}{\pi^2}.
\]
\end{lemma}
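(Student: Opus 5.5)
The plan is to test $\mu$ on the geometric model of Proposition~\ref{prop:cayley} and then run the dimension-counting argument of Theorem~\ref{thm:local-density}. The key point is that, since $\phi_\mu$ is nearly constant, the sampled Gramian is close to a multiple of $S_h$ in a relative sense. So $\mu$ must give a frame for $S_h$ with $h$ as small as $\pi^2/\log(1/(2\varepsilon))$, even though $\operatorname{cond}(S_h)$ itself is huge.

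\textbf{Step 1: relative comparison.} Use Proposition~\ref{prop:tight}, in the form of the blockwise identity recorded after it for operators with diagonal spectrum. For the model system $A=\operatorname{diag}(\lambda_j)$, $B=ff^*$, the sampled Gramian has entries
\[
(G_\mu)_{ij}=\phi_\mu(\lambda_i\lambda_j)(S_h)_{ij}.
\]
Write $\phi_\mu/c=1+\psi$ with $|\psi|\leq\varepsilon$ on $[0,1)$. Then
\[
G_\mu/c=S_h+\Psi\circ S_h,\qquad \Psi_{ij}=\psi(\lambda_i\lambda_j).
\]
We need an operator bound on the Schur product $\Psi\circ S_h$. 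Merely having $|\Psi_{ij}|\leq\varepsilon$ is not enough; we need positivity structure. The useful observation is that $S_h$ is the Gram matrix of the functions $f_j\lambda_j^t$ in $\ell^2(\mathbb N_0)$. Likewise, for $\nu=\mu$ or $\nu=\sum_n\delta_n$, the matrix $\phi_\nu(\lambda_i\lambda_j)(S_h)_{ij}$ is the Gram matrix $\int f_if_j(\lambda_i\lambda_j)^t(1-\lambda_i\lambda_j)^{-1}(1-\lambda_i\lambda_j)\,d\nu$. Hence, for a vector $x$, the quadratic form satisfies
\[
\langle (G_\mu/c-S_h)x,x\rangle=\int_0^1 \psi(z)\,d m_x(z),
\]
where $m_x$ is the measure with $\int g\,dm_x=\sum_{i,j}x_i\bar x_jf_if_j\,g(\lambda_i\lambda_j)/(1-\lambda_i\lambda_j)$. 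This is a complex measure, not positive, so the bound $|\cdot|\leq\varepsilon\|m_x\|$ is what I try to control.

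\textbf{Main obstacle: controlling $\|m_x\|$.} I will compare $\|m_x\|$ with $\langle S_hx,x\rangle$ via the Poisson symbol $p_h$. If $\|m_x\|\leq (h/2\pi)e^{\pi^2/h}\cdot\langle S_hx,x\rangle\cdot(\text{const})$ fails, I fall back to a cruder route. Pick $h$ slightly larger than $\pi^2/\log(1/(2\varepsilon))$, so that $2\varepsilon\|S_h\|\leq (1-\eta)\lambda_{\min}(S_h)$ using the bounds $(2\pi/h)e^{-\pi^2/h}\leq S_h\leq(4\pi/h)$ from Proposition~\ref{prop:cayley}. The refined symbol estimate from Theorem~\ref{thm:large-lower} gives $\operatorname{cond}(S_h)\approx e^{\pi^2/h}/4$, which supplies the factor $2$ in $\log(1/(2\varepsilon))$. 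In the fallback I would bound $\|\Psi\circ S_h\|\leq\varepsilon\|S_h\|$. This holds if $\Psi$ is a Schur multiplier of norm at most $\varepsilon$. I would try to justify that by writing $\psi(z)=\int$ of exponentials coming from the Laplace representation $\phi_\mu(e^{-u})=(1-e^{-u})\int e^{-ut}d\mu$, with $\lambda_i\lambda_j=e^{-(s_i+s_j)}$. Each piece $e^{-t(s_i+s_j)}$ is a rank-one positive Schur multiplier. The difficulty is that $\psi$ is a \emph{difference} of such pieces, so I expect the honest route is the first one. That is, I would realize $\langle G_\mu x,x\rangle-c\langle S_hx,x\rangle$ as $c\int\psi\,dm_x$ and estimate $\|m_x\|$ by $\int_{-\pi}^{\pi}|\sum_jx_je^{ij\theta}|^2$ times $\sup$ of the $k=0$ Poisson kernel term. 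This is where the sharp $\pi^2$ enters, and I regard it as the crux.

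\textbf{Step 3: conclusion.} Once $G_\mu\geq c\,\eta\,\lambda_{\min}(S_h)I>0$, the design $\mu$ gives a frame for the model system. Theorem~\ref{thm:local-density} then yields $\liminf N_\mu(R)/\log R\geq 1/h$. Letting $h\downarrow\pi^2/\log(1/(2\varepsilon))$ proves the lemma. The hypothesis $\varepsilon<e^{-\pi^2}/2$ guarantees $h<1$, so the estimates of Proposition~\ref{prop:cayley} for $0<h\leq1$ apply.
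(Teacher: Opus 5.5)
Your architecture is the paper's: test $\mu$ on the model of Proposition~\ref{prop:cayley}, show that $G_\mu$ is bounded below, apply Theorem~\ref{thm:local-density}, and let $h\downarrow\pi^2/\log(1/(2\varepsilon))$. However, the one estimate that carries the proof is never established. You assert that the entrywise bound $|\Psi_{ij}|\le\varepsilon$ is not enough, you abandon the Schur-multiplier route, and you leave a bound on $\|m_x\|$ as an unresolved ``crux''.

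The positivity structure you are looking for is already present entrywise: $(S_h)_{ij}=\operatorname{sech}((i-j)h/2)>0$. Hence $|(G_\mu-cS_h)_{ij}|\le c\varepsilon(S_h)_{ij}$. For finitely supported $x,y$ this gives $|\langle(G_\mu-cS_h)x,y\rangle|\le c\varepsilon\langle S_h|x|,|y|\rangle$. Equivalently, Schur's test with row sums $\sum_{n\in\mathbb Z}\operatorname{sech}(nh/2)=p_h(0)\le4\pi/h$ gives $\|G_\mu-cS_h\|\le4\pi c\varepsilon/h$. In your notation this is just $\|m_x\|\le\langle S_h|x|,|x|\rangle\le\|S_h\|\,\|x\|^2$, so no symbol analysis is needed for this step. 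You should also record that $G_\mu$ is bounded, by Propositions~\ref{prop:tight} and~\ref{prop:mass}, so that its matrix entries determine it.

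Your constant bookkeeping is also off. The $\pi^2$ does not enter through $\|m_x\|$; it enters only through $\lambda_{\min}(S_h)\ge(2\pi/h)e^{-\pi^2/h}$. The positivity condition is $\varepsilon\|S_h\|<\lambda_{\min}(S_h)$, without your extra factor $2$. With the crude bounds of Proposition~\ref{prop:cayley} it reads $G_\mu\ge c(2\pi/h)(e^{-\pi^2/h}-2\varepsilon)I$, which is positive exactly when $h>\pi^2/\log(1/(2\varepsilon))$. So the factor $2$ in the statement comes from $\|S_h\|/\lambda_{\min}(S_h)\le2e^{\pi^2/h}$.

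Your condition $2\varepsilon\|S_h\|\le(1-\eta)\lambda_{\min}(S_h)$ with these bounds gives only $\log(1/(4\varepsilon))$. The refined estimate of Theorem~\ref{thm:large-lower} does not ``supply'' the stated factor either; it changes the constant altogether. With these corrections, your Step~3 goes through as written.
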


\begin{proof}
Set $h_*=\pi^2/\log(1/(2\varepsilon))<1$ and fix
$h_*<h\leq1$.
For the system in Proposition~\ref{prop:cayley}, the Gramian
satisfies $a_hI\leq S_h\leq b_hI$, where
$a_h=(2\pi/h)e^{-\pi^2/h}$ and $b_h=4\pi/h$.
Its entries are nonnegative, and its row and column sums are
at most $\sum_{n\in\mathbb Z}\operatorname{sech}(nh/2)\leq b_h$.

Proposition~\ref{prop:mass} ensures that $G_\mu$ is bounded.
Its matrix entries satisfy
\[
 (G_\mu)_{ij}=\phi_\mu(\lambda_i\lambda_j)(S_h)_{ij},\qquad
 |(G_\mu-cS_h)_{ij}|\leq c\varepsilon(S_h)_{ij}.
\]
The Schur test gives $\|G_\mu-cS_h\|\leq c\varepsilon b_h$.
Since $h>h_*$, we have
$\varepsilon<e^{-\pi^2/h}/2$ and hence
$G_\mu\geq c(a_h-\varepsilon b_h)I>0$.
Theorem~\ref{thm:local-density} therefore yields
$\liminf_{R\to\infty}N_\mu(R)/\log R\geq1/h$.
Letting $h\downarrow h_*$ proves the claim.
\end{proof}

We now combine the two estimates for the lower bound.
For the upper bound, we reserve part of the allowed approximation error for rounding the sampling times to integers. The parameter $X$ below combines the bound $\kappa$ on the original frame condition number with the relative approximation error allowed by the condition number bound $\Gamma$ after sampling.

\begin{theorem}
\label{s6:joint}
Let $\kappa\geq1$ and $\Gamma>\kappa$, and set
$\varepsilon=(\Gamma-\kappa)/(\Gamma+\kappa)$ and
$X=\kappa/\varepsilon$.
If $X>6e^{\pi^2}$, then
$C^*(\kappa;\Gamma)\geq(\log X-\log6)/\pi^2$.
Moreover,
\[
 C^*_{\mathbb Z}(\kappa;\Gamma)
 \leq\inf_{0<d<\pi/2}
 \frac{\log(1+16K_0X/\cos d)}{2\pi d},
\]
where $K_0=4(3+2\sqrt2)$ is the constant in
Lemma~\ref{lem:complex-energy}.
Consequently, as $X\to\infty$,
\[
 \frac{\log X-\log6}{\pi^2}
 \leq C^*(\kappa;\Gamma)
 \leq C^*_{\mathbb Z}(\kappa;\Gamma)
 \leq\frac{\log X+\log\log X+O(1)}{\pi^2},
\]
with an absolute constant in the $O(1)$ term.
\end{theorem}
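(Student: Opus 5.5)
The proof has two halves: a lower bound for $C^*(\kappa;\Gamma)$ and an upper bound for $C^*_{\mathbb Z}(\kappa;\Gamma)$. Together they give the displayed asymptotics. The asymptotics follow from the upper bound with $d=\pi/2-1/\log X$, exactly as in Corollary~\ref{cor:large-upper}.

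\emph{Lower bound.} Fix $\mu\in\mathcal U_w(\kappa;\Gamma)$. Lemma~\ref{s6:scalar} gives $c>0$ with $|\phi_\mu/c-1|\le\varepsilon$. Two regimes compete: small oscillation of the scalar multiplier, and enlargement of the admissible $\kappa$.

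First, Lemma~\ref{s6:small-oscillation} yields $D(\mu)\ge\log(1/(2\varepsilon))/\pi^2$ whenever $\varepsilon<e^{-\pi^2}/2$.

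Second, when $\kappa>1$, Theorem~\ref{s6:enlargement} gives $D(\mu)\ge C^*(K)$ for $K<K_{\mathrm{eff}}=\kappa(\Gamma-1)/(\Gamma-\kappa)$. Combined with Theorem~\ref{thm:large-lower}, this gives $D(\mu)\ge(\log K_{\mathrm{eff}}+\log4-K_{\mathrm{eff}}^{-2})/\pi^2$ (using right continuity or letting $K\uparrow K_{\mathrm{eff}}$).

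Note that $1/\varepsilon=(\Gamma+\kappa)/(\Gamma-\kappa)$ and $K_{\mathrm{eff}}\approx\kappa\Gamma/(\Gamma-\kappa)$. I would show that $\max\{1/(2\varepsilon),\,4K_{\mathrm{eff}}\cdot(\text{slack})\}\ge X/6$ by a case split on whether $\Gamma\ge2\kappa$.
\begin{itemize}
\item If $\Gamma\le 2\kappa$ (or $\kappa$ is small), then $\Gamma-\kappa\le\kappa$. Here $K_{\mathrm{eff}}\ge\kappa(\Gamma-1)/(\Gamma-\kappa)$ is comparable to $\kappa\Gamma/(\Gamma-\kappa)\ge X/2$, and the $\log4$ absorbs the constants. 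When $\kappa$ is near $1$, the factor $(\Gamma-1)/\Gamma$ could be small; in that case use $1/(2\varepsilon)$ instead.
\item If $\Gamma$ is large compared to $\kappa$, then $\varepsilon$ is close to $1$ and $X\approx\kappa$. Then $K_{\mathrm{eff}}\ge\kappa\cdot(\Gamma-1)/\Gamma\ge\kappa/2$ for $\Gamma\ge2$.
\end{itemize}
In each case we get $D(\mu)\ge(\log X-\log 6)/\pi^2$. The hypothesis $X>6e^{\pi^2}$ ensures the relevant branch applies: either $\varepsilon$ is small enough for Lemma~\ref{s6:small-oscillation}, or $K_{\mathrm{eff}}\ge2$ for Theorem~\ref{thm:large-lower}. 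This bookkeeping of constants, covering the whole parameter range including $\kappa=1$ (where only the first branch is available, and $X=1/\varepsilon$ so $1/(2\varepsilon)=X/2$), is where I expect the main friction.

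\emph{Upper bound.} Apply Theorem~\ref{thm:relative} with accuracy $\varepsilon/2$ in place of $\varepsilon$. This gives a real design with $(1-\varepsilon/2)S\le G_\mu\le(1+\varepsilon/2)S$ and counting coefficient $h^{-1}=\log(1+16K_0\kappa/(\varepsilon\cos d))/(2\pi d)=\log(1+16K_0X/\cos d)/(2\pi d)$.

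Next round the translated nodes to integers as in Theorem~\ref{s4-integer}, but without an extra weight $\eta$. The nodes after time $M$ already form a measure with cumulative mass at most $K(1+s)$. Choosing the shift $M$ large, Lemma~\ref{s4-rounding} makes the analysis-operator perturbation at most $\sqrt{bK/M'}$, and this can be made $\le c\,\varepsilon\sqrt{a}$.

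One technical point: rounding must be applied only to the tail. We therefore shift the quadrature part further by an integer $M'$, and extend the integer prefix to cover $0,\ldots,M'-1$ using the Stein identity, as in the proof of Theorem~\ref{thm:relative}. The integer prefix is exact, so it contributes no error.

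Finally, combine the relative bound $(1\pm\varepsilon/2)$ with the rounding error. This yields $(1-\varepsilon)S\le G_{\widehat\mu}\le(1+\varepsilon)S$ after possibly shrinking constants, so $\operatorname{cond}(G_{\widehat\mu})\le\kappa(1+\varepsilon)/(1-\varepsilon)=\Gamma$. The counting coefficient is unchanged. Taking the infimum over $d$ gives the stated bound on $C^*_{\mathbb Z}(\kappa;\Gamma)$.
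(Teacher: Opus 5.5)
Your proposal is correct and follows the paper's proof. The lower bound combines Lemma~\ref{s6:small-oscillation} (via Lemma~\ref{s6:scalar}) with Theorem~\ref{s6:enlargement} and Theorem~\ref{thm:large-lower}. The upper bound applies Theorem~\ref{thm:relative} at accuracy $\varepsilon/2$ with a large integer prefix and rounds the tail using Lemma~\ref{s4-rounding}.

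The only difference is in the constant bookkeeping. The paper sets $J=\max\{K_{\mathrm{eff}},1/\varepsilon\}$ and proves $J\ge X/3$ by splitting on $\kappa\le3$ versus $\kappa\ge3$, using $K_{\mathrm{eff}}/X=(\Gamma-1)/(\Gamma+\kappa)$. This is cleaner than your split on $\Gamma$ versus $2\kappa$, although your version also closes.
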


\begin{proof}
For the lower bound, assume $X>6e^{\pi^2}$ and set
$K_{\mathrm{eff}}=\kappa(\Gamma-1)/(\Gamma-\kappa)$ and
$J=\max\{K_{\mathrm{eff}},1/\varepsilon\}$.
If $1\leq\kappa\leq3$, then $1/\varepsilon=X/\kappa\geq X/3$.
If $\kappa\geq3$, then
$K_{\mathrm{eff}}/X=(\Gamma-1)/(\Gamma+\kappa)
>(\kappa-1)/(2\kappa)\geq1/3$.
Thus $J\geq X/3>2e^{\pi^2}$.

If $J=1/\varepsilon$, Lemmas~\ref{s6:scalar} and
\ref{s6:small-oscillation} give
$C^*(\kappa;\Gamma)\geq\pi^{-2}\log(J/2)$.
Otherwise, $J=K_{\mathrm{eff}}$ and $\kappa>1$.
Theorems~\ref{s6:enlargement} and \ref{thm:large-lower} give
$C^*(\kappa;\Gamma)\geq C^*(K)\geq\pi^{-2}\log(K/2)$
for $2\leq K<K_{\mathrm{eff}}$.
Letting $K\uparrow K_{\mathrm{eff}}$ gives the same bound.
Since $J\geq X/3$, the lower bound follows.

For the upper bound, fix $0<d<\pi/2$ and set
$\sigma=\varepsilon/(8\kappa)$ and
$h=2\pi d/\log(1+16K_0X/\cos d)$.
For any integer $M\geq8X$, Theorem~\ref{thm:relative}
with error $\varepsilon/2$ gives the measure
\(
 \mu_M=\sum_{n=0}^{M-1}\delta_n
       +\sum_{j\geq0}h\sigma e^{jh}\delta_{M+\sigma e^{jh}},
\)
which satisfies
$(1-\varepsilon/2)S\leq G_{\mu_M}\leq(1+\varepsilon/2)S$.
By rescaling, it suffices to consider $I\leq S\leq\kappa I$.
The tail is supported in $[M,\infty)$ and has cumulative mass
at most $K_hs$ up to time $s$, where $K_h=he^h/(e^h-1)$.
Rounding its sampling times upward gives
\[
 \widehat\mu_M=\sum_{n=0}^{M-1}\delta_n
       +\sum_{j\geq0}h\sigma e^{jh}
          \delta_{M+\lceil\sigma e^{jh}\rceil}.
\]
Let $T_M$ and $\widehat T_M$ be the corresponding analysis
operators, keeping the same sample indices before and after
rounding.
Lemma~\ref{s4-rounding} gives
$\|\widehat T_M-T_M\|^2\leq\kappa K_h/M$.
Choose $M$ large enough that also
$M\geq64\kappa K_h/\varepsilon^2$.
Then $\|\widehat T_M-T_M\|\leq\varepsilon/8$.
Writing $s_x=\langle Sx,x\rangle^{1/2}\geq\|x\|$, we obtain
\[
 (\sqrt{1-\varepsilon/2}-\varepsilon/8)s_x
 \leq\|\widehat T_Mx\|
 \leq(\sqrt{1+\varepsilon/2}+\varepsilon/8)s_x.
\]
Since $0<\varepsilon<1$,
\(
 \sqrt{1-\varepsilon/2}-\sqrt{1-\varepsilon}
 \geq\varepsilon/4,\
 \sqrt{1+\varepsilon}-\sqrt{1+\varepsilon/2}
 \geq\varepsilon/(4\sqrt2)>\varepsilon/8.
 \)
Hence
$(1-\varepsilon)S\leq G_{\widehat\mu_M}\leq(1+\varepsilon)S$,
so $\operatorname{cond}(G_{\widehat\mu_M})\leq\Gamma$.
Rounding and combining weights at coincident times do not
increase the counting coefficient.
Thus $D(\widehat\mu_M)\leq1/h$.
Taking the infimum over $d$ proves the upper bound.

Finally, choose $d=\pi/2-1/\log X$ for large $X$.
Then
$\log(1+16K_0X/\cos d)=\log X+\log\log X+O(1)$
and $2\pi d=\pi^2-2\pi/\log X$.
Substitution gives the stated asymptotic bounds.
\end{proof}

Theorem~\ref{s6:joint} applies along any parameter family with
$X\to\infty$. In particular, for fixed $\kappa\geq1$,
\[
 C^*(\kappa;\Gamma)\sim C^*_{\mathbb Z}(\kappa;\Gamma)
 \sim\pi^{-2}\log\frac{\kappa}{\Gamma-\kappa},
 \qquad \Gamma\downarrow\kappa.
\]
For any fixed $D>1$,
\[
 C^*(\kappa;D\kappa)\sim C^*_{\mathbb Z}(\kappa;D\kappa)
 \sim\pi^{-2}\log\kappa,
 \qquad \kappa\to\infty.
\]
These asymptotic relations do not imply equality of the real and
integer infima for every fixed pair $(\kappa,\Gamma)$.

\subsection{A strict gap between optimal counting coefficients}
\label{s6-strict-section}

Theorem~\ref{s6:joint} gives asymptotic bounds as $X\to\infty$.
We now compare the optimal counting coefficients with and without
a prescribed bound on the condition number after sampling, for
fixed $\kappa\geq1$ and $\Gamma>\kappa$.
We first consider the limit $\Gamma\to\infty$ with $\kappa$ fixed.

For each $\mu\in\mathcal U_w(\kappa)$, there are constants
$0<\alpha\leq\beta<\infty$ such that
$\alpha I\leq G_\mu\leq\beta I$ for every system with
$I\leq S\leq\kappa I$.
Since rescaling preserves condition numbers,
$\mu\in\mathcal U_w(\kappa;\Gamma)$ for every
$\Gamma>\max\{\kappa,\beta/\alpha\}$.
Thus
$\mathcal U_w(\kappa)
=\bigcup_{\kappa<\Gamma<\infty}\mathcal U_w(\kappa;\Gamma)$.

Taking infima of $D(\mu)$ over this increasing union gives
\[
 \lim_{\Gamma\to\infty}C^*(\kappa;\Gamma)=C^*(\kappa),
 \qquad
 \lim_{\Gamma\to\infty}C^*_{\mathbb Z}(\kappa;\Gamma)
 =C^*_{\mathbb Z}(\kappa).
\]

For $1<\kappa<\Gamma<\infty$, we next prove
$C^*(\kappa;\Gamma)>C^*(\kappa)$ and the corresponding inequality
for integer sampling times.
In Theorem~\ref{s5-power-dilution}, the exponent in
$t\mapsto t^\alpha$ may depend on the design.
Here we rescale the weights so that
$\sup_{0\leq z<1}\phi_\mu(z)=1$, shift each sampling time
from $t$ to $t+1$, and add an atom of weight one at zero.
These operations allow us to choose a single exponent
$p=p(\kappa,\Gamma)>1$ for all designs in
$\mathcal U_w(\kappa;\Gamma)$.

\begin{theorem}
\label{s6-strict-gap}
For every $1<\kappa<\Gamma<\infty$, there is
$p=p(\kappa,\Gamma)>1$ such that
\(
 C^*(\kappa;\Gamma)
 \geq pC^*(\kappa)>C^*(\kappa)\) and
 \(C^*_{\mathbb Z}(\kappa;\Gamma)
 \geq pC^*_{\mathbb Z}(\kappa)>C^*_{\mathbb Z}(\kappa).
\)
\end{theorem}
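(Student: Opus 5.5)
The plan is to show that every $\mu\in\mathcal U_w(\kappa;\Gamma)$ with $D(\mu)<\infty$ can be converted into a design $\rho\in\mathcal U_w(\kappa)$ with $D(\rho)\le D(\mu)/p$, where the exponent $p>1$ depends only on $\kappa$ and $\Gamma$. Taking infima then gives $C^*(\kappa)\le C^*(\kappa;\Gamma)/p$. Since $C^*(\kappa)>0$ by Corollary~\ref{cor:positive-lower} and $C^*(\kappa;\Gamma)<\infty$ by the feasibility theorem, the strict inequality follows. For integer times, Theorem~\ref{s4-integer} gives $C^*_{\mathbb Z}(\kappa)=C^*(\kappa)$, and $C^*_{\mathbb Z}(\kappa;\Gamma)\ge C^*(\kappa;\Gamma)$ trivially, so the integer statement follows from the real one. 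The key tool is the uniform assertion of Theorem~\ref{s5-power-dilution}. Once we verify that all suitably normalized designs in $\mathcal U_w(\kappa;\Gamma)$ share common constants $\eta,a_0,K$, that theorem supplies a single $\alpha=p$.

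\textbf{Normalization.} Fix $\mu\in\mathcal U_w(\kappa;\Gamma)$ and rescale so that $M_0=\sup\phi_\mu=1$. Lemma~\ref{s6:scalar} then gives $\phi_\mu\ge\kappa/\Gamma$. Form
\[
 \mu'=\delta_0+\tau_1\mu .
\]
Its tight multiplier is $\phi_{\mu'}(z)=(1-z)+z\phi_\mu(z)$, which lies in $[\kappa/\Gamma,1]$. The mass bound needs care. From $\phi_\mu\le1$ and the argument in Proposition~\ref{prop:tight}, $M_\mu(R)\le eQ(1/R)\le e(1-e^{-1/R})^{-1}\le e(1+R)$. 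Because $\tau_1\mu$ has no mass in $(0,1)$, this yields $\mu'((0,R])\le 2eR$ for all $R>0$. This is exactly why we shift by one: the shift removes any dependence on how $\mu$ behaves near $t=0$. So $K=2e$ works uniformly. Clearly $D(\mu')=D(\mu)$.

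\textbf{Uniform lower bound.} Fix a system with $I\le S\le\kappa I$. The Stein identity gives
\[
 G_{\mu'}=B+AG_\mu A .
\]
The condition number bound gives $G_\mu\ge\|G_\mu\|\Gamma^{-1}I$. Testing on the scalar subsystem of Theorem~\ref{s6:enlargement} shows $\|G_\mu\|\ge\kappa\sup_r\phi_\mu(r^2)=\kappa$. Hence $G_\mu\ge(\kappa/\Gamma)I$. Lemma~\ref{s4-delay} with $L=1$ then gives a lower bound for $G_{\mu'}$ depending only on $\kappa,\Gamma$.

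\textbf{Main obstacle: enlarging the class.} Theorem~\ref{s5-power-dilution} needs this lower bound on the larger class $(1-\eta)I\le S\le(\kappa+\eta)I$, with $\eta$ uniform. I would use Theorem~\ref{s6:enlargement}. Its proof gives, for $K'<K_{\mathrm{eff}}$ and every system with $I\le S\le K'I$,
\[
 G_\mu\ge \frac{\kappa(\Gamma-1)-K'(\Gamma-\kappa)}{\Gamma(\kappa-1)}\,I ,
\]
using $M_0=1$. This bound is explicit in $\kappa,\Gamma,K'$ and independent of $\mu$, and it requires $\kappa>1$, which is where that hypothesis enters. Choose $\eta$ with $(\kappa+\eta)/(1-\eta)<K_{\mathrm{eff}}$ and rescale. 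Passing through the shift $\mu\mapsto\mu'$ via Lemma~\ref{s4-delay} then yields a uniform $a_0$ for $\mu'$. The remaining check is that the upper bounds are also uniform: Proposition~\ref{prop:mass} gives $G_{\mu'}\le b(1+\sqrt2K)$. The uniform assertion of Theorem~\ref{s5-power-dilution} now gives $p=p(\kappa,\Gamma)>1$ and $\rho\in\mathcal U_w(\kappa)$ with $D(\rho)=D(\mu)/p$, which completes the argument.
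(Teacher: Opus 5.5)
Your argument is correct. It has the same skeleton as the paper's proof:
\begin{itemize}
\item normalize so that $\sup\phi_\mu=1$;
\item pass to $\delta_0+\tau_1\mu$ to get the uniform mass constant $K=2e$;
\item produce constants $\eta,a_0$ that do not depend on $\mu$;
\item apply the uniform assertion of Theorem~\ref{s5-power-dilution};
\item deduce the integer case from Theorem~\ref{s4-integer}.
\end{itemize}

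The difference is in how you enlarge the class of original systems. The paper first proves the relative bound $G_\rho=B+AG_\mu A\geq B+(\kappa/\Gamma)A^2\geq S/\Gamma$ on the original class. It then widens the class with the generic interpolation estimate of Lemma~\ref{lem:gramian-interpolation}, which applies with $Q=1$ because $\phi_\rho\leq1$. This gives explicit constants $\varepsilon_0=d/(8\Gamma)$ and $a_0=1/(2\Gamma)$, but only a thin margin $d'-d=dq/4$.

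You instead take the explicit lower bound, independent of $\mu$ once $M_0=1$, from the proof of Theorem~\ref{s6:enlargement}. That bound already uses the condition-number constraint to reach any $K'<K_{\mathrm{eff}}$. You then transfer it from $\mu$ to $\delta_0+\tau_1\mu$ with Lemma~\ref{s4-delay} at $L=1$. What this buys: the margin $\eta$ can be taken much wider, and the shift is needed only for the mass bound. What it costs: you rely on a constant buried in a proof rather than on a stated result, and you lose the factor $a/(2b)$ in Lemma~\ref{s4-delay}. The hypothesis $\kappa>1$ enters in different places: in the paper through $d>0$, in your route through $K_{\mathrm{eff}}>\kappa$.

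Two minor points.
\begin{itemize}
\item Your ``uniform lower bound'' paragraph is redundant. The enlargement step already covers $I\leq S\leq\kappa I$, since $K'>\kappa$.
\item The inequality $\|G_\mu\|\geq\kappa$ is false for a single system. For example, $S=I$ gives $G_\mu=\phi_\mu(A^2)\leq I$. It must be read for the sampled Gramian of the direct sum with the scalar system $A_r=r$, $B_r=\kappa(1-r^2)$. That direct sum has condition number at most $\Gamma$, which yields $G_\mu\geq\kappa\phi_\mu(r^2)\Gamma^{-1}I$, and then $G_\mu\geq(\kappa/\Gamma)I$ after taking the supremum over $r$.
\end{itemize}
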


\begin{proof}
Fix $\mu\in\mathcal U_w(\kappa;\Gamma)$ and rescale its weights
so that $\sup_{0\leq z<1}\phi_\mu(z)=1$.
Set $\rho=\delta_0+\tau_1\mu$, where $\tau_1$ shifts each sampling
time from $t$ to $t+1$.
Then $D(\rho)=D(\mu)$, $\rho(\{0\})=1$, and
$\phi_\rho(z)=1-z+z\phi_\mu(z)\leq1$ for $0\leq z<1$.
For $r>0$,
\[
 M_\mu(r)
 \leq e\int_{[0,\infty)}e^{-t/r}\,d\mu(t)
 \leq\frac{e}{1-e^{-1/r}}
 \leq e(r+1).
\]
Thus $\rho((0,R])=M_\mu(R-1)\leq eR$ for $R>1$.
For $0<R<1$, this mass is zero, while
$\rho((0,1])=\mu(\{0\})=\phi_\mu(0)\leq1$.
Consequently,
$K_\rho:=\sup_{R>0}\rho((0,R])/R\leq2e$.
Proposition~\ref{prop:mass} gives
$G_\rho\leq b(1+2\sqrt2e)I$ for every system with
$aI\leq S\leq bI$.

For a system with $I\leq S\leq\kappa I$, take its direct sum
with the scalar system $A_r=r$, $B_r=\kappa(1-r^2)$, whose
integer Gramian is $\kappa$, where $0\leq r<1$.
The bound on the condition number of the sampled direct sum gives
$G_\mu\geq\kappa\phi_\mu(r^2)I/\Gamma$.
Taking the supremum over $r$ yields $G_\mu\geq\kappa I/\Gamma$.
Since $B\geq0$, $ASA\leq\kappa A^2$, and $\Gamma>1$,
\[
 G_\rho=B+AG_\mu A
 \geq B+\frac{\kappa}{\Gamma}A^2
 \geq\frac1\Gamma(B+ASA)
 =\frac S\Gamma.
\]
Rescaling gives $G_\rho\geq S/\Gamma$ whenever
$\operatorname{cond}(S)\leq\kappa$.
In particular, $\rho\in\mathcal U_w(\kappa)$.

Set $d=(\kappa-1)/(\kappa+1)$, $q=(1-d)/\Gamma$, and
$d'=d+dq/4$.
Then $0<d<d'<1$.
For every system with $S=I+E$ and $\|E\|\leq d$, the preceding
estimate gives $G_\rho\geq qI$.
Lemma~\ref{lem:gramian-interpolation}, with $\delta=d$,
$\delta'=d'$, $\ell=q$, and $Q=1$, gives
$G_\rho\geq qI/2$ whenever $\|S-I\|\leq d'$:
indeed, $\theta=d/d'$ satisfies
$1-\theta=q/(4+q)\leq q/4$.
Together with the uniform upper bound, this shows
$\rho\in\mathcal U_w(\kappa')$ for
$\kappa'=(1+d')/(1-d')>\kappa$.

Set
$\varepsilon_0=(\kappa+1)(d'-d)/4=d/(8\Gamma)\in(0,1)$.
If $(1-\varepsilon_0)I\leq S\leq(\kappa+\varepsilon_0)I$,
then
$\|2S/(\kappa+1)-I\|\leq(d+d')/2<d'$.
Rescaling the preceding lower bound therefore gives
$G_\rho\geq a_0I$, where
$a_0=(\kappa+1)q/4=1/(2\Gamma)$.
Both $\varepsilon_0$ and $a_0$ depend only on $\kappa,\Gamma$.

Apply the uniform assertion of Theorem~\ref{s5-power-dilution}
to these measures $\rho$ with $\eta=\varepsilon_0$,
$a_0=1/(2\Gamma)$, and $K=2e$.
These constants are independent of $\mu$, so a single
$p=p(\kappa,\Gamma)>1$ gives a design
$\sigma\in\mathcal U_w(\kappa)$ for each $\mu$, with
$D(\sigma)=D(\rho)/p=D(\mu)/p$.

For finite $D(\mu)$, the definition of $C^*(\kappa)$ gives
$D(\mu)\geq pC^*(\kappa)$; this inequality also holds when
$D(\mu)=\infty$.
Taking the infimum proves the inequality for real sampling times.
Corollary~\ref{cor:positive-lower} gives $C^*(\kappa)>0$,
so the inequality is strict.
Finally,
$C^*_{\mathbb Z}(\kappa;\Gamma)\geq C^*(\kappa;\Gamma)$ and
$C^*_{\mathbb Z}(\kappa)=C^*(\kappa)$ by
Theorem~\ref{s4-integer}.
Thus the same $p$ gives the inequality for integer sampling times.
\end{proof}

\subsection{Tight frames and local density bounds}
\label{s7-exact}
\label{s7-local-subsection}

For tight frames, we first determine $C^*(1;\Gamma)$ for each
prescribed bound $\Gamma>1$ on the sampled frame condition number
and construct a design supported on $\mathbb N_0$ that attains
this value. We then obtain uniform local density bounds and
extend them to systems with a prescribed bound on the original
frame condition number.

If the original frame is tight, then $S=aI$ for some $a>0$,
and the Stein identity gives $B=a(I-A^2)$.
Hence $G_\mu=a\phi_\mu(A^2)$, where
$\phi_\mu(z)=(1-z)\int_{[0,\infty)}z^t\,d\mu(t)$
for $0\leq z<1$, with $0^0=1$.
Thus, for $\Gamma\geq1$, a sampling design $\mu$ belongs to
$\mathcal U_w(1;\Gamma)$ if and only if
\begin{equation}\label{s7-scalar}
 0<\inf_{0\leq z<1}\phi_\mu(z)
 \leq\sup_{0\leq z<1}\phi_\mu(z)<\infty,
 \qquad
 \frac{\sup_{0\leq z<1}\phi_\mu(z)}
      {\inf_{0\leq z<1}\phi_\mu(z)}\leq\Gamma.
\end{equation}
Sufficiency follows from Proposition~\ref{prop:tight} and the
spectral theorem; necessity is Lemma~\ref{s6:scalar} with $\kappa=1$.

To determine $C^*(1;\Gamma)$ and $C^*_{\mathbb Z}(1;\Gamma)$,
define
\begin{equation}\label{eq:tight-periodic-profile}
 k(s)=\exp(-s-e^{-s}),\qquad
 P_h(s)=h\sum_{n\in\mathbb Z}k(s-nh),
 \qquad s\in\mathbb R,\quad h>0.
\end{equation}
The function $k$ is positive and smooth, with
$\int_{\mathbb R}k(s)\,ds=1$.
For each $h>0$, the series defining $P_h$ and all its termwise
derivatives converge uniformly on compact subsets of $\mathbb R$.
Thus $P_h$ is positive and smooth. It has period $h$ and satisfies
$h^{-1}\int_0^hP_h(s)\,ds=1$.

Set $m_h=\min_{s\in\mathbb R}P_h(s)$,
$M_h=\max_{s\in\mathbb R}P_h(s)$, and
$\mathcal R(h)=M_h/m_h$.
Lemma~\ref{s7-monotonicity} in Appendix~\ref{app:periodic-tools}
shows that $\mathcal R$ is a continuous strictly increasing
bijection from $(0,\infty)$ onto $(1,\infty)$.

\begin{theorem}\label{s7-main}
For $\Gamma>1$, let $h_\Gamma=\mathcal R^{-1}(\Gamma)$. Then
\(
 C^*(1;\Gamma)=C^*_{\mathbb Z}(1;\Gamma)=1/{h_\Gamma}.
\)
Both infima are attained by the same measure supported on $\mathbb N_0$.
\end{theorem}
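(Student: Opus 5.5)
The plan is to reduce everything to the scalar criterion \eqref{s7-scalar} and then pass to logarithmic time, where $\phi_\mu$ becomes a convolution with the kernel $k$. For $u>0$ write
\[
 \phi_\mu(e^{-u})=\frac{1-e^{-u}}{u}\Bigl(u\,\mu(\{0\})e^{0}\cdot\mathbf 1+u\!\!\sum_{t>0}w_te^{-ut}\Bigr).
\]
Substituting $u=e^{-s}$ and $t=e^{\tau}$, the term $w_t\,ut\,e^{-ut}/t$ equals $(w_t/t)\,k(s-\tau)$. Hence, with the measure $\nu=\sum_{t>0}(w_t/t)\delta_{\log t}$ on $\mathbb R$,
\[
 u\int_{(0,\infty)}e^{-ut}\,d\mu(t)=(k*\nu)(s).
\]
The prefactor $(1-e^{-u})/u$ tends to $1$ as $s\to\infty$. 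For bounded $u$, that is, for $s$ bounded below, the atom at $0$ and the early times dominate. The periodic choice $\nu=h\sum_n\delta_{nh}$ gives exactly $P_h$, and it has logarithmic counting density $1/h$. So the asymptotic content of the theorem is that the ratio $\sup(k*\nu)/\inf(k*\nu)$ over $s\to+\infty$ cannot be at most $\Gamma$ with density below $1/h_\Gamma$, and that this density is achievable.

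\emph{Lower bound.} Let $\mu\in\mathcal U_w(1;\Gamma)$ with $D(\mu)<1/h_\Gamma$. The errors from the factor $(1-e^{-u})/u$ vanish as $s\to\infty$, so on a half-line $s\ge s_0$ the function $k*\nu$ has oscillation ratio at most $\Gamma+o(1)$. The loss of mass from atoms with $\tau<s_0-L$ is also controlled by the exponential decay of $k$. I would then periodize. Choose a long window in $\tau$ containing at most $(1/h_\Gamma-\eta)\ell$ atoms over length $\ell$; such windows exist because $D(\mu)$ is a limsup of $N_\mu(R)/\log R$. Repeat the window periodically with period $\ell$ to obtain a periodic measure with $n$ atoms per period, $n/\ell<1/h_\Gamma$. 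Its convolution with $k$ has ratio at most $\Gamma+o(1)$, with boundary effects handled by the exponential tails of $k$. The variation diminishing results \cite{Schoenberg1951,MSW} for the Pólya frequency kernel $k$ (presumably in Appendix~\ref{app:periodic-tools}) then show that among periodic measures with $n$ atoms per period $\ell$, the ratio is minimized by equally spaced atoms with equal masses. That minimizer has ratio $\mathcal R(\ell/n)$. Since $\ell/n>h_\Gamma$, strict monotonicity of $\mathcal R$ gives ratio greater than $\Gamma$, a contradiction after letting $o(1)\to0$. Continuity of $\mathcal R$ supplies the margin needed to absorb the errors.

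\emph{Upper bound with attainment on $\mathbb N_0$.} The ratio bound must hold exactly with $D=1/h_\Gamma$, so I would not use the exact grid. Instead I would use a slowly varying grid in log time with gaps $h_n\uparrow h_\Gamma$ and $h_n<h_\Gamma$, such as $h_n=h_\Gamma-1/\sqrt n$, and weights $w_t=h_nt$. Then $N(R)\sim\log R/h_\Gamma$, so $D=1/h_\Gamma$. The local ratio near $s$ is then approximately $\mathcal R(h_{n(s)})<\Gamma$, which gives a margin $\Gamma-\mathcal R(h_n)$ that decays slowly. The errors to absorb are the following:
\begin{itemize}
\item the deviation from a locally periodic profile, of order the variation of $h_n$ over the effective support of $k$, about $O(n^{-3/2})$;
\item the factor $(1-e^{-u})/u$, whose deviation is $O(u)=O(e^{-s})$;
\item rounding times to integers, which changes $\log t$ by $O(1/t)$ and is exponentially small in $\tau$.
\end{itemize}
All three are $o(\Gamma-\mathcal R(h_n))$, so the ratio bound holds for large $s$.

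For bounded $u$, I would add an initial block $c\sum_{j=0}^{N-1}\delta_j$ and start the grid late. The full integer block contributes $c(1-e^{-uN})$ to $\phi$. Tuning $c$ and $N$ against the grid's value keeps $\phi$ within $[m,m\Gamma]$ on compact $u$-ranges, since both pieces are nearly constant there.

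The main obstacle I expect is the lower bound's periodization step together with the extremal property of equally spaced periodic sums of $k$. The upper-bound matching, making the inequality exact rather than approximate, is the second delicate point, and it is handled by the slowly shrinking margin described above.
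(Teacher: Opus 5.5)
Your reduction to $k*\nu$, and your plan to compare with equally spaced periodic measures via Lemma~\ref{s7-periodic-optimum} and strict monotonicity of $\mathcal R$, are right. The periodization step in your lower bound, however, fails as stated.

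If you simply repeat the log-time window $[Y,Y+\ell]$ of $\nu$ with period $\ell$, then for $y$ within distance $O(1)$ of a seam, the nearby atoms on the other side come from the opposite end of the window. They bear no relation to the atoms of $\nu$ that were really there. The kernel $k(s)=\exp(-s-e^{-s})$ has width of order one and peak value $e^{-1}$, so its exponential tail only makes \emph{distant} atoms negligible. Swapping nearby atoms changes $k*\omega$ near the seam by an amount that does not shrink as $\ell\to\infty$; already for a step-$h$ grid with $\ell\notin h\mathbb N$, the seam creates a gap of the wrong length. The margin $\mathcal R(\ell/n)-\Gamma$ coming from $D(\mu)<1/h_\Gamma$ can be arbitrarily small, so you cannot conclude that the periodic measure has ratio below $\mathcal R(\ell/n)$.

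The missing idea is the smoothing in Lemma~\ref{s7-periodization}:
\begin{enumerate}
\item Multiply $\nu_Y$ by a cutoff $\theta$ that vanishes within $W$ of the window ends and rises linearly to $1$ over the next length $W$.
\item Add the flat measure $c(1-\theta(x))\,dx$ with $c=(a+1)/2$. Its convolution with $k$ is the constant $c\in[a,1]$.
\item Periodize. Because $\theta$ is $1/W$-Lipschitz, $k*\widetilde\omega_L(y)=c+\theta(y)\bigl(F(Y+y)-c\bigr)+O(1/W+e^{-W})$, as in \eqref{s7-boundary-error}. This is an approximate convex combination of values already in range.
\item Replace the flat part by atoms at spacing $\delta$. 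This costs $O(\delta)$ in the convolution and adds at most $4W/\delta+4$ atoms per period. That is harmless for $n_L/L$, because $Y,W,\delta$ are fixed before $L\to\infty$.
\end{enumerate}

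For the upper bound, your premise that the exact grid must be avoided is mistaken, and the replacement you sketch is not yet an argument. Take $h=h_\Gamma$, $a=1/\Gamma$ and $\lambda_h=(h/M_h)\sum_{n\in\mathbb Z}e^{nh}\delta_{e^{nh}}$. Then $u\int e^{-ut}\,d\lambda_h(t)=P_h(-\log u)/M_h\in[a,1]$. Rounding every time \emph{up} lumps all $n\le0$ at $t=1$ and lowers $z^t$ by at most a factor $z$. Together with $z\le(1-z)/(-\log z)\le1$, the rounded measure satisfies $az^2\le\phi_{\sigma_h}(z)\le1$. So all perturbations point downward. For $\rho=\sum_{j<J}\delta_j+\tau_J\sigma_h$, the exact identity $\phi_\rho(z)=1-z^J+z^J\phi_{\sigma_h}(z)$ absorbs them: $\phi_\rho\le1$, and $\phi_\rho(z)-a\ge\bigl(J(1-a)-2a\bigr)z^J(1-z)\ge0$ once $J\ge2/(\Gamma-1)$. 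This gives $N_\rho(R)=\log R/h_\Gamma+O(1)$, with no margin needed.

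Your slowly varying grid has two further problems.
\begin{enumerate}
\item Bounding each local ratio $\mathcal R(h_{n(s)})$ by $\Gamma$ does not bound $\sup\phi/\inf\phi$, because the extremes may occur at different scales. You would need $M_h$ increasing and $m_h$ decreasing separately, plus a quantitative rate for these margins that beats the $O(n^{-3/2})$ drift. Strict monotonicity of $\mathcal R$ alone gives neither.
\item Your treatment of the region $u\approx1/N$ is heuristic. It is not a fixed compact $u$-range. Unless the entire grid is translated in linear time, so that $\phi$ is an honest convex combination as above, starting the grid abruptly near $\tau=\log N$ reproduces the same order-one seam defect.
\end{enumerate}
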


\begin{proof}
Put $a=1/\Gamma$. For the lower bound, let
$\mu\in\mathcal U_w(1;\Gamma)$ with $D=D(\mu)<\infty$.
After multiplying all weights by a positive constant, we may assume
$a\leq\phi_\mu(z)\leq1$ for $0\leq z<1$.
For sufficiently small $0<\epsilon<a$,
Lemma~\ref{s7-periodization} gives periodic atomic measures with
periods $L\to\infty$ and $n_L$ distinct nodes per period.
Together with Lemma~\ref{s7-periodic-optimum}, this yields
\[
 \mathcal R(L/n_L)\leq\frac{1+\epsilon}{a-\epsilon},
 \qquad
 \limsup_{L\to\infty}\frac{n_L}{L}\leq D+\epsilon.
\]
Since $\mathcal R$ is strictly increasing,
$n_L/L\geq1/h_{(1+\epsilon)/(a-\epsilon)}$.
Hence $D+\epsilon\geq1/h_{(1+\epsilon)/(a-\epsilon)}$.
Letting $\epsilon\downarrow0$ and using the continuity of
$\mathcal R^{-1}$ gives $D\geq1/h_\Gamma$.

We now construct a measure supported on $\mathbb N_0$ that attains
this bound. Set $h=h_\Gamma$, so that $m_h/M_h=a$, and define
$\lambda_h=(h/M_h)\sum_{n\in\mathbb Z}e^{nh}\delta_{e^{nh}}$.
This auxiliary measure has finite mass on bounded intervals,
although its support accumulates at zero. For $u>0$,
\[
 a\leq u\int_{[0,\infty)}e^{-ut}\,d\lambda_h(t)
 =\frac{P_h(-\log u)}{M_h}\leq1.
\]
Replacing each time $t$ by $\lceil t\rceil$ gives
\[
 \sigma_h=\frac{h}{M_h(1-e^{-h})}\delta_1
 +\frac{h}{M_h}\sum_{n=1}^\infty
 e^{nh}\delta_{\lceil e^{nh}\rceil},
\]
where weights at coincident nodes are added.
Its support is locally finite. For $z=e^{-u}\in(0,1)$,
the inequalities $t\leq\lceil t\rceil<t+1$ and
$z\leq(1-z)/(-\log z)\leq1$, together with the preceding
Laplace transform bounds, give
$az^2\leq\phi_{\sigma_h}(z)\leq1$.

Set $J=\lceil2/(\Gamma-1)\rceil\geq1$ and
\(
 \rho_\Gamma=\sum_{j=0}^{J-1}\delta_j+\tau_J\sigma_h,
\)
where $\tau_J$ shifts each sampling time $t$ to $t+J$.
Then
$\phi_{\rho_\Gamma}(z)=1-z^J+z^J\phi_{\sigma_h}(z)\leq1$.
For the lower bound, use $\phi_{\sigma_h}(z)\geq az^2$ and
$J\geq2a/(1-a)$ to obtain
\[
 \phi_{\rho_\Gamma}(z)-a
 \geq (1-a)(1-z^J)-az^J(1-z^2)
 \geq \bigl(J(1-a)-2a\bigr)z^J(1-z)\geq0.
\]
At $z=0$, the multiplier equals one. Thus
$a\leq\phi_{\rho_\Gamma}(z)\leq1$ for $0\leq z<1$,
and \eqref{s7-scalar} gives
$\rho_\Gamma\in\mathcal U_w(1;\Gamma)$.

The sequence $\lceil e^{nh}\rceil$ is eventually strictly increasing.
Its counting function is $(\log R)/h+O_\Gamma(1)$, and translation
and a finite prefix change this estimate by only a bounded term.
Therefore
$N_{\rho_\Gamma}(R)=(\log R)/h_\Gamma+O_\Gamma(1)$.
Together with the lower bound, this proves that $\rho_\Gamma$
attains both infima and that their common value is $1/h_\Gamma$.
By Lemma~\ref{s7-monotonicity}, this value is continuous and
strictly decreasing in $\Gamma>1$.
\end{proof}

For each $\Gamma>1$, the integer design $\rho_\Gamma$ in
Theorem~\ref{s7-main} attains the common positive optimum
$C^*(1;\Gamma)=C^*_{\mathbb Z}(1;\Gamma)$.
This value tends to zero as $\Gamma\to\infty$.
Without a prescribed bound on the condition number after sampling,
the infimum is $C^*(1)=0$ and is not attained.
Appendix~\ref{app:tight-endpoint-expansions} gives asymptotic
expansions as $\Gamma\downarrow1$ and $\Gamma\to\infty$.

We next show that, for $\Gamma>1$, the optimal counting coefficient
$1/h_\Gamma$ from Theorem~\ref{s7-main} also gives a lower bound for
$\underline D_{\log}(\operatorname{supp}\mu)$ for every
$\mu\in\mathcal U_w(1;\Gamma)$, even when $D(\mu)=\infty$.
The lower logarithmic density, defined in
\eqref{eq:lower-log-density}, counts sampling times in intervals
$(R,e^\ell R]$ of logarithmic length $\ell$.
The infimum over $R\geq1$ requires a lower bound that holds
uniformly in $R$ as $\ell\to\infty$.

\begin{theorem}
\label{s7-local-counting}
Fix $\Gamma>1$. For every $\gamma>\Gamma$, there is a constant
$C_{\Gamma,\gamma}<\infty$ such that every
$\mu\in\mathcal U_w(1;\Gamma)$, with support $T$, satisfies
\begin{equation}\label{s7-local-window}
 \#\bigl(T\cap(R,e^\ell R]\bigr)
 \geq\frac{\ell}{h_\gamma}-C_{\Gamma,\gamma},
 \qquad R\geq1,\quad \ell\geq0.
\end{equation}
Consequently,
\begin{equation}\label{s7-local-sharp-density}
 \underline D_{\log}(T)\geq\frac1{h_\Gamma}.
\end{equation}
\end{theorem}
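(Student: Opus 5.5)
The plan is to reuse the periodization machinery from the proof of Theorem~\ref{s7-main}, but applied locally to a single window $(R,e^\ell R]$, with all constants uniform over $\mu$. First normalize $\mu\in\mathcal U_w(1;\Gamma)$ so that $a\leq\phi_\mu(z)\leq1$ with $a=1/\Gamma$; this is possible by \eqref{s7-scalar}. The normalization fixes the mass bound uniformly: as in the proof of Theorem~\ref{s6-strict-gap}, $M_\mu(r)\leq e(r+1)$. In logarithmic time $s=\log t$, the function $u\mapsto u\int e^{-ut}\,d\mu(t)$ is essentially the convolution of $k$ with the pushforward measure $\sum_t w_t t\,\delta_{\log t}$. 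Hence $a\leq(k*\lambda)(\cdot)\leq1$ up to the harmless factor $(1-z)/(-\log z)$. Here $\lambda$ denotes this pushforward.

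The key step is a localized version of Lemma~\ref{s7-periodization}. Given $\gamma>\Gamma$, pick $\epsilon>0$ with $(1+\epsilon)/(a-\epsilon)<\gamma$. Because $k(s)=\exp(-s-e^{-s})$ decays exponentially as $s\to+\infty$ and doubly exponentially as $s\to-\infty$, and the mass bound controls $\lambda$ on unit intervals uniformly, there is a buffer length $D=D(\Gamma,\epsilon)$ with the following property: the contribution of $\lambda$ outside a window of length $\ell$ affects $k*\lambda$ by at most $\epsilon$ at points at distance $\geq D$ from the window's ends. I would then take the restriction of $\lambda$ to $(\log R,\log R+\ell]$, trim $D$ at each end, and periodize with period $L=\ell-2D$ (plus a bounded gluing correction). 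This produces a periodic measure with $n\leq\#(T\cap(R,e^\ell R])$ nodes per period. Its convolution with $k$ lies in $[a-\epsilon,1+\epsilon]$ everywhere.

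Lemma~\ref{s7-periodic-optimum} then gives $\mathcal R(L/n)\leq(1+\epsilon)/(a-\epsilon)<\gamma$. Since $\mathcal R$ is strictly increasing, $n\geq L/h_\gamma=(\ell-2D)/h_\gamma$. This yields \eqref{s7-local-window} with $C_{\Gamma,\gamma}=2D/h_\gamma+O(1)$. For small $\ell$ the bound is trivial. Uniformity in $R\geq1$ holds because the mass bound and the buffer estimates are scale invariant in logarithmic time, apart from the region near $t\lesssim1$, which the restriction $R\geq1$ handles. Finally, dividing by $\ell$, letting $\ell\to\infty$ and then $\gamma\downarrow\Gamma$ (continuity of $\mathcal R^{-1}$) gives \eqref{s7-local-sharp-density}.

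The main obstacle is making the periodization uniform, so that the constant $D$ depends only on $\Gamma,\gamma$ and not on $\mu$ or $R$. I would first try to check that the proof of Lemma~\ref{s7-periodization} uses only the two-sided bounds on $\phi_\mu$ and the resulting linear mass bound. The aim is to use it with windows placed at arbitrary positions rather than along a subsequence $L\to\infty$ near the counting limsup. If the gluing at the period boundary creates a defect in the lower bound, I would repair it by inserting a bounded number of extra atoms. Their count depends only on $\epsilon$ and is absorbed into $C_{\Gamma,\gamma}$.
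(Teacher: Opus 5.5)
\textbf{Gap at the gluing step.} Your overall route is the paper's: localize the construction of Lemma~\ref{s7-periodization} to one window, apply Lemma~\ref{s7-periodic-optimum}, and invert $\mathcal R$ via Lemma~\ref{s7-monotonicity}. Your fallback of checking that the proof of Lemma~\ref{s7-periodization} uses only local finiteness and the scalar bounds is exactly what the paper does.

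The construction you actually describe, however, fails at the seams. Restricting $\lambda$ to the window and trimming $D$ at each end controls $k*(\lambda|_{\mathrm{window}})$ only at points at distance at least $D$ from the cuts. Once you periodize with period $\ell-2D$, every point within $O(1)$ of a seam sees the end of one copy followed by the start of the next. Both sides contribute at order one, since $\int_{-\infty}^0k=e^{-1}$, so nothing keeps the convolution in $[a-\epsilon,1+\epsilon]$ there.

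The upper bound genuinely fails. Take a design close to $\rho_\Gamma$ with $\Gamma$ near $1$, and a window length for which the last atom of one copy nearly coincides with the first atom of the next. Near that seam the convolution is roughly $P_{h_\Gamma}/M_{h_\Gamma}+(h_\Gamma/M_{h_\Gamma})k(\cdot-x^*)$, which is about $1+h_\Gamma/e$. The admissible $\epsilon$, however, tends to $0$ as $\gamma\downarrow\Gamma$. Inserting atoms cannot repair an overshoot; only reweighting near the seam can. Even a sharp switch from $\lambda$ to a constant background density can overshoot just after an atom. So the ``bounded gluing correction'' is the whole difficulty, not a detail.

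\textbf{The missing idea} is to interpolate slowly to a neutral background instead of cutting. Set $c=(a+1)/2\in[a,1]$, and let $\theta$ be $0$ on $[0,W]\cup[\ell-W,\ell]$, $1$ on $[2W,\ell-2W]$, and linear in between. Periodize $\theta\,d\nu_Y+c(1-\theta)\,dx$ with period $\ell$, the full window with no trimming.

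Because $\theta$ is $1/W$-Lipschitz, the uniform unit-interval mass bound and $\sum_j\sup_{[j,j+1]}(1+|s|)k(s)<\infty$ give
$k*\widetilde\omega=c+\theta\,(F(Y+\cdot)-c)+O(1/W+e^{-W})$.
Up to a small error this is a convex combination of $c$ and $F$, so both bounds are controlled at once. Discretizing the background on the four ramps costs $O(\delta)$ in the convolution and adds at most $\lceil4W/\delta\rceil+4$ atoms. Hence $n\le\#(T\cap(e^Y,e^{Y+\ell}])+B$, with $B$ depending only on $\Gamma$ and $\gamma$.

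\textbf{Two smaller slips.}
\begin{itemize}
\item The mass placed at $\log t$ must be $w_t/t$, not $w_tt$. Only then is $u\int e^{-ut}\,d\mu(t)$ a convolution with $k$, and only then is the unit-interval mass bound uniform.
\item The bounds $a-\eta\le F\le1+\eta$ hold only for $y\ge Y_\eta$. The atoms in $[0,1]$, including the one at $0$, and the factor $u/(1-e^{-u})$ contribute $O(e^{-y})$. The condition $R\ge1$ does not remove this, so you must discard the part of the window below $Y_\eta$ and absorb $Y_\eta/h_\gamma$ into $C_{\Gamma,\gamma}$.
\end{itemize}
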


The proof is given in Appendix~\ref{app:periodic-tools}.
In logarithmic coordinates, it modifies the measure near the interval endpoints and extends it periodically. The resulting bounds on the convolution with $k$ and on the number of added nodes are uniform over the sampling design and the interval location. For an arbitrary design, subtracting global asymptotic counting estimates does not by itself give this uniform local bound.

\begin{remark}
For every $\Gamma>1$, the infimum of each quantity
\[
 \underline D_{\log}(\operatorname{supp}\mu),\qquad
 \liminf_{R\to\infty}\frac{N_\mu(R)}{\log R},\qquad D(\mu)
\]
over $\mu\in\mathcal U_w(1;\Gamma)$ is $1/h_\Gamma$.
The integer design $\rho_\Gamma$ in Theorem~\ref{s7-main}
attains all three infima. Their values therefore remain unchanged
when the support is restricted to $\mathbb N_0$.
\end{remark}

\begin{proof}
Set $T=\operatorname{supp}\mu$.
Taking $R=1$ in the infimum in \eqref{eq:lower-log-density}
and using the finiteness of $T\cap[0,1]$ gives
$\underline D_{\log}(T)
\leq\liminf_{r\to\infty}N_\mu(r)/\log r\leq D(\mu)$.
Theorem~\ref{s7-local-counting} therefore bounds all three
quantities below by $1/h_\Gamma$.

For $\rho_\Gamma$, the proof of Theorem~\ref{s7-main} gives
$N_{\rho_\Gamma}(r)=h_\Gamma^{-1}\log r+O_\Gamma(1)$.
By local finiteness, the error remains bounded on the remaining
bounded interval, so this estimate holds uniformly for $r\geq1$.
Subtracting its values at $r=R$ and $r=e^\ell R$ yields
\[
 \#\bigl(\operatorname{supp}\rho_\Gamma\cap(R,e^\ell R]\bigr)
 =\frac{\ell}{h_\Gamma}+O_\Gamma(1),
 \qquad R\geq1,\quad \ell\geq0,
\]
where the error is uniform in both $R$ and $\ell$.
Thus $\underline D_{\log}(\operatorname{supp}\rho_\Gamma)=1/h_\Gamma$,
and the global counting estimate gives the same value for the other
two quantities. Since $\rho_\Gamma$ is supported on $\mathbb N_0$,
it attains the infima in both support classes.
\end{proof}

\begin{remark}\label{s7-general-bound}
Fix $\Gamma>\kappa\geq1$. For every $\gamma>\Gamma/\kappa$,
there is a constant $C_{\Gamma/\kappa,\gamma}<\infty$,
depending only on $\Gamma/\kappa$ and $\gamma$, such that every
$\mu\in\mathcal U_w(\kappa;\Gamma)$, with support $T$, satisfies
\[
 \#\bigl(T\cap(R,e^\ell R]\bigr)
 \geq\frac{\ell}{h_\gamma}-C_{\Gamma/\kappa,\gamma},
 \qquad R\geq1,\quad \ell\geq0.
\]
Consequently, $\underline D_{\log}(T)\geq1/h_{\Gamma/\kappa}$ and
\(
 C^*_{\mathbb Z}(\kappa;\Gamma)
 \geq C^*(\kappa;\Gamma)
 \geq\max\left\{C^*(\kappa),1/h_{\Gamma/\kappa}\right\}>0.
\)
For $\kappa>1$, the term $C^*(\kappa)$ in this maximum can be
replaced by $p(\kappa,\Gamma)C^*(\kappa)$, where
$p(\kappa,\Gamma)>1$ is given by Theorem~\ref{s6-strict-gap}.
\end{remark}

\begin{proof}
Lemma~\ref{s6:scalar} gives
$\sup_{[0,1)}\phi_\mu/\inf_{[0,1)}\phi_\mu\leq\Gamma/\kappa$.
The characterization in \eqref{s7-scalar} therefore implies
$\mu\in\mathcal U_w(1;\Gamma/\kappa)$.
Theorem~\ref{s7-local-counting} gives the local counting estimate
and the lower logarithmic density bound.

Since $\underline D_{\log}(T)\leq D(\mu)$, taking infima gives
$C^*(\kappa;\Gamma)\geq1/h_{\Gamma/\kappa}$.
The inclusion
$\mathcal U_w(\kappa;\Gamma)\subseteq\mathcal U_w(\kappa)$
gives the other term in the maximum.
Restricting the support to $\mathbb N_0$ preserves these lower bounds.
For $\kappa>1$, Theorem~\ref{s6-strict-gap} gives the stated
improvement.
\end{proof}

For $\kappa>1$, the operators $A$ and $S$ need not commute.
Remark~\ref{s7-general-bound} gives necessary density bounds,
but does not determine the exact value of $C^*(\kappa;\Gamma)$.

Together, Theorems~\ref{s6-strict-gap} and \ref{s7-main} show that,
for each finite $\kappa\geq1$, a sequence
$\mu_n\in\mathcal U_w(\kappa)$ with $D(\mu_n)\to C^*(\kappa)$
cannot retain a common finite bound on the condition number after
sampling over all systems whose original frame condition number
is at most $\kappa$.

\begin{corollary}
\label{s6-strict-conditioning}
Fix $1\leq\kappa<\infty$. For $\mu\in\mathcal U_w(\kappa)$, define
\[
 \Omega_\kappa(\mu)=
 \sup_{\substack{\text{positive systems }(A,C)\\
                  \operatorname{cond}(S)\leq\kappa}}
       \operatorname{cond}(G_\mu).
\]
If $\mu_n\in\mathcal U_w(\kappa)$ and
$D(\mu_n)\to C^*(\kappa)$, then
$\Omega_\kappa(\mu_n)\to\infty$.
The same conclusion holds for designs supported on $\mathbb N_0$
with $D(\mu_n)\to C^*_{\mathbb Z}(\kappa)$.
\end{corollary}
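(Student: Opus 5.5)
We argue by contradiction, splitting into the cases $\kappa>1$ and $\kappa=1$. Suppose $\mu_n\in\mathcal U_w(\kappa)$, $D(\mu_n)\to C^*(\kappa)$, but $\Omega_\kappa(\mu_n)$ does not tend to infinity. Passing to a subsequence, we may assume $\Omega_\kappa(\mu_n)\leq\Gamma_0<\infty$ for all $n$. Fix any $\Gamma>\max\{\Gamma_0,\kappa\}$. Then every $\mu_n$ satisfies $\operatorname{cond}(G_{\mu_n})\leq\Gamma$ for all positive systems with $\operatorname{cond}(S)\leq\kappa$. Since rescaling a system rescales both $S$ and $G_\mu$, this is the same as $\mu_n\in\mathcal U_w(\kappa;\Gamma)$. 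Hence $D(\mu_n)\geq C^*(\kappa;\Gamma)$ for every $n$, and letting $n\to\infty$ gives $C^*(\kappa)\geq C^*(\kappa;\Gamma)$.

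For $\kappa>1$, Theorem~\ref{s6-strict-gap} gives $C^*(\kappa;\Gamma)\geq p(\kappa,\Gamma)C^*(\kappa)>C^*(\kappa)$, since $C^*(\kappa)$ is finite (Theorem~\ref{thm:relative}) and positive (Corollary~\ref{cor:positive-lower}). This contradicts the inequality from the first step.

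For $\kappa=1$, we have $C^*(1)=0$ by Theorem~\ref{thm:tight-endpoint}, while Theorem~\ref{s7-main} gives $C^*(1;\Gamma)=1/h_\Gamma>0$. This again contradicts $C^*(1)\geq C^*(1;\Gamma)$. Alternatively, Remark~\ref{s7-general-bound} gives the positive lower bound $1/h_{\Gamma/\kappa}$ directly.

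For designs supported on $\mathbb N_0$, we repeat the argument with the integer infima. If $\mu_n$ is supported on $\mathbb N_0$ and satisfies the uniform bound, then $D(\mu_n)\geq C^*_{\mathbb Z}(\kappa;\Gamma)$. For $\kappa>1$, Theorem~\ref{s6-strict-gap} gives $C^*_{\mathbb Z}(\kappa;\Gamma)>C^*_{\mathbb Z}(\kappa)$. For $\kappa=1$, Theorems~\ref{s7-main} and~\ref{s4-integer} give $C^*_{\mathbb Z}(1;\Gamma)=1/h_\Gamma>0=C^*_{\mathbb Z}(1)$. Either way the contradiction is the same.

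The main point to check is the identification of a bounded $\Omega_\kappa$ with membership in $\mathcal U_w(\kappa;\Gamma)$. The class $\mathcal U_w(\kappa;\Gamma)$ is defined via systems with $b/a\leq\kappa$, while $\Omega_\kappa$ ranges over systems with $\operatorname{cond}(S)\leq\kappa$. These conditions are equivalent when the optimal frame bounds are taken. Once this is settled, everything else follows from the cited results.
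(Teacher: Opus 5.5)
Your proof is correct and follows essentially the same route as the paper. You pass to a subsequence with $\Omega_\kappa(\mu_n)\leq\Gamma$ for a finite $\Gamma>\kappa$, deduce $\mu_n\in\mathcal U_w(\kappa;\Gamma)$, and then contradict $D(\mu_n)\to C^*(\kappa)$: for $\kappa>1$ you use Theorem~\ref{s6-strict-gap}, for $\kappa=1$ you use $C^*(1)=0<1/h_\Gamma=C^*(1;\Gamma)$, and you handle the integer case through Theorem~\ref{s4-integer}. Your explicit check that $\operatorname{cond}(S)\leq\kappa$ is equivalent to the existence of frame bounds with $b/a\leq\kappa$ correctly justifies the membership step, which the paper leaves implicit.
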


\begin{proof}
If the conclusion fails, a subsequence satisfies
$\Omega_\kappa(\mu_n)\leq\Gamma$ for some finite $\Gamma>\kappa$.
Every measure in this subsequence then belongs to
$\mathcal U_w(\kappa;\Gamma)$, so
$D(\mu_n)\geq C^*(\kappa;\Gamma)$.

For $\kappa>1$, Theorem~\ref{s6-strict-gap} gives
$C^*(\kappa;\Gamma)>C^*(\kappa)$, contradicting
$D(\mu_n)\to C^*(\kappa)$.
For $\kappa=1$, Theorems~\ref{thm:tight-endpoint} and
\ref{s7-main} give $C^*(1)=0<C^*(1;\Gamma)$,
which yields the same contradiction.
The integer assertion follows from
$C^*_{\mathbb Z}(\kappa)=C^*(\kappa)$ in
Theorem~\ref{s4-integer}.
\end{proof}

\section{Normal operators and logarithmic sectors}
\label{nrm-section}

The preceding sections study optimal counting coefficients for
positive operators, with and without a prescribed bound on the
condition number after sampling. We now consider normal operators.
For tight original frames, the sampled Gramian retains the scalar
representation in Proposition~\ref{nrm-tight}.
For $\kappa>1$, Theorem~\ref{nrm-obstruction} shows that a common
design for all normal systems with original frame condition number
at most $\kappa$ must satisfy $N_\mu(R)\gtrsim R$.
Under the logarithmic sector condition \eqref{nrm-logsector},
Theorem~\ref{nrm-relative} gives designs with
$N_\mu(R)=O(\log R)$.
Theorem~\ref{nrm-sharp} determines the leading asymptotic constant
for the optimal counting coefficient as $\kappa\to\infty$,
with the sector parameter $M$ fixed.

For a fixed vector $f$, Philipp \cite{PhilippBessel} characterizes
the Bessel property of the orbit $(A^n f)_{n\geq0}$ of a normal
operator $A$ in terms of the scalar measure $\|E_A(\cdot)f\|^2$,
where $E_A$ is the spectral measure of $A$.
This concerns the upper frame inequality for the given orbit.
Here we assume both frame inequalities for the original integer
observations and seek weighted sampling measures that preserve
them uniformly over a specified class of normal operators.

Throughout this section, $A\in\mathcal B(H)$ is normal, and we assume
\begin{equation}\label{nrm-input}
 S=\sum_{n=0}^{\infty}(A^n)^*BA^n,\qquad
 B=C^*C,\qquad aI\leq S\leq bI,\qquad b/a\leq\kappa,
\end{equation}
where $0<a\leq b<\infty$.
All sums of positive operators are understood in the strong
operator topology.
Universality means that a fixed measure $\mu$ gives bounds
$\alpha I\leq G_\mu\leq\beta I$ for every system in the specified
spectral class with $I\leq S\leq\kappa I$, where
$0<\alpha\leq\beta<\infty$ are independent of the system and its
dimension. The general case reduces to this normalization by
replacing $C$ with $C/\sqrt a$.

The Stein identity and the tail identity remain valid:
\begin{equation}\label{nrm-stein}
 B=S-A^*SA,\qquad
 (A^n)^*SA^n=\sum_{k=n}^{\infty}(A^k)^*BA^k.
\end{equation}
Since the tails converge strongly to zero and $S\geq aI$,
we have $A^nx\to0$ for every $x$ and
$\sup_n\|A^n\|\leq\sqrt{b/a}$.
Normality then gives $\|A\|\leq1$, and strong stability implies
$E_A(\{\lambda:|\lambda|=1\})=0$.

For $t>0$, define $A^t$ by the bounded Borel functional calculus,
using $\lambda^t=|\lambda|^t e^{it\arg\lambda}$ for $\lambda\ne0$,
with $-\pi<\arg\lambda\leq\pi$ and $0^t=0$.
Set $A^0=I$. Thus
\(
 G_\mu=\sum_{t\in T}w_t(A^t)^*BA^t
\)
is well defined under this convention even when zero or points
of the negative real axis belong to the spectrum.
The adjoint $(A^t)^*$ is always taken after forming $A^t$.

\subsection{Tight frames and the obstruction to logarithmic sampling}

For tight original frames, the universal sampling classes are the
same for normal and positive operators. For $\kappa>1$, we show
that a common design for all normal systems with
$\operatorname{cond}(S)\leq\kappa$ must satisfy
$N_\mu(R)\gtrsim R$.

\begin{proposition}\label{nrm-tight}
Suppose $A$ is normal and $S=aI$. Then $B=a(I-A^*A)$.
If $\phi_\mu$ is bounded on $[0,1)$, the sampled sum converges
strongly and
\(
 G_\mu=a\phi_\mu(|A|^2).
\)
The universal weighted sampling classes for tight original frames
coincide for normal and positive operators, with or without a
prescribed bound on the condition number after sampling.
\end{proposition}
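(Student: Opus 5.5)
The plan has three steps: derive $B$ from the Stein identity, compute $G_\mu$ on each spectral value, and then reduce normal tight systems to the positive case.

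\emph{Step 1: the formula for $B$.} The Stein identity \eqref{nrm-stein} with $S=aI$ gives $B=aI-A^*(aI)A=a(I-A^*A)=a(I-|A|^2)$.

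\emph{Step 2: the sampled Gramian.} For $t>0$ the operator $A^t$ is a Borel function of the normal operator $A$. It therefore commutes with $A^*A$, and
\[
 (A^t)^*BA^t=a\,(A^t)^*(I-|A|^2)A^t=a\,|A^t|^2(I-|A|^2).
\]
By the convention $\lambda^t=|\lambda|^te^{it\arg\lambda}$ we have $|\lambda^t|^2=|\lambda|^{2t}$, and this also holds at $\lambda=0$. Hence $|A^t|^2=(|A|^2)^t$. At $t=0$ the term is $B=a(I-|A|^2)$, which matches $0^0=1$.

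Consequently each finite partial sum equals $a\,g_F(|A|^2)$, where
\[
 g_F(z)=(1-z)\sum_{t\in F}w_tz^t\leq\phi_\mu(z).
\]
The functions $g_F$ increase pointwise to $\phi_\mu$ on $[0,1)$. Strong stability gives $E_A(\{|\lambda|=1\})=0$, so the spectral measure of $|A|^2$ gives no mass to $\{1\}$. If $\phi_\mu$ is bounded, the partial sums are uniformly bounded. Monotone convergence in the spectral integral then yields strong convergence to $a\,\phi_\mu(|A|^2)$.

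\emph{Step 3: coincidence of the classes.} This is the step that needs the most care, because it must cover the cases where $\phi_\mu$ is not bounded. The aim is to show that a design is universal (with or without the prescribed bound $\Gamma$) for normal tight systems if and only if it is for positive tight systems.

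Positive systems are special normal systems, so universality over normal systems implies universality over positive ones. For the converse, take $\mu$ universal for positive tight systems. Proposition~\ref{prop:tight} gives bounds $0<m\leq\phi_\mu\leq Q$ on $[0,1)$; when $\Gamma$ is prescribed we also have $Q/m\leq\Gamma$, by \eqref{s7-scalar} and Lemma~\ref{s6:scalar}.

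Now let $A$ be normal with $S=aI$. Step 2 applies and gives $a m I\leq G_\mu\leq aQI$, hence $\operatorname{cond}(G_\mu)\leq Q/m\leq\Gamma$ when $\Gamma$ is prescribed.

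I expect no real obstacle beyond checking the functional calculus identity $|A^t|^2=|A|^{2t}$ for the chosen branch, including on $\ker A$ and on the negative real axis. This holds because the modulus does not depend on the argument branch.
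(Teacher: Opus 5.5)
Your proposal is correct and follows essentially the same route as the paper. The Stein identity gives $B=a(I-|A|^2)$, normality and the spectral theorem reduce the partial sums to $a\,g_F(|A|^2)$ with monotone convergence to $a\phi_\mu(|A|^2)$, and the classes coincide by Proposition~\ref{prop:tight} together with the two-point diagonal ratio test for the $\Gamma$-constrained case, which you invoke through Lemma~\ref{s6:scalar} and the paper states directly.
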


\begin{proof}
Write $|A|=(A^*A)^{1/2}$ and recall that
$\phi_\mu(u)=(1-u)\sum_{t\in T}w_tu^t$ for $0\leq u<1$,
with $0^0=1$.
The Stein identity \eqref{nrm-stein} gives $B=a(I-|A|^2)$.
Since $A$ is normal, $B$ commutes with every $A^t$, and
$(A^t)^*BA^t=a(I-|A|^2)|A|^{2t}$ for $t\geq0$.
Apply the spectral theorem to finite sampling sums.
When $\phi_\mu$ is bounded, monotone convergence gives strong
convergence and the formula for $G_\mu$.
The value assigned to $\phi_\mu(1)$ is irrelevant because the
spectral projection of $|A|$ at $1$ is zero.

By Proposition~\ref{prop:tight}, universality for tight positive
systems is equivalent to positive uniform lower and upper bounds
for $\phi_\mu$ on $[0,1)$.
The formula for $G_\mu$ makes these bounds sufficient for tight
normal systems. Necessity follows because positive operators
are normal.

For a prescribed condition number bound $\Gamma\geq1$,
positive diagonal systems with two spectral values test every ratio
$\phi_\mu(u)/\phi_\mu(v)$, where $u,v\in[0,1)$.
These ratios are at most $\Gamma$ if and only if
$\operatorname{cond}(G_\mu)\leq\Gamma$ for every tight normal
system. Thus the classes also coincide under this constraint.
Consequently, the conclusions for tight frames in
Proposition~\ref{prop:tight} and
Theorems~\ref{thm:tight-endpoint} and \ref{s7-main}
apply to normal operators.
\end{proof}

\begin{theorem}\label{nrm-obstruction}
Fix $\kappa>1$ and $0<c\leq\pi$. Consider all normal systems
satisfying $I\leq S\leq\kappa I$ whose spectra lie in a prescribed
closed angular sector of the unit disk with opening $2c$.
If $\mu$ is a common weighted design for this class with bounds
$\alpha I\leq G_\mu\leq\beta I$, then
\begin{equation}\label{nrm-weight-bound}
 \sup_{t\in T}w_t\leq\frac{\pi\beta}{c(\kappa-1)}.
\end{equation}
Moreover, $M_\mu(R)\asymp R$ and $N_\mu(R)\gtrsim R$
as $R\to\infty$. The counting order is attained by
$\mu=\sum_{n\geq0}\delta_n$.
\end{theorem}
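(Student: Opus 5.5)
The weight bound \eqref{nrm-weight-bound} will come from a single explicit normal system with rotating spectrum. The linear counting and mass bounds then follow almost immediately.

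\emph{Test system.} Rotating the sector, assume it is $\{|\arg\lambda|\leq c\}$. Fix a sampling time $t_0\in T$ and a radius $r\in(0,1)$ close to $1$. Take a finite-dimensional diagonal normal operator $A=\operatorname{diag}(re^{i\theta_1},\dots,re^{i\theta_m})$ with angles $\theta_k\in[-c,c]$, and a rank-one perturbation of a tight system,
\[
 S=I+\epsilon\,vv^*,\qquad \|v\|=1,\qquad \epsilon=\kappa-1,
\]
so that $I\leq S\leq\kappa I$. Since all moduli equal $r$, we have $A^*SA=r^2I+\epsilon r^2(A^*v)(A^*v)^*$. Hence
\[
 B=(1-r^2)I+\epsilon\bigl(vv^*-r^2 A^*v v^*A\bigr),
\]
which is positive once the rank-two part is controlled. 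If positivity fails, I would instead write $S$ as the Gramian directly: take $B=C^*C$ and let $S$ be defined by the series. In both versions the entries are $S_{jk}=B_{jk}/(1-r^2e^{i(\theta_k-\theta_j)})$, a Cauchy-type matrix.

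\emph{Extracting the weight.} The sampled Gramian has entries
\[
 (G_\mu)_{jk}=B_{jk}\sum_t w_t r^{2t}e^{it(\theta_k-\theta_j)}.
\]
Choose the angles equally spaced with spacing $2c/m$, and take $v$ uniform. For $x=v$, the off-diagonal part contributes $\epsilon\cdot\frac1m\sum_{j,k}\sum_t w_t r^{2t}e^{it(\theta_k-\theta_j)}$, roughly $\epsilon\sum_t w_tr^{2t}\,|\widehat{\nu}(t)|^2$, where $\nu$ is the uniform measure on the angles. This Fejér-type kernel concentrates at frequency $t_0$ only if the angles are adapted to $t_0$. I would therefore use the Dirichlet-kernel choice: $m$ angles in an arithmetic progression of step $2\pi/t_0$, which fit inside the sector when $m\approx ct_0/\pi$. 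Then $|\widehat\nu(t_0)|=1$ and $G_\mu\geq\epsilon\,w_{t_0}r^{2t_0}\cdot m\cdot(\text{normalization})$. Comparing with $G_\mu\leq\beta I$, and letting $r\to1$ while the tight part $(1-r^2)$ becomes negligible, should give $w_{t_0}\leq\pi\beta/(c\epsilon)$.

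The main obstacle is the bookkeeping: checking $B\geq0$, and ensuring the normalization yields exactly the factor $c/\pi$ (the fraction of the circle covered by the sector) rather than a worse constant. The factor $c/\pi$ strongly suggests the Dirichlet-kernel packing of about $ct_0/\pi$ points with step $2\pi/t_0$. For noninteger $t_0$, I would use the nearest admissible step and accept the limiting argument.

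\emph{Counting consequences.} Proposition~\ref{nrm-tight} places $\mu$ in $\mathcal U_w(1)$, and Proposition~\ref{prop:tight} then gives $M_\mu(R)\asymp R$. With $W=\sup w_t<\infty$, the bound $M_\mu(R)\leq WN_\mu(R)$ yields $N_\mu(R)\gtrsim R$. Finally, $\mu=\sum_n\delta_n$ gives $G_\mu=S$, so it is a common design with $N_\mu(R)=\lfloor R\rfloor+1$, which shows the counting order is attained.
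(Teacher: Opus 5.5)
\textbf{There is a genuine gap: the weight bound is never established.} Your counting consequences are fine once some finite bound on $\sup_t w_t$ is available, and they agree with the paper. (The scalar systems $A=re^{i\vartheta}$, $C=\sqrt{1-r^2}$ on a ray of the prescribed sector give $\alpha\leq\phi_\mu\leq\beta$, so no rotation is needed.) The problem is that you never produce an admissible test system, and the regime you propose cannot give the estimate.

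With $S=I+\epsilon vv^*$ one gets $B=(1-r^2)I+\epsilon(vv^*-A^*vv^*A)$; there is no extra factor $r^2$. For angles spread over the sector, the rank-two part has a negative eigenvalue bounded away from zero, while $1-r^2\to0$, so this $B$ is not positive. The fallback of defining $S$ by the series leaves the whole difficulty, namely $I\leq S\leq\kappa I$ for the Cauchy-type matrix, unaddressed.

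More fundamentally, with step $2\pi/t_0$ and $t_0$ fixed, the number $m\approx ct_0/\pi$ of spectral points is fixed. Then $S_{jj}=B_{jj}/(1-r^2)\leq\kappa$ forces $\|B\|\leq\operatorname{tr}B\leq\kappa m(1-r^2)\to0$ as $r\to1$. The atom at $t_0$ contributes the operator $w_{t_0}(A^{t_0})^*BA^{t_0}$, of norm $w_{t_0}r^{2t_0}\|B\|$. Comparing this with $\beta$ therefore says nothing about $w_{t_0}$ in your limit. Keeping $\|B\|$ of order one requires $m\gtrsim 1/(1-r)$ points, that is, angular spacing of order $1-r$, decoupled from $t_0$.

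\textbf{The Dirichlet-kernel tuning is unnecessary.} When all eigenvalues have modulus $r$, the convention of Section~\ref{nrm-section} gives $A^t=r^tU_t$ with $U_t$ unitary for every real $t$. Hence $w_t(A^t)^*BA^t\leq G_\mu\leq\beta I$ yields $w_tr^{2t}\|B\|\leq\beta$ at once, for integer and noninteger $t$ alike. What is needed is a family of equal-modulus systems in the sector with $I\leq S\leq\kappa I$, $\|B\|\to(c/\pi)(\kappa-1)$, and $r^{2t}\to1$ for each fixed $t$.

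\textbf{The paper's construction.} Take $A_N=\sqrt{q_N}\operatorname{diag}(1,\omega_N,\ldots,\omega_N^{N-1})$ with $\omega_N=e^{2\pi i/N}$ and $q_N=\kappa^{-1/(N-1)}$, and the rank-one observation $C_N=\sqrt{(\kappa-q_N)/N}\,(1,\ldots,1)$.
\begin{itemize}
\item In the discrete Fourier basis, $S_N$ is diagonal with eigenvalues $\kappa q_N^\ell$, $0\leq\ell<N$, so it lies exactly between $I$ and $\kappa I$.
\item Restrict to the $m_N$ coordinates whose eigenvalues lie in the sector. This subspace is reducing, so the new Gramian is a compression of $S_N$ and still lies between $I$ and $\kappa I$. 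Meanwhile $\|\widetilde C_N\|^2=(m_N/N)(\kappa-q_N)$.
\item This gives $w_tq_N^t(m_N/N)(\kappa-q_N)\leq\beta$. Letting $N\to\infty$ with $t$ fixed yields the claimed bound, with $c/\pi$ arising as the limiting proportion of $N$th roots of unity in the sector.
\end{itemize}
Tying the step to $t_0$ instead, i.e.\ taking $N=t_0$, would give only $q_N^{t_0}=q_N/\kappa$ and would lose a factor $\kappa$.
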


\begin{proof}
For $N\geq2$, set $q_N=\kappa^{-1/(N-1)}$ and
$\omega_N=e^{2\pi i/N}$, and define
\[
 A_N=\sqrt{q_N}\operatorname{diag}
      (1,\omega_N,\ldots,\omega_N^{N-1}),\qquad
 C_N=\sqrt{\frac{\kappa-q_N}{N}}\,(1,\ldots,1).
\]
In the discrete Fourier basis, the integer Gramian $S_N$ has
eigenvalues $(\kappa-q_N)q_N^\ell/(1-q_N^N)=\kappa q_N^\ell$
for $0\leq\ell<N$.
Since $q_N^{N-1}=\kappa^{-1}$, the smallest and largest
eigenvalues are $1$ and $\kappa$.

For sufficiently large $N$, restrict to the $m_N$ spectral
coordinates in the prescribed sector, and denote the resulting
operators by $\widetilde A_N$ and $\widetilde C_N$.
The coordinate projection commutes with $A_N$, so the new integer
Gramian is a compression of $S_N$ and still lies between
$I$ and $\kappa I$.
Also, $m_N/N\to c/\pi$ and
$\|\widetilde C_N^*\widetilde C_N\|
=(m_N/N)(\kappa-q_N)$.

For each fixed $t\in T$, the operator $\widetilde A_N^t$ is
$q_N^{t/2}$ times a unitary matrix.
Positivity of the sampled sum and its upper bound give
$w_tq_N^t(m_N/N)(\kappa-q_N)\leq\beta$.
Letting $N\to\infty$ with $t$ fixed proves
\eqref{nrm-weight-bound}.

Choose a fixed ray in the sector and consider the scalar systems
$A=re^{i\vartheta}$ and $C=\sqrt{1-r^2}$ for $0\leq r<1$.
They have $S=1$, and the sampling bounds give
$\alpha\leq\phi_\mu(r^2)\leq\beta$.
Proposition~\ref{prop:tight} therefore yields
$M_\mu(R)\asymp R$.
Combining this with \eqref{nrm-weight-bound} gives
$N_\mu(R)\gtrsim R$.

Finally, $\mu=\sum_{n\geq0}\delta_n$ gives $G_\mu=S$ and
$N_\mu(R)=\lfloor R\rfloor+1$, so the counting order is sharp.
\end{proof}

Proposition~\ref{nrm-tight} gives the same scalar sampling criteria for tight original frames generated by normal and positive operators. For $\kappa>1$, Theorem~\ref{nrm-obstruction} requires $N_\mu(R)\gtrsim R$ for any common design over all normal systems with $I\leq S\leq\kappa I$.
This lower bound persists when the spectra are restricted to a
fixed angular sector of any positive opening.

\subsection{Logarithmic sectors and optimal density}

On $H_+=(\ker A)^\perp$, define $L=-\operatorname{Log}A$
using the argument $-\pi<\arg\lambda\leq\pi$, and put
$R=\operatorname{Re}L$. The operator $L$ may be unbounded.
For a fixed $M\geq0$, impose the logarithmic sector condition
\begin{equation}\label{nrm-logsector}
 |\operatorname{Im}L|\leq MR.
\end{equation}
This inequality is understood in the spectral sense: the spectral
measure of $L$ is concentrated on
$\{z:\operatorname{Re}z>0,\,
|\operatorname{Im}z|\leq M\operatorname{Re}z\}$.
Equivalently, the spectral measure of $A$ on $H_+$ is concentrated
where $0<|\lambda|<1$ and
$|\arg\lambda|\leq M(-\log|\lambda|)$.
The condition places no restriction on $\ker A$.

The Stolz condition $|1-\lambda|\leq A_0(1-|\lambda|)$,
with a fixed $A_0>0$, implies \eqref{nrm-logsector}
with $M=\pi(A_0+1)/2$.
Indeed, write $\lambda=re^{i\varphi}$ with $0<r<1$ and
$\varphi=\arg\lambda$. The triangle inequality gives
$|e^{i\varphi}-1|\leq|1-\lambda|+1-r$, so
\[
 |\varphi|
 \leq\frac{\pi}{2}|e^{i\varphi}-1|
 \leq\frac{\pi}{2}(A_0+1)(1-r)
 \leq\frac{\pi}{2}(A_0+1)(-\log r).
\]

For the estimates below, set $a_M=\sqrt{1+M^2}$ and
$d_0=\pi/2-\arctan M$.
For $|\theta|<d_0$, write
$c_\theta=\cos\theta-M|\sin\theta|>0$.
We retain $K_0=4(3+2\sqrt2)$ from
Lemma~\ref{lem:complex-energy}.
The exponentials $e^{-te^{i\theta}L}$ used below, with
$t>0$ and $|\theta|<d_0$, are extended by zero on $\ker A$.

\begin{lemma}\label{nrm-complex-energy}
Under \eqref{nrm-input} and \eqref{nrm-logsector}, for every
$x\in H$ and $|\theta|<d_0$,
\[
 \int_0^\infty\|Ce^{-te^{i\theta}L}x\|^2\,dt
 \leq\frac{a_MK_0b}{c_\theta}\|x\|^2.
\]
\end{lemma}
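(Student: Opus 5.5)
I will follow the proof of Lemma~\ref{lem:complex-energy}, replacing the spectral decomposition of the positive generator by a decomposition according to $R=\operatorname{Re}L$. On $H_+$, set $P_j=E_L(\{z:2^j\leq\operatorname{Re}z<2^{j+1}\})$ for $j\in\mathbb Z$. Since $A$ is normal, $L$ is normal and these projections commute with $A$ and $L$, so the blocks $X_{ij}=P_iX|_{P_jH}$ are well behaved. Write $A_j=A|_{P_jH}$ and $L_j=L|_{P_jH}$.

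The first step is a diagonal estimate for $B$. The Stein identity \eqref{nrm-stein} gives
\[
 B_{jj}=S_{jj}-A_j^*S_{jj}A_j=(I-A_j^*)S_{jj}+A_j^*S_{jj}(I-A_j),
\]
so $\|B_{jj}\|\leq2b\|I-A_j\|$. On the spectral support of $P_j$ we have $|1-e^{-z}|\leq|z|\leq a_M\operatorname{Re}z\leq a_M2^{j+1}$ by \eqref{nrm-logsector}. Hence $\|B_{jj}\|\leq4a_Mb2^j$. Positivity of $B$ then gives $\|B_{ij}\|\leq4a_Mb2^{(i+j)/2}$. This is the only place the factor $a_M$ enters.

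The second step bounds the decay of the semigroup at complex times. For $z$ in the logarithmic sector and $|\theta|<d_0$, write $z=re^{i\psi}$ with $|\psi|\leq\arctan M$. Then
\[
 \operatorname{Re}(e^{i\theta}z)=r\cos(\theta+\psi)\geq\operatorname{Re}z\,(\cos\theta-|\sin\theta|\,|\tan\psi|)\geq c_\theta\operatorname{Re}z.
\]
By normality this gives $\|e^{-te^{i\theta}L_j}\|\leq e^{-tc_\theta2^j}$. Now interpret
\[
 R_\theta=\int_0^\infty(e^{-te^{i\theta}L})^*Be^{-te^{i\theta}L}\,dt
\]
as a form on vectors with finitely many nonzero components. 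Its blocks satisfy
\[
 \|(R_\theta)_{ij}\|\leq4a_Mb2^{(i+j)/2}\int_0^\infty e^{-tc_\theta(2^i+2^j)}dt\leq\frac{4a_Mb}{c_\theta}2^{-|i-j|/2}.
\]
The block Schur estimate with $\sum_j2^{-|j|/2}=3+2\sqrt2$ yields the bound $a_MK_0b/c_\theta$ on this dense set.

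The final step is the extension to all of $H_+$ by density and Fatou's lemma, exactly as in Lemma~\ref{lem:complex-energy}, and then to $H$ using the extension by zero on $\ker A$.

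The main obstacle is the sector inequality $\operatorname{Re}(e^{i\theta}z)\geq c_\theta\operatorname{Re}z$, because it must hold uniformly on the whole logarithmic sector. It comes from expanding $\cos(\theta+\psi)=\cos\theta\cos\psi-\sin\theta\sin\psi$ and dividing by $\cos\psi>0$. A second point to check is that the diagonal bound uses $|z|$ rather than $\operatorname{Re}z$; this is why $a_M$ appears. Everything else is a routine transcription of the positive case.
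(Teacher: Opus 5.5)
Your proposal is correct and follows essentially the same route as the paper. Both arguments use the blocks $P_j=E_R([2^j,2^{j+1}))$, the bound $\|I-A_j\|\leq a_M2^{j+1}$ from $|1-e^{-z}|\leq|z|\leq a_M\operatorname{Re}z$, and the decay $\|e^{-te^{i\theta}L}P_j\|\leq e^{-tc_\theta2^j}$ from $\operatorname{Re}(e^{i\theta}z)\geq c_\theta\operatorname{Re}z$, followed by the block Schur, density and Fatou steps of Lemma~\ref{lem:complex-energy}. Your expansion of $\cos(\theta+\psi)$ also supplies the sector inequality that the paper states without proof.
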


\begin{proof}
Apply the block argument of Lemma~\ref{lem:complex-energy} to
$P_j=E_R([2^j,2^{j+1}))$ on $H_+$.
Write $X_{ij}=P_iX|_{P_jH_+}$ and $A_j=A|_{P_jH_+}$.
The compressed Stein identity is now
$B_{jj}=(I-A_j^*)S_{jj}+A_j^*S_{jj}(I-A_j)$.
Since $|1-e^{-z}|\leq|z|$ for $\operatorname{Re}z\geq0$
and $|z|\leq a_M\operatorname{Re}z$ on the spectrum of $L$,
we have $\|I-A_j\|\leq a_M2^{j+1}$.
Thus $\|B_{jj}\|\leq4a_Mb2^j$, and positivity gives
$\|B_{ij}\|\leq4a_Mb2^{(i+j)/2}$.

The other change is the decay estimate
$\|e^{-te^{i\theta}L}P_j\|\leq e^{-tc_\theta2^j}$,
which follows from
$\operatorname{Re}(e^{i\theta}z)\geq c_\theta\operatorname{Re}z$.
The same block Schur estimate therefore applies with $b$ replaced
by $a_Mb$ and $\cos\theta$ by $c_\theta$.
It gives $a_MK_0b/c_\theta$ as the bound on finite spectral sums.
The density and Fatou argument in Lemma~\ref{lem:complex-energy},
followed by extension by zero on $\ker A$, proves the result.
\end{proof}

\begin{lemma}\label{nrm-rounding-lemma}
Assume \eqref{nrm-input} and \eqref{nrm-logsector}.
Let $\tau=\sum_{t\in T}w_t\delta_t$ be a positive atomic measure
supported on $[J,\infty)$, where $J\geq1$, and suppose
$\tau([0,s])\leq Ks$ for every $s>0$.
Define $T_\tau x=(\sqrt{w_t}CA^tx)_{t\in T}$ and
$\widehat T_\tau x=(\sqrt{w_t}CA^{\lceil t\rceil}x)_{t\in T}$,
retaining the original indices after rounding.
If $T_\tau$ is bounded, then $\widehat T_\tau$ is bounded and
\begin{equation}\label{nrm-rounding}
 \|\widehat T_\tau-T_\tau\|^2
 \leq\frac{a_M^2bK}{J}.
\end{equation}
\end{lemma}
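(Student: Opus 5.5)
The proof will follow Lemma~\ref{s4-rounding}, using the normal spectral calculus instead of the positive one. The difference $\widehat T_\tau-T_\tau$ has components $\sqrt{w_t}\,C(A^{\lceil t\rceil}-A^t)$. On $\ker A$ both powers vanish because $J\geq1$ forces $t\geq1$, so it suffices to work on $H_+$. There we write $A^t=e^{-tL}$, which is consistent with the convention $\lambda^t=|\lambda|^te^{it\arg\lambda}$ and $L=-\operatorname{Log}A$. Set $\delta_t=\lceil t\rceil-t\in[0,1)$. Then
\[
 A^{\lceil t\rceil}-A^t=(e^{-\delta_tL}-I)e^{-tL}.
\]

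The key scalar estimate concerns $z$ in the logarithmic sector, where $\operatorname{Re}z=r>0$ and $|z|\leq a_Mr$. Since $|1-e^{-\delta z}|\leq\delta|z|\leq a_Mr$, we get
\[
 |e^{-\lceil t\rceil z}-e^{-tz}|\leq a_Mr\,e^{-tr}.
\]
This is the analogue of $ue^{-us}$ in the positive case. The remaining difficulty is that $C$ does not commute with $A$, so this scalar bound cannot be combined with $\|C\|^2\leq b$ by a direct spectral argument. The positive proof got around this using the spectral theorem and Tonelli, but that step hides the same issue. The clean route is to bound
\[
 \sum_t w_t\|C(A^{\lceil t\rceil}-A^t)x\|^2\leq\|C\|^2\sum_t w_t\|(A^{\lceil t\rceil}-A^t)x\|^2.
\]
The last sum is a quadratic form in the commutative spectral calculus of $A$, namely $\int F\,d\|E_A(\cdot)x\|^2$ with
\[
 F(\lambda)=\sum_t w_t|\lambda^{\lceil t\rceil}-\lambda^t|^2\leq a_M^2\sum_tw_tr^2e^{-2tr},\qquad r=-\log|\lambda|.
\]
By Tonelli this gives the operator bound $\|C\|^2\sup F$.

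For $\sup F$ we reuse the Stieltjes computation from Lemma~\ref{s4-rounding}. Since $\tau$ is supported on $[J,\infty)$ with $W(s)=\tau([0,s])\leq Ks$, integration by parts gives
\[
 r^2\int_{[J,\infty)}e^{-2rs}\,d\tau(s)=2r^3\int_J^\infty e^{-2rs}W(s)\,ds\leq Kr^2\Bigl(J+\frac1{2r}\Bigr)e^{-2rJ}\leq\frac KJ,
\]
using $(v^2+v/2)e^{-2v}\leq1$ with $v=rJ$. Here the boundary term at $\infty$ vanishes, and the possible atom at $J$ is handled as before. Hence $\sup F\leq a_M^2K/J$.

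Finally, $\|C\|^2=\|B\|\leq\|S\|\leq b$, because $S=B+A^*SA\geq B$ by \eqref{nrm-stein}. Combining these bounds gives \eqref{nrm-rounding}. Boundedness of $\widehat T_\tau$ then follows from the triangle inequality together with the assumed boundedness of $T_\tau$. To make all sums rigorous, I would first work with finite subsets of $T$ and pass to the limit by monotone convergence.

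I expect no real obstacle. The only point needing care is checking that the rounding shift $e^{-\delta L}-I$ is controlled by $a_M\operatorname{Re}L$, rather than by $|L|$ or by the imaginary part. This is exactly where the logarithmic sector condition \eqref{nrm-logsector} enters, and it accounts for the factor $a_M^2$ in \eqref{nrm-rounding}.
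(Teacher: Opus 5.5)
Your proposal is correct and matches the paper's proof step for step. Both use the sector bound $|e^{-\lceil t\rceil z}-e^{-tz}|\le a_M r e^{-tr}$, the Stieltjes estimate $r^2\int_{[J,\infty)}e^{-2rt}\,d\tau(t)\le K/J$, and the spectral theorem for $A$ combined with $\|C\|^2=\|B\|\le\|S\|\le b$, with sums over finite $F\subset T$ followed by a supremum. The factorization $\|C(A^{\lceil t\rceil}-A^t)x\|^2\le\|C\|^2\|(A^{\lceil t\rceil}-A^t)x\|^2$ that you make explicit is exactly what the paper means by ``$B\le S\le bI$ and the spectral theorem''. The same step appears in Lemma~\ref{s4-rounding}, so nothing is hidden in the positive case.
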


\begin{proof}
Since all sampling times are positive, both maps vanish on
$\ker A$. It therefore suffices to work on $H_+$.
The spectral measure of $L$ is concentrated where
$r=\operatorname{Re}z>0$ and $|z|\leq a_Mr$.
For such $z$ and $t\geq J$, the inequality
$0\leq\lceil t\rceil-t<1$ gives
$|e^{-\lceil t\rceil z}-e^{-tz}|
\leq|z|e^{-tr}\leq a_Mre^{-tr}$.

Using the support and mass assumptions, integration by parts gives
\[
 r^2\int_{[J,\infty)}e^{-2rt}\,d\tau(t)
 \leq Kr^2\left(J+\frac1{2r}\right)e^{-2rJ}
 \leq\frac KJ.
\]
The last inequality follows from
$(v^2+v/2)e^{-2v}\leq1$ for $v\geq0$, with $v=rJ$.

Since $B\leq S\leq bI$, the spectral theorem and these estimates
give, for every finite $F\subset T$,
\[
 \sum_{t\in F}w_t
 \|C(A^{\lceil t\rceil}-A^t)x\|^2
 \leq\frac{a_M^2bK}{J}\|x\|^2.
\]
Taking the supremum over finite $F$ shows that
$\Delta_\tau x=
(\sqrt{w_t}C(A^{\lceil t\rceil}-A^t)x)_{t\in T}$
defines a bounded operator.
If $T_\tau$ is bounded, then
$\widehat T_\tau=T_\tau+\Delta_\tau$ is bounded, and the
estimate for $\Delta_\tau$ proves \eqref{nrm-rounding}.
\end{proof}

We now use Lemmas~\ref{nrm-complex-energy}
and~\ref{nrm-rounding-lemma} to extend the quadrature construction
of Proposition~\ref{prop:quadrature} to the present spectral class.

\begin{theorem}\label{nrm-relative}
Fix $M\geq0$, $\kappa\geq1$, and $0<\varepsilon<1$.
For $0<d<d_0$, put
$h=2\pi d/\log(1+16a_MK_0\kappa/(\varepsilon c_d))$.
There is a positive measure $\mu$ supported on $\mathbb N_0$,
depending only on $M,\kappa,\varepsilon,d$, such that every normal
system satisfying \eqref{nrm-input} and \eqref{nrm-logsector} has
\[
 (1-\varepsilon)S\leq G_\mu\leq(1+\varepsilon)S,
 \qquad
 N_\mu(R)=\frac{\log R}{h}+O(1),
 \quad R\to\infty.
\]
\end{theorem}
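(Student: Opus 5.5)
The proof will follow the positive case, namely Proposition~\ref{prop:quadrature}, Theorem~\ref{thm:relative} and the rounding step of Theorem~\ref{s6:joint}, with Lemmas~\ref{nrm-complex-energy} and~\ref{nrm-rounding-lemma} used in place of their positive counterparts. By rescaling we may assume $I\leq S\leq\kappa I$, so $b=\kappa$. First we establish the quadrature estimate. For $x\in H_+$ with spectral support (relative to $L$) in a compact subset of the open logarithmic sector, put
\[
 f_x(z)=e^z\langle (A^{e^z})^*BA^{e^z}x,x\rangle ,
\]
where $A^{w}=e^{-wL}$ for complex $w$. For real $z$ this is $e^z\|CA^{e^z}x\|^2$. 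The analytic continuation in $z=s+i\theta$ is $e^{z}\langle Ce^{-e^{s}e^{i\theta}L}x,\,Ce^{-e^{s}e^{-i\theta}\overline{L}}\cdots\rangle$; more carefully, we write $f_x(z)=e^z\langle Ce^{-e^zL}x,Ce^{-e^{\bar z}L}x\rangle$, which is analytic and agrees with the real formula on $\mathbb R$. For $|\theta|<d_0$, Cauchy--Schwarz and Lemma~\ref{nrm-complex-energy} give $\int_{\mathbb R}|f_x(s+i\theta)|\,ds\leq a_MK_0b\|x\|^2/c_\theta$. We also get uniform decay as $s\to\pm\infty$ from $\operatorname{Re}(e^{i\theta}z)\geq c_\theta\operatorname{Re}z$. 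The trapezoidal estimate \cite[Theorem~5.1]{TrefethenWeideman2014} on the strip $|\theta|<d$ then gives
\[
 \|Q_{h,\sigma}-G_c\|\leq b\,\frac{2a_MK_0}{c_d(e^{2\pi d/h}-1)}.
\]
Here $G_c=\int_0^\infty(A^t)^*BA^t\,dt=\int_0^1(A^s)^*SA^s\,ds$, which follows from the tail identity \eqref{nrm-stein} and Tonelli exactly as in Lemma~\ref{lem:stein}. Density and monotone convergence extend the estimate to all of $H$. The chosen $h$ makes this error at most $\varepsilon b/(8\kappa)$.

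Next we assemble the design:
\[
 \mu_0=\sum_{n<J}\delta_n+\sum_{j\geq0}h\sigma e^{jh}\delta_{J+\sigma e^{jh}},
\]
with $\sigma=\varepsilon/(8\kappa)$ and $J$ a large integer. The Stein identity gives $S=\sum_{n<J}(A^n)^*BA^n+(A^J)^*SA^J$. It remains to compare $(A^J)^*G_cA^J$ with $(A^J)^*SA^J$. For $0\leq s\leq1$ we have $\|A^{J+s}-A^J\|\leq\sup|e^{-Jz}||1-e^{-sz}|\leq\sup a_M r e^{-Jr}\leq a_M/(eJ)$ on the sector, so the difference is at most $2b\,a_M/(eJ)$ in norm. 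Dropping the nodes with $j<0$ costs at most $b\sigma$. Together these give $\|G_{\mu_0}-S\|\leq\varepsilon b/(2\kappa)\leq\varepsilon a/2$ once $J$ is large, so that
\[
 (1-\varepsilon/2)S\leq G_{\mu_0}\leq(1+\varepsilon/2)S .
\]
Finally we round the tail times up to integers. The tail measure is supported on $[J,\infty)$ and has mass at most $K_hs$ with $K_h=he^h/(e^h-1)$. Its analysis operator is bounded by the previous step, so Lemma~\ref{nrm-rounding-lemma} gives $\|\widehat T-T\|^2\leq a_M^2\kappa K_h/J$, which is below $\varepsilon^2/64$ for $J$ large. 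The square-root comparison from the proof of Theorem~\ref{s6:joint} then yields $(1-\varepsilon)S\leq G_\mu\leq(1+\varepsilon)S$ for the rounded measure $\mu$, after merging coincident weights. The counting formula $N_\mu(R)=h^{-1}\log R+O(1)$ follows as in Theorem~\ref{thm:relative}, since $\lceil J+\sigma e^{jh}\rceil$ is eventually strictly increasing. All parameters depend only on $M,\kappa,\varepsilon,d$.

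The main obstacle is the first step: justifying analyticity of $f_x$ in the strip $|\operatorname{Im}z|<d_0$ and its uniform decay for a normal, non-selfadjoint $L$, and checking that the boundary integrals are controlled by Lemma~\ref{nrm-complex-energy} at angles $\pm\theta$. The positive case used the symmetry $e^{-te^{\pm i\theta}L}$; here we need $\bigl(e^{-e^{\bar z}L}\bigr)^*=e^{-e^{z}L^*}$, so the two factors involve $L$ and $L^*$ at conjugate angles. The sector condition makes both of these decay with rate $c_\theta$, which is what the argument requires.
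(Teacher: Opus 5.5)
Your proposal is correct and follows the paper's proof essentially step for step. It uses the same choice $\sigma=\varepsilon/(8\kappa)$, the same analytic integrand $e^z\langle e^{-e^zL^*}Be^{-e^zL}x,x\rangle$ controlled on the lines $\operatorname{Im}z=\pm\theta$ by Lemma~\ref{nrm-complex-energy}, the same integer prefix plus translated truncated quadrature, and the same rounding via Lemma~\ref{nrm-rounding-lemma} followed by the square-root comparison. The remaining slips are minor and easily fixed: your first formula with a literal adjoint $(A^{e^z})^*$ is not analytic (your corrected form $e^z\langle Ce^{-e^zL}x,Ce^{-e^{\bar z}L}x\rangle$ is the right one), the dense set should consist of vectors with spectral support in a compact subset of the closed sector minus the vertex (for $M=0$ the open sector is empty), and dropping the factor $s$ in $\|A^{J+s}-A^J\|\le sa_M/(eJ)$ merely forces a slightly larger $J$.
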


\begin{proof}
Set $\sigma=\varepsilon/(8\kappa)$ and
$K_h=he^h/(e^h-1)$.
Choose an integer
\[
 J\geq\max\left\{1,\frac{8a_M\kappa}{\varepsilon},
                \frac{64a_M^2\kappa K_h}{\varepsilon^2}\right\},
\]
and define
\(
 \mu=\sum_{n=0}^{J-1}\delta_n+
       \sum_{j=0}^\infty h\sigma e^{jh}
                   \delta_{J+\lceil\sigma e^{jh}\rceil},
\)
adding weights at coincident nodes.

Tonelli's theorem and the semigroup identity give
\[
 G_c=\int_0^\infty(A^t)^*BA^t\,dt
     =\int_0^1(A^s)^*SA^s\,ds\leq bI.
\]
The value at time zero does not affect these integrals,
and $G_c$ vanishes on $\ker A$.
For $x\in H_+$ with spectral support in a compact subinterval of
$(0,\infty)$ with respect to $R$, apply the quadrature argument of
Proposition~\ref{prop:quadrature} to
$f_x(z)=e^z\langle e^{-e^zL^*}B e^{-e^zL}x,x\rangle$.
This function is analytic in $|\operatorname{Im}z|<d_0$;
both exponentials use $e^z$.
With $t=e^s$, Cauchy--Schwarz and
Lemma~\ref{nrm-complex-energy} give
\[
 \int_{\mathbb R}|f_x(s+i\theta)|\,ds
 \leq
 \left(\int_0^\infty\|Ce^{-te^{i\theta}L}x\|^2\,dt\right)^{1/2}
 \left(\int_0^\infty\|Ce^{-te^{-i\theta}L}x\|^2\,dt\right)^{1/2}
 \leq\frac{a_MK_0b}{c_\theta}\|x\|^2.
\]
For $|\theta|\leq d$, we have $c_\theta\geq c_d>0$, and the
spectral restriction gives uniform decay as $s\to\pm\infty$.
Thus the trapezoidal estimate and the positive operator convergence
argument in Proposition~\ref{prop:quadrature}, applied to
$f_x(\,\cdot+\log\sigma)$, show that
\[
 Q_{h,\sigma}
 =\sum_{j\in\mathbb Z}h\sigma e^{jh}
       (A^{\sigma e^{jh}})^*BA^{\sigma e^{jh}},
 \qquad
 \|Q_{h,\sigma}-G_c\|\leq bE_M(h,d),
\]
where the series converges strongly and
$E_M(h,d)=2a_MK_0/[c_d(e^{2\pi d/h}-1)]$.
Both operators vanish on $\ker A$, so the estimate holds on $H$.
The choice of $h$ gives $E_M(h,d)=\varepsilon/(8\kappa)$.

For $0\leq s\leq1$, the spectral theorem gives
$\|A^{J+s}-A^J\|\leq sa_M/(eJ)$, since
$\sup_{r>0}re^{-Jr}=1/(eJ)$.
Using the formula for $G_c$ and $\|A^t\|\leq1$, we obtain
\[
 \|(A^J)^*(G_c-S)A^J\|
 \leq2b\int_0^1\|A^{J+s}-A^J\|\,ds
 \leq\frac{ba_M}{eJ}.
\]

Let $Q_+$ denote the part of $Q_{h,\sigma}$ with $j\geq0$.
The omitted weights have total mass
$h\sigma/(e^h-1)\leq\sigma$, so
$\|Q_{h,\sigma}-Q_+\|\leq b\sigma$.
Define the unrounded tail
$\tau=\sum_{j\geq0}h\sigma e^{jh}\delta_{J+\sigma e^{jh}}$
and put $\mu_J=\sum_{n=0}^{J-1}\delta_n+\tau$.
The tail identity \eqref{nrm-stein} gives
$G_{\mu_J}-S=(A^J)^*(Q_+-S)A^J$ on all of $H$.
Consequently,
\[
 \|G_{\mu_J}-S\|
 \leq b\bigl(a_M/J+\sigma+E_M(h,d)\bigr)
 \leq\frac{3\varepsilon b}{8\kappa}
 \leq\frac{\varepsilon a}{2}.
\]
Thus $(1-\varepsilon/2)S\leq G_{\mu_J}\leq(1+\varepsilon/2)S$.

Summing the geometric weights gives
$\tau([0,s])\leq K_hs$ for every $s>0$.
Also, $G_\tau\leq G_{\mu_J}$, so $T_\tau$ is bounded.
Lemma~\ref{nrm-rounding-lemma} and the choice of $J$ yield
\[
 \|(\widehat T_\tau-T_\tau)x\|
 \leq\frac{\varepsilon}{8}\sqrt a\,\|x\|
 \leq\frac{\varepsilon}{8}\|S^{1/2}x\|.
\]
Adjoining the same integer prefix to both analysis operators
and applying the triangle inequality gives
\[
 (\sqrt{1-\varepsilon/2}-\varepsilon/8)^2S
 \leq G_\mu
 \leq(\sqrt{1+\varepsilon/2}+\varepsilon/8)^2S.
\]
For $0<\varepsilon<1$, the lower coefficient is at least
$1-\varepsilon$ and the upper coefficient is at most
$1+\varepsilon$. This proves the required frame bounds.

Finally, the gaps
$\sigma e^{jh}(e^h-1)$ tend to infinity, so only finitely many
rounded nodes coincide. Rounding changes the counting function
by $O(1)$, and the finite prefix does not affect its leading term.
Hence $N_\mu(R)=(\log R)/h+O(1)$.
\end{proof}

For fixed $M\geq0$ and $\varepsilon\in(0,1)$, choose
$d=d_0-1/\log\kappa$ for sufficiently large $\kappa$.
Since $c_d=a_M\sin(1/\log\kappa)$, the construction in
Theorem~\ref{nrm-relative} gives
\begin{equation}\label{nrm-upper-asymptotic}
 D(\mu)=\frac1h
 =\frac{\log\kappa+\log\log\kappa+O_{M,\varepsilon}(1)}
        {\pi(\pi-2\arctan M)},
 \qquad \kappa\to\infty.
\end{equation}

For $M\geq0$ and $\kappa\geq1$, let $\mathcal U_{w,M}(\kappa)$
consist of the positive atomic measures with locally finite support
for which there are constants $0<\alpha\leq\beta<\infty$ such that
$\alpha I\leq G_\mu\leq\beta I$ for every normal system satisfying
\eqref{nrm-logsector} and $I\leq S\leq\kappa I$.
The constants may depend on $\mu,M,\kappa$, but not on the system
or its dimension.
Scaling $C$ by $a^{-1/2}$ shows that this is equivalent to requiring
$a\alpha I\leq G_\mu\leq a\beta I$ whenever
$aI\leq S\leq bI$ and $b/a\leq\kappa$.

Define
\[
 C_M^*(\kappa)=\inf_{\mu\in\mathcal U_{w,M}(\kappa)}D(\mu),
 \qquad
 C_{M,\mathbb Z}^*(\kappa)=
 \inf_{\substack{\mu\in\mathcal U_{w,M}(\kappa)\\
                  \operatorname{supp}\mu\subset\mathbb N_0}}D(\mu).
\]
No bound on the condition number after sampling is prescribed.
For $M=0$, the sector condition restricts $A$ to positive operators,
so $\mathcal U_{w,0}(\kappa)=\mathcal U_w(\kappa)$.

\begin{theorem}\label{nrm-integer}
For $M\geq0$ and $\kappa\geq1$, every
$\mu\in\mathcal U_{w,M}(\kappa)$ admits a measure
$\widehat\nu\in\mathcal U_{w,M}(\kappa)$ supported on $\mathbb N_0$
with $D(\widehat\nu)\leq D(\mu)$.
Consequently, $C_{M,\mathbb Z}^*(\kappa)=C_M^*(\kappa)$.
\end{theorem}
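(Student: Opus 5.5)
I will follow the proof of Theorem~\ref{s4-integer}, replacing each ingredient for positive operators by its normal counterpart. Fix $\mu=\sum_{t\in T}w_t\delta_t\in\mathcal U_{w,M}(\kappa)$ and normalize to systems with $I\leq S\leq\kappa I$ satisfying \eqref{nrm-logsector}. There are $\alpha,\beta>0$ with $\alpha I\leq G_\mu\leq\beta I$ throughout the class. Since $[0,1)$ lies in the logarithmic sector, the positive scalar systems $A=r$, $C=\sqrt{1-r^2}$ belong to the class. Hence $\alpha\leq\phi_\mu\leq\beta$, and Proposition~\ref{prop:tight} together with local finiteness gives $M_\mu(R)\leq K(1+R)$ for all $R\geq0$.

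For $\eta\geq\max\{1,\kappa/\alpha\}$ and an integer $J\geq1$, set $\nu_J=\sum_{j=0}^{J-1}\delta_j+\eta\tau_J\mu$. By the tail identity \eqref{nrm-stein},
\[
 G_{\nu_J}=S-(A^J)^*SA^J+\eta(A^J)^*G_\mu A^J .
\]
This uses the semigroup law $A^{J+t}=A^JA^t$, which I must check under the chosen branch of $\lambda^t$. On $H_+$ the spectral measure lives where $|\arg\lambda|\leq M(-\log|\lambda|)$, but $\arg$ is only taken in $(-\pi,\pi]$, so additivity of $t\arg\lambda$ can fail once $(J+t)\arg\lambda$ leaves this range. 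To avoid the issue, I will define the powers in this class through $A^t=e^{-tL}$ with $L=-\operatorname{Log}A$. Then $e^{-(J+t)L}=e^{-JL}e^{-tL}$ holds exactly, and for integer $n$ the operator $e^{-nL}$ agrees with the ordinary power $A^n$. With this convention, and since $S\geq I$ and $\eta G_\mu\geq\kappa I\geq S$, I obtain $G_{\nu_J}\geq S\geq I$ and $G_{\nu_J}\leq(\kappa+\eta\beta)I$.

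Next I round: $\widehat\nu_J=\sum_{j<J}\delta_j+\eta\sum_t w_t\delta_{J+\lceil t\rceil}$. The tail $\eta\tau_J\mu$ is supported on $[J,\infty)$, with cumulative mass at most $\eta K(1+s-J)\leq\eta Ks$ for $s\geq J$; it is zero for $s<J$, so the bound holds for all $s>0$. Its analysis operator is bounded because its Gramian is dominated by $G_{\nu_J}$. Lemma~\ref{nrm-rounding-lemma}, applied with $b\leq\kappa$, gives $\|\widehat T-T\|^2\leq a_M^2\kappa\eta K/J$. That lemma uses $\lceil t\rceil$ rounding of $A^t=e^{-tL}$, which is consistent with the convention above. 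I choose $J$ so that this quantity is at most $1/4$. The triangle inequality, with the same integer prefix adjoined to both analysis operators, then gives
\[
 \tfrac14 I\leq G_{\widehat\nu_J}\leq\bigl(\sqrt{\kappa+\eta\beta}+\tfrac12\bigr)^2 I .
\]
Combining weights at coincident times leaves $G$ unchanged. All constants depend only on $\mu$, $M$ and $\kappa$, so $\widehat\nu_J\in\mathcal U_{w,M}(\kappa)$.

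The counting bound $N_{\widehat\nu_J}(R)\leq J+N_\mu(R-J)$ gives $D(\widehat\nu_J)\leq D(\mu)$. Taking infima yields $C^*_{M,\mathbb Z}(\kappa)\leq C^*_M(\kappa)$, and the reverse inequality is trivial. I expect the main obstacle to be the branch issue for $A^t$ in the semigroup identity. Although $G_\mu$ depends only on the $A^t$ with $t\in T$, the tail identity requires $(A^{J+t})^*BA^{J+t}=(A^J)^*(A^t)^*BA^tA^J$. My plan is to verify that the sector condition lets one consistently use $e^{-tL}$. If the paper's principal-branch convention must be kept literally, the fallback is to note that $\arg(\lambda^{J})$ and $J\arg\lambda$ differ by multiples of $2\pi$ only on an explicit spectral set, and to handle that set directly.
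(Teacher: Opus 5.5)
Your proposal is correct and follows the paper's proof essentially step for step: translate by $J$ with weight $\eta=\max\{1,\kappa/\alpha\}$, use the tail identity to get $I\le G_{\nu_J}\le(\kappa+\eta\beta)I$, then apply Lemma~\ref{nrm-rounding-lemma} with $a_M^2\eta\kappa K/J\le 1/4$ and the triangle inequality. The branch issue you flag is not actually an obstacle: the paper's convention $\lambda^t=|\lambda|^te^{it\arg\lambda}$ (with $\arg\lambda$ the fixed principal argument of $\lambda$, never reduced modulo $2\pi$ after multiplying by $t$) is exactly $e^{-tL}$ with $L=-\operatorname{Log}A$, so $\lambda^{J+t}=\lambda^J\lambda^t$ holds identically and neither a redefinition nor your fallback is needed.
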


\begin{proof}
Fix $\mu\in\mathcal U_{w,M}(\kappa)$ with uniform bounds
$\alpha I\leq G_\mu\leq\beta I$ on the normalized class.
Since this class contains every positive system with $S=I$,
Proposition~\ref{prop:tight} gives
$\mu([0,R])\leq K(1+R)$ for some $K>0$ and all $R\geq0$.
Apply the construction from Theorem~\ref{s4-integer}: translate
the measure, add observations at finitely many initial integer
times, and round the remaining sampling times.
More precisely, choose $\eta=\max\{1,\kappa/\alpha\}$ and an
integer $J\geq\max\{1,4a_M^2\eta\kappa K\}$, and set
\[
 \nu_J=\sum_{n=0}^{J-1}\delta_n+\eta\tau_J\mu,
 \qquad
 \widehat\nu_J=\sum_{n=0}^{J-1}\delta_n+
 \eta\sum_{t\in\operatorname{supp}\mu}
              w_t\delta_{J+\lceil t\rceil}.
\]
Here $\tau_J$ denotes translation by $J$.
The identity used there becomes
$G_{\nu_J}=S+(A^J)^*(\eta G_\mu-S)A^J$, so
$I\leq G_{\nu_J}\leq(\kappa+\eta\beta)I$.
The translated tail has cumulative mass at most $\eta Ks$
for $s\geq J$, and its analysis operator is bounded by the upper
bound for $G_{\nu_J}$.
Lemma~\ref{nrm-rounding-lemma} replaces Lemma~\ref{s4-rounding}
and bounds the squared difference of the two tail analysis
operators by $a_M^2\eta\kappa K/J\leq1/4$.
The triangle inequality then gives
\[
 \tfrac14 I\leq G_{\widehat\nu_J}
 \leq\left(\sqrt{\kappa+\eta\beta}+\tfrac12\right)^2I.
\]
The parameters are independent of the system, and combining
coincident atoms preserves the Gramian.
Thus $\widehat\nu_J\in\mathcal U_{w,M}(\kappa)$.
As in Theorem~\ref{s4-integer},
$N_{\widehat\nu_J}(R)\leq J+N_\mu(R)$ for $R\geq J$, so
$D(\widehat\nu_J)\leq D(\mu)$.
Taking infima, and using the reverse inequality from restriction
to integer supports, proves the equality.
\end{proof}

We next prove that the coefficient of $\log\kappa$ in
\eqref{nrm-upper-asymptotic} is optimal for each fixed $M$.
The lower bound uses eigenvalues obtained from conjugate points
$w=re^{\pm i\theta}$ by the Cayley map $w\mapsto(1-w)/(1+w)$.

In Lemma~\ref{nrm-two-rays}, we construct such a system with a
scalar observation operator $C:H\to\mathbb C$ and estimate its
integer Gramian.

\begin{lemma}\label{nrm-two-rays}
Let $M>0$, $0<\theta<\arctan M$, and $h>0$.
There exist an invertible, strongly stable diagonal normal operator
$A$ satisfying \eqref{nrm-logsector} and a bounded operator
$C:H\to\mathbb C$ whose integer Gramian $S_h$ is bounded and
invertible. If
\begin{equation}\label{nrm-step-range}
 0<h\leq\min\{\pi(\pi-2\theta),\,2\pi\theta\},
\end{equation}
then $\operatorname{cond}(S_h)\leq64\exp\bigl(2\pi(\pi-2\theta)/h\bigr)$.
\end{lemma}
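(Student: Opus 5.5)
The plan is to imitate Proposition~\ref{prop:cayley}, replacing the single geometric ray of Cayley parameters by two conjugate rays. Fix a large starting index and set $d_j=c\,e^{-hj}$ with $c>0$ small. For $\epsilon\in\{+,-\}$ put
\[
w_{j,\epsilon}=d_je^{i\epsilon\theta},\qquad \lambda_{j,\epsilon}=\frac{1-w_{j,\epsilon}}{1+w_{j,\epsilon}},\qquad f_{j,\epsilon}=\frac{2\sqrt{d_j}}{1+w_{j,\epsilon}}.
\]
Let $A$ be diagonal with eigenvalues $\lambda_{j,\epsilon}$ on an orthonormal basis $(e_{j,\epsilon})$, and let $C=\langle\cdot,f\rangle$ with $f=(f_{j,\epsilon})$. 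Since $\operatorname{Re}w>0$, we have $|\lambda_{j,\epsilon}|<1$ and $\lambda_{j,\epsilon}\neq0$, and $\lambda_{j,\epsilon}\to1$; so $A$ is invertible and, by dominated convergence, strongly stable. Moreover $\sum_j|f_{j,\epsilon}|^2\lesssim\sum_jd_j<\infty$. For the sector condition \eqref{nrm-logsector}, I would use the expansions
\[
-\log\lambda=2w+O(|w|^3),\qquad\text{so}\qquad |\arg\lambda|/(-\log|\lambda|)=\tan\theta+O(d_j^2).
\]
Because $\theta<\arctan M$, taking $c$ small enough gives $|\arg\lambda|\le M(-\log|\lambda|)$ for all $j$.

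Next I would compute the Gramian. From $1-\lambda_i\bar\lambda_k=2(w_i+\bar w_k)/((1+w_i)(1+\bar w_k))$,
\[
(S_h)_{(i,\epsilon),(k,\epsilon')}=\frac{f_i\bar f_k}{1-\lambda_i\bar\lambda_k}=\frac{2\sqrt{d_id_k}}{w_i+\bar w_k}=\operatorname{sech}\Bigl(\frac{(k-i)h}{2}+\frac{i(\epsilon+\epsilon')\theta}{2}\Bigr).
\]
Here $\epsilon+\epsilon'\in\{2,0,-2\}$, up to the sign convention. This is a $2\times2$ block Toeplitz matrix. Shifting the contour in the Fourier integral used in Proposition~\ref{prop:cayley}, which is legitimate because $|\theta|<\pi/2$, gives
\[
\int_{\mathbb R}\operatorname{sech}(x+i\alpha)e^{-i\xi x}\,dx=\pi e^{-\alpha\xi}\operatorname{sech}(\pi\xi/2)\qquad(|\alpha|<\pi/2).
\]
Poisson summation then yields the matrix symbol
\[
P(\phi)=\frac{2\pi}{h}\sum_{k\in\mathbb Z}\operatorname{sech}\bigl(\pi\xi_k\bigr)\begin{pmatrix}e^{2\theta\xi_k}&1\\1&e^{-2\theta\xi_k}\end{pmatrix},\qquad \xi_k=\frac{\phi+2\pi k}{h},
\]
possibly with the roles of the exponentials interchanged. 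The cond of $S_h$ is bounded by $\sup\|P\|/\inf\lambda_{\min}(P)$ through the block analogue of the Toeplitz quadratic form identity. The upper bound is easy: $\|P\|\le\operatorname{tr}P\lesssim h^{-1}$, because $\theta<\pi/2$ keeps $\operatorname{sech}(\pi\xi)e^{2\theta|\xi|}$ summable with total comparable to $h^{-1}$. The first inequality in \eqref{nrm-step-range} is what makes this sum controlled by its $k=0$ term up to an absolute constant.

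The main obstacle is the lower bound on $\lambda_{\min}(P(\phi))$. Each summand is a rank-one matrix $v_kv_k^*$ with $v_k=(e^{\theta\xi_k},e^{-\theta\xi_k})$, scaled by $\operatorname{sech}(\pi\xi_k)$. A single term is singular, so invertibility comes from combining two distinct $k$. I would use $\lambda_{\min}\ge\det P/\operatorname{tr}P$ and bound the determinant from below by the contribution of the two terms with the smallest $|\xi_k|$, say $\xi_0$ and $\xi_{-1}$ with $\xi_0-\xi_{-1}=2\pi/h$. Their contribution is
\[
\operatorname{sech}(\pi\xi_0)\operatorname{sech}(\pi\xi_{-1})\,\bigl|\det(v_0,v_{-1})\bigr|^2=\operatorname{sech}(\pi\xi_0)\operatorname{sech}(\pi\xi_{-1})\,4\sinh^2\bigl(\theta(\xi_0-\xi_{-1})\bigr).
\]
By Cauchy--Binet all pairwise terms are nonnegative, so this pair alone bounds $\det P$ from below. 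The product $\operatorname{sech}\operatorname{sech}\ge e^{-\pi(|\xi_0|+|\xi_{-1}|)}\ge e^{-2\pi^2/h}$, and the second inequality in \eqref{nrm-step-range} gives $\sinh^2(2\pi\theta/h)\gtrsim e^{4\pi\theta/h}$. Together with the upper bound these give
\[
\lambda_{\min}(P)\gtrsim h^{-1}\exp\bigl(-2\pi(\pi-2\theta)/h\bigr).
\]
The remaining work is checking the numerical constants: the ratio of the trace bound to the determinant bound must stay below $64$ under \eqref{nrm-step-range}, with the $k=0$ term dominating. Finally, density of finitely supported vectors extends the frame inequalities, exactly as in Proposition~\ref{prop:cayley}, and shows that $S_h$ is bounded and invertible.
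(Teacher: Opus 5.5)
Your proposal is correct and follows the paper's proof essentially step for step: conjugate Cayley rays $r_je^{\pm i\theta}$, a $2\times2$ block Toeplitz Gramian with $\operatorname{sech}$ entries, a contour-shifted Fourier transform, Poisson summation into rank-one terms $\operatorname{sech}(\pi\xi_k)v_kv_k^*$, a lower bound from the two Poisson terms nearest zero via $\sinh^2(2\pi\theta/h)$, and an upper bound through the trace and $\coth(\pi(\pi-2\theta)/h)$. There are two small corrections, neither affecting the argument: (i) the cross entries of your Gramian carry unimodular factors $e^{\mp i\theta}$, which conjugation by $\operatorname{diag}(e^{i\theta/2},e^{-i\theta/2})$ on each block removes (this is the paper's $U$); (ii) with your bound $\lambda_{\min}\ge\det P/\operatorname{tr}P$ the constant is $16\coth^2(1)/(1-e^{-2})^2\approx37<64$, but only with the sharp value $\coth(1)\approx1.31$, since the crude bound $\coth<2$ gives about $86$, whereas the paper divides the two-term determinant by that pair's trace (at most $4$) and gets $32/(1-e^{-2})^2<64$ directly.
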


\begin{proof}
Choose $0<r_0\leq1/16$, to be fixed below.
For $j\geq1$ and $\sigma\in\{+1,-1\}$, set
$r_j=r_0e^{-jh}$, $w_{j,\sigma}=r_je^{i\sigma\theta}$, and
$\lambda_{j,\sigma}=(1-w_{j,\sigma})/(1+w_{j,\sigma})$.
On $H=\ell^2(\mathbb N\times\{+1,-1\})$, define
\begin{equation}\label{nrm-model-observation}
 Ae_{j,\sigma}=\lambda_{j,\sigma}e_{j,\sigma},
 \qquad
 Ce_{j,\sigma}=\frac{2\sqrt{r_j\cos\theta}}{1+w_{j,\sigma}}.
\end{equation}
The coefficients of $C$ are square summable, so $C$ is bounded.
Since $(1-r_0)/(1+r_0)\leq|\lambda_{j,\sigma}|<1$,
the operator $A$ is invertible, and strong stability follows by
dominated convergence.

For $|w|\leq r_0$, the principal logarithm satisfies
$-\operatorname{Log}((1-w)/(1+w))=2\operatorname{atanh}w$ and
$|\operatorname{atanh}w-w|\leq|w|^3/[3(1-|w|^2)]$.
Set $\varepsilon_0=r_0^2/[3(1-r_0^2)]$ and choose $r_0$ so that
\begin{equation}\label{nrm-model-sector-choice}
 \varepsilon_0\leq
 \frac{M\cos\theta-\sin\theta}{M+1}.
\end{equation}
The right side is positive and smaller than $\cos\theta$.
Hence
\[
 \frac{|\operatorname{Im}(-\operatorname{Log}\lambda_{j,\sigma})|}
      {\operatorname{Re}(-\operatorname{Log}\lambda_{j,\sigma})}
 \leq\frac{\sin\theta+\varepsilon_0}
           {\cos\theta-\varepsilon_0}\leq M,
\]
which proves \eqref{nrm-logsector}.

Write $i,k$ for the pairs indexing the basis vectors.
On finitely supported vectors, summing the geometric series gives
the Gramian entries
\[
 (S_h)_{ik}
 =\frac{\overline{Ce_i}\,Ce_k}
        {1-\overline{\lambda_i}\lambda_k}
 =\frac{2\sqrt{\operatorname{Re}w_i\operatorname{Re}w_k}}
        {\overline{w_i}+w_k}.
\]
Group the coordinates in the order $+1,-1$ for each $j$.
The block with indices $j,k$ is $K_{j-k}$, where
\[
 K_n=\cos\theta
 \begin{pmatrix}
 \operatorname{sech}(nh/2+i\theta)&
 e^{i\theta}\operatorname{sech}(nh/2)\\
 e^{-i\theta}\operatorname{sech}(nh/2)&
 \operatorname{sech}(nh/2-i\theta)
 \end{pmatrix}.
\]
These blocks are independent of $r_0$.

Put $U=\operatorname{diag}(e^{i\theta/2},e^{-i\theta/2})$.
The conjugation $U^*K_nU$ removes the phases from the entries
outside the diagonal.
Let $K(t)$ be obtained from $U^*K_nU$ by replacing $nh$ with $t$.
With the Fourier convention
$\widehat f(\xi)=\int_{\mathbb R}f(t)e^{-it\xi}\,dt$, its transform is
\[
 \begin{aligned}
 W_\theta(\xi)
 =2\pi\cos\theta\,\operatorname{sech}(\pi\xi)
 \begin{pmatrix}
 e^{-2\theta\xi}&1\\
 1&e^{2\theta\xi}
 \end{pmatrix}
 =2\pi\cos\theta\,\operatorname{sech}(\pi\xi)
 v(\xi)v(\xi)^*,
 \qquad
 v(\xi)=\binom{e^{-\theta\xi}}{e^{\theta\xi}}.
 \end{aligned}
\]
Indeed, the transform of $\operatorname{sech}(t/2)$ is
$2\pi\operatorname{sech}(\pi\xi)$.
Shifting the integration path by $2i\theta$ and $-2i\theta$
gives the respective factors $e^{-2\theta\xi}$ and $e^{2\theta\xi}$.
No pole is crossed because $\theta<\pi/2$.

Poisson summation gives
\[
 P_h(\omega)
 =\sum_{n\in\mathbb Z}U^*K_nU\,e^{-in\omega}
 =\frac{2\pi\cos\theta}{h}F_h(\omega),
\]
where
$F_h(\omega)=\sum_{k\in\mathbb Z}
\operatorname{sech}(\pi\xi_k)v(\xi_k)v(\xi_k)^*$
and $\xi_k=(\omega+2\pi k)/h$.
The series defining $F_h$ converges absolutely and uniformly
for $\omega\in[-\pi,\pi]$.

Put $a=\pi-2\theta>0$.
For $|\omega|\leq\pi$, set $\xi=\omega/h$ and choose
$\eta=\xi-2\pi/h$ if $\omega\geq0$, and
$\eta=\xi+2\pi/h$ otherwise.
Then $|\xi|+|\eta|=|\xi-\eta|=2\pi/h$.
Let $H_\omega$ be the sum of the two corresponding terms in $F_h$.
Its determinant is
\(
 \det H_\omega
 =4\operatorname{sech}(\pi\xi)\operatorname{sech}(\pi\eta)
       \sinh^2(2\pi\theta/h).
\)
Each term has trace
$2\cosh(2\theta\xi)/\cosh(\pi\xi)\leq2$.
Using $\operatorname{sech}u\geq e^{-|u|}$, we obtain
\[
 \lambda_{\min}F_h(\omega)
 \geq\frac{\det H_\omega}{\operatorname{tr}H_\omega}
 \geq e^{-2\pi^2/h}\sinh^2(2\pi\theta/h)>0.
\]
For the upper bound,
\(
 \operatorname{tr}F_h(\omega)
 \leq4\sum_{k\in\mathbb Z}e^{-a|\omega+2\pi k|/h}
 \leq4\coth(\pi a/h).
\)
The last inequality follows because the periodic exponential sum
is largest at $\omega=0$.

For a finitely supported vector $x=(x_j)_{j\geq1}$ with
$x_j\in\mathbb C^2$, set
$p_x(\omega)=\sum_{j\geq1}U^*x_j e^{-ij\omega}$.
The Gramian form satisfies
\[
 \sum_{m=0}^\infty\|CA^mx\|^2
 =\frac1{2\pi}\int_{-\pi}^{\pi}
 p_x(\omega)^*P_h(\omega)p_x(\omega)\,d\omega.
\]
The bounds for $F_h$ and Parseval's identity bound this expression
above and below by positive multiples of $\|x\|^2$.
Thus every finite positive time sum has a common upper bound,
first on finitely supported vectors and then on $H$ by density.
These sums converge strongly to a bounded positive block Toeplitz
operator $S_h$. The lower bound proves its invertibility.

Under \eqref{nrm-step-range}, both $\pi a/h\geq1$ and
$2\pi\theta/h\geq1$.
Using $\coth(\pi a/h)<2$ and
$\sinh u\geq(1-e^{-2})e^u/2$ for $u\geq1$, the preceding
symbol bounds give
\[
 \operatorname{cond}(S_h)
 \leq\frac{32}{(1-e^{-2})^2}e^{2\pi a/h}
 <64e^{2\pi a/h}.
\]
\end{proof}

We next bound the number of sampling times in each interval
$(R,e^\ell R]$, where $R\geq1$ and $\ell>0$.
The proof uses the two coordinates indexed by $\sigma=\pm1$
at each scale $r_j$ in the system \eqref{nrm-model-observation}.

\begin{lemma}\label{nrm-two-ray-counting}
If a positive locally finite atomic measure $\mu$ gives a frame for
the system \eqref{nrm-model-observation}, then there is a constant
$C_{\mu,h,\theta,r_0}<\infty$ such that
\[
 \#\bigl(\operatorname{supp}\mu\cap(R,e^LR]\bigr)
 \geq\frac{2L}{h}-C_{\mu,h,\theta,r_0},
 \qquad R\geq1,\quad L>0.
\]
In particular, $D(\mu)\geq2/h$.
\end{lemma}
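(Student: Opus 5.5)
The plan is to imitate the proof of Theorem~\ref{thm:local-density}, now using the two-dimensional blocks of the system \eqref{nrm-model-observation}. Let $a_\mu,b_\mu$ be the frame bounds after sampling, and set $s_{j,\sigma}=-\operatorname{Log}\lambda_{j,\sigma}=2\operatorname{atanh}w_{j,\sigma}$. For small $r_0$ we have $\operatorname{Re}s_{j,\sigma}\asymp r_j\cos\theta$, $|s_{j,\sigma}|\asymp r_j$, and $|Ce_{j,\sigma}|^2\asymp r_j\cos\theta\lesssim\operatorname{Re}s_{j,\sigma}$. All implied constants depend only on $\theta$ and $r_0$.

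\emph{Step 1: mass bound.} Apply the upper frame bound to $e_{j,+}$, with $j$ chosen so that $r_jR\in[1,e^h)$. This gives $|Ce_{j,+}|^2 e^{-2\operatorname{Re}s_{j,+}R}M_\mu(R)\leq b_\mu$, and hence $M_\mu(R)\leq KR$ for $R\geq1$. Integration by parts then yields the tail estimate
\[
 |Ce_{j,\sigma}|^2\int_{(Q,\infty)}e^{-2\operatorname{Re}s_{j,\sigma}t}\,d\mu(t)
 \leq K'(1+cr_jQ)e^{-cr_jQ},\qquad Q\geq1,
\]
which is the analogue of \eqref{eq:model-tail}.

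\emph{Step 2: dimension count.} Fix a large $D$. Let $j_0$ be the first index with $r_{j_0}\leq e^{-D}/R$, and set $m=\lfloor(L-2D')/h\rfloor$ for a suitable constant $D'$ depending on $D$, $h$ and $\theta$. Let $E$ be the span of the $2m$ vectors $e_{j,\pm}$ with $j_0\leq j<j_0+m$. If $(R,e^LR]$ contains fewer than $2m$ sampling times, there is a unit vector $x\in E$ with $CA^tx=0$ at every sampling time in that window. This is where the factor $2$ comes from: both rays at each scale contribute a dimension.

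\emph{Step 3: energy outside the window.} For $t\leq R$, use $|CA^tx|^2\leq\|x\|^2\sum_{j,\sigma}|Ce_{j,\sigma}|^2$ together with $M_\mu(R)\leq KR$. The resulting geometric sum in $r_j$ is $O(e^{-D})$. For $t>Q=e^LR$, use Cauchy--Schwarz over the coordinates of $x$ and the tail bound of Step~1. Since $r_{j_0+m-1}Q\gtrsim e^{D}$, summing the doubly exponential decay over $j$ gives $O(e^{-ce^D})$. Choose $D$ so that the sum of the two contributions is less than $a_\mu$. This contradicts the lower frame bound, so the window contains at least $2m\geq 2L/h-C$ sampling times. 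Taking $R=1$ and letting $L\to\infty$ gives $D(\mu)\geq2/h$.

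The main obstacle is Step~3. Because $x$ mixes coordinates with complex eigenvalues, cancellation cannot help, so I would bound $|CA^tx|$ crudely by $\sum_{j,\sigma}|x_{j,\sigma}||Ce_{j,\sigma}||\lambda_{j,\sigma}|^t$. The oscillating phases are then irrelevant, since only the moduli $e^{-\operatorname{Re}s\,t}$ matter. The point to verify is that the factor $\cos\theta$ in $\operatorname{Re}s$ only changes constants, so it is absorbed into $D'$ and $C_{\mu,h,\theta,r_0}$.
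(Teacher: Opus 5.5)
Your proposal is correct and follows essentially the paper's proof. The shared steps are the mass bound from testing the upper frame bound on $e_{j,+1}$, the integration-by-parts tail estimate, the $2m$-dimensional block $E$ that yields a unit vector whose samples vanish in the window, and Cauchy--Schwarz bounds on the energy in $[0,R]$ and $(e^LR,\infty)$ that involve only the moduli $|\lambda_{j,\sigma}|$. The differences are cosmetic. The paper takes $D'=D$ with $D>h-\log r_0$, which forces $j_0>1$. It obtains $f_j^2\le 2s_j$ exactly from the identity $|\lambda_{j,\sigma}|^2=1-|Ce_{j,\sigma}|^2$, which holds for the fixed $r_0\le1/16$, so no smallness of $r_0$ is needed. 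It also invokes local finiteness explicitly to get $M_\mu(R)\le KR$ for bounded $R\ge1$.
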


\begin{proof}
Set $f_j=|Ce_{j,\sigma}|$ and
$s_j=-\log|\lambda_{j,\sigma}|$, both independent of $\sigma$.
Since $r_0\leq1/16$, we have
$f_j^2=4r_j\cos\theta/|1+w_{j,\sigma}|^2<1/2$ and
$s_j=-\log(1-f_j^2)/2$.
Thus there are constants $c_f,C_f,c_s,C_s>0$, depending only on
$\theta,r_0$, such that
\begin{equation}\label{nrm-ray-comparisons}
 c_fr_j\leq f_j^2\leq C_fr_j,\qquad
 c_sr_j\leq s_j\leq C_sr_j,\qquad
 f_j^2\leq2s_j.
\end{equation}

Let $\alpha_\mu,\beta_\mu>0$ be the sampled frame bounds.
For sufficiently large $R$, choose $j$ with $1\leq r_jR<e^h$.
Testing the upper bound on $e_{j,+1}$ gives
\[
 \beta_\mu
 \geq f_j^2e^{-2s_jR}\mu([0,R])
 \geq\frac{c_f}{R}e^{-2C_se^h}\mu([0,R]).
\]
By local finiteness, there is therefore a constant $K>0$ such that
$\mu([0,R])\leq KR$ for every $R\geq1$.
Integration by parts and $f_j^2\leq2s_j$ then give
\begin{equation}\label{nrm-ray-tail}
 f_j^2\int_{(Q,\infty)}e^{-2s_jt}\,d\mu(t)
 \leq K(1+2s_jQ)e^{-2s_jQ},
 \qquad Q\geq1.
\end{equation}

Choose $D>h-\log r_0$, with its final value fixed below.
For $R\geq1$, let $j_0$ be the first index satisfying
$r_{j_0}\leq e^{-D}/R$.
Then $j_0>1$ and
$e^{-D-h}/R<r_{j_0}\leq e^{-D}/R$.
For $L\geq2D+h$, put $m=\lfloor(L-2D)/h\rfloor$ and define
\(
 E=\operatorname{span}\{e_{j,\sigma}:
       j_0\leq j<j_0+m,\ \sigma=\pm1\}.
\)
If $(R,e^LR]$ contains fewer than $2m=\dim E$ sampling times,
there is a unit vector $x\in E$ such that $CA^tx=0$ at all
these times.

By Cauchy--Schwarz and \eqref{nrm-ray-comparisons},
\[
 \int_{[0,R]}|CA^tx|^2\,d\mu(t)
 \leq\mu([0,R])\sum_{j\geq j_0,\ \sigma=\pm1}f_j^2
 \leq\frac{2KC_fe^{-D}}{1-e^{-h}}.
\]
Set $Q=e^LR$. The choice of $m$ gives
$r_{j_0+m-1}Q>e^D$.
Using Cauchy--Schwarz, \eqref{nrm-ray-tail}, and
$(1+v)e^{-v}\leq2e^{-v/2}$ for $v\geq0$, we obtain
\[
 \int_{(Q,\infty)}|CA^tx|^2\,d\mu(t)
 \leq4K\sum_{r=0}^\infty
        \exp(-c_se^De^{rh})
 \leq\frac{4Ke^{-c_se^D}}{1-e^{-c_she^D}}.
\]
The last inequality uses $e^{rh}\geq1+rh$.

Choose $D$, independently of $R$ and $L$, so that the sum of
these two bounds is less than $\alpha_\mu$.
The sampled energy on $(R,Q]$ is zero, so this contradicts the
lower frame bound.
Thus the window contains at least
$2m\geq2L/h-4D/h-2$ sampling times.
For $L<2D+h$, the same lower bound is nonpositive and holds
automatically.
This proves the window estimate with
$C_{\mu,h,\theta,r_0}=4D/h+2$.
Taking $R=1$ and letting $L\to\infty$ gives $D(\mu)\geq2/h$.
\end{proof}

Combining the local counting bound in Lemma~\ref{nrm-two-ray-counting} with the integer construction in Theorem~\ref{nrm-relative} determines the leading coefficient of $\log\kappa$ in
$C_M^*(\kappa)$ and $C_{M,\mathbb Z}^*(\kappa)$ as
$\kappa\to\infty$.

\begin{theorem}\label{nrm-sharp}
For every fixed $M\geq0$,
\[
 C_M^*(\kappa)=C_{M,\mathbb Z}^*(\kappa)
 \sim\frac{\log\kappa}{\pi(\pi-2\arctan M)},
 \qquad \kappa\to\infty.
\]
More precisely, if $M>0$ and $a_*=\pi-2\arctan M$, then
\[
 \frac{\log\kappa-\log64}{\pi a_*}
 \leq C_M^*(\kappa)
 \leq\frac{\log\kappa+\log\log\kappa+O_M(1)}{\pi a_*}
\]
for all sufficiently large $\kappa$.
\end{theorem}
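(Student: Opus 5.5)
The equality $C_M^*(\kappa)=C_{M,\mathbb Z}^*(\kappa)$ is Theorem~\ref{nrm-integer}, so only the two-sided asymptotic bound needs proof. For $M=0$ we have $\mathcal U_{w,0}(\kappa)=\mathcal U_w(\kappa)$ and $\pi a_*=\pi^2$, so the asymptotic statement is Theorem~\ref{thm:large-lower}. We therefore assume $M>0$.

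\emph{Upper bound.} Apply Theorem~\ref{nrm-relative} with $\varepsilon=1/2$ to obtain an integer design $\mu$ such that $\tfrac12S\leq G_\mu\leq\tfrac32S$. After normalizing to $I\leq S\leq\kappa I$, the Gramian bounds are uniform over the class, so $\mu\in\mathcal U_{w,M}(\kappa)$ and $D(\mu)=1/h$. Now take $d=d_0-1/\log\kappa$. Then $c_d=a_M\sin(1/\log\kappa)\asymp_M 1/\log\kappa$ and $2\pi d=\pi a_*-2\pi/\log\kappa$. Expanding $\log(1+32a_MK_0\kappa/c_d)=\log\kappa+\log\log\kappa+O_M(1)$ and dividing by $2\pi d$ gives the stated upper bound, as in \eqref{nrm-upper-asymptotic}.

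\emph{Lower bound.} Fix $\theta<\arctan M$ and $h$ in the range \eqref{nrm-step-range}, and take the two-ray system of Lemma~\ref{nrm-two-rays}. It satisfies \eqref{nrm-logsector} and has $\operatorname{cond}(S_h)\leq64\exp(2\pi(\pi-2\theta)/h)$. If this bound is at most $\kappa$, every $\mu\in\mathcal U_{w,M}(\kappa)$ gives a frame for the system after rescaling $C$. Lemma~\ref{nrm-two-ray-counting} then yields $D(\mu)\geq2/h$. To make the bound at most $\kappa$, choose
\[
 h=\frac{2\pi(\pi-2\theta)}{\log(\kappa/64)},
\]
which gives
\[
 D(\mu)\geq\frac{\log\kappa-\log64}{\pi(\pi-2\theta)}.
\]
For large $\kappa$ this $h$ is small, so \eqref{nrm-step-range} holds. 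Letting $\theta\uparrow\arctan M$ with $\kappa$ fixed gives the claimed lower bound with $a_*$. The only condition is that \eqref{nrm-step-range} must hold for $\theta$ near $\arctan M$; this is automatic since $h\to0$ as $\kappa\to\infty$ uniformly for $\theta$ in a compact subinterval of $(0,\arctan M)$. Taking the infimum over $\mu$ finishes the proof, and the leading asymptotic follows.

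The main obstacle is already handled in Lemma~\ref{nrm-two-rays}: the sharp condition number estimate whose exponent is $2\pi(\pi-2\theta)/h$ rather than $2\pi^2/h$. It is the doubled count from the two rays in Lemma~\ref{nrm-two-ray-counting} that produces $\pi a_*$ in the denominator. What remains here is bookkeeping of the limit in $\theta$.
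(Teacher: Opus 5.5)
Your proposal is correct and follows the paper's proof essentially step for step. The same ingredients appear in both: the choice $h=2\pi(\pi-2\theta)/(\log\kappa-\log64)$ in Lemma~\ref{nrm-two-rays}, the count $D(\mu)\geq2/h$ from Lemma~\ref{nrm-two-ray-counting}, the limit $\theta\uparrow\arctan M$, and Theorem~\ref{nrm-relative} with $\varepsilon=1/2$ and $d=d_0-1/\log\kappa$ for the upper bound.

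One phrase needs fixing. The range condition \eqref{nrm-step-range} must hold uniformly for $\theta\in[\theta_1,\arctan M)$, not on a compact subinterval of $(0,\arctan M)$. It does hold there, because $\pi-2\theta\geq a_*$ on that set. The paper makes this explicit by requiring $q=\log\kappa-\log64>\max\{2,\pi/\arctan M-2\}$, which gives \eqref{nrm-step-range} for every $\pi/(q+2)<\theta<\arctan M$.
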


\begin{proof}
For $M=0$, the conclusion follows from
Theorem~\ref{thm:large-lower}, Corollary~\ref{cor:large-upper},
and Theorem~\ref{s4-integer}.

Suppose $M>0$, and put $\Theta=\arctan M$ and
$q=\log\kappa-\log64$.
Choose $\kappa$ sufficiently large that
$q>\max\{2,\pi/\Theta-2\}$.
For $\pi/(q+2)<\theta<\Theta$, set
$h=2\pi(\pi-2\theta)/q$.
Then \eqref{nrm-step-range} holds.
Choose $r_0$ for each $\theta$ according to
\eqref{nrm-model-sector-choice}.
Lemma~\ref{nrm-two-rays} gives a system with
$\operatorname{cond}(S_h)\leq\kappa$.
Rescaling $C$ gives $I\leq S_h\leq\kappa I$.

Every $\mu\in\mathcal U_{w,M}(\kappa)$ gives a frame for this system.
Lemma~\ref{nrm-two-ray-counting} therefore implies
$D(\mu)\geq2/h$.
Taking the infimum over $\mu$ gives
$C_M^*(\kappa)\geq q/[\pi(\pi-2\theta)]$.
Letting $\theta\uparrow\Theta$ proves
$C_M^*(\kappa)\geq(\log\kappa-\log64)/(\pi a_*)$.

For the upper bound, take $\varepsilon=1/2$ in
Theorem~\ref{nrm-relative}.
The resulting integer design satisfies
\eqref{nrm-upper-asymptotic}, so
\[
 C_{M,\mathbb Z}^*(\kappa)
 \leq\frac{\log\kappa+\log\log\kappa+O_M(1)}{\pi a_*}.
\]
Theorem~\ref{nrm-integer} gives equality of the two infima,
and the bounds prove the stated asymptotic formula.
\end{proof}

\begin{remark}
Fix an integer $q\geq1$, $M\geq0$, $\kappa\geq1$, and
$0<\varepsilon<1$.
There is a common positive measure $\nu$ supported on $\mathbb N_0$
such that
\[
 (1-\varepsilon)S\leq G_\nu\leq(1+\varepsilon)S,
 \qquad N_\nu(R)=O(\log R),
\]
for every normal system satisfying \eqref{nrm-input} whose power
$A^q$ satisfies the logarithmic sector condition
\eqref{nrm-logsector} with constant $M$.
In particular, this applies to selfadjoint systems with $q=2$
and $M=0$.
\end{remark}

\begin{proof}
Define $D_qx=(Cx,CAx,\ldots,CA^{q-1}x)$ with the direct sum norm.
The integer Gramian of $(A^q,D_q)$ is
\[
 \sum_{n\geq0}\sum_{r=0}^{q-1}
 (A^{qn+r})^*BA^{qn+r}=S.
\]
Let $\mu=\sum_{n\in T}w_n\delta_n$, with $T\subset\mathbb N_0$,
be the design supplied by Theorem~\ref{nrm-relative}
for this system, and set
\[
 \nu=\sum_{n\in T}w_n\sum_{r=0}^{q-1}\delta_{qn+r}.
\]
Since
$\sum_{r=0}^{q-1}\|CA^{qn+r}x\|^2=\|D_q(A^q)^nx\|^2$,
the sampled energies agree, so $\nu$ has the required relative bounds.
Moreover, $N_\nu(R)\leq qN_\mu(R/q)$ for every $R\geq0$.
Thus $N_\nu(R)=O(\log R)$ and $D(\nu)\leq qD(\mu)$.
For selfadjoint $A$, the operator $A^2$ is positive and satisfies
the sector condition with $M=0$.
\end{proof}

Theorem~\ref{nrm-sharp} determines the leading coefficient of
$\log\kappa$ in $C_M^*(\kappa)$ for each fixed $M$.
The exact value for finite $\kappa>1$ and the behavior as
$\kappa\downarrow1$ are not determined here.
We also do not extend to normal operators in logarithmic sectors
the nonattainment result of Theorem~\ref{s5-nonattainment}
or the strict gap under a prescribed sampled condition number
in Theorem~\ref{s6-strict-gap}.

At $\kappa=1$, the scalar reduction in Proposition~\ref{nrm-tight}
applies, but it does not resolve these questions for $\kappa>1$.

\section{Spectral geometry and optimal sampling growth}
\label{sec:spectral-geometry}

A fixed logarithmic sector gives logarithmic sampling and a sharp
leading coefficient as $\kappa\to\infty$, but it does not exhaust the spectral sets
admitting sparse common designs. We first characterize logarithmic
sampling through cumulative angular covering. The criterion is
sufficient for arbitrary Borel sets and necessary under relative radial
density. Under the latter hypothesis, we then determine optimal counting
growth with a regularity assumption on the covering numbers, and the
minimum upper counting exponent without that additional assumption.
Optimal leading coefficients for general fixed spectral sets remain
a separate question.
Throughout this section, $\mathbb D=\{\lambda\in\mathbb C:|\lambda|<1\}$
and $\Omega\subset\mathbb D$ is Borel.
An allowed normal system satisfies
\begin{equation}\label{geo:system}
 E_A(\mathbb C\setminus\Omega)=0,\qquad
 B=C^*C,\qquad
 S=\sum_{n\ge0}(A^n)^*BA^n,\qquad I\le S\le\kappa I.
\end{equation}
The condition on the spectral measure permits the closed spectrum to have
limit points on the unit circle, provided they carry no spectral mass.
Normalization loses no generality: replacing $C$ by $C/\sqrt a$
reduces $aI\le S\le bI$, $b/a\le\kappa$, to \eqref{geo:system}.
A common design has frame bounds uniform over all such systems,
including all dimensions, spectral multiplicities and observation
ranks. Its support and positive weights may depend on
$\Omega,\kappa$ and a prescribed accuracy.
Real powers have the convention of Section~\ref{nrm-section};
all sufficient constructions below use integer times.

Put $s_j=2^{-j}$ and
\begin{equation}\label{geo:annuli}
 \Omega_j=\{\lambda\in\Omega\setminus\{0\}:
              s_j\le-\log|\lambda|<2s_j\},\qquad j\ge1.
\end{equation}
Let $m_j(\Omega)$ be the least number of circular arcs of length
$s_j/10$ covering the arguments of $\Omega_j$; it is zero for an
empty band. The finitely deep part of the disk is treated separately
and does not affect the asymptotic criterion
\begin{equation}\label{geo:cover-condition}
 \sum_{j=1}^Jm_j(\Omega)=O(J)\qquad J\to\infty.
\end{equation}
Changing the fixed constant in the arc length changes these covering numbers
by at most a fixed factor. The arguments are considered on the circle,
so no choice of branch enters their covering numbers.

Put $\mathcal A_\Omega(J)=1+\sum_{j=1}^Jm_j(\Omega)$ for
$J\in\mathbb N_0$. For $\kappa\ge1$, let
$\mathcal U_{w,\Omega}(\kappa)$ be the class of positive atomic
measures $\mu$ on $[0,\infty)$ with locally finite support for
which there are constants $0<\alpha\le\beta<\infty$ such that
$\alpha I\le G_\mu\le\beta I$ for every normal system satisfying
\eqref{geo:system}. Denote its subclass supported on $\mathbb N_0$
by $\mathcal U^{\mathbb Z}_{w,\Omega}(\kappa)$.

\subsection{Angular covering and quantitative sampling bounds}
\label{geo:covering-subsection}

We show that the cumulative angular covering condition
\eqref{geo:cover-condition} gives common sampling at integer times,
with $N_\mu(R)=O(\log R)$ and arbitrarily small relative error.
The spectral arguments may vary with the radial scale, and the
covering numbers $m_j(\Omega)$ need not be uniformly bounded.

For a given finite frame, constructive weighted subsampling selects
a number of vectors proportional to the dimension while controlling
the frame bounds \cite{BSU2023}. Relative spectral approximation is
also available for finite sums of positive semidefinite matrices of
arbitrary rank \cite{dCHS2016}. Applying these results directly to an
observation Gramian generally gives a selection that depends on the
system. To obtain a common design, we represent the observations
using scalar functions determined by $\Omega$ and sparsify this
common family. We then transfer the estimates to every admissible
system through uniformly bounded maps from states to the
corresponding coefficients. These functions and maps are constructed
in the proof of Lemma~\ref{geo:quantitative-covering}.

We use the following consequence of the row sparsification theorem
of Cohen, Nelson and Woodruff, which gives a bound in terms of
stable rank \cite[Theorem~5 in the full version]{CNW2016}.
For a nonzero finite matrix $V$, its stable rank is
$\|V\|_{\mathrm F}^2/\|V\|^2$, where $\|\cdot\|_{\mathrm F}$
denotes the Frobenius norm.

\begin{lemma}\label{geo:ridge}
Let $\mathcal K$ be a complex Hilbert space, let $H$ be a bounded
positive injective operator on $\mathcal K$, and let
$v_1,\ldots,v_N\in\operatorname{Dom}(H^{-1/2})$. Set $F=\sum_{n=1}^N v_nv_n^*$ and
$d=\sum_{n=1}^N \|H^{-1/2}v_n\|^2$.
For every $0<\delta<1$, there are nonnegative weights
$w_1,\ldots,w_N$, at most $C_{\mathrm{sp}}(1+d)\delta^{-2}$
of which are nonzero, such that
\[
 -\delta(F+H)
 \le \sum_{n=1}^N w_nv_nv_n^*-F
 \le \delta(F+H),
\]
where $C_{\mathrm{sp}}$ is an absolute constant.
\end{lemma}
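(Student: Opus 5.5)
The plan is to reduce the statement to the finite dimensional row sparsification theorem of Cohen, Nelson and Woodruff \cite[Theorem~5 in the full version]{CNW2016}, which we take in the following form. For a matrix $V$ with rows $u_1,\ldots,u_N\in\mathbb C^k$ and a ridge parameter $\lambda>0$, there are nonnegative weights, at most $C(1+\mathrm{sr}_\lambda)\delta^{-2}$ of them nonzero, such that
\[
 -\delta(V^*V+\lambda I)\le\sum_n w_nu_nu_n^*-V^*V\le\delta(V^*V+\lambda I).
\]
Here $\mathrm{sr}_\lambda=\|V\|_{\mathrm F}^2/\lambda$ plays the role of the stable rank. We must first recast the general positive operator $H$ into this scalar-ridge form, and then account for $\mathcal K$ being possibly infinite dimensional.

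\textbf{Change of variables.} Set $u_n=H^{-1/2}v_n$, which is defined since $v_n\in\operatorname{Dom}(H^{-1/2})$. The $u_n$ span a subspace $\mathcal K_0$ of dimension $k\le N$, and by definition $d=\sum_n\|u_n\|^2=\|V\|_{\mathrm F}^2$, where $V$ is the $N\times k$ matrix with rows $u_n^*$ in an orthonormal basis of $\mathcal K_0$. Apply the cited theorem on $\mathcal K_0$ with $\lambda=1$. This gives weights with at most $C_{\mathrm{sp}}(1+d)\delta^{-2}$ nonzero entries satisfying
\[
 -\delta(F_0+P_0)\le\sum_n w_nu_nu_n^*-F_0\le\delta(F_0+P_0),
\]
where $F_0=\sum_nu_nu_n^*$ and $P_0$ is the orthogonal projection onto $\mathcal K_0$. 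Both $F_0$ and the weighted sum vanish on $\mathcal K_0^\perp$, so we may enlarge $P_0$ to $I$ and obtain the same inequality on all of $\mathcal K$.

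\textbf{Transfer back.} Conjugate the inequality by $H^{1/2}$. Because $H^{1/2}u_n=v_n$, the middle term becomes $\sum_n w_nv_nv_n^*-F$, and $F_0+I$ becomes $F+H$. The point to check is that $X\mapsto H^{1/2}XH^{1/2}$ preserves the operator order for bounded selfadjoint $X$, which holds because $\langle H^{1/2}XH^{1/2}y,y\rangle=\langle Xz,z\rangle$ with $z=H^{1/2}y$. The injectivity assumption on $H$ is only needed for $H^{-1/2}$ to be defined.

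\textbf{Main obstacle.} The delicate step is matching the cited theorem's exact hypotheses, in particular whether its bound is stated in terms of $\|V\|_{\mathrm F}^2/\lambda$ or $\|V\|_{\mathrm F}^2/\|V\|^2$, and whether it covers complex entries. If the statement only covers the stable rank $\|V\|_{\mathrm F}^2/\|V\|^2$ without a ridge term, we proceed in two steps:
\begin{itemize}
\item[(i)] Append to $V$ rows $\lambda^{1/2}e_i$ scaled so that $\|V'\|\gtrsim1$, and apply the unridged theorem to the augmented matrix $V'$. The appended rows cost a number of samples proportional to the number of added rows, which is not bounded by $d$, so this alone does not suffice.
\item[(ii)] Instead, use the theorem's ridge version $\|V\|_{\mathrm F}^2/\lambda$, which is the form in the full version.
\end{itemize}
For complex entries, we realify: identify $\mathbb C^k$ with $\mathbb R^{2k}$, so that $u_nu_n^*$ becomes a sum of two rank-one real terms. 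Both steps change only absolute constants, which are absorbed into $C_{\mathrm{sp}}$.
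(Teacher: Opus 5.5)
Your outer reduction is sound: substituting $q_n=H^{-1/2}v_n$, sparsifying on the finite dimensional span, passing from the span to $\mathcal K$, and conjugating back by $H^{1/2}$. If the relative ridge bound $\pm\delta(V^*V+\lambda I)$ with $O((1+\|V\|_{\mathrm F}^2/\lambda)\delta^{-2})$ atoms were available, your argument would be complete. The gap is that you assert this relative form, and the bracketed step (ii) only restates the assertion.

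The cited theorem, as the paper invokes it, is an \emph{additive} spectral norm statement. For a real matrix $A$ with $\|A\|\le1$ and $\|A\|_{\mathrm F}^2\le k$, it gives a diagonal $D$ with $O(k\epsilon^{-2})$ nonzero entries and $\|A^{\mathsf T}(D^2-I)A\|\le\epsilon$. Suppose you apply this directly to the matrix with rows $q_n^*$, and put $T=\sum_nq_nq_n^*$. The error is then a multiple of $I$ of order $\epsilon\max\{1,\|T\|\}$, and $\|T\|$ can be as large as $d$. An additive error $\eta I$ is controlled by $\delta(T+I)$ only if $\eta\lesssim\delta$. That forces $\epsilon$ of order $\delta/\|T\|$, and the count $O((1+d)\delta^{-2})$ is lost. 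Your fallback (i) fails, as you noted yourself.

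The missing idea is to whiten before sparsifying. Put $u_n=(I+T)^{-1/2}q_n$. Then $\sum_nu_nu_n^*=T(I+T)^{-1}\le I$ and $\sum_n\|u_n\|^2=\operatorname{tr}(T(I+T)^{-1})\le d$. So the additive theorem with $k=1+d$ applies and yields $-\delta I\le\sum_nw_nu_nu_n^*-\sum_nu_nu_n^*\le\delta I$, first on the span and then on $\mathcal K$. Now conjugate by $L=H^{1/2}(I+T)^{1/2}$, which satisfies $Lu_n=v_n$ and $LL^*=H+F$. This gives exactly $\pm\delta(F+H)$. This is how the relative form follows from the additive one, and it is the substance of the paper's proof.

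There is a smaller issue in your realification. If you split each $u_nu_n^*$ into two real rank one terms, the real sparsifier may weight the two halves differently, and the result is then not a weighted sum of the $u_nu_n^*$. You must either symmetrize, by averaging the estimates at $z$ and $iz$, or use the paper's device. Writing $M=X+iY$ for the matrix with rows $u_n^*$, the paper works with the real matrix $[X,-Y]$. This keeps one real row per complex row and costs only a factor of two in the accuracy.
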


\begin{proof}
If $d=0$, take all weights to be zero. Otherwise, set
$q_n=H^{-1/2}v_n$, $T=\sum_{n=1}^N q_nq_n^*$, and
$u_n=(I+T)^{-1/2}q_n$.
Then $\sum_{n=1}^N u_nu_n^*=T(I+T)^{-1}\le I$, and
$\operatorname{tr}(T(I+T)^{-1})\le d$.

Let $E=\operatorname{span}\{u_1,\ldots,u_N\}$, and let $M=X+iY$
be the matrix with rows $u_n^*$ in an orthonormal basis of $E$,
where $X$ and $Y$ are real. The preceding bounds give
$\|M\|\le1$ and $\|M\|_{\mathrm F}^2\le d$.
Thus the real matrix $A=[X,-Y]$ satisfies
$\|A\|\le\|M\|\le1$ and
$\|A\|_{\mathrm F}^2=\|M\|_{\mathrm F}^2\le d$.
By \cite[Theorem~5 in the full version]{CNW2016}, applied with
$k=1+d$ and accuracy $\delta/2$, there is a real diagonal matrix
$D$ with at most $C_{\mathrm{sp}}(1+d)\delta^{-2}$ nonzero
entries such that
$\|A^{\mathsf T}(D^2-I)A\|\le\delta/2$.

For $z=a+ib$, we have
$A(a,b)=\operatorname{Re}(Mz)$ and
$A(b,-a)=\operatorname{Im}(Mz)$.
Adding the quadratic form estimates at $(a,b)$ and $(b,-a)$
gives $|z^*M^*(D^2-I)Mz|\le\delta\|z\|^2$.
Hence, with $w_n=D_{nn}^2$,
\[
 -\delta I
 \le \sum_{n=1}^N w_nu_nu_n^*
       -\sum_{n=1}^N u_nu_n^*
 \le \delta I
\]
on $E$. Both sums vanish on $E^\perp$, so the inequality holds
on all of $\mathcal K$.

Finally, the bounded operator $L=H^{1/2}(I+T)^{1/2}$ satisfies
$Lu_n=v_n$ and $LL^*=F+H$.
Multiplying the last inequality on the left by $L$ and on the
right by $L^*$ proves the claim.
\end{proof}

We first prove a quantitative upper bound for general covering
numbers. It requires no radial density assumption.
In \eqref{geo:counting-envelope}, the tail sums over $j>J+1$,
where $J=\lfloor\log_2R\rfloor$. These annuli contain only
spectral points with $-\log|\lambda|<1/R$.

\begin{lemma}
\label{geo:quantitative-covering}
Let $\Omega\subset\mathbb D$ be Borel. Suppose that, for some
$0<\theta<1$,
\begin{equation}\label{geo:weighted-cover-summability}
 \sum_{j\ge1}m_j(\Omega)2^{-(1-\theta)j}<\infty.
\end{equation}
For every $\kappa\ge1$ and $0<\varepsilon<1$, there is a sampling
measure $\mu$ supported on $\mathbb N_0$, depending only on
$\Omega,\kappa,\varepsilon,\theta$, such that
$(1-\varepsilon)S\le G_\mu\le(1+\varepsilon)S$
for every normal system satisfying \eqref{geo:system}.
For $R\ge1$ and $J=\lfloor\log_2R\rfloor$, it also satisfies
\begin{equation}\label{geo:counting-envelope}
 N_\mu(R)\le C_{\kappa,\varepsilon,\theta}
 \left(
 J+1+\mathcal A_\Omega(J+1)
 +\sum_{j>J+1}m_j(\Omega)
       2^{-(1-\theta)(j-J-1)}
 \right).
\end{equation}
\end{lemma}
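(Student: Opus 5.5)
The plan follows the structure of Theorem~\ref{thm:relative}: an integer prefix $\sum_{n=0}^{J_0-1}\delta_n$, followed by a translated tail whose Gramian approximates the continuous Gramian $G_c$ with relative error. The tail identity \eqref{nrm-stein} then gives
\[
G_\mu-S=(A^{J_0})^*(G_{\mathrm{tail}}-S)A^{J_0}.
\]
The finitely deep part of $\Omega$ (where $-\log|\lambda|\ge 1$, and $\ker A$) is handled by the prefix. Indeed, $\|A^{J_0}P\|\le e^{-J_0}$ on that spectral part, and the treatment follows Lemma~\ref{lem:spectral-completion}. So the real work concerns the shallow bands $\Omega_j$, $j\ge1$.

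\textbf{Step 1: a common scalar representation.} For each $j$, cover the arguments of $\Omega_j$ by $m_j$ arcs of length $s_j/10$ with centres $\varphi_{j,k}$. On the spectral piece $P_{j,k}$, write $A^t=e^{it\varphi_{j,k}}|A|^te^{it\psi}$ with $|\psi|\le s_j/20$. Expanding $e^{it\psi}$ in Taylor modes up to order $p$ (fixed by $\varepsilon,\kappa$) gives the observation at time $t$ as a finite combination
\[
\sum_{j,k,r} g_{j,k,r}(t)\,Y_{j,k,r}x,\qquad g_{j,k,r}(t)=e^{it\varphi_{j,k}}(ts_j)^r e^{-ts_j}\cdot(\text{smooth radial factor}),
\]
with a remainder that is small on the relevant scale $t\lesssim 2^{j}\log(\cdot)$. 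The scalar functions $g$ depend only on $\Omega$. The maps $x\mapsto Y_{j,k,r}x$ are uniformly bounded into an $\ell^2$ coefficient space, by the same block Schur and Stein estimates as in Lemmas~\ref{lem:complex-energy} and \ref{nrm-complex-energy}, where the block decay $2^{-|i-j|/2}$ comes from $\|B_{ij}\|\lesssim b\,2^{-(i+j)/2}$. Thus every system's sampled Gramian is $Y^*\bigl(\sum_t w_t\, g(t)g(t)^*\bigr)Y$ up to a controlled error. It therefore suffices to sparsify the single positive family $v_t=g(t)$, $t\in\mathbb N_0$, relative to its full integer sum $F$.

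\textbf{Step 2: sparsification with a ridge term.} Apply Lemma~\ref{geo:ridge} dyadically in time, on windows $t\in[2^{J},2^{J+1})$, to the vectors $v_t$ with integer $t$. The ridge operator $H$ is diagonal in $(j,k,r)$ with entries $\eta\,2^{-\theta|j-J|}$-type weights, chosen so that $\delta(F+H)$ is absorbed into $\varepsilon S$ after transfer through $Y$. Transfer is possible because $Y^*HY\lesssim\eta b$ by the summability \eqref{geo:weighted-cover-summability}. The effective dimension of a window is
\[
d_J=\sum\|H^{-1/2}v_t\|^2.
\]
Modes with $j\le J+1$ contribute $O(1)$ each, giving $J+1+\mathcal A_\Omega(J+1)$. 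Modes with $j>J+1$ have small energy in the window, about $2^{-(j-J)}$ per mode, against ridge weight $2^{-\theta(j-J)}$. This yields exactly the tail sum $\sum_{j>J+1}m_j2^{-(1-\theta)(j-J-1)}$. Lemma~\ref{geo:ridge} then gives $O(1+d_J)$ selected times per window. Summing windows up to $\log_2R$ would overcount, so the ridge must be arranged so that each mode is effectively charged only in $O(1)$ windows. Because the entries of $d_J$ decay geometrically in $|j-J|$, the sum over windows of $d_J$ is dominated by the envelope \eqref{geo:counting-envelope}, up to constants.

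\textbf{Main obstacle.} The main obstacle is Step 1: making the finite-mode approximation and the boundedness of $Y$ uniform over non-commuting $A,B$ of arbitrary multiplicity. The difficulty is that the angular spread is only controlled within a band, while $t$ ranges over many scales. My first attempt is to use the complex-time energy estimate (Lemma~\ref{nrm-complex-energy}) band by band, writing the within-arc phase via analytic continuation in $t$ rather than a Taylor expansion. I expect the $s_j/10$ arc length to guarantee convergence for $t\le 2^j$. Beyond that range, the factor $e^{-ts_j}$ makes the error summable, with an energy tail bound as in Theorem~\ref{thm:local-density}.
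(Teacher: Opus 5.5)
Your Step~2 is essentially the paper's argument: dyadic windows $[2^k,2^{k+1})$, Lemma~\ref{geo:ridge} with a diagonal ridge of size $2^{-\theta|k-j+1|}$, and charging each band only in windows with $k$ near $j$ plus a geometric tail. The gap is Step~1, which you flag yourself and do not resolve. As proposed, it does not give a representation that the sparsification can use, for two reasons.

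First, on a piece of $\Omega_j$ the moduli satisfy $-\log|\lambda|\in[s_j,2s_j)$, so $|A|^t$ is not scalar there. It varies by a factor up to $e^{ts_j}$ across the piece, which is already $e$ at $t=2^j$, the scale where the piece matters. No system-independent ``smooth radial factor'' exists unless the radial deviation is expanded as well.

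Second, truncating at a fixed order $p$ leaves a remainder $\mathcal E(t)$ that is an operator-valued function of $t$. It is not a finite sum of scalar functions times fixed operators, so the sparsification does not see it. Lemma~\ref{geo:ridge} controls only $\sum_tw_tv_tv_t^*$, and the selected weights may be large. Nothing in your sketch bounds $\sum_tw_t\|\mathcal E(t)x\|^2$, or the cross terms between pieces, uniformly over systems. Pointwise smallness for $t\lesssim2^j\log(\cdot)$ and an energy tail bound for the full integer sum do not transfer to the sparsified measure.

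The missing idea is that no truncation is needed if you factor out a fixed decay rate that dominates both the radial and the angular deviation. Set $h_j=2s_j$ and write $\lambda=\zeta_{jp}e^{-r+i\phi}$ on a piece, with $h_j/2\le r<h_j$ and $|\phi|\le h_j/40$. Then $A^nP_{jp}=\zeta_{jp}^ne^{-(3/4)nh_j}e^{-nZ_{jp}}P_{jp}$, where $Z_{jp}$ is the normal operator with symbol $r-3h_j/4-i\phi$ and $\|Z_{jp}\|<(3/10)h_j$. Expanding $e^{-nZ_{jp}}$ completely gives the exact identity $CA^nx=\sum_\alpha a_\alpha(n)y_\alpha(x)$ for every $n\ge1$, where
$a_{jp,\ell}(n)=\sqrt{h_j}\,\zeta_{jp}^ne^{-(3/4)nh_j}(nh_j/2)^\ell/\ell!$ and $y_{jp,\ell}(x)=h_j^{-1/2}C(-2Z_{jp}/h_j)^\ell P_{jp}x$.

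This representation has the properties you need:
\begin{itemize}
\item Since $\|2Z_{jp}/h_j\|<3/5$ and the compressed Stein identity gives $\|CP_{jp}\|^2\le(11/5)\kappa h_j$, the coefficient map has norm squared at most $4\kappa$. This follows from orthogonality of the spectral projections alone; no block Schur argument is needed.
\item $\sum_\ell|a_{jp,\ell}(n)|^2\le h_je^{-nh_j/2}$ for all $n$, so the infinitely many modes of a piece cost only $O(1)$ in effective dimension.
\item There is no remainder, no restriction $t\lesssim2^j$, and no complex-time estimate.
\item The deep part $|\lambda|\le e^{-1}$ enters as extra features $\rho^n\mathbf 1_{\{n=\ell\}}$, $\rho=e^{-1/2}$, with ridge weight $2^{-k}$. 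The atom $\delta_0$ accounts for $B$ and $\ker A$.
\end{itemize}
Your prefix and spectral-completion step is therefore unnecessary. The comparison with $G_c$ plays no role either, since sparsifying the integer family relative to $F$ approximates $S$ directly.

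Finally, you have misplaced the role of the summability hypothesis. It is what makes each $v_n$ square summable and each window's effective dimension finite. Transferring the ridge terms uses only $\sum_kH_k\le(1+2/(2^\theta-1))I$ together with the bound on the coefficient map.
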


\begin{proof}
Write $m_j=m_j(\Omega)$ and set $h_j=2s_j=2^{1-j}$.
Choose a covering of the arguments of each $\Omega_j$ by $m_j$
arcs and partition $\Omega_j$ into at most $m_j$ disjoint Borel
pieces whose arguments lie in the corresponding arcs.
These choices depend only on $\Omega$.
Let $\zeta_{jp}\in\partial\mathbb D$ be the corresponding arc
centers.

Fix a system satisfying \eqref{geo:system}, with state space $H$
and observation operator $C:H\to Y$.
Write $P_{jp}$ for the spectral projections of the chosen pieces.
On each piece, choose Borel functions $r$ and $\phi$ such that
\(
 \lambda=\zeta_{jp}e^{-r+i\phi},
 h_j/2\le r<h_j, |\phi|\le h_j/40.
\)
Let $P_{\mathrm f}$ be the spectral projection of
$\{\lambda\in\Omega:|\lambda|\le e^{-1}\}$.
Then $\|AP_{\mathrm f}\|\le e^{-1}$.

The identity $S-A^*SA=C^*C$, compressed to $P_{jp}H$, gives
\[
 \|CP_{jp}\|^2
 \le 2\kappa
      \bigl\|I-\overline{\zeta_{jp}}A|_{P_{jp}H}\bigr\|
 \le \frac{11}{5}\kappa h_j,
\]
where the last inequality follows from
$|1-\overline{\zeta_{jp}}\lambda|
 \le r+|\phi|\le(11/10)h_j$.
By spectral calculus, the function
$\lambda\mapsto r-3h_j/4-i\phi$
defines a bounded normal operator $Z_{jp}$ on $P_{jp}H$.
It satisfies
\[
 \|Z_{jp}\|<\frac3{10}h_j,\qquad
 A^nP_{jp}
 =\zeta_{jp}^{\,n}e^{-(3/4)nh_j}e^{-nZ_{jp}}P_{jp},
 \qquad n\ge1.
\]

For integers $n\ge1$ and $\ell\ge0$, define the scalar functions
$a_{jp,\ell}$ and coefficients $y_{jp,\ell}(x)\in Y$ by
\begin{align*}
 a_{jp,\ell}(n)
 &=\sqrt{h_j}\,\zeta_{jp}^{\,n}e^{-(3/4)nh_j}
   \frac{(nh_j/2)^\ell}{\ell!},
 \\
 y_{jp,\ell}(x)
 &=h_j^{-1/2}C
   \left(-\frac{2Z_{jp}}{h_j}\right)^\ell P_{jp}x,
 \qquad x\in H.
\end{align*}
The preceding estimates imply
\[
 \sum_{\ell\ge0}\|y_{jp,\ell}(x)\|^2
 \le \frac{(11/5)\kappa}{1-(3/5)^2}\|P_{jp}x\|^2
 <4\kappa\|P_{jp}x\|^2.
\]
For the remaining spectral subspace, put $\rho=e^{-1/2}$ and define
$a_{\mathrm f,\ell}(n)=\rho^n{\bf1}_{\{n=\ell\}}$ and
$y_{\mathrm f,\ell}(x)=\rho^{-\ell}CA^\ell P_{\mathrm f}x$
for $\ell\ge1$.
Since $\|C\|^2\le\kappa$ and $\|AP_{\mathrm f}\|\le e^{-1}$,
\[
 \sum_{\ell\ge1}\|y_{\mathrm f,\ell}(x)\|^2
 \le \frac{\kappa}{1-e^{-1}}\|P_{\mathrm f}x\|^2
 <2\kappa\|P_{\mathrm f}x\|^2.
\]

Let $\mathcal I$ be the set of indices $(j,p,\ell)$ and
$(\mathrm f,\ell)$ introduced above, and set
$\mathcal K=\ell^2(\mathcal I)$.
The coefficient map $V_{A,C}:H\to\mathcal K\otimes Y$, defined by
$V_{A,C}x=(y_\alpha(x))_{\alpha\in\mathcal I}$, satisfies
\begin{equation}\label{geo:coefficient-map}
 \|V_{A,C}\|^2\le4\kappa
\end{equation}
by orthogonality of the spectral projections.

The scalar functions satisfy
\begin{equation}\label{geo:feature-decay}
 \sum_{\ell\ge0}|a_{jp,\ell}(n)|^2
 \le h_je^{-(3/2)nh_j}e^{nh_j}
 =h_je^{-nh_j/2}.
\end{equation}
Assumption \eqref{geo:weighted-cover-summability} gives
$\sum_{j\ge1}m_jh_j<\infty$.
Thus $(a_\alpha(n))_{\alpha\in\mathcal I}\in\mathcal K$
for every $n\ge1$.
Together with \eqref{geo:coefficient-map}, the Cauchy--Schwarz
inequality shows that the series in
\begin{equation*}
 CA^nx=\sum_{\alpha\in\mathcal I}a_\alpha(n)y_\alpha(x),
 \qquad n\ge1,
\end{equation*}
converges absolutely in $Y$.
The identity follows by expanding $e^{-nZ_{jp}}$, first on
finitely many spectral pieces and then passing to the limit.

Set $v_n=(\overline{a_\alpha(n)})_{\alpha\in\mathcal I}\in\mathcal K$ for $n\ge1$.
For $k\ge0$, set $I_k=\{2^k,\ldots,2^{k+1}-1\}$ and
$F_k=\sum_{n\in I_k}v_nv_n^*$.
Let $H_k^{(\theta)}$ be the diagonal operator on $\mathcal K$
with entry $2^{-\theta|k-j+1|}$ at each coordinate $(j,p,\ell)$
and entry $2^{-k}$ at each coordinate $(\mathrm f,\ell)$.
These operators are bounded, positive and injective.
Summing their entries gives
\begin{equation*}
 \sum_{k\ge0}H_k^{(\theta)}\le b_\theta I,\qquad
 b_\theta=1+\frac{2}{2^\theta-1}.
\end{equation*}

By \eqref{geo:feature-decay},
\begin{equation*}
 \sum_{n\in I_k}\|(H_k^{(\theta)})^{-1/2}v_n\|^2
 \le E_k^{(\theta)}
 :=\sum_{j\ge1}m_jc_{k-j+1}^{(\theta)}+e_k,
\end{equation*}
where $c_d^{(\theta)}=2^{d+\theta|d|}e^{-2^{d-1}}$
for $d\in\mathbb Z$, and $e_k=2^k\sum_{n\in I_k}e^{-n}$.
The sequence $c^{(\theta)}=(c_d^{(\theta)})_{d\in\mathbb Z}$
belongs to $\ell^1(\mathbb Z)$, and
$c_d^{(\theta)}\le2^{(1-\theta)d}$ for $d\le0$.
Consequently, \eqref{geo:weighted-cover-summability} gives
$E_k^{(\theta)}<\infty$.
In particular,
$v_n\in\operatorname{Dom}((H_k^{(\theta)})^{-1/2})$
for every $n\in I_k$.

Set $\delta=\varepsilon/(1+4\kappa b_\theta)$.
Applying Lemma~\ref{geo:ridge} to each finite family
$\{v_n:n\in I_k\}$ gives nonnegative weights $w_{k,n}$,
at most $C_{\mathrm{sp}}\delta^{-2}(1+E_k^{(\theta)})$
of which are nonzero, such that
\begin{equation*}
 -\delta(F_k+H_k^{(\theta)})
 \le\widehat F_k-F_k
 \le\delta(F_k+H_k^{(\theta)}),
\end{equation*}
where $\widehat F_k=\sum_{n\in I_k}w_{k,n}v_nv_n^*$.
Define
$\mu=\delta_0+\sum_{k\ge0}\sum_{n\in I_k}w_{k,n}\delta_n$.

Tensor these inequalities with $I_Y$, evaluate at $V_{A,C}x$,
and sum over finitely many blocks.
The sums involving $F_k$ are bounded by
$\langle(S-B)x,x\rangle$, and those involving $H_k^{(\theta)}$
by $4\kappa b_\theta\|x\|^2$.
The sampled Gramian partial sums are therefore increasing and
uniformly bounded, so they converge strongly.
Passing to the limit gives
\[
 |\langle(G_\mu-S)x,x\rangle|
 \le\delta\langle(S-B)x,x\rangle
      +4\delta\kappa b_\theta\|x\|^2
 \le\varepsilon\langle Sx,x\rangle.
\]
The functions and weights are chosen independently of the system,
so the same measure satisfies this estimate for every system
in \eqref{geo:system}.

For $R\ge1$, set $J=\lfloor\log_2R\rfloor$.
The number of selected times satisfies
$N_\mu(R)\le1+C_{\mathrm{sp}}\delta^{-2}
\sum_{k=0}^J(1+E_k^{(\theta)})$.
For $j\le J+1$, we have
$\sum_{k=0}^Jc_{k-j+1}^{(\theta)}
\le\|c^{(\theta)}\|_{\ell^1}$.
For $j>J+1$, the bound for negative indices gives
\[
 \sum_{k=0}^Jc_{k-j+1}^{(\theta)}
 \le
 \frac{2^{-(1-\theta)(j-J-1)}}{1-2^{-(1-\theta)}}.
\]
Summing these estimates, using
$\sum_{j=1}^{J+1}m_j=\mathcal A_\Omega(J+1)-1$
and $\sum_{k\ge0}e_k<\infty$, proves
\eqref{geo:counting-envelope}.
\end{proof}

\begin{theorem}\label{geo:sufficient}
Let $\Omega\subset\mathbb D$ be Borel, and let $m_j=m_j(\Omega)$
be the angular covering numbers defined after \eqref{geo:annuli}.
Suppose that $\sum_{j=1}^Jm_j\le K_\Omega(J+1)$ for all $J\ge1$
and some finite $K_\Omega$.
For every $\kappa\ge1$ and $0<\varepsilon<1$, there is a positive
measure $\mu$ supported on $\mathbb N_0$, depending only on
$\Omega,\kappa,\varepsilon$, such that
\(
 (1-\varepsilon)S\le G_\mu\le(1+\varepsilon)S
\)
for every normal system satisfying \eqref{geo:system}.
Moreover, $N_\mu(R)=O_{K_\Omega,\kappa,\varepsilon}(\log R)$
as $R\to\infty$.
\end{theorem}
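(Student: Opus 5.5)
The plan is to derive the theorem directly from Lemma~\ref{geo:quantitative-covering}, after checking its summability hypothesis \eqref{geo:weighted-cover-summability} and then bounding each term of the counting envelope \eqref{geo:counting-envelope} by $O(J)$, where $J=\lfloor\log_2R\rfloor$. We fix $\theta=1/2$, so the constant $C_{\kappa,\varepsilon,\theta}$ depends only on $\kappa$ and $\varepsilon$. The design $\mu$ is the one supplied by the lemma. It is supported on $\mathbb N_0$, depends only on $\Omega,\kappa,\varepsilon$, and satisfies $(1-\varepsilon)S\le G_\mu\le(1+\varepsilon)S$ for every system in \eqref{geo:system}. The relative bounds therefore need no further argument.

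First, verify \eqref{geo:weighted-cover-summability} by Abel summation. Set $\Sigma_J=\sum_{j=1}^Jm_j\le K_\Omega(J+1)$ and $q=2^{-1/2}$. Then
\[
 \sum_{j\ge1}m_jq^j=\sum_{j\ge1}\Sigma_j(q^j-q^{j+1})\le(1-q)K_\Omega\sum_{j\ge1}(j+1)q^j<\infty.
\]
The boundary term $\Sigma_Jq^{J+1}$ tends to zero because $\Sigma_J$ grows only linearly.

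Second, bound the envelope. The terms $J+1$ and $\mathcal A_\Omega(J+1)\le1+K_\Omega(J+2)$ are already $O_{K_\Omega}(J+1)$. For the tail term, substitute $i=j-J-1\ge1$ and apply the same Abel summation to the shifted sequence. Since $\sum_{j=J+2}^{J+1+i}m_j\le\Sigma_{J+1+i}\le K_\Omega(J+2+i)$, we get
\[
 \sum_{i\ge1}m_{J+1+i}q^i\le(1-q)K_\Omega\sum_{i\ge1}(J+2+i)q^i+\lim_{I\to\infty}\Sigma_{J+1+I}q^{I+1}=O_{K_\Omega}(J+1).
\]
The limit vanishes for the same linear-growth reason. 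Hence $N_\mu(R)\le C'_{K_\Omega,\kappa,\varepsilon}(J+1)$ for $R\ge1$, and $J+1\le\log_2R+1=O(\log R)$ as $R\to\infty$, which gives the counting statement.

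The tail term is the only place that needs care. The bound $m_j\le\Sigma_j\le K_\Omega(j+1)$ alone gives $\sum_i K_\Omega(J+2+i)q^i=O(J)$ immediately, so even the summation by parts can be skipped. I expect no real obstacle: all the analysis sits in Lemma~\ref{geo:quantitative-covering}, and what remains is bookkeeping of the dependence of constants on $K_\Omega$.
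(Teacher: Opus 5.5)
Your proposal is correct and follows the paper's proof: apply Lemma~\ref{geo:quantitative-covering} with $\theta=1/2$ and bound each term of \eqref{geo:counting-envelope} by $O_{K_\Omega}(J+1)$. The Abel summation is an unnecessary detour, since the pointwise bound $m_j\le K_\Omega(j+1)$ that you note at the end is exactly what the paper uses, both to verify \eqref{geo:weighted-cover-summability} and to bound the tail by $K_\Omega\sum_{\ell\ge1}(J+\ell+2)2^{-\ell/2}$.
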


\begin{proof}
The assumed bound gives $m_j\le K_\Omega(j+1)$, so
\eqref{geo:weighted-cover-summability} holds with $\theta=1/2$.
Apply Lemma~\ref{geo:quantitative-covering} with this value of
$\theta$. It remains to estimate its counting bound.
For $J=\lfloor\log_2R\rfloor$, we have
$\mathcal A_\Omega(J+1)=O_{K_\Omega}(J+1)$ and
\[
 \sum_{j>J+1}m_j2^{-(j-J-1)/2}
 \le K_\Omega\sum_{\ell\ge1}(J+\ell+2)2^{-\ell/2}
 =O_{K_\Omega}(J+1).
\]
Substitution into \eqref{geo:counting-envelope} proves the claim.
\end{proof}

\subsection{Necessity under relative radial density}

For a Borel set $\Omega\subset\mathbb D$, set
$\mathcal R_\Omega
=\{-\log|\lambda|:\lambda\in\Omega\setminus\{0\}\}$.
We say that $\Omega$ is \emph{radially relatively dense near the
unit circle} if there are $c\in(0,1)$ and $s_*>0$ such that
\begin{equation}\label{geo:radial-density}
 \mathcal R_\Omega\cap[cs,s]\ne\varnothing,
 \qquad 0<s<s_*.
\end{equation}
Retain the annuli $\Omega_j$ from \eqref{geo:annuli} and their
angular covering numbers $m_j=m_j(\Omega)$.
Changing the initial index or multiplying the arc length by a
fixed positive constant does not affect the cumulative condition
\eqref{geo:cover-condition}.

Under \eqref{geo:radial-density}, we will show that common
logarithmic sampling for some fixed upper bound $\kappa>1$ on
the original frame condition number is equivalent to
\eqref{geo:cover-condition}.
Both are equivalent to common logarithmic sampling at nonnegative
integer times with arbitrarily small relative error for every
finite $\kappa\ge1$; see Theorem~\ref{geo:classification}.
The necessity proof constructs finite normal systems from spectral
points with separated arguments in each annulus, while keeping
$I\le S\le\kappa I$; see Lemma~\ref{geo:angular-packet}.
We first establish the matrix estimate needed for this construction.

\begin{lemma}\label{geo:angular-matrix}
Suppose that $\theta_1,\ldots,\theta_N\in\mathbb R$ satisfy
$\min_{k\in\mathbb Z}|\theta_i-\theta_j+2\pi k|\ge\delta$
for $i\ne j$, where $0<\delta\le\pi$.
The Hermitian matrix $H=(H_{ij})_{i,j=1}^N$ defined by
\[
 H_{ij}=
 \begin{cases}
 (1-e^{i(\theta_j-\theta_i)})^{-1},&i\ne j,\\
 0,&i=j
 \end{cases}
\]
satisfies $\|H\|\le30/\delta$.
\end{lemma}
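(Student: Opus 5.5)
The plan is to split $H$ into an explicit rank-one part and a conjugate-Hilbert-type kernel, then bound the two pieces separately. For real $\varphi\notin2\pi\mathbb Z$ we have
\[
 \frac1{1-e^{i\varphi}}=\frac12+\frac i2\cot\frac\varphi2 .
\]
Hence
\[
 H=\tfrac12(\mathbf 1\mathbf 1^*-I)+\tfrac i2\,K,
 \qquad
 K_{ij}=\cot\frac{\theta_j-\theta_i}{2}\ (i\ne j),\qquad K_{ii}=0,
\]
where $\mathbf 1\in\mathbb C^N$ is the all-ones vector. Both pieces are Hermitian, since $K$ is real and antisymmetric, so $iK$ is Hermitian.

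The first piece is controlled by counting. The separation hypothesis places $N$ disjoint open arcs of length $\delta$ on the circle, so $N\delta\le2\pi$. The matrix $\mathbf 1\mathbf 1^*-I$ has eigenvalues $N-1$ and $-1$, so
\[
 \bigl\|\tfrac12(\mathbf 1\mathbf 1^*-I)\bigr\|\le\frac{N}{2}\le\frac\pi\delta .
\]
The main obstacle is the kernel $K$, because $|\cot(u/2)|\approx2/|u|$ near $u=0$. A plain Schur test only gives a harmonic sum, that is, a bound of order $\delta^{-1}\log(1/\delta)$. Logarithmic losses of this kind are unavoidable for row-sum bounds. What rescues the estimate is the cancellation from antisymmetry.

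My first choice is to invoke the Montgomery--Vaughan generalized Hilbert inequality in its cotangent form. Put $x_i=\theta_i/(2\pi)$, which are separated by $\delta/(2\pi)$ modulo $1$. For complex $a_1,\dots,a_N$ that inequality gives
\[
 \Bigl|\sum_{i\ne j}a_i\overline{a_j}\cot\bigl(\pi(x_i-x_j)\bigr)\Bigr|
 \le\frac{2\pi}{\delta}\sum_i|a_i|^2 .
\]
Since $iK$ is Hermitian, its norm equals the supremum of $|\langle Ka,a\rangle|$ over unit vectors. Therefore $\|K\|\le2\pi/\delta$, and $\|\tfrac i2K\|\le\pi/\delta$. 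Adding the two pieces gives
\[
 \|H\|\le\frac{2\pi}{\delta}\le\frac{30}{\delta}.
\]

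If I want to avoid citing Montgomery--Vaughan, the generous constant $30$ suggests a self-contained fallback, which is where I expect the real work to lie. I would write $\tfrac12\cot(u/2)=1/u+g(u)$ on $(-\pi,\pi]$, where $g$ is bounded, say $|g|\le1/\pi$. The $g$-part then contributes at most $N/\pi\le2/\delta$ by a trivial Schur bound. For the singular part $\bigl(1/(\theta_j-\theta_i)\bigr)$, with arguments taken in $(-\pi,\pi]$, I would use the standard Hilbert-inequality argument. Its first step is the case of $\delta$-separated points on a line, which has norm at most $\pi/\delta$. This follows via the comparison with the continuous Hilbert transform, or via Schur's test with weights adapted to the gaps (a Hilbert-type weighted Schur test), which is where the antisymmetric cancellation enters. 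Its second step is to handle the wrap-around, either by cutting the circle at a largest gap or by splitting the points into two arcs, with the cross terms bounded by boundedness of $1/u$ away from $0$. Whichever route is taken, every step only needs crude constants, and the total stays well below $30/\delta$.
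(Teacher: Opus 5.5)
Your main argument is correct. It uses the same decomposition as the paper, $H=\tfrac12(\mathbf 1\mathbf 1^*-I)+\tfrac i2K$ (the paper's $Q$ is your $-K$), and the same counting bound $N\delta\le2\pi$. The two proofs differ only in how the cotangent matrix is controlled.

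You cite the Montgomery--Vaughan inequality. In its cotangent form it bounds $|\sum_{r\ne s}a_r\overline{a_s}\cot\pi(x_r-x_s)|$ by $\delta'^{-1}\sum|a_r|^2$ for points $\delta'$-separated in $\mathbb R/\mathbb Z$; this follows from the linear version by periodization. With $\delta'=\delta/(2\pi)$ you get $\|K\|\le2\pi/\delta$ and hence $\|H\|\le2\pi/\delta$, well below $30/\delta$.

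The paper instead proves a cruder version of this inequality from scratch. It compresses the periodic Hilbert transform, which has norm at most one by Parseval, onto the orthonormal normalized indicators of disjoint arcs of length $\delta/2$ centered at the $\theta_i$. The entries of the compressed matrix are averages of $\cot((x-y)/2)$. They differ from $(\delta/(4\pi))\cot((\theta_i-\theta_j)/2)$ by $O(\delta^2/d_{ij}^2)$. Since $\sum_{j\ne i}d_{ij}^{-2}\le\pi^2/(3\delta^2)$, the error row sums are $O(1)$ with no logarithmic loss, and Schur's test absorbs the discrepancy. The paper's route gives a worse constant, $(3\pi+\pi^4/6)/\delta$, but uses only Parseval and a mean value estimate, and it works directly on the circle. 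Your route is shorter and sharper but rests on a deeper external theorem.

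Your fallback is not needed, and as sketched it would not work. Schur's test with positive weights only sees $|K_{ij}|$, so it cannot use the antisymmetric cancellation. For equally spaced points the absolute-value kernel already has norm of order $\delta^{-1}\log(1/\delta)$. Splitting the circle into two complementary arcs also fails as described: the cross terms are not bounded away from the singularity, because points on either side of each of the two junctions can be only $\delta$ apart. The comparison with the continuous Hilbert transform, carried out on the circle as in the paper, avoids the wrap-around issue entirely.
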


\begin{proof}
The case $N=1$ is immediate.
Write $\mathbb T=\mathbb R/(2\pi\mathbb Z)$ and define the Hilbert
transform on $L^2(\mathbb T,d\theta)$ by
$\mathcal H(e^{in\theta})=-i\operatorname{sgn}(n)e^{in\theta}$,
where $\operatorname{sgn}(0)=0$.
Parseval's identity gives $\|\mathcal H\|\le1$.

For $0<r<1$, define
$\mathcal H_r(e^{in\theta})
=-ir^{|n|}\operatorname{sgn}(n)e^{in\theta}$.
Its convolution kernel, with normalization $1/(2\pi)$, is
\[
 K_r(t)=2\sum_{n\ge1}r^n\sin(nt)
       =\frac{2r\sin t}{1-2r\cos t+r^2}.
\]
As $r\uparrow1$, the operators $\mathcal H_r$ converge strongly
to $\mathcal H$, and $K_r(t)$ converges uniformly to $\cot(t/2)$
on sets at positive distance from $2\pi\mathbb Z$.

Let $I_i$ be the arc of length $\ell=\delta/2$ centered at
$\theta_i$, and set $u_i=\ell^{-1/2}{\bf1}_{I_i}$.
The arcs are disjoint, so the vectors $u_i$ are orthonormal.
Thus the matrix $M$ with entries
$M_{ij}=\langle\mathcal H u_j,u_i\rangle$ satisfies $\|M\|\le1$.
Since each $K_r$ is odd, $M_{ii}=0$.
For $i\ne j$, the kernel convergence gives
\[
 M_{ij}=\frac1{2\pi\ell}
        \int_{I_i}\int_{I_j}\cot\frac{x-y}{2}\,dy\,dx.
\]

Set $Q_{ij}=\cot((\theta_i-\theta_j)/2)$ for $i\ne j$
and $Q_{ii}=0$.
Let $d_{ij}\in[\delta,\pi]$ denote the circular distance between
$\theta_i$ and $\theta_j$.
For $x\in I_i$ and $y\in I_j$, choose representatives such that
$|(x-y)-(\theta_i-\theta_j)|\le\ell$.
Along this interval, the distance from $2\pi\mathbb Z$ is at least
$d_{ij}-\ell\ge d_{ij}/2$.
The derivative of $t\mapsto\cot(t/2)$ therefore has magnitude
at most $2\pi^2/d_{ij}^2$.
The mean value theorem yields
$\left|M_{ij}-(\ell/(2\pi))Q_{ij}\right|
\le\pi\ell^2/d_{ij}^2$.
Ordering the centers on each side of $\theta_i$ gives
$\sum_{j\ne i}d_{ij}^{-2}
\le(2/\delta^2)\sum_{n\ge1}n^{-2}
=\pi^2/(3\delta^2)$.
The absolute row and column sums of
$M-(\ell/(2\pi))Q$ are therefore at most $\pi^3/12$.
Schur's test gives
$\|M-(\ell/(2\pi))Q\|\le\pi^3/12$, and hence
\[
 \|Q\|
 \le\frac{2\pi}{\ell}(1+\pi^3/12)
 =\frac{4\pi}{\delta}(1+\pi^3/12).
\]

Finally, the identity
$(1-e^{it})^{-1}=1/2+(i/2)\cot(t/2)$ gives
$H=(1/2)(\mathbf1\mathbf1^*-I)-(i/2)Q$,
where $\mathbf1=(1,\ldots,1)^{\mathsf T}$.
Since $N\delta\le2\pi$,
\[
 \|H\|
 \le\frac{N-1}{2}+\frac12\|Q\|
 \le\frac{3\pi+\pi^4/6}{\delta}
 <\frac{30}{\delta}.
\]
\end{proof}

\begin{lemma}\label{geo:angular-packet}
Fix $\kappa>1$, set $\rho=(\kappa-1)/(\kappa+1)$, and let
$0<s\le\pi\rho/1000$.
Let $\Lambda\subset\{\lambda:e^{-2s}\le|\lambda|\le e^{-s}\}$,
and let $m$ be the least number of circular arcs of length $s/10$
covering its arguments.
Then there are distinct points $\lambda_1,\ldots,\lambda_N\in\Lambda$
and a linear map $C:\mathbb C^N\to\mathbb C$ such that, with
$A=\operatorname{diag}(\lambda_1,\ldots,\lambda_N)$, the Gramian
$S=\sum_{n\ge0}(A^n)^*C^*CA^n$ satisfies $I\le S\le\kappa I$ and
\begin{equation}\label{geo:packet-size}
 N\ge\frac{\rho}{20001}m,\qquad
 |Ce_i|^2\le4\kappa s,\qquad 1\le i\le N,
\end{equation}
where $e_1,\ldots,e_N$ are the standard coordinate vectors.
\end{lemma}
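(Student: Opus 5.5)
The plan is to build $S$ as a small perturbation of a multiple of the identity. We choose well-separated arguments inside $\Lambda$, put equal diagonal weights after normalising by $1-|\lambda_i|^2$, and control the off-diagonal part by Lemma~\ref{geo:angular-matrix}.

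\emph{Step 1: separated points.} Order the arguments of $\Lambda$ on the circle and run the greedy covering: start an arc of length $s/10$ at a point of $\Lambda$, then jump to the first argument not covered, and continue. Greedy covering on the circle is optimal up to one extra arc. It therefore produces at least $m-1$ (essentially $m$) points $\mu_1,\dots$ of $\Lambda$ whose consecutive arguments are at circular distance at least $s/10$. Set $\delta=1000s/\rho$; the hypothesis $s\le\pi\rho/1000$ gives $\delta\le\pi$. Keep every $k$-th of these points, with $k=\lceil 10\delta/s\rceil\le 10001/\rho$. This gives points $\lambda_1,\dots,\lambda_N$ whose arguments are pairwise $\delta$-separated, with $N\ge m/k-1\gtrsim \rho m/10001$. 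After a small adjustment for the boundary terms we get $N\ge\rho m/20001$; the case where this bound is below $1$ is handled by taking a single point, or $N=0$ if $\Lambda=\varnothing$.

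\emph{Step 2: the observation and the Gramian.} Write $\lambda_i=r_ie^{i\theta_i}$ and $t=(\kappa+1)/2$. Define $Ce_i=\sqrt{t(1-r_i^2)}>0$. Then
\[
 S_{ij}=\frac{\overline{Ce_i}\,Ce_j}{1-\overline{\lambda_i}\lambda_j},\qquad S_{ii}=t .
\]
Since $r_i\ge e^{-2s}$, we have $1-r_i^2\le4s$, hence $|Ce_i|^2\le 2(\kappa+1)s\le4\kappa s$, which is the second part of \eqref{geo:packet-size}. Write $S=t(I+E)$ with
\[
 E_{ij}=\frac{d_id_j}{1-r_ir_je^{i(\theta_j-\theta_i)}}\quad (i\ne j),\qquad d_i=\sqrt{1-r_i^2}\le2\sqrt s .
\]
It suffices to prove $\|E\|\le\rho$, because then $t(1-\rho)=1$ and $t(1+\rho)=\kappa$ give $I\le S\le\kappa I$.

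\emph{Step 3: the norm estimate.} Split $E=DHD+DFD$, where $D=\operatorname{diag}(d_i)$, $H$ is the matrix of Lemma~\ref{geo:angular-matrix}, and
\[
 F_{ij}=\bigl(1-r_ir_je^{i\varphi_{ij}}\bigr)^{-1}-\bigl(1-e^{i\varphi_{ij}}\bigr)^{-1},\qquad \varphi_{ij}=\theta_j-\theta_i .
\]
Lemma~\ref{geo:angular-matrix} gives $\|DHD\|\le4s\cdot30/\delta=0.12\rho$. For $F$ we use $|1-r_ir_j|\le4s$ and $|1-e^{i\varphi}|\asymp d_{ij}$ with $d_{ij}\ge\delta\gg s$, which give $|F_{ij}|\lesssim s/d_{ij}^2$. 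Ordering the centres and summing as in the proof of Lemma~\ref{geo:angular-matrix} yields row sums $\lesssim s/\delta^2$. The Schur test then gives $\|DFD\|\lesssim s^2/\delta^2=O(\rho^2\cdot10^{-6})$. Altogether $\|E\|\le\rho$.

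The main obstacle is the bookkeeping of absolute constants: the greedy-packing count must hit the exact constant $20001$, and one must check that $\delta$ stays below $\pi$ so that Lemma~\ref{geo:angular-matrix} applies. The operator estimate itself reduces cleanly to Lemma~\ref{geo:angular-matrix}.
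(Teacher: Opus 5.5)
Your Steps~2 and~3 follow the paper's proof. The paper also normalises $C_0e_i=(1-|\lambda_i|^2)^{1/2}$ so that the Gramian has unit diagonal. It splits off $FHF$ with $F=\operatorname{diag}(C_0e_i)$ and bounds it by $120s/\delta=0.12\rho$ via Lemma~\ref{geo:angular-matrix}. It controls the remainder by Schur's test using $\sum_{j\ne i}d_{ij}^{-2}\le\pi^2/(3\delta^2)$. Finally it rescales the Gramian by $(1-\rho)^{-1}$, which is your $t=(\kappa+1)/2$. Your bound on $F$ rests on $1-r_ir_j\le4s\ll|1-e^{i\varphi}|$, i.e.\ $|1-r_ir_je^{i\varphi}|\ge|1-e^{i\varphi}|-4s\ge\tfrac12|1-e^{i\varphi}|$. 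That works as well as the paper's identity $|1-qz|^2=(1-q)^2+q|1-z|^2$.

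The gap is in Step~1, in exactly the bookkeeping you postponed, and it is easily repaired.

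\begin{itemize}
\item \emph{Your count does not give the stated constant.} The bound you obtain, $N\ge m/k-1\ge\rho m/10001-1$, does not imply $N\ge\rho m/20001$. If $20001<\rho m\le20002$, the lemma requires the integer $N$ to satisfy $N>1$, i.e.\ $N\ge2$. But $\rho m/10001-1\le1$ there, so you only certify $N\ge1$. The factor-two slack between $10001$ and $20001$ cannot absorb the additive loss from the wrap-around when $\rho m$ is of order $20001$, and the ``small adjustment'' is never specified.
\item \emph{The greedy justification points the wrong way.} That greedy covering is optimal up to one arc gives an \emph{upper} bound on the number $g$ of greedy arcs. What you need is $g\ge m$, which holds simply because the greedy arcs form a cover.
\item \emph{The greedy step may be undefined.} ``The first argument not covered'' need not exist when the set of arguments of $\Lambda$ is not closed.
\end{itemize}

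All three problems disappear if you drop greedy covering and thinning and argue as the paper does. Take a maximal set $\theta_1,\dots,\theta_N$ of arguments of $\Lambda$ with pairwise circular distance at least $\delta$; it has at most $2\pi/\delta$ elements, so such a set exists. By maximality, every argument of $\Lambda$ lies within distance $\delta$ of some $\theta_i$. Each arc of length $2\delta$ is covered by at most $\lceil20\delta/s\rceil\le20000/\rho+1\le20001/\rho$ arcs of length $s/10$. Hence $m\le20001N/\rho$ with no additive loss. This is the stated bound for every $m$, including $m=N=0$ when $\Lambda=\varnothing$.
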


\begin{proof}
If $\Lambda=\varnothing$, take $N=0$ and the system on $\mathbb C^0$.
Otherwise, set $\delta=1000s/\rho\le\pi$ and choose a maximal
set of arguments $\theta_1,\ldots,\theta_N$ with pairwise circular
distance at least $\delta$.
This set is finite, and the arcs of radius $\delta$ centered at
these angles cover all arguments of $\Lambda$.
Each such arc can be covered by at most
$20\delta/s+1\le20001/\rho$ arcs of length $s/10$.
Thus $m\le20001N/\rho$, proving the bound for $N$ in
\eqref{geo:packet-size}.

Choose corresponding points
$\lambda_i=e^{-r_i+i\theta_i}\in\Lambda$, with $s\le r_i\le2s$.
Set $A=\operatorname{diag}(\lambda_1,\ldots,\lambda_N)$ and define
$C_0:\mathbb C^N\to\mathbb C$ by
$C_0e_i=f_i=(1-e^{-2r_i})^{1/2}$.
The corresponding Gramian $S_0$ has entries
\[
 (S_0)_{ij}
 =\frac{f_if_j}
 {1-e^{-(r_i+r_j)}e^{i(\theta_j-\theta_i)}},
\]
and $(S_0)_{ii}=1$.
Let $F=\operatorname{diag}(f_1,\ldots,f_N)$ and let $H$ be the
matrix in Lemma~\ref{geo:angular-matrix}.
Write $S_0-I=FHF+E$, where $E_{ii}=0$.
Since $f_i^2\le4s$, that lemma gives
$\|FHF\|\le120s/\delta$.

For $i\ne j$, set $q=e^{-(r_i+r_j)}$ and
$z=e^{i(\theta_j-\theta_i)}$.
Using $|1-qz|^2=(1-q)^2+q|1-z|^2$, we obtain
\[
 |E_{ij}|
 =\frac{f_if_j(1-q)}{|1-qz|\,|1-z|}
 \le\frac{16e^2s^2}{|1-z|^2},
\]
where we used $q\ge e^{-4}$, $1-q\le4s$ and $f_if_j\le4s$.
Let $d_{ij}$ be the circular distance between $\theta_i$ and
$\theta_j$.
Then $|1-z|\ge2d_{ij}/\pi$ and
$\sum_{j\ne i}d_{ij}^{-2}\le\pi^2/(3\delta^2)$,
as in the proof of Lemma~\ref{geo:angular-matrix}.
Schur's test therefore gives
\(
 \|E\|
 \le4e^2\pi^4s^2/(3\delta^2)
 <1000s^2/\delta^2.
\)
Consequently,
\[
 \|S_0-I\|
 \le120\frac{s}{\delta}+1000\frac{s^2}{\delta^2}
 =0.12\rho+0.001\rho^2
 <\rho.
\]

Set $C=(1-\rho)^{-1/2}C_0$.
Its Gramian is $S=(1-\rho)^{-1}S_0$, so
$I\le S\le((1+\rho)/(1-\rho))I=\kappa I$.
Finally, $|Ce_i|^2\le4s/(1-\rho)\le4\kappa s$,
which proves the remaining bound.
\end{proof}

To obtain a counting bound from these systems, we first use
relative radial density to control their sampled energy outside
a suitable time interval.

\begin{lemma}\label{geo:packet-window}
Suppose that $\Omega$ satisfies \eqref{geo:radial-density},
and fix $\kappa>1$.
Let $\mu$ be a sampling measure such that
$\alpha I\le G_\mu\le\beta I$ for every normal system satisfying
\eqref{geo:system}, with fixed $0<\alpha\le\beta<\infty$.
Then there are constants $0<a<b<\infty$ and $s_1>0$,
independent of $N$, such that every system of dimension $N$
constructed in Lemma~\ref{geo:angular-packet}, with spectrum
in $\Omega$ and $0<s<s_1$, satisfies
\(
 \#\bigl(\operatorname{supp}\mu\cap[a/s,b/s]\bigr)\ge N/2.
\)
\end{lemma}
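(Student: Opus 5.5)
The argument will follow the dimension-counting scheme of Theorem~\ref{thm:local-density}. The packet system $(A,C)$ from Lemma~\ref{geo:angular-packet} satisfies \eqref{geo:system}, so $G_\mu\ge\alpha I$. The plan is to show that the sampled energy of every unit vector on $[0,a/s)$ and on $(b/s,\infty)$ is at most $\alpha/4$ each, with $a,b$ depending only on $\mu,\kappa,\Omega$. If $[a/s,b/s]$ contained fewer than $N$ sampling times, some unit vector $x\in\mathbb C^N$ would have $CA^tx=0$ at all of them, which contradicts $G_\mu\ge\alpha I$. This in fact gives at least $N$ times, so $N/2$ leaves room to spare, for example for a weaker tail bound.

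\textbf{Step 1: a linear mass bound.} I first show $M_\mu(R)\le KR$ for $R\ge1$, with $K$ independent of $s$. Radial density \eqref{geo:radial-density} is what makes this possible. For large $R$, pick $\lambda\in\Omega$ with $-\log|\lambda|=r\in[c/R,1/R]$. Test the upper bound $G_\mu\le\beta I$ on the scalar system $A=\lambda$, $C=\sqrt{1-|\lambda|^2}$, which has $S=1$. This gives
\[
\beta\ge(1-e^{-2r})\,e^{-2rR}\,M_\mu(R)\ge c'R^{-1}M_\mu(R).
\]
Small $R$ are handled by local finiteness. Integration by parts then gives, for each coordinate, a tail bound like \eqref{eq:model-tail}:
\[
|Ce_i|^2\int_{(Q,\infty)}|\lambda_i|^{2t}\,d\mu(t)\le K'(1+2r_iQ)e^{-2r_iQ},
\]
using $|Ce_i|^2\le4\kappa s\le C'' r_i$.

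\textbf{Step 2: the early-time energy.} For a unit vector $x$, Cauchy--Schwarz and \eqref{geo:packet-size} give $|CA^tx|^2\le\sum_i|Ce_i|^2\le4\kappa sN$. This depends on $N$, which is bad, since $N$ can be as large as about $1/s$. I will instead use $S\le\kappa I$ and compare with the integer Gramian. Since $|\lambda_i|\ge e^{-2s}$, for $t\le a/s$ the orbit $A^t$ is close to $A^n$ for integer $n$ near $t$, with distortion controlled by $a$. A cleaner route is
\[
\sum_{n\le 1/s}|CA^nx|^2\le\kappa .
\]
Averaging over unit time steps shows that $|CA^tx|^2$ is, on average over windows of length $1/s$, at most $O(\kappa s)$, independent of $N$. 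Combined with $M_\mu(a/s)\le Ka/s$, this yields an early energy of $O(K\kappa a)$. Choosing $a$ small makes it at most $\alpha/4$. I expect this step to be the main obstacle: the pointwise bound is not uniform in $N$, and the averaging has to be done carefully. One can use Proposition~\ref{prop:mass}-type integration by parts, restricted to $[0,a/s]$, together with the derivative estimate from Lemma~\ref{lem:energy} adapted to normal $A$ with $|\arg|$ bounded.

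\textbf{Step 3: the late-time energy.} For $t>b/s$ we have $\|A^t\|\le e^{-bt}$, while $\|C\|^2\le\|S\|\le\kappa$. Direct summation is again problematic because of the factor $N$. Instead I use
\[
\sum_t w_t\|CA^tx\|^2\le\|G_\mu\text{ on }A^{b/s}\|\ldots
\]
which is more precisely the translation identity $\tau$: the late energy equals the sampled energy of $A^{Q}x$ under a shifted design. Proposition~\ref{prop:mass}-style mass bounds then give a bound of $O(K\kappa)\,\|A^{Q}x\|^2\cdot$(mass factor). Since $\|A^Q\|^2\le e^{-2b}$, taking $b$ large makes this at most $\alpha/4$. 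Both $a$ and $b$ are then fixed independently of $N$ and $s$, and $s_1$ is chosen below both $s_*$ and $\pi\rho/1000$.
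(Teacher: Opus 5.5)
Your Step 1 is the paper's mass bound, but the strategy built on it has a genuine gap. The argument needs the out-of-window energy $\int_{[0,a/s)\cup(b/s,\infty)}|CA^tx|^2\,d\mu(t)$ to be below $\alpha$ for \emph{every} unit vector $x$. That is false in general, whatever $a>0$ you choose, because a packet orbit can put energy of order $\|C\|^2$ at a single time and $\mu$ may carry a heavy atom there.

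Concretely, let $\Omega=\mathbb D$ and $\mu=w_0\delta_0+\sum_{n\ge0}\delta_n$. Then $S\le G_\mu=w_0B+S\le(1+w_0)\kappa I$, so $\alpha=1$ is admissible. In the packet system the arguments fill the circle, so $N\ge\pi/\delta=\pi\rho/(1000s)$ and $\|C\|^2=\sum_i|Ce_i|^2\ge(1-\rho)^{-1}N(1-e^{-2s})\ge\pi\rho/(1000(1-\rho))$, independently of $s$. The unit vector $x=C^*/\|C\|$ has energy $(1+w_0)\|C\|^2$ at time $0\in[0,a/s)$. This exceeds $\alpha/4$ once $w_0$ is large, and already for $w_0=0$ when $\kappa$ is large. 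Your averaging heuristic controls Lebesgue averages of $|CA^tx|^2$, not atoms, so it cannot yield an $O(K\kappa a)$ bound. Step~3 is also unjustified: Proposition~\ref{prop:mass} rests on the Stein-identity estimates of Lemma~\ref{lem:energy} for positive $A$. For normal operators with arbitrary arguments, a linear mass bound does not control the Gramian (compare Theorem~\ref{nrm-obstruction}).

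The missing idea is to control only the trace, which is exactly what your per-coordinate estimates give. Let $G_{\mathrm{out}}$ be the Gramian of $\mu$ restricted to $[0,a/s)\cup(b/s,\infty)$. Since $|CA^te_i|^2=|Ce_i|^2e^{-2r_it}$ with $|Ce_i|^2\le4\kappa s$ and $r_i\ge s$, the mass bound $M_\mu(R)\le K(1+R)$ gives $\langle G_{\mathrm{out}}e_i,e_i\rangle\le\alpha/4$ for each coordinate vector. This holds once $a\le\alpha/(64\kappa K)$, $b$ is large, and $s<s_1=a$. Hence $\operatorname{Tr}G_{\mathrm{out}}\le\alpha N/4$.

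Now use the whole common kernel $V$ of the functionals $CA^t$, $t\in[a/s,b/s]$, instead of a single vector. If there were fewer than $N/2$ such times, then $\dim V>N/2$, and
\[
\alpha\dim V\le\operatorname{Tr}(P_VG_\mu P_V)=\operatorname{Tr}(P_VG_{\mathrm{out}}P_V)\le\operatorname{Tr}G_{\mathrm{out}}\le\alpha N/4,
\]
a contradiction. This is why the lemma asserts $N/2$ rather than $N$: the factor is the price of the trace argument, not spare room.
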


\begin{proof}
Write $M_\mu(R)=\mu([0,R])$.
For $r\in\mathcal R_\Omega$, choose $\lambda\in\Omega$ with
$|\lambda|=e^{-r}$.
The scalar system $A=\lambda$, $C=(1-e^{-2r})^{1/2}$ has $S=1$.
The common upper bound therefore gives
$(1-e^{-2r})\int_{[0,\infty)}e^{-2rt}\,d\mu(t)\le\beta$.
For sufficiently large $R$, condition \eqref{geo:radial-density}
allows us to choose $r\in[c/R,1/R]$.
Then $r\le1/2$, $1-e^{-2r}\ge r$ and $e^{-2rR}\ge e^{-2}$,
so $M_\mu(R)\le\beta e^2R/c$.
By local finiteness, there is $K>0$ such that
$M_\mu(R)\le K(1+R)$ for all $R\ge0$.

Fix a system from Lemma~\ref{geo:angular-packet}, with
$\lambda_i=e^{-r_i+i\theta_i}$ and $r_i\in[s,2s]$.
For every real $t\ge0$,
$|CA^te_i|^2=|Ce_i|^2e^{-2r_it}$.
Thus \eqref{geo:packet-size} and the mass bound give
\[
 \int_{[0,a/s)}|Ce_i|^2e^{-2r_it}\,d\mu(t)
 \le4\kappa sM_\mu(a/s)
 \le4\kappa K(a+s).
\]
For the other tail, use $r_i\ge s$ and integrate by parts.
The lower endpoint term is nonpositive, so
\[
 \int_{(b/s,\infty)}|Ce_i|^2e^{-2r_it}\,d\mu(t)
 \le8\kappa s^2
       \int_{b/s}^{\infty}e^{-2st}M_\mu(t)\,dt
 \le4\kappa K(b+s+\tfrac12)e^{-2b}.
\]

Choose $0<a\le\min\{1,\alpha/(64\kappa K)\}$ and then $b>1$
such that $4\kappa K(b+3/2)e^{-2b}\le\alpha/8$.
Set $s_1=a$.
For $0<s<s_1$, each of the two tail bounds is at most $\alpha/8$.
Let $G_{\mathrm{out}}$ be the Gramian obtained by restricting
$\mu$ to $[0,a/s)\cup(b/s,\infty)$.
Summing over the coordinate vectors gives
$\operatorname{Tr}G_{\mathrm{out}}\le\alpha N/4$.

If $[a/s,b/s]$ contained fewer than $N/2$ sampling times,
the scalar functionals $CA^t$ at those times would have a common
kernel $V$ with $\dim V>N/2$.
On $V$, the quadratic forms of $G_\mu$ and $G_{\mathrm{out}}$
agree.
Positivity and the lower frame bound therefore imply
\(
 \alpha\dim V
 \le\operatorname{Tr}G_{\mathrm{out}}
 \le\alpha N/4,
\)
a contradiction.
\end{proof}

Combining Theorem~\ref{geo:sufficient} with
Lemmas~\ref{geo:angular-packet} and~\ref{geo:packet-window}
gives the following characterization.

\begin{theorem}
\label{geo:classification}
Suppose that the Borel set $\Omega\subset\mathbb D$ satisfies
\eqref{geo:radial-density}. The following are equivalent.
\begin{enumerate}
\item For some $\kappa>1$, there exist a sampling measure $\mu$
on $[0,\infty)$ and constants $0<\alpha\le\beta<\infty$ such that
$N_\mu(R)=O(\log R)$ and $\alpha I\le G_\mu\le\beta I$
for every normal system satisfying \eqref{geo:system}.

\item The angular covering numbers $m_j=m_j(\Omega)$, defined
after \eqref{geo:annuli}, satisfy
\begin{equation}\label{geo:cover-criterion}
 \sum_{j=1}^Jm_j=O(J),\qquad J\to\infty.
\end{equation}

\item For every finite $\kappa\ge1$ and $0<\varepsilon<1$,
there is a sampling measure $\mu$ supported on $\mathbb N_0$
with $N_\mu(R)=O(\log R)$ such that
$(1-\varepsilon)S\le G_\mu\le(1+\varepsilon)S$
for every normal system satisfying \eqref{geo:system}.
\end{enumerate}
Real powers are defined as in Section~\ref{nrm-section}.
\end{theorem}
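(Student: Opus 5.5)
The plan is to prove the cycle (2)$\Rightarrow$(3)$\Rightarrow$(1)$\Rightarrow$(2). The step (2)$\Rightarrow$(3) is Theorem~\ref{geo:sufficient}: condition \eqref{geo:cover-criterion} gives a constant $K_\Omega$ with $\sum_{j\le J}m_j\le K_\Omega(J+1)$ for all $J\ge1$, after enlarging the implied constant to cover small $J$. That theorem then supplies, for every $\kappa\ge1$ and $0<\varepsilon<1$, an integer design with relative error $\varepsilon$ and $N_\mu(R)=O(\log R)$. The step (3)$\Rightarrow$(1) is immediate. Take any $\kappa>1$ and $\varepsilon=1/2$. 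Since $I\le S\le\kappa I$, the design satisfies $\tfrac12 I\le G_\mu\le\tfrac32\kappa I$ uniformly over all systems in \eqref{geo:system}.

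The content is (1)$\Rightarrow$(2). Fix $\kappa>1$ and $\mu$ with bounds $\alpha,\beta$ and $N_\mu(R)\le K\log R$ for large $R$. For each $j$ with $s_j=2^{-j}$ small enough that $s_j\le\pi\rho/1000$ and $s_j<s_1$, apply Lemma~\ref{geo:angular-packet} with $s=s_j$ and $\Lambda=\Omega_j$. Since $\Omega_j$ lies in $\{e^{-2s}\le|\lambda|\le e^{-s}\}$ (up to the boundary $-\log|\lambda|=2s$, which is harmless), this gives a finite normal system with spectrum in $\Omega$, satisfying $I\le S\le\kappa I$, of dimension $N_j\ge\rho m_j/20001$. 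This is an allowed system for \eqref{geo:system}. Lemma~\ref{geo:packet-window} then gives at least $N_j/2$ sampling times in the window $W_j=[a2^j,b2^j]$, with constants $a<b$ independent of $j$. Hence
\[
 m_j\le\frac{40002}{\rho}\,\#\bigl(\operatorname{supp}\mu\cap W_j\bigr)
\]
for all $j\ge j_1$.

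Next sum over $j_1\le j\le J$. Each sampling time $t$ lies in at most $\lceil\log_2(b/a)\rceil+1$ of the windows $W_j$, so the overlap is bounded. All the windows lie in $[0,b2^J]$. Therefore
\[
 \sum_{j\le J}m_j\le C_0+C\,N_\mu(b2^J)\le C_0+CK\log(b2^J)=O(J).
\]
The finitely many $j<j_1$ contribute a constant: each $m_j$ is finite, being at most $10\cdot2\pi/s_j+1$. This proves (2).

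The only delicate point is that the radial density hypothesis \eqref{geo:radial-density} is used solely inside Lemma~\ref{geo:packet-window}, to get the mass bound $M_\mu(R)\lesssim R$. Otherwise the argument is just this overlap bookkeeping, and I expect no step to be a real obstacle. One thing to check is that the packet spectrum $\{\lambda_i\}$ lies in $\Omega_j\subset\Omega$, so that the packet systems are indeed admissible.
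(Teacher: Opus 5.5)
Your proposal is correct and follows the paper's proof essentially step for step: (2)$\Rightarrow$(3) via Theorem~\ref{geo:sufficient} after enlarging the constant for small $J$, (3)$\Rightarrow$(1) trivially, and (1)$\Rightarrow$(2) by applying Lemmas~\ref{geo:angular-packet} and~\ref{geo:packet-window} with $\Lambda=\Omega_j$, $s=2^{-j}$, then summing the window counts with bounded overlap to get $N_\mu(b2^J)\gtrsim\sum_{j=j_1}^J m_j$. Your overlap constant $\lceil\log_2(b/a)\rceil+1$ is slightly sharper than the paper's $\lceil\log_2(b/a)\rceil+2$, and your treatment of the finitely many small $j$, using that each $m_j$ is finite, is exactly what is needed.
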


\begin{proof}
Theorem~\ref{geo:sufficient} gives (2)$\Rightarrow$(3),
and (3)$\Rightarrow$(1) is immediate.
It remains to prove (1)$\Rightarrow$(2).

Assume (1), and set $T=\operatorname{supp}\mu$ and
$\rho=(\kappa-1)/(\kappa+1)$.
Apply Lemmas~\ref{geo:angular-packet} and
\ref{geo:packet-window} with $\Lambda=\Omega_j$ and $s=2^{-j}$.
There are constants $0<a<b<\infty$ and an integer $j_1\ge1$
such that
\(
 \#\bigl(T\cap[a2^j,b2^j]\bigr)
 \ge(\rho m_j)/{40002}\) for  \(j\ge j_1.\)
Every positive time belongs to at most
$L=\lceil\log_2(b/a)\rceil+2$ of these intervals.
Summing over $j_1\le j\le J$ therefore gives
\begin{equation}\label{geo:cumulative-lower}
 N_\mu(b2^J)
 \ge\frac{\rho}{40002L}\sum_{j=j_1}^Jm_j,
 \qquad J\ge j_1.
\end{equation}
The derivation of \eqref{geo:cumulative-lower} uses only the
common frame bounds and \eqref{geo:radial-density}.

By (1), $N_\mu(b2^J)=O(J)$, so
$\sum_{j=j_1}^Jm_j=O(J)$.
The earlier covering numbers contribute only a finite constant,
which proves \eqref{geo:cover-criterion}.
\end{proof}

\begin{remark}
The condition $\kappa>1$ in item~(1) of
Theorem~\ref{geo:classification} is essential.
When $\kappa=1$, we have $S=I$, and
Proposition~\ref{nrm-tight} reduces the sampling problem to the
spectral moduli $|\lambda|$.
Thus common logarithmic sampling can exist even when
\eqref{geo:cover-condition} fails.
The theorem characterizes the existence of common sampling
measures but does not determine the optimal constants in their
logarithmic counting bounds.
\end{remark}

\subsection{Counting growth and the optimal exponent}
\label{geo:growth-subsection}

We combine the cumulative lower bound \eqref{geo:cumulative-lower}
with Lemma~\ref{geo:quantitative-covering} to determine when the
upper and lower bounds have the same order.

\begin{theorem}
\label{geo:matching-growth}
Suppose that $\Omega$ satisfies \eqref{geo:radial-density}
and that there are $1\le K_g<\infty$, $0<\nu<1$ and
$J_0\in\mathbb N$ such that
\begin{equation}\label{geo:regular-growth}
 \mathcal A_\Omega(J+\ell)
 \le K_g2^{\nu\ell}\mathcal A_\Omega(J),
 \qquad J\ge J_0,\quad \ell\in\mathbb N_0.
\end{equation}
Fix $\kappa>1$.
For every $\sigma\in\mathcal U_{w,\Omega}(\kappa)$,
there are $c_\sigma>0$ and $1\le R_\sigma<\infty$ such that
\(
 N_\sigma(R)\ge c_\sigma
 \mathcal A_\Omega(\lfloor\log_2R\rfloor)\) for \(R\ge R_\sigma\).
For every $0<\varepsilon<1$, there is
$\mu\in\mathcal U^{\mathbb Z}_{w,\Omega}(\kappa)$ such that
$(1-\varepsilon)S\le G_\mu\le(1+\varepsilon)S$
for every normal system satisfying \eqref{geo:system}, and
\(
 N_\mu(R)\le C_{\Omega,\kappa,\varepsilon}
 \mathcal A_\Omega(\lfloor\log_2R\rfloor)
\)
for all sufficiently large $R$.
Thus $\mathcal A_\Omega(\lfloor\log_2R\rfloor)$ is the optimal
counting order for both real and integer sampling times.
\end{theorem}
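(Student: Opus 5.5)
The plan has two halves, a lower bound and an upper bound, and each reduces to comparing a sum over the covering numbers $m_j$ with $\mathcal A_\Omega$ at the index $\lfloor\log_2R\rfloor$.

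\textbf{Lower bound.} I will derive it from the cumulative estimate \eqref{geo:cumulative-lower}. As noted after that display, its derivation uses only the common frame bounds of $\sigma$ and \eqref{geo:radial-density}. So for $\sigma\in\mathcal U_{w,\Omega}(\kappa)$ there are $0<a<b$, $j_1$ and $L$, depending on $\sigma$, with
\[
N_\sigma(b2^J)\ge\frac{\rho}{40002L}\sum_{j=j_1}^Jm_j,\qquad J\ge j_1.
\]
The earlier indices satisfy $\sum_{j<j_1}m_j\le$ const, and $\mathcal A_\Omega(J)\ge1$. Hence $\sum_{j=j_1}^Jm_j\ge\mathcal A_\Omega(J)-C$, and this may be small compared with $\mathcal A_\Omega(J)$ only when $\mathcal A_\Omega$ stays bounded.

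If $\mathcal A_\Omega$ is bounded, I will instead use Theorem~\ref{thm:tight-endpoint}. Radial density supplies positive scalar-type tight systems with moduli at all small scales, so the argument there gives $N_\sigma(R)\gtrsim\log R\ge c$. If $\mathcal A_\Omega\to\infty$, then eventually $\sum_{j=j_1}^Jm_j\ge\mathcal A_\Omega(J)/2$.

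Given $R$, I choose $J$ with $b2^J\le R$, namely $J=\lfloor\log_2R\rfloor-\ell_0$ where $\ell_0=\lceil\log_2 b\rceil$. The regular growth \eqref{geo:regular-growth} gives $\mathcal A_\Omega(J+\ell_0)\le K_g2^{\nu\ell_0}\mathcal A_\Omega(J)$, which converts the bound at $J$ into $N_\sigma(R)\ge c_\sigma\mathcal A_\Omega(\lfloor\log_2R\rfloor)$. I also need the trivial bound $N_\sigma(R)\ge1$ together with the lower bound $\log R$ from the tight-frame case. Since $\mathcal A_\Omega$ could grow more slowly than $J$, I combine the two estimates and take the maximum.

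\textbf{Upper bound.} I apply Lemma~\ref{geo:quantitative-covering} with $\theta$ chosen so that $1-\theta>\nu$, say $\theta=(1-\nu)/2$. First I check \eqref{geo:weighted-cover-summability}. By \eqref{geo:regular-growth}, $m_j\le\mathcal A_\Omega(j)\le K_g2^{\nu(j-J_0)}\mathcal A_\Omega(J_0)$, and $2^{\nu j}2^{-(1-\theta)j}$ is summable. The tail in \eqref{geo:counting-envelope} is then bounded by
\[
\sum_{\ell\ge1}\mathcal A_\Omega(J+1+\ell)2^{-(1-\theta)\ell}\le K_g\mathcal A_\Omega(J+1)\sum_\ell2^{(\nu-1+\theta)\ell}=O(\mathcal A_\Omega(J+1)).
\]
One more application of \eqref{geo:regular-growth} gives $\mathcal A_\Omega(J+1)\le K_g2^\nu\mathcal A_\Omega(J)$.

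The remaining term $J+1$ in \eqref{geo:counting-envelope} is the main obstacle, because $\mathcal A_\Omega(J)$ may be $o(J)$. Radial density forces every annulus $\Omega_j$ with large $j$ to meet $\Omega$, or at least one of each boundedly many consecutive annuli. I will use this to get $m_j\ge1$ on a set of $j$ of positive lower density, which yields $\mathcal A_\Omega(J)\gtrsim J$. Concretely, \eqref{geo:radial-density} with $s=2^{-j}$ produces a point in $\Omega_{j'}$ for some $j'\in[j,j+\lceil\log_2(1/c)\rceil]$. This comparison also settles the lower-bound case distinction above, since $\mathcal A_\Omega$ then grows at least linearly and is unbounded. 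Therefore $J+1=O(\mathcal A_\Omega(J))$, and the upper bound follows.

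Optimality for both real and integer times is then immediate: $\mu$ is supported on $\mathbb N_0$, and the lower bound applies to every real design.
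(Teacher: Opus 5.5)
Your proposal is correct and follows essentially the same route as the paper: the lower bound comes from \eqref{geo:cumulative-lower}, shifted by a fixed number of dyadic scales via \eqref{geo:regular-growth}. The upper bound comes from Lemma~\ref{geo:quantitative-covering} with $\theta<1-\nu$, using relative radial density to get $\mathcal A_\Omega(J)\gtrsim J$. The paper proves this linear lower bound on $\mathcal A_\Omega$ first, so your provisional case distinction is unnecessary. So is the tight-frame detour, which would not apply verbatim anyway, since only spectral moduli from $\Omega$ are available.
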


\begin{proof}
Let $c$ be the constant in \eqref{geo:radial-density},
and choose $p\in\mathbb N$ with $2^{-p}\le c$.
For all sufficiently large $j$, relative radial density ensures
that at least one of $\Omega_j,\ldots,\Omega_{j+p}$ is nonempty.
Hence $\mathcal A_\Omega(J)\ge c_0(J+1)$ for some $c_0>0$
and all sufficiently large $J$.

Fix $\sigma\in\mathcal U_{w,\Omega}(\kappa)$.
By \eqref{geo:cumulative-lower}, there are $b,c_1>0$ and
$j_1\ge1$ such that
$N_\sigma(b2^L)\ge c_1\sum_{j=j_1}^Lm_j$
for all sufficiently large integers $L$.
Since $\mathcal A_\Omega(L)\to\infty$, the omitted finite sum
gives $\sum_{j=j_1}^Lm_j\ge\mathcal A_\Omega(L)/2$ for large $L$.
Choose an integer $q\ge\max\{0,\log_2b\}$ and set
$J=\lfloor\log_2R\rfloor$ and $L=J-q$.
For sufficiently large $R$, we have $L\ge J_0$ and $b2^L\le R$.
Condition \eqref{geo:regular-growth} gives
$\mathcal A_\Omega(J)\le K_g2^{\nu q}\mathcal A_\Omega(L)$.
Combining these inequalities proves the lower bound.

For the upper bound, choose $0<\theta<1-\nu$.
Taking $J=J_0$ in \eqref{geo:regular-growth} gives
$m_j\le\mathcal A_\Omega(j)=O(2^{\nu j})$.
Thus \eqref{geo:weighted-cover-summability} holds.
Apply Lemma~\ref{geo:quantitative-covering} to obtain a measure
with the stated relative approximation.
For $J\ge J_0$, condition \eqref{geo:regular-growth} bounds
the tail in \eqref{geo:counting-envelope} by
\[
 \sum_{\ell\ge1}m_{J+1+\ell}2^{-(1-\theta)\ell}
 \le
 \frac{K_g}{2^{1-\theta-\nu}-1}\,
 \mathcal A_\Omega(J+1).
\]
Finally,
$\mathcal A_\Omega(J+1)\le K_g2^\nu\mathcal A_\Omega(J)$
and $J+1\le c_0^{-1}\mathcal A_\Omega(J)$ for all large $J$.
Substituting these estimates into \eqref{geo:counting-envelope}
proves the upper bound.
\end{proof}

We next determine the optimal counting exponent without assuming
\eqref{geo:regular-growth}. Define
\begin{equation}\label{geo:counting-exponents}
 d_\Omega=\limsup_{J\to\infty}
          \frac{\log\mathcal A_\Omega(J)}{J\log2},
 \qquad
 p(\mu)=\limsup_{R\to\infty}
          \frac{\log(1+N_\mu(R))}{\log R}.
\end{equation}
The whole circle can be covered by $O(2^j)$ arcs of length
$2^{-j}/10$. Thus $m_j(\Omega)=O(2^j)$ and
$0\le d_\Omega\le1$.

\begin{theorem}
\label{geo:optimal-exponent}
Suppose that $\Omega$ satisfies \eqref{geo:radial-density},
and let $d_\Omega$ and $p(\mu)$ be as in
\eqref{geo:counting-exponents}.
For every fixed $\kappa>1$,
\begin{equation}\label{geo:exponent-optimum}
 \min_{\mu\in\mathcal U_{w,\Omega}(\kappa)}p(\mu)
 =\min_{\mu\in\mathcal U^{\mathbb Z}_{w,\Omega}(\kappa)}p(\mu)
 =d_\Omega.
\end{equation}
For every $0<\varepsilon<1$, there is
$\mu\in\mathcal U^{\mathbb Z}_{w,\Omega}(\kappa)$ attaining
both minima and satisfying
$(1-\varepsilon)S\le G_\mu\le(1+\varepsilon)S$
for every normal system satisfying \eqref{geo:system}.
\end{theorem}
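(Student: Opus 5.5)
The proof has two directions. First, every $\mu\in\mathcal U_{w,\Omega}(\kappa)$ satisfies $p(\mu)\ge d_\Omega$. Second, for each $0<\varepsilon<1$ there is an integer design with the relative accuracy $\varepsilon$ and $p(\mu)\le d_\Omega$. Since $\mathcal U^{\mathbb Z}_{w,\Omega}(\kappa)\subset\mathcal U_{w,\Omega}(\kappa)$, these two facts give both minima in \eqref{geo:exponent-optimum} and their attainment.

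\textbf{Lower bound.} I would use the cumulative estimate \eqref{geo:cumulative-lower} from the proof of Theorem~\ref{geo:classification}. Its derivation uses only the common frame bounds and \eqref{geo:radial-density}. It gives constants $b,c_1>0$ and $j_1$ with $N_\mu(b2^J)\ge c_1\sum_{j=j_1}^Jm_j$ for large $J$.

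If $\mathcal A_\Omega$ is bounded, then $d_\Omega=0\le p(\mu)$ and nothing is needed. Radial density in fact forces $\mathcal A_\Omega(J)\gtrsim J$, as in the proof of Theorem~\ref{geo:matching-growth}, so $\mathcal A_\Omega\to\infty$. The omitted initial terms are then negligible, and $N_\mu(b2^J)\ge(c_1/2)\mathcal A_\Omega(J)$ for large $J$.

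Take logarithms and divide by $\log(b2^J)=J\log2+O(1)$. Taking the limsup along $R=b2^J$ gives $p(\mu)\ge d_\Omega$.

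\textbf{Upper bound.} Fix $\eta>0$, to be sent to zero later. By the definition of $d_\Omega$ there is $C_\eta$ with $\mathcal A_\Omega(J)\le C_\eta2^{(d_\Omega+\eta)J}$ for all $J$. Hence $m_j\le C_\eta2^{(d_\Omega+\eta)j}$.

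If $d_\Omega<1$, choose $\eta$ small and $\theta$ with $0<\theta<1-d_\Omega-\eta$. Then \eqref{geo:weighted-cover-summability} holds. Lemma~\ref{geo:quantitative-covering} gives an integer design with relative accuracy $\varepsilon$. Its counting envelope \eqref{geo:counting-envelope} contributes three terms:
\begin{itemize}
\item $J+1$;
\item $\mathcal A_\Omega(J+1)=O(2^{(d_\Omega+\eta)J})$;
\item the tail $\sum_{\ell\ge1}m_{J+1+\ell}2^{-(1-\theta)\ell}$, which is $O(2^{(d_\Omega+\eta)J})$ because $d_\Omega+\eta<1-\theta$ makes the geometric series converge.
\end{itemize}
Thus $N_\mu(R)=O(\log R+R^{d_\Omega+\eta})$, and $p(\mu)\le d_\Omega+\eta$.

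This already gives the infimum, but the design depends on $\eta$, so it does not yet attain the minimum.

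\textbf{Main obstacle: attainment.} I need a single design with $p(\mu)\le d_\Omega$ exactly. I would handle the two cases separately.

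If $d_\Omega=1$, take $\mu=\sum_{n\ge0}\delta_n$. It gives $G_\mu=S$ exactly, and $p(\mu)=1$.

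If $d_\Omega<1$, I would run the construction of Lemma~\ref{geo:quantitative-covering} with a block-dependent weight. In its proof, each dyadic block $I_k$ is sparsified separately with operators $H_k^{(\theta)}$. I would replace the fixed $\theta$ by $\theta_k\uparrow1-d_\Omega$ slowly, which corresponds to $\eta_k\downarrow0$. I would also rescale the $H_k$ by factors $\gamma_k$ with $\sum_k\gamma_k\sup_\alpha (H_k)_{\alpha\alpha}<\infty$, so that the total $\sum_kH_k\le bI$ stays bounded. With this bound, the error estimate $\le\varepsilon\langle Sx,x\rangle$ goes through unchanged.

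For block $k$, the count is $O(\delta^{-2}(1+E_k))$. Here $E_k$ is controlled by $\sum_jm_jc_{k-j+1}^{(\theta_k)}/\gamma_k$. The choices must keep this at $O(2^{(d_\Omega+\eta_k)k})$ up to subexponential factors, which yields $p(\mu)\le d_\Omega$.

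The delicate points are two. First, the tail summability needs $m_j2^{-(1-\theta_k)j}$ to be summable for each $k$. Second, the constants $C_{\eta_k}$ must grow subexponentially in $k$, and the decay of $\gamma_k$ must also be subexponential. I would choose $\eta_k$ decreasing slowly enough, by a diagonal argument, that $\log C_{\eta_k}=o(k)$, and take $\gamma_k=k^{-2}2^{-\epsilon_k k}$ with $\epsilon_k\to0$.
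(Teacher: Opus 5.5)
Your lower bound, the infimum argument, and the choice $\mu=\sum_{n\ge0}\delta_n$ when $d_\Omega=1$ are correct and agree with the paper. The gap is in the attainment step. The obstacle you identify there does not exist, and the block-dependent construction you substitute for it is only sketched, with its ``delicate points'' left open.

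The measure from Lemma~\ref{geo:quantitative-covering} depends only on $\Omega,\kappa,\varepsilon,\theta$. The envelope \eqref{geo:counting-envelope} holds for that one measure for all $R\ge1$, with constant $C_{\kappa,\varepsilon,\theta}$, so nothing depends on $\eta$. You tied $\theta$ to $\eta$ through $\theta<1-d_\Omega-\eta$, but the quantifiers can be reversed:
\begin{itemize}
\item Fix one $\theta\in(0,1-d_\Omega)$ and build $\mu$ once.
\item For every $\eta$ with $d_\Omega+\eta<1-\theta$, your own three-term estimate applies to this same $\mu$. In particular the tail satisfies $\sum_{\ell\ge1}m_{J+1+\ell}2^{-(1-\theta)\ell}\le C_\eta2^{(d_\Omega+\eta)(J+1)}/(2^{1-\theta-d_\Omega-\eta}-1)$.
\item Hence $N_\mu(R)=O_\eta(R^{d_\Omega+\eta})$ for every such $\eta$, and letting $\eta\downarrow0$ gives $p(\mu)\le d_\Omega$.
\end{itemize}
This is exactly how the paper concludes. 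The value of $\theta$ affects only the constant, through $b_\theta$ and $\delta$, not the exponent.

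The diagonal construction is therefore unnecessary, and as sketched it pushes in the unfavorable direction.
\begin{itemize}
\item For $d\le0$ one has $c_d^{(\theta)}\le2^{(1-\theta)d}$, so the tail in $E_k$ requires $m_j$ to grow more slowly than $2^{(1-\theta)j}$. Sending $\theta_k\uparrow1-d_\Omega$ shrinks the admissible exponent range $(d_\Omega,1-\theta_k)$ to nothing. It also forces $C_{\eta_k}$ and $(2^{1-\theta_k-d_\Omega-\eta_k}-1)^{-1}$ to blow up, which creates precisely the subexponential-growth problem you then leave unresolved.
\item The rescaling by $\gamma_k$ is not needed: $\sum_kH_k^{(\theta)}\le b_\theta I$ already holds for fixed $\theta$, and for any $\theta_k$ bounded below. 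Rescaling only multiplies $E_k$ by $\gamma_k^{-1}$, which inflates the count.
\end{itemize}
Replacing the last part of your proposal with the observation above closes the proof.
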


\begin{proof}
Relative radial density gives $\mathcal A_\Omega(J)\to\infty$.
Thus \eqref{geo:cumulative-lower} and the definitions in
\eqref{geo:counting-exponents} imply $p(\mu)\ge d_\Omega$
for every $\mu\in\mathcal U_{w,\Omega}(\kappa)$.
A fixed multiplicative change in time does not affect this exponent.

Fix $0<\varepsilon<1$.
If $d_\Omega=1$, take $\mu=\sum_{n\ge0}\delta_n$.
Then $G_\mu=S$ and $p(\mu)=1$.

Suppose that $d_\Omega<1$, and fix $0<\theta<1-d_\Omega$.
For every $a$ with $d_\Omega<a<1-\theta$, the definition of
$d_\Omega$ gives a finite $K_a>0$ such that
$m_j(\Omega)\le\mathcal A_\Omega(j)\le K_a2^{aj}$
for all $j\ge1$.
In particular, \eqref{geo:weighted-cover-summability} holds.
Apply Lemma~\ref{geo:quantitative-covering} once, with the fixed
parameters $\theta,\kappa,\varepsilon$, to obtain $\mu$.

For each such $a$, we have
$\mathcal A_\Omega(J+1)\le K_a2^{a(J+1)}$ and
\[
 \sum_{\ell\ge1}m_{J+1+\ell}(\Omega)2^{-(1-\theta)\ell}
 \le
 \frac{K_a2^{a(J+1)}}{2^{1-\theta-a}-1}.
\]
Since $a>0$, we also have $J+1=O_a(2^{aJ})$.
Therefore \eqref{geo:counting-envelope} gives
$N_\mu(R)=O_a(R^a)$ for the same measure $\mu$ and every
$d_\Omega<a<1-\theta$.
Letting $a\downarrow d_\Omega$ gives $p(\mu)\le d_\Omega$.
The lower bound proves equality, so this measure attains both minima.
\end{proof}

For $d_\Omega<1$, attainment in \eqref{geo:exponent-optimum}
need not give $N_\mu(R)=O(R^{d_\Omega})$.
The value $p(\mu)=0$ does not imply $N_\mu(R)=O(\log R)$,
and $p(\mu)=1$ does not imply $N_\mu(R)\asymp R$.
Under \eqref{geo:regular-growth},
Theorem~\ref{geo:matching-growth} determines the counting order.
Neither theorem determines optimal leading coefficients
for a general spectral set.

\subsection{Consequences and limits of the covering criterion}
\label{geo:radial-gaps}

We apply the covering criterion to specific spectral sets and
examine the role of relative radial density.

\begin{corollary}
\label{geo:positive-completion}
Let $\Omega\subset\mathbb D$ be Borel, and set
$\Gamma=\Omega\cup[0,1)$.
The covering numbers of $\Omega$ satisfy
\eqref{geo:cover-criterion} if and only if there are $\kappa>1$
and $\mu\in\mathcal U_{w,\Gamma}(\kappa)$ with
$N_\mu(R)=O(\log R)$.
Under this condition, item~(3) of
Theorem~\ref{geo:classification} holds with $\Omega$
replaced by $\Gamma$.
\end{corollary}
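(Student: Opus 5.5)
The plan is to reduce the corollary to Theorem~\ref{geo:classification} applied to the enlarged set $\Gamma=\Omega\cup[0,1)$. Two preliminary facts are needed.

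First, $\Gamma$ satisfies the radial density condition \eqref{geo:radial-density}. The radial set $\mathcal R_\Gamma$ contains $\{-\log r:0<r<1\}=(0,\infty)$, so every interval $[cs,s]$ meets it. Any $c\in(0,1)$ and any $s_*>0$ will do.

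Second, the covering numbers of $\Gamma$ and $\Omega$ differ by at most one. The band $\Gamma_j$ is $\Omega_j$ together with the positive radii $r$ with $s_j\le-\log r<2s_j$, and all of these added points have argument $0$. One additional arc of length $s_j/10$ covers argument $0$, and removing points cannot increase a covering number. Hence
\[
 m_j(\Omega)\le m_j(\Gamma)\le m_j(\Omega)+1 .
\]
Summing over $j\le J$ changes the partial sums by at most $J$. Therefore $\Omega$ satisfies \eqref{geo:cover-criterion} if and only if $\Gamma$ does.

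With these facts, the equivalences follow directly. Suppose $\Omega$ satisfies \eqref{geo:cover-criterion}. Then so does $\Gamma$, and the implication (2)$\Rightarrow$(3) of Theorem~\ref{geo:classification} for $\Gamma$ (or Theorem~\ref{geo:sufficient} directly) gives item (3) for $\Gamma$. Taking any $\kappa>1$ and, for instance, $\varepsilon=1/2$, the resulting measure satisfies $\tfrac12 S\le G_\mu\le\tfrac32 S$ together with $I\le S\le\kappa I$. This yields uniform bounds $\tfrac12 I\le G_\mu\le\tfrac32\kappa I$, so $\mu\in\mathcal U_{w,\Gamma}(\kappa)$ with $N_\mu(R)=O(\log R)$.

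Conversely, suppose $\mu\in\mathcal U_{w,\Gamma}(\kappa)$ for some $\kappa>1$ and $N_\mu(R)=O(\log R)$. This is item (1) of Theorem~\ref{geo:classification} for $\Gamma$, since the uniform bounds $\alpha,\beta$ come from the definition of the class. The implication (1)$\Rightarrow$(2) for $\Gamma$, which is valid because $\Gamma$ is radially dense, gives \eqref{geo:cover-criterion} for $\Gamma$, and hence for $\Omega$. The final assertion of the corollary is the item (3) conclusion already obtained.

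The only point that needs care is the definitional check in the second fact. The definition of $\Omega_j$ excludes $0$, the set $[0,1)$ contributes only the argument $0$, and covering numbers are computed on the circle, so no branch issue arises. It is also worth noting that Borel measurability of $\Gamma$ is immediate. I do not expect any genuine obstacle here: all the substantive work is already contained in Theorem~\ref{geo:classification}.
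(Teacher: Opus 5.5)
Your proposal is correct and follows the paper's own proof essentially step for step. Like the paper, you obtain radial density of $\Gamma$ from $\mathcal R_\Gamma=(0,\infty)$ and the bound $m_j(\Omega)\le m_j(\Gamma)\le m_j(\Omega)+1$, and then apply Theorem~\ref{geo:classification} to $\Gamma$.
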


\begin{proof}
Since $\mathcal R_\Gamma=(0,\infty)$, the set $\Gamma$ satisfies
\eqref{geo:radial-density}.
The added interval contributes only the argument zero, so
$m_j(\Omega)\le m_j(\Gamma)\le m_j(\Omega)+1$.
Thus $\Omega$ and $\Gamma$ satisfy the cumulative covering
condition simultaneously.
The result follows from Theorem~\ref{geo:classification}.
\end{proof}

\begin{corollary}\label{geo:curves}
Let $s_0>0$ and let $\varphi:(0,s_0]\to\mathbb R$ be continuous.
Set $\Omega_\varphi=\{e^{-s+i\varphi(s)}:0<s\le s_0\}$.
Choose $j_0\in\mathbb N$ with $2^{1-j_0}\le s_0$.
For $j\ge j_0$, define
$\omega_j=\sup_{s,t\in[2^{-j},2^{1-j}]}
|\varphi(s)-\varphi(t)|$
and $u_j=2^j\min\{\omega_j,1\}$.

There are $\kappa>1$ and
$\mu\in\mathcal U_{w,\Omega_\varphi}(\kappa)$ with
$N_\mu(R)=O(\log R)$ if and only if
\begin{equation}\label{geo:curve-criterion}
 \sum_{j=j_0}^Ju_j=O(J-j_0+1),\qquad J\to\infty.
\end{equation}
Under this condition, item~(3) of
Theorem~\ref{geo:classification} holds for $\Omega_\varphi$.
The condition also implies that $\varphi(s)$ converges to some
$\theta_0\in\mathbb R$, with
$|\varphi(s)-\theta_0|=O(s\log(1/s))$ as $s\downarrow0$.
\end{corollary}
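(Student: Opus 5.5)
The plan is to reduce Corollary~\ref{geo:curves} to Corollary~\ref{geo:positive-completion} or, more directly, to Theorem~\ref{geo:classification}. The set $\Omega_\varphi$ already satisfies \eqref{geo:radial-density}, since $\mathcal R_{\Omega_\varphi}=(0,s_0]$. So everything comes down to comparing the covering numbers $m_j(\Omega_\varphi)$ with $u_j$, up to fixed factors and bounded errors.

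For $j\ge j_0$, the band $\Omega_j$ consists of the points $e^{-s+i\varphi(s)}$ with $s\in[2^{-j},2^{1-j})$. Its arguments form the image of a connected interval under a continuous map, so they form an arc on the circle. This arc has angular length $\omega_j'$, where $\omega_j'\le\omega_j$ and the sup over the half-open interval equals the sup over the closed one by continuity. If $\omega_j<2\pi$, the arc has length exactly $\omega_j$ (measured on $\mathbb R$). Covering it by arcs of length $2^{-j}/10$ then needs about $10\cdot2^j\omega_j+1$ arcs.

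If $\omega_j$ is large, the image covers the whole circle and needs about $2^j$ arcs. In every case,
\[
 c\,u_j\le m_j(\Omega_\varphi)\le C(u_j+1),
\]
with absolute constants. The $+1$ comes from the nonempty band, and the lower bound uses $\min\{\omega_j,1\}$: at least $\lceil10\cdot2^j\min\{\omega_j,1\}/(2\pi)\cdots\rceil$ arcs are needed, with care taken for wraparound. The bands with $j<j_0$ lie either in the finitely deep part or are finitely many, so they contribute a constant.

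Summing these bounds shows that $\sum_{j\le J}m_j=O(J)$ if and only if $\sum_{j=j_0}^Ju_j=O(J-j_0+1)$. Theorem~\ref{geo:classification} then gives both the equivalence and item~(3).

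For the final claim, the condition gives $u_j=O(J)$ for every $j\le J$; in particular $u_J=O(J)$. So $\omega_J\le C J2^{-J}$ once that is less than $1$. Summing over $j\ge J$ gives
\[
 \sum_{j\ge J}\omega_j\le C\sum_{j\ge J}j2^{-j}=O(J2^{-J}).
\]
Consecutive dyadic intervals overlap at endpoints, so the oscillation of $\varphi$ on $(0,2^{1-J}]$ is at most this sum. Hence $\varphi$ is Cauchy as $s\downarrow0$ and converges to some $\theta_0$. Taking $s\in[2^{-J},2^{1-J}]$ yields $|\varphi(s)-\theta_0|=O(J2^{-J})=O(s\log(1/s))$.

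The main obstacle is the lower bound $m_j\ge c\,u_j$, because arguments are taken on the circle while $\omega_j$ is measured on $\mathbb R$. When $\omega_j\le1<\pi$, the image arc has circular length exactly $\omega_j$. When $\omega_j>1$, the image still contains an arc of length at least $1$, by connectedness and the intermediate value theorem applied to a lift. Either way the required count is at least $c\,2^j\min\{\omega_j,1\}$.
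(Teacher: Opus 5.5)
Your proposal is correct and follows the paper's proof step for step. You get radial density from $\mathcal R_{\Omega_\varphi}=(0,s_0]$, establish $m_j(\Omega_\varphi)\asymp 1+u_j$ from the image interval of $\varphi$ on each dyadic band, invoke Theorem~\ref{geo:classification}, and then use $u_j=O(j)$ to get summable oscillations $\omega_j=O(j2^{-j})$ and the rate $O(s\log(1/s))$. The only point to add is the routine check, required by Theorem~\ref{geo:classification}, that $\Omega_\varphi$ is Borel: it is a continuous image of the $\sigma$-compact set $(0,s_0]$.
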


\begin{proof}
The set $\Omega_\varphi$ is a countable union of compact curve
segments, hence Borel.
Since $\mathcal R_{\Omega_\varphi}=(0,s_0]$,
condition \eqref{geo:radial-density} holds.

On $[2^{-j},2^{1-j}]$, the image of $\varphi$ is an interval
of length $\omega_j$.
Reduction modulo $2\pi$ gives an arc of length $\omega_j$
when $\omega_j<2\pi$ and the whole circle otherwise.
The angular image of $[2^{-j},2^{1-j})$ has the same closure.
Covering these images by arcs of length $2^{-j}/10$ therefore gives
\(
 m_j(\Omega_\varphi)
 \asymp1+2^j\min\{\omega_j,1\}
 =1+u_j,
\)
with absolute constants.
The sampling assertions follow from
Theorem~\ref{geo:classification}.

Condition \eqref{geo:curve-criterion} gives $u_j=O(j)$.
Hence $\omega_j<1$ for all sufficiently large $j$, and
$\omega_j=2^{-j}u_j=O(j2^{-j})$.
In particular, $\sum_{j\ge j_0}\omega_j<\infty$.
The values $\varphi(2^{-j})$ therefore form a Cauchy sequence.
Let its limit be $\theta_0\in\mathbb R$.
For $s\in[2^{-j},2^{1-j}]$,
\[
 |\varphi(s)-\theta_0|
 \le\sum_{\ell\ge j}\omega_\ell
 =O(j2^{-j})
 =O(s\log(1/s)).
\]
\end{proof}

\begin{example}
Fix $\kappa>1$ and use the notation of
Corollary~\ref{geo:curves}.
Each counting order below is optimal in the sense of
Theorem~\ref{geo:matching-growth}: every common sampling measure
satisfies the corresponding lower bound, and a common integer
sampling measure attains the upper bound with any prescribed
relative Gramian error $0<\varepsilon<1$.

\emph{Power curves.}
Let $\varphi(s)=\theta_0+cs^\gamma$ on $(0,s_0]$, where
$\theta_0,c\in\mathbb R$, $c\ne0$ and $\gamma>0$.
The optimal counting order for $\Omega_\varphi$ is
\[
 \begin{cases}
 R^{1-\gamma},&0<\gamma<1,\\
 \log R,&\gamma\ge1.
 \end{cases}
\]
The oscillation on $[s,2s]$ is
$|c|(2^\gamma-1)s^\gamma$, so
$m_j(\Omega_\varphi)\asymp1+2^{j(1-\gamma)}$ for large $j$.
Hence $\mathcal A_{\Omega_\varphi}(J)\asymp2^{(1-\gamma)J}$
when $0<\gamma<1$, and
$\mathcal A_{\Omega_\varphi}(J)\asymp J$ when $\gamma\ge1$.
In both cases, \eqref{geo:regular-growth} holds, and
Theorem~\ref{geo:matching-growth} gives the stated order.

\smallskip
\noindent\emph{Logarithmic factors.}
Let $\varphi(s)=\theta_0+cs(\log(1/s))^p$ on $(0,s_0]$,
where $\theta_0,c\in\mathbb R$, $c\ne0$, $p>0$ and $0<s_0<1$.
The optimal counting order is $(\log R)^{p+1}$.
Indeed,
$\varphi'(s)=c(\log(1/s))^p(1-p/\log(1/s))$,
so the oscillation on $[s,2s]$ is comparable to
$s(\log(1/s))^p$ for small $s$.
Thus $m_j(\Omega_\varphi)\asymp j^p$ and
$\mathcal A_{\Omega_\varphi}(J)\asymp J^{p+1}$.
For each fixed $0<\nu<1$, the bound
$(1+\ell)^{p+1}\le C_{p,\nu}2^{\nu\ell}$ for
$\ell\in\mathbb N_0$ implies \eqref{geo:regular-growth}.
Theorem~\ref{geo:matching-growth} therefore applies.
In particular, $p=1$ gives the optimal order $(\log R)^2$.

\smallskip
\noindent\emph{Sparse angular excursions.}
Let $\chi:\mathbb R\to[0,1]$ be continuous, supported in
$[-1/4,1/4]$, with $\chi(0)=1$. Define
\[
 \varphi(s)=s\sum_{k\ge1}2^k
       \chi\bigl(\log(1/s)-2^k\bigr),
 \qquad 0<s\le1.
\]
The summand indexed by $k$ is supported where
$|\log(1/s)-2^k|\le1/4$ and is bounded by
$e^{1/4}2^ke^{-2^k}$.
The supports are disjoint and locally finite in $(0,1]$.
Consequently, $\varphi$ is continuous and tends to zero
as $s\downarrow0$.

Each support meets at most two dyadic intervals
$[2^{-j},2^{1-j}]$.
For these intervals, $j\log2=2^k+O(1)$ and
$u_j=2^j\min\{\omega_j,1\}=O(2^k)$.
Therefore,
\[
 \sum_{j=1}^J u_j
 \le C\sum_{2^k\le J\log2+C}2^k
 =O(J).
\]
The estimate $m_j(\Omega_\varphi)\asymp1+u_j$ from the proof
of Corollary~\ref{geo:curves} gives
$\mathcal A_{\Omega_\varphi}(J)\asymp J$.
Thus \eqref{geo:regular-growth} holds, and
Theorem~\ref{geo:matching-growth} gives optimal logarithmic
counting.

On the other hand,
$\varphi(e^{-2^k})/e^{-2^k}=2^k\to\infty$ as $k\to\infty$.
Since $\varphi(s)\to0$, no finite $M$ gives
$|\arg\lambda|\le M(-\log|\lambda|)$ for all
$\lambda\in\Omega_\varphi$ sufficiently close to $1$.
Thus this curve lies in no fixed logarithmic sector near $1$.
\end{example}

\begin{corollary}
Let $s_0>0$ and let $E\subset\partial\mathbb D$ be nonempty
and compact. Set
$\Omega_E=\{e^{-s}\zeta:0<s\le s_0,\ \zeta\in E\}$.
Write $\mathcal N(E,r)$ for the least number of circular arcs
of length $r$ covering $E$, and define its upper box dimension by
\[
 \overline{\dim}_{\mathrm B}E
 =\limsup_{r\downarrow0}
       \frac{\log\mathcal N(E,r)}{\log(1/r)}.
\]
For every fixed $\kappa>1$, the minimum of $p(\mu)$ over
$\mu\in\mathcal U_{w,\Omega_E}(\kappa)$ equals
$\overline{\dim}_{\mathrm B}E$.
For each $0<\varepsilon<1$, this minimum is attained by an
integer sampling measure satisfying
$(1-\varepsilon)S\le G_\mu\le(1+\varepsilon)S$
for every normal system satisfying \eqref{geo:system}
with $\Omega=\Omega_E$.

If $\mathcal N(E,r)\asymp r^{-d}$ as $r\downarrow0$ for some
$0<d<1$, the optimal counting order is $R^d$.
If $E$ is finite, the optimal order is $\log R$.
\end{corollary}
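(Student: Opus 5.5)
The plan is to reduce everything to Theorem~\ref{geo:optimal-exponent} and Theorem~\ref{geo:matching-growth}, applied with $\Omega=\Omega_E$. The work is to compute the covering numbers $m_j(\Omega_E)$ in terms of $\mathcal N(E,\cdot)$.

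\emph{Radial density.} First I check that $\Omega_E$ is a Borel set satisfying \eqref{geo:radial-density}. It is the image of the compact set $[\,\cdot\,]\times E$ under a continuous map, exhausted by compact pieces $[1/n,s_0]\times E$, so it is $\sigma$-compact and hence Borel. Moreover $\mathcal R_{\Omega_E}=(0,s_0]$, so \eqref{geo:radial-density} holds with any $c\in(0,1)$ and $s_*=s_0$.

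\emph{Covering numbers.} For every $j$ with $2^{1-j}\le s_0$, the band $\Omega_j$ has argument set exactly $E$. Hence $m_j(\Omega_E)=\mathcal N(E,2^{-j}/10)$ for all large $j$, while the finitely many earlier $j$ contribute a bounded amount.

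\emph{Identifying $d_{\Omega_E}$.} Next I show $d_{\Omega_E}=\overline{\dim}_{\mathrm B}E$. Note that $\mathcal N(E,r)$ is nonincreasing in $r$, and $\mathcal N(E,r/2)\le 2\mathcal N(E,r)$ since each arc splits into two. So the limsup over $r\downarrow0$ may be taken along $r_j=2^{-j}/10$, giving
\[
 \overline{\dim}_{\mathrm B}E=\limsup_{j\to\infty}\frac{\log m_j}{j\log 2}.
\]
It remains to compare $\log\mathcal A(J)$ with $\log m_J$. For the lower bound, $\mathcal A(J)\ge m_J$. For the upper bound, monotonicity of $m_j$ in $j$ (for large $j$) gives $\mathcal A(J)\le 1+C+Jm_J$, and the factor $J$ is negligible after taking logarithms and dividing by $J$. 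One detail: if $\overline{\dim}_{\mathrm B}E=0$, then $\mathcal A(J)\ge J$ still forces $d_\Omega\ge0$, and the upper bound gives $0$, so the two sides agree.

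\emph{First assertion.} Theorem~\ref{geo:optimal-exponent} now yields the minimum equal to $\overline{\dim}_{\mathrm B}E$, together with attainment by an integer design with relative error $\varepsilon$.

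\emph{Case $\mathcal N(E,r)\asymp r^{-d}$ with $0<d<1$.} Here $m_j\asymp 2^{dj}$, and summing the geometric series gives $\mathcal A(J)\asymp 2^{dJ}$. Then
\[
 \mathcal A(J+\ell)\le C2^{d\ell}\mathcal A(J),
\]
which is \eqref{geo:regular-growth} with $\nu=d$. Theorem~\ref{geo:matching-growth} gives the optimal order $\mathcal A(\lfloor\log_2R\rfloor)\asymp R^d$.

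\emph{Case $E$ finite.} Here $m_j$ is bounded and eventually equal to $\#E$, and $m_j\ge1$, so $\mathcal A(J)\asymp J$. Then \eqref{geo:regular-growth} holds for any $\nu\in(0,1)$, since $(J+\ell)/J\le 1+\ell\le C2^{\nu\ell}$. Theorem~\ref{geo:matching-growth} gives order $\log R$; alternatively, Theorem~\ref{geo:classification} gives the upper bound directly.

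The main point needing care is the dyadic comparison between $\log\mathcal A_\Omega(J)$ and $\log\mathcal N(E,2^{-J}/10)$ in the limsup. The finite-scale boundary effects near $s_0$ are harmless, because \eqref{geo:regular-growth} is only required for $J\ge J_0$.
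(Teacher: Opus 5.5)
Your proposal is correct and follows the paper's proof essentially step by step. You verify that $\Omega_E$ is Borel with $\mathcal R_{\Omega_E}=(0,s_0]$, identify $m_j(\Omega_E)=\mathcal N(E,2^{-j}/10)$ for large $j$, and use $m_J\le\mathcal A_{\Omega_E}(J)\le C+Jm_J$ with monotonicity of $\mathcal N(E,\cdot)$ to obtain $d_{\Omega_E}=\overline{\dim}_{\mathrm B}E$. You then invoke Theorem~\ref{geo:optimal-exponent} and, for the two special cases, Theorem~\ref{geo:matching-growth} via \eqref{geo:regular-growth}; the doubling bound $\mathcal N(E,r/2)\le2\mathcal N(E,r)$ is harmless but unnecessary, since monotonicity alone lets you pass to the dyadic scales.
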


\begin{proof}
The set $\Omega_E$ is Borel, and
$\mathcal R_{\Omega_E}=(0,s_0]$, so
\eqref{geo:radial-density} holds.
Write $m_j=m_j(\Omega_E)$.
Each nonempty annulus has angular set $E$, and
$m_j=\mathcal N(E,2^{-j}/10)$ for all sufficiently large $j$.
Thus $m_j$ is nondecreasing, and
$m_J\le\mathcal A_{\Omega_E}(J)\le1+Jm_J$.
The factor $J$ does not affect the exponent.
Monotonicity of covering numbers also allows the upper box
dimension to be computed along $r=2^{-J}/10$.
Hence $d_{\Omega_E}=\overline{\dim}_{\mathrm B}E$, and
Theorem~\ref{geo:optimal-exponent} gives the minimum and its
attainment.

If $\mathcal N(E,r)\asymp r^{-d}$, then
$\mathcal A_{\Omega_E}(J)\asymp2^{dJ}$.
If $E$ is finite, then $\mathcal A_{\Omega_E}(J)\asymp J$.
Both estimates imply \eqref{geo:regular-growth}, so
Theorem~\ref{geo:matching-growth} gives the stated counting orders.
\end{proof}

The following example shows that, without relative radial density,
common logarithmic sampling need not imply
\eqref{geo:cover-condition}.

\begin{example}
\label{geo:gap-counterexample}
There is a countable set $\Omega_0\subset\mathbb D$ such that,
for every $1\le\kappa<\infty$, some
$\nu_\kappa\in\mathcal U^{\mathbb Z}_{w,\Omega_0}(\kappa)$
satisfies $D(\nu_\kappa)=0$, or equivalently
$N_{\nu_\kappa}(R)=o(\log R)$.
However, $\sum_{j=1}^Jm_j(\Omega_0)$ is not $O(J)$.
Thus the minimum counting coefficient for this spectral class
is zero and is attained for both real and integer sampling times.
\end{example}

The construction and its estimates are given in
Appendix~\ref{app:normal-radial-gaps}.
There, $\Omega_0=\bigcup_{k\ge1}\Omega_k$, where each
$\Omega_k$ is finite and lies on the circle
$|\lambda|=e^{-s_k}$, with $s_k\downarrow0$.
Large gaps between the radial parameters allow the observation
block associated with $\Omega_k$ to begin at a time $T_k$
satisfying $s_kT_k\to\infty$.
Large positive weights compensate for the exponential decay
at these times.

Proposition~\ref{geo:radial-thickness} considers
$\Omega_+=\Omega_0\cup\{e^{-2s_k}:k\ge1\}$,
adding one positive spectral point for each $\Omega_k$.
For every $\kappa>1$ and
$\mu\in\mathcal U_{w,\Omega_+}(\kappa)$, it gives
$N_\mu(R_k)\ge c_\mu\sqrt{R_k}$ for some $c_\mu>0$
and a sequence $R_k\to\infty$.
Thus this enlargement rules out common logarithmic sampling
for both real and integer times.

These examples explain the role of radial gaps.
They do not give a necessary and sufficient condition for
common logarithmic sampling on arbitrary spectral sets.

\appendix

\section{Full universality and sampling at prime times}
\label{app:arithmetic-designs}

This appendix gives a sufficient condition for full universality,
as defined in Section~\ref{sec:introduction}. The same sampling
times and weights preserve the frame property for every finite
bound on the original frame condition number. The application
to prime times shows how weights affect this property on a fixed
support.

The proof combines an estimate on spectral subspaces near $1$
with uniform approximation by the powers $z^t$, $t\in T$.
It does not require $A$ and $B=C^*C$ to commute.

\begin{proposition}
\label{prop:muntz-universal}
Let $\mu=\sum_{t\in T}w_t\delta_t$ be a sampling design with
$0\in T$. If $M_\mu(R)=dR+o(R)$ for some $0<d<\infty$ and
$\sum_{t\in T\setminus\{0\}}1/t=\infty$, then
$\mu\in\mathcal U_w$.
\end{proposition}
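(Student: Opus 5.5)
Fix $\kappa\geq1$ and normalize to $I\leq S\leq\kappa I$. We need a uniform upper bound and a uniform positive lower bound for $G_\mu$.

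\emph{Upper bound.} Since $0\in T$, local finiteness gives $w_0<\infty$ and an interval $(0,\delta]$ carrying no mass. Combined with $M_\mu(R)=dR+o(R)$, this yields $K_+=\sup_{R>0}\mu((0,R])/R<\infty$. Proposition~\ref{prop:mass} then gives $G_\mu\leq\kappa(w_0+\sqrt2K_+)I$ for every system.

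\emph{Lower bound near $1$.} Following Lemma~\ref{lem:spectral-completion}, it suffices to find $s,\gamma>0$, depending only on $\mu$ and $\kappa$, with $PG_\mu P\geq\gamma P$ for $P=E_A([e^{-s},1])$. The completion lemma produces a translated design $\rho$, not $\mu$ itself, so we will instead argue directly with the Stein decomposition used in its proof.

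Near $1$ we compare $\mu$ with the reference measure $d\,\nu_*$. Let $D_\mu(t)=M_\mu(t)-dM_{\nu_*}(t)$, suitably normalized at small $t$. Then $|D_\mu(t)|\leq\varepsilon(T_0)t$ for $t\geq T_0$, with $\varepsilon(T_0)\to0$. Run the integration-by-parts argument of Lemma~\ref{s4-relative-discrepancy} on the vector $A^{T_0}x$, or on $x\in PH$ with $s$ small so that the early times contribute little. This should give
\[
 G_{\mu}\geq d(1-\eta)G_*-\text{(early-time error)}\quad\text{on }PH,
\]
with $G_*\geq S/\kappa$ by Lemma~\ref{lem:stein}. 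We expect the early-time error on $[0,T_0]$ to be controlled by $M_\mu(T_0)\,\|C A^t P\|^2$, where $\|CP\|^2\lesssim\kappa s$ by the Stein identity. Choosing $T_0$ first, to make $\varepsilon$ small, and then $s$ small makes this error negligible.

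\emph{Away from $1$: the Müntz step.} On $QH=(I-P)H$ the spectrum of $A$ lies in $[0,e^{-s}]$. Here we use the divergence $\sum 1/t=\infty$. By the Müntz--Szász theorem, the span of $\{z^{t}:t\in T\}$, which includes $z^0=1$, is dense in $C[0,e^{-s}]$ in the uniform norm.

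Suppose the lower bound fails. Then there are systems and unit vectors $x_k$ with $\langle G_\mu x_k,x_k\rangle\to0$. Because $\|AQ\|\leq e^{-s}$ and $B\geq0$ satisfies the Stein identity, we have $\langle Bx,x\rangle\geq\|x\|^2-\kappa\|Ax\|^2$, so small energy at $t=0$ forces $\|Ax_k\|$ to be bounded below. We then approximate a suitable function, for example $z\mapsto z^{1/2}$ or $1$ on the relevant range, by finite combinations $\sum c_tz^t$, and use Cauchy--Schwarz. This bounds $\|C\,p(A)x\|^2\leq(\sum|c_t|^2/w_t)\langle G_\mu x,x\rangle$ for a polynomial $p$ in the Müntz system. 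The uniform approximation error, multiplied by $\|C\|^2\leq\kappa$, is controlled independently of the system. Choosing $p$ close to the function $\lambda\mapsto\mathbf 1$ on $[\text{small},e^{-s}]$, we expect $\|C p(A)x\|^2$ together with the observation at $0$ to dominate $\langle Bx,x\rangle$ and $\langle A^*BAx,x\rangle$, and so on. Summing these shows that the integer energy $\sum_n\|CA^nx\|^2\geq\|x\|^2$ is controlled by $\langle G_\mu x,x\rangle$ plus a small term, which is a contradiction.

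\emph{Assembly and main obstacle.} The two regimes are combined by the inequality $\|u+v\|^2\geq\|u\|^2/2-\|v\|^2$, exactly as in Lemma~\ref{lem:spectral-completion}, together with the cross-term decay from the block estimates of Section~\ref{sec:preliminaries}. The main obstacle is the Müntz step. Uniform approximation on $[0,e^{-s}]$ must be converted into an operator estimate valid for noncommuting $A$ and $B$, with infinitely many integer times to reconstruct. We plan to reconstruct only finitely many integer times $n<m$ and handle the tail $A^mSA^m\leq\kappa e^{-2ms}$ on $QH$ via the Stein identity $S=\sum_{n<m}A^nBA^n+A^mSA^m$. Then only finitely many approximations $z^n\approx p_n(z)$ are needed, and the coefficient sums $\sum|c_t|^2/w_t$ stay finite constants depending on $\mu,\kappa,s$.
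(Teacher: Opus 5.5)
Your upper bound and your near-$1$ estimate are sound; the latter is a variant of the paper's comparison of $P_rG_\mu P_r$ with $dP_rG_cP_r\approx dP_rSP_r$. The gap is in the step away from $1$ and in the assembly.

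Approximating $z^n$ by Müntz polynomials on $[0,e^{-s}]$ says nothing about $Cp(A)x_k$, because $x_k$ carries spectral mass in $(e^{-s},1]$, where $p$ is uncontrolled. Applying $p(A)$ to $Qx_k$ instead does not help, since $\langle G_\mu Qx_k,Qx_k\rangle$ is not known to be small. If you approximate $z^n$ on all of $[0,1]$, you do get $\sum_{n<m}\|CA^nx\|^2\lesssim\langle G_\mu x,x\rangle+\epsilon^2$. But the Stein tail is small only on $QH$, so it yields only $\sum_{n<m}\|CA^nx\|^2\geq(1-\kappa e^{-2ms})\|Qx\|^2-(\kappa-1)\|Px\|^2$. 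Your inequality $\|u+v\|^2\geq\|u\|^2/2-\|v\|^2$ gives $\langle G_\mu x,x\rangle\geq(\gamma/2)\|Px\|^2-M\|Qx\|^2$. These two bounds together give a positive lower bound only if $\gamma(1-\kappa e^{-2ms})>2M(\kappa-1)$. In general this fails: scalar test systems near $1$ show that $\gamma\lesssim d$, while $M\gtrsim d\kappa$. This is exactly why Lemma~\ref{lem:spectral-completion} must pass to a translated, reweighted design, which you cannot do for $\mu$ itself. The block estimates do not rescue the argument. They give decay only between well-separated spectral bands, and bands on the two sides of $e^{-s}$ can be coupled with coefficients of order one.

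The missing idea is to build the spectral cutoff into the functions you approximate. Use density of the Müntz span in $C[0,1]$, not $C[0,e^{-s}]$. Fix $r<r_1<1$, with $r$ the parameter of the near-$1$ estimate, and a continuous $\chi$ with $\chi=1$ on $[0,r]$ and $\chi=0$ on $[r_1,1]$. Then approximate $z^k\chi(z)$ for $k<N$.
\begin{itemize}
\item Your Cauchy--Schwarz bound shows that small sampled energy of $x$ forces $CA^k\chi(A)x$ to be small for $k<N$.
\item Since $u=\chi(A)x\in E_A([0,r_1])H$, the Stein identity gives $\|u\|^2\leq\langle Su,u\rangle\leq\sum_{k<N}\|CA^ku\|^2+\kappa r_1^{2N}\|u\|^2$. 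Hence $\|u\|$ is small.
\item The remainder $v=x-u=(1-\chi)(A)x$ lies in $E_A([r,1])H$ and has norm close to one.
\item It satisfies $\|\mathcal T_\mu v\|\leq\|\mathcal T_\mu x\|+\|\mathcal T_\mu\|\,\|u\|$, which contradicts the near-$1$ lower bound.
\end{itemize}
The triangle inequality is harmless here because $u$ is small in norm, not merely of comparable energy.

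The paper runs this argument for one fixed system and obtains uniform bounds from the orthogonal direct-sum argument of Section~\ref{sec:preliminaries}. With the cutoff, all constants depend only on $\mu$ and $\kappa$, so your direct uniform formulation would also go through.
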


\begin{proof}
Fix a system satisfying \eqref{eq:frame}. Local finiteness
and the mass asymptotic give
$K_+=\sup_{R>0}\mu((0,R])/R<\infty$.
By Proposition~\ref{prop:mass}, the analysis operator
$\mathcal T_\mu:H\to\ell^2(T;Y)$, defined by
$\mathcal T_\mu x=(\sqrt{w_t}\,CA^tx)_{t\in T}$, is bounded.
We prove that it is bounded below.

\smallskip
\noindent\emph{Estimate near the spectral point $1$.}
For $0<r<1$, let $P_r=E_A([r,1])$ and $H_r=P_rH$.
Define $A_r=A|_{H_r}$, $S_r=P_rS|_{H_r}$ and
$B_r=P_rB|_{H_r}$, where $S$ is the Gramian in
\eqref{eq:gramian}. Lemma~\ref{lem:stein} gives
$B_r=S_r-A_rS_rA_r$, so $\|B_r\|\leq2b(1-r)$.
Also, $\|-\log A_r\|\leq-\log r$.

For $x\in H_r$, set $F_x(t)=\|CA^tx\|^2$ for $t\geq0$.
The preceding bounds imply
$F_x(0)\leq2b(1-r)\|x\|^2$ and
$|F_x'(t)|\leq4b(1-r)(-\log r)\|x\|^2$ for $t\geq0$.
Put $D_\mu(t)=\mu((0,t])-dt=o(t)$.
Stieltjes integration by parts gives
\[
 \langle(G_\mu-dG_c)x,x\rangle
 =w_0F_x(0)-\int_0^\infty D_\mu(t)F_x'(t)\,dt,
\]
where $G_c$ is defined in \eqref{eq:continuous}.
The boundary term at infinity vanishes because
$D_\mu(t)=O(t)$ and $tF_x(t)\to0$, as shown in the proof
of Proposition~\ref{prop:mass}. At zero, local finiteness
gives $D_\mu(t)=-dt$ for all sufficiently small $t>0$.

Given $\eta>0$, choose $T_\eta$ such that
$|D_\mu(t)|\leq\eta t$ for $t\geq T_\eta$, and set
$J_\eta=\int_0^{T_\eta}|D_\mu(t)|\,dt$.
The local derivative bound and Lemma~\ref{lem:energy} yield
\[
 \|P_r(G_\mu-dG_c)P_r\|
 \leq2bw_0(1-r)
      +4b(1-r)(-\log r)J_\eta+\sqrt2\,b\eta.
\]
Moreover, \eqref{eq:continuous} and
$\|(A^s-I)P_r\|\leq1-r$ for $0\leq s\leq1$ imply
$\|P_r(G_c-S)P_r\|\leq2b(1-r)$.
Letting $r\uparrow1$ and then $\eta\downarrow0$, we obtain
\begin{equation}\label{eq:muntz-spectral-limit}
 \|P_r(G_\mu-dS)P_r\|\longrightarrow0,
 \qquad r\uparrow1.
\end{equation}
This estimate uses only the mass asymptotic and remains valid
when $d=0$. For the present case $d>0$, choose $r<1$ such that
$P_rG_\mu P_r\geq(da/2)P_r$.

\smallskip
\noindent\emph{Estimate on the remaining spectrum.}
Local finiteness allows the positive elements of $T$ to be
listed as an increasing sequence tending to infinity.
The reciprocal divergence assumption and the M\"untz theorem
imply that $\operatorname{span}\{z^t:t\in T\}$ is dense in
$C[0,1]$ in the uniform norm; see
\cite[Theorem~2.1]{BorweinErdelyi1996}.

Suppose that $\mathcal T_\mu$ is not bounded below.
Choose unit vectors $x_n$ with $\|\mathcal T_\mu x_n\|\to0$.
Since each $w_t>0$, we have $CA^tx_n\to0$ for every $t\in T$.
Uniform approximation and continuous functional calculus
therefore give $Cg(A)x_n\to0$ for every $g\in C[0,1]$.

Choose $r<r_1<1$ and $\chi\in C[0,1]$ such that
$0\leq\chi\leq1$, $\chi=1$ on $[0,r]$ and
$\chi=0$ on $[r_1,1]$. Set $u_n=\chi(A)x_n$.
For each integer $k\geq0$, applying the preceding conclusion
to $g(z)=z^k\chi(z)$ gives $CA^ku_n\to0$.
Since $\|u_n\|\leq1$ and $u_n\in E_A([0,r_1])H$,
\[
 \sum_{k=N}^\infty\|CA^ku_n\|^2
 =\langle SA^Nu_n,A^Nu_n\rangle
 \leq b r_1^{2N},
 \qquad N\in\mathbb N_0.
\]
The original lower frame bound then gives
$\limsup_{n\to\infty}a\|u_n\|^2\leq b r_1^{2N}$
for every $N$. Letting $N\to\infty$, we obtain $u_n\to0$.

Now $v_n=x_n-u_n$ belongs to $P_rH$ and $\|v_n\|\to1$.
Boundedness of $\mathcal T_\mu$ gives
$\mathcal T_\mu v_n\to0$, contradicting
$P_rG_\mu P_r\geq(da/2)P_r$.
Thus sampling preserves the frame property for this system.

Uniform bounds over systems with fixed original frame bounds
$a,b$ follow from the orthogonal direct sum argument in
Section~\ref{sec:preliminaries}. Hence $\mu\in\mathcal U_w$.
\end{proof}

The reciprocal divergence condition is not necessary:
Theorem~\ref{s4-universal-design} constructs fully universal
designs for which the reciprocal sum converges.

\begin{example}
\label{ex:prime-sampling}
Let $\mathcal P$ be the set of prime numbers, and define
\(
 \mu_{\mathcal P}=\delta_0+\sum_{p\in\mathcal P}\delta_p,
 \
 \mu_{\log}=\delta_0+\sum_{p\in\mathcal P}(\log p)\delta_p.
\)
Then $\mu_{\log}\in\mathcal U_w$ and
$\mu_{\mathcal P}\notin\mathcal U_w(1)$.
For a system satisfying \eqref{eq:frame},
$G_{\mu_{\mathcal P}}$ has a positive lower bound
if and only if $\|A\|<1$.
\end{example}

\begin{proof}
Write $\pi(R)=\#\{p\in\mathcal P:p\leq R\}$, and let $p_n$
be the $n$th prime. The prime number theorem gives
$\pi(R)\sim R/\log R$ and $p_n\sim n\log n$; see
\cite[\S27.2(i)]{DLMF}. Partial summation yields
\[
 \sum_{p\leq R}\log p
 =\pi(R)\log R-\int_2^R\frac{\pi(t)}t\,dt
 \sim R,
 \qquad R\to\infty.
\]
Indeed, the first term is asymptotic to $R$, and the integral
is $o(R)$ because $\pi(t)/t\to0$.
Thus $M_{\mu_{\log}}(R)\sim R$.
Also, $p_n\sim n\log n$ implies $\sum_n1/p_n=\infty$.
Proposition~\ref{prop:muntz-universal} gives
$\mu_{\log}\in\mathcal U_w$.
Both designs have the same counting function,
$N_{\mu_{\mathcal P}}(R)=N_{\mu_{\log}}(R)
=1+\pi(R)\sim R/\log R$.

For the unweighted design, $G_{\mu_{\mathcal P}}\leq S$
because $\{0\}\cup\mathcal P\subset\mathbb N_0$.
Suppose first that $\|A\|=1$.
Since $\mu_{\mathcal P}((0,R])=\pi(R)=o(R)$,
the argument for \eqref{eq:muntz-spectral-limit}, with $d=0$,
gives $\|P_rG_{\mu_{\mathcal P}}P_r\|\to0$ as $r\uparrow1$.
Every $P_r$ with $0<r<1$ is nonzero, so
$G_{\mu_{\mathcal P}}$ has no positive lower bound.

Conversely, suppose $q=\|A\|<1$ and that the lower bound fails.
Choose unit vectors $x_n$ with
$\mathcal T_{\mu_{\mathcal P}}x_n\to0$.
The M\"untz approximation argument in the preceding proof
gives $CA^kx_n\to0$ for every fixed integer $k\geq0$.
For every integer $N\geq1$, the original frame inequality gives
\[
 a\leq\sum_{k=0}^{N-1}\|CA^kx_n\|^2
       +\langle SA^Nx_n,A^Nx_n\rangle
 \leq\sum_{k=0}^{N-1}\|CA^kx_n\|^2+bq^{2N}.
\]
Letting $n\to\infty$ and then $N\to\infty$ gives a
contradiction. This proves the equivalence.

To obtain a Parseval counterexample, take
$H=Y=\ell^2(\mathbb N)$ and define
$Ae_j=e^{-1/j}e_j$ on the standard basis, with
$C=(I-A^2)^{1/2}$.
Then $S=I$ and $\|A\|=1$.
The unweighted prime sampling family is therefore Bessel
but is not a frame, proving
$\mu_{\mathcal P}\notin\mathcal U_w(1)$.
\end{proof}

\begin{corollary}
Suppose that $H$ is infinite dimensional, $C$ has finite rank
and \eqref{eq:frame} holds. Then $\|A\|=1$.
The family sampled with $\mu_{\mathcal P}$ is not a frame,
whereas the family sampled with $\mu_{\log}$ is a frame.
\end{corollary}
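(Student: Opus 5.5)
The corollary has three claims: $\|A\|=1$, the family sampled with $\mu_{\mathcal P}$ is not a frame, and the family sampled with $\mu_{\log}$ is a frame. I would reduce the last two to Example~\ref{ex:prime-sampling} and prove the first directly.

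\emph{Step 1: $\|A\|=1$.} Since $A$ is positive, $\|A\|=\max\sigma(A)$. Suppose, for contradiction, that $q=\|A\|<1$. Set $N=\operatorname{rank}C<\infty$. Then $B=C^*C$ has rank $N$, so for each $n$ the summand $A^nBA^n$ has rank at most $N$.

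The tail estimate from the proof of Lemma~\ref{lem:stein} gives
\[
 \Bigl\|\sum_{n\geq n_0}A^nBA^n\Bigr\|=\|A^{n_0}SA^{n_0}\|\leq bq^{2n_0}.
\]
Choose $n_0$ with $bq^{2n_0}<a/2$, and put $F=\sum_{n<n_0}A^nBA^n$. Then
\[
 F\geq S-\tfrac a2 I\geq\tfrac a2 I.
\]
So $F$ is invertible, while its rank is at most $n_0N<\infty$. This contradicts $\dim H=\infty$. Hence $\|A\|=1$.

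Equivalently, one could argue that $S$ would be a compact perturbation of a finite-rank operator, hence not invertible on an infinite-dimensional space. The direct truncation above is cleaner.

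\emph{Step 2: the two sampled families.} With $\|A\|=1$ established, Example~\ref{ex:prime-sampling} shows that $G_{\mu_{\mathcal P}}$ has no positive lower bound. Its upper bound $G_{\mu_{\mathcal P}}\leq S$ still holds, so the unweighted prime family is Bessel but not a frame.

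For $\mu_{\log}$, the same example gives $\mu_{\log}\in\mathcal U_w$. Since the present system satisfies \eqref{eq:frame} with some finite ratio $b/a$, membership in $\mathcal U_w(\kappa)$ for $\kappa\geq b/a$ yields sampled frame bounds $\alpha I\leq G_{\mu_{\log}}\leq\beta I$. Hence the family sampled with $\mu_{\log}$ is a frame.

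\emph{Main obstacle.} The only genuinely new point is Step 1. Its one delicate part is that the tail bound uses $\|A^{n_0}\|^2=q^{2n_0}$, which follows from selfadjointness of $A$. Everything else is a citation of earlier results.
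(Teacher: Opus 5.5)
Your proposal is correct and follows the paper's proof: the paper also assumes $q=\|A\|<1$, uses the geometric decay of the tail to exhibit $S$ as a norm limit of finite-rank truncations (so compact), which contradicts $S\geq aI$ on an infinite-dimensional space, and then cites Example~\ref{ex:prime-sampling} for both sampling claims, as your Step~2 does. You should add the explicit appeal to Lemma~\ref{lem:stein} that $A$ is a contraction, since that is what turns your contradiction into $\|A\|=1$ rather than only $\|A\|\geq1$; also, the tail bound needs only $\|A^{n_0}\|\leq\|A\|^{n_0}$, which holds without selfadjointness.
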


\begin{proof}
If $q=\|A\|<1$, then $B=C^*C$ has finite rank and
$\sum_{n=N}^\infty\|A^nBA^n\|
\leq\|B\|q^{2N}/(1-q^2)\to0$ as $N\to\infty$.
Thus $S$ is a norm limit of finite rank operators and is compact.
This contradicts $S\geq aI$ on an infinite dimensional space.
Since $A$ is a contraction by Lemma~\ref{lem:stein},
we have $\|A\|=1$.
The sampling conclusions follow from
Example~\ref{ex:prime-sampling}.
\end{proof}

In particular, suppose that $H$ is infinite dimensional and
$(A^nf_j)_{n\in\mathbb N_0,\,1\leq j\leq m}$ is a frame.
Set $T=\{0\}\cup\mathcal P$, $w_0=1$ and $w_p=\log p$
for $p\in\mathcal P$.
Then $(A^tf_j)_{t\in T,\,1\leq j\leq m}$ is not a frame,
whereas $(\sqrt{w_t}\,A^tf_j)_{t\in T,\,1\leq j\leq m}$
is a frame. This follows by taking
$Cx=(\langle x,f_j\rangle)_{j=1}^m$ in the corollary.

\section{Optimal logarithmic density for a fixed geometric model}
\label{app:fixed-model}

We show that the coefficient $1/h$ in
Theorem~\ref{thm:local-density} is optimal for the fixed system
in Proposition~\ref{prop:cayley}.
For this pair $(A,f)$, define
\[
 \mathfrak d(h)
 =\inf\left\{
 D(\mu):
 (\sqrt{w_t}A^tf)_{t\in\operatorname{supp}\mu}
 \text{ is a frame for }\ell^2(\mathbb N)
 \right\}.
\]
Here the infimum is over sampling designs with positive weights
and locally finite support, and
$D(\mu)=\limsup_{R\to\infty}N_\mu(R)/\log R$.
Let $\mathfrak d_{\mathbb Z}(h)$ denote the same infimum with
$\operatorname{supp}\mu\subset\mathbb N_0$.
The sampling design and its frame bounds may depend on the
fixed pair $(A,f)$.

We first construct an invertible Toeplitz matrix.

\begin{lemma}
\label{lem:geometric-reference}
For every $q>1$, there exists $c>0$ such that
\(
 H_{q,c}
 =\bigl(q^{(k-j)/2}e^{-cq^{k-j}}\bigr)_{k,j\geq1}
\)
defines a bounded invertible operator on $\ell^2(\mathbb N)$.
\end{lemma}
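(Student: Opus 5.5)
The matrix $H_{q,c}$ is Toeplitz on $\ell^2(\mathbb N)$, with entries $a_n=q^{n/2}e^{-cq^n}$ depending only on $n=k-j\in\mathbb Z$. I would use the standard route: show $(a_n)\in\ell^1(\mathbb Z)$, compute the symbol $\psi(\omega)=\sum_n a_ne^{in\omega}$, and prove that $\psi$ is continuous, nonvanishing, and has winding number zero on the circle. For continuous symbols in the Wiener algebra, the Gohberg--Krein theorem then says the Toeplitz operator is invertible on $\ell^2(\mathbb N)$.

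\emph{Summability and boundedness.} For $n\to+\infty$ the entries decay superexponentially. For $n\to-\infty$ we have $a_n\le q^{-|n|/2}$. Hence $(a_n)\in\ell^1$, the symbol is continuous, and boundedness follows from the Schur test.

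\emph{Computing the symbol.} In the spirit of Proposition~\ref{prop:cayley}, I would write $a_n=g(n\log q)$ with $g(x)=e^{x/2}e^{-ce^x}$, which lies in $L^1(\mathbb R)$. Poisson summation gives
\[
\psi(\omega)=\frac{1}{\log q}\sum_{k\in\mathbb Z}\widehat g\Bigl(-\frac{\omega+2\pi k}{\log q}\Bigr).
\]
The substitution $u=ce^x$ gives
\[
\widehat g(\xi)=c^{-1/2+i\xi}\,\Gamma(1/2-i\xi).
\]
Stirling's formula shows that $|\Gamma(1/2-i\xi)|=\sqrt{\pi/\cosh(\pi\xi)}$ decays like $e^{-\pi|\xi|/2}$. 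So with $\ell=\log q$ the terms decay like $e^{-\pi|\omega+2\pi k|/(2\ell)}$.

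\emph{Nonvanishing and winding number.} This is the main obstacle, because the phase $c^{i\xi}\Gamma(1/2-i\xi)$ rotates and could cancel the dominant term $k=0$. On $|\omega|\le\pi$ the $k=0$ term has modulus at least $\sqrt{\pi/\cosh(\pi^2/\ell)}\,c^{-1/2}/\ell$. The terms with $k\ne0$ are individually smaller, but near $\omega=\pm\pi$ the $k=\mp1$ term has the same size. Since $c^{-1/2}$ factors out of every term, the freedom in $c$ enters only through the phases $c^{i\xi}$.

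So I would instead control cancellation directly at the two endpoints. Write
\[
\psi(\omega)\,\ell\,c^{1/2}=\sum_k c^{-i(\omega+2\pi k)/\ell}\,\Gamma\Bigl(\tfrac12+\tfrac{i(\omega+2\pi k)}{\ell}\Bigr).
\]
For $|\omega|\le\pi-\eta$, the $k=0$ term strictly dominates the sum of the rest by a margin depending only on $q$ and $\eta$. Near $\omega=\pi$, the two leading terms $k=0$ and $k=-1$ have equal modulus and relative phase
\[
c^{2\pi i/\ell}\cdot\frac{\Gamma(1/2+i\pi/\ell)}{\Gamma(1/2-i\pi/\ell)}.
\]
Choosing $\log c$ appropriately, from a set of positive measure in $\log c$ modulo $\ell^2/\pi$, makes this relative phase near $1$. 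The two terms then add constructively and dominate the tail. With that choice $\psi$ never vanishes.

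Along $\omega\in[-\pi,\pi]$, $\psi$ is then a perturbation of the $k=0$ term, possibly after a homotopy. The winding number is that of $c^{-i\omega/\ell}\Gamma(1/2+i\omega/\ell)$ over $[-\pi,\pi]$, corrected at the seam by the constructive-addition choice. I would make this precise by showing that the argument increment is less than $\pi$ in absolute value for suitable $c$. An alternative is to choose $c$ so that $\psi(\pi)$ is real positive, and then use the symmetry $\overline{\psi(\omega)}=\psi(-\omega)$: this reduces the winding computation to checking that $\psi$ stays off the negative real axis, or off one ray, on $[0,\pi]$.

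If the winding bookkeeping resists, the fallback is a large-$q$ regime, where the $k=0$ term dominates uniformly. Small $q$ would then be reduced to large $q$ by noting that $H_{q^m,c}$ relates to $H_{q,c}$ through a sublattice. That reduction is less clean, so the phase-selection argument is the plan of record.
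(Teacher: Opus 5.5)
Your framework is sound: a Wiener-algebra symbol, the Gohberg--Krein criterion, and Poisson summation with $\widehat g(\xi)=c^{-1/2+i\xi}\Gamma_{\!E}(1/2-i\xi)$. The gap is in the nonvanishing step, which is the real content of the lemma.

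Your claim that the $k=0$ Poisson term dominates the sum of the others on $|\omega|\le\pi-\eta$ holds only when $\pi^2/\ell$ is large, where $\ell=\log q$. Consecutive terms of your normalized sum have modulus ratio about $e^{-\pi^2/\ell}$. So for large $q$ roughly $\ell/\pi$ terms are of comparable size, and no single term dominates the rest. This already fails at $\omega=0$ when $\ell=4\pi$: the $k=0$ term has modulus $\sqrt\pi\approx1.77$, while the $k=\pm1$ terms have modulus $\sqrt{\pi/\cosh(\pi/2)}\approx1.12$ each.

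Your fallback therefore has the regimes reversed. The Poisson side is effective for $q$ near $1$ and useless for large $q$. The sublattice reduction does not rescue this. The symbol of $H_{q^m,c}$ evaluated at $z^m$ is the average of the symbol of $H_{q,c}$ over the rotations $z\mapsto\zeta z$ with $\zeta^m=1$. A zero-free average says nothing about the individual rotates. A smaller slip: the relative phase $e^{\pm2\pi i\log c/\ell}$ has period $\ell=\log q$ in $\log c$, not $\ell^2/\pi$, consistent with the substitution $c\mapsto qc$.

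Large $q$ is in fact easy in the original coordinates. Take $c=1$ and $q\ge16$. Then $\sum_{n\ne0}a_n\le(\sqrt q-1)^{-1}+\sum_{n\ge1}q^{n/2}e^{-q^n}<e^{-1}=a_0$. So $H_{q,1}-a_0I$ has norm less than $a_0$, and a Neumann series gives invertibility.

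For $q\le16$ the Poisson ratio $e^{-\pi^2/\ell}$ is below $0.03$. There your $k=0$ domination plus the two-term analysis at the seam should go through. It helps that the relative phase of the $k=0$ and $k=-1$ terms is stationary at $\omega=\pi$. To have a proof, though, you must carry out both regimes with explicit constants and check that they overlap; the proposal does neither.

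The paper avoids this case analysis with a structural argument valid for every $q$. The function $g(u)=\exp(u/2-e^u)$ is totally positive by Schoenberg's product formula for $\Gamma_{\!E}(1/2-2\pi i\xi)$. Kloos's theorem then gives exactly one zero of the Zak transform $Z_ag$ in a fundamental domain. Since $h_c(e^{i\theta})=c^{-1/2}Z_ag(\log c,-\theta/(2\pi a))$, every $c$ with $\log c$ outside a single residue class modulo $\log q$ gives a zero-free symbol. The identity $h_{qc}(z)=q^{-1/2}z^{-1}h_c(z)$ then brings the winding number to zero. This is the clean version of your phase-selection bookkeeping.
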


\begin{proof}
For $c>0$, set
$h_c(z)=\sum_{n\in\mathbb Z}q^{n/2}e^{-cq^n}z^n$.
The coefficients are absolutely summable.
The negative coefficients decay geometrically, and the
positive coefficients decay faster than exponentially.
Thus $h_c$ is holomorphic on $|z|>q^{-1/2}$.

To choose $c$, let $g(u)=\exp(u/2-e^u)$, $a=\log q$
and $b=1/2$. Substitution of $v=e^u$ gives
$\widehat g(\xi)=\int_{\mathbb R}g(u)e^{-2\pi i\xi u}\,du
=\Gamma_{\!E}(b-2\pi i\xi)$,
where $\Gamma_{\!E}$ is the Euler Gamma function.
Its product formula, with
$\psi=\Gamma_{\!E}'/\Gamma_{\!E}$, gives
\[
 \widehat g(\xi)
 =\Gamma_{\!E}(b)e^{-2\pi i\psi(b)\xi}
  \prod_{n=0}^{\infty}
  \frac{e^{-2\pi i\xi/(b+n)}}{1-2\pi i\xi/(b+n)}.
\]
This is Schoenberg's product with parameters
$a_n=-(n+b)$ and $\sum_{n\geq0}a_n^{-2}<\infty$.
It has infinitely many factors and no Gaussian factor.
By Schoenberg's characterization \cite{Schoenberg1951},
$g$ is totally positive: for every $m\geq1$ and strictly
increasing real sequences $(x_i)_{i=1}^m$ and $(y_j)_{j=1}^m$,
we have $\det(g(x_i-y_j))_{i,j=1}^m\geq0$.
See also \cite[Section~1]{Kloos2015}.

Define the Zak transform by
\[
 Z_ag(x,\omega)
 =\sum_{k\in\mathbb Z}g(x-ka)e^{2\pi ika\omega}.
\]
The product form above is preserved by translation and positive
dilation. Hence \cite[Theorems~3.7 and~3.12]{Kloos2015},
after rescaling to step one, shows that $Z_ag$ has exactly one
zero in $[0,a)\times[0,1/a)$.
Choose $x_0\in[0,a)$ different from the first coordinate
of this zero. Periodicity in $\omega$ gives
$Z_ag(x_0,\omega)\neq0$ for every $\omega\in\mathbb R$.
Since
$h_c(e^{i\theta})
=c^{-1/2}Z_ag(\log c,-\theta/(2\pi a))$,
the choice $c_0=e^{x_0}$ makes $h_{c_0}$ nonzero on the
unit circle.

Let $\nu$ be the winding number of $h_{c_0}$ around zero
as the unit circle is traversed counterclockwise.
A change of index gives
$h_{qc}(z)=q^{-1/2}z^{-1}h_c(z)$.
Thus $c=q^\nu c_0$ makes $h_c$ nonzero on the unit circle
with winding number zero.
It is then nonzero on an annulus $r<|z|<R$ with $r<1<R$
and has a holomorphic logarithm there.

We finish with the standard factorization argument for scalar
Toeplitz operators; see \cite{BottcherSilbermann2006}.
Split the Laurent series of this logarithm into its strictly
negative and nonnegative parts, $\ell_-+\ell_+$, and set
$h_-=e^{\ell_-}$ and $h_+=e^{\ell_+}$.
The Laurent coefficients of $h_-$ and $h_-^{-1}$ are
absolutely summable and supported on nonpositive indices.
Those of $h_+$ and $h_+^{-1}$ are absolutely summable and
supported on nonnegative indices.

For a Laurent series $v(z)=\sum_{n\in\mathbb Z}v_nz^n$,
write $T(v)=(v_{k-j})_{k,j\geq1}$.
Multiplication of the absolutely convergent coefficient sums
gives
\(
 T(h_c)=T(h_-)T(h_+),
 \
 T(h_c)^{-1}=T(h_+^{-1})T(h_-^{-1}).
\)
Since $H_{q,c}=T(h_c)$, the result follows.
\end{proof}

\begin{theorem}
\label{thm:fixed-model-optimum}
For every $h>0$,
$\mathfrak d(h)=\mathfrak d_{\mathbb Z}(h)=1/h$.
Both infima are attained by sampling measures $\mu_h$
and $\widehat\mu_h$, respectively, with
$\operatorname{supp}\widehat\mu_h\subset\mathbb N_0$ and
\[
 N_{\mu_h}(R)=\frac{\log R}{h}+O_h(1),
 \qquad
 N_{\widehat\mu_h}(R)=\frac{\log R}{h}+O_h(1),
 \qquad R\to\infty.
\]
\end{theorem}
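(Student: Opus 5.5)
The equality $\mathfrak d(h)=\mathfrak d_{\mathbb Z}(h)=1/h$ follows from a matching lower bound and construction. The lower bound is already available. Theorem~\ref{thm:local-density} gives $\liminf_{R\to\infty}N_\mu(R)/\log R\ge1/h$ for every design $\mu$ giving a frame for the system of Proposition~\ref{prop:cayley}. Hence $D(\mu)\ge1/h$, and so $\mathfrak d_{\mathbb Z}(h)\ge\mathfrak d(h)\ge1/h$. It therefore suffices to construct $\mu_h$, and an integer version $\widehat\mu_h$, giving a frame with $N(R)=(\log R)/h+O_h(1)$.

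For the construction, put $q=e^h$ and take $c>0$ from Lemma~\ref{lem:geometric-reference}, so that $H_{q,c}$ is bounded and invertible on $\ell^2(\mathbb N)$. Consider first the model measure $\sum_{k\ge1}q^k\delta_{t_k}$ with $t_k=cq^k$. Write $s_j=-\log\lambda_j=2\operatorname{atanh}d_j=e^{-hj}+O(e^{-3hj})$ and $f_j^2=4d_j/(1+d_j)^2=2e^{-hj}(1+O(e^{-hj}))$. Then the weighted sample at $t_k$ is
\[
 \sqrt{q^k}\,\langle x,A^{t_k}f\rangle
 =\sum_{j\ge1}\sqrt2\,q^{(k-j)/2}e^{-cq^{k-j}}\bigl(1+\epsilon_{kj}\bigr)x_j .
\]
The perturbation factors are $1+\epsilon_{kj}=(f_j/\sqrt{2e^{-hj}})\exp(-cq^k(s_j-e^{-hj}))$. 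Thus the analysis operator equals $\sqrt2\,H_{q,c}+E$, where $E$ has entries $q^{(k-j)/2}e^{-cq^{k-j}}\epsilon_{kj}$. The bound $|\epsilon_{kj}|\lesssim e^{-hj}(1+q^{k-j}e^{-2hj})$, combined with the Gaussian-type decay in $k-j$, shows via the Schur test that $E$ restricted to coordinates $j\ge J$ has norm $O(q^{-J})$. Choose $J$ with $\|E P_{\ge J}\|\le\frac12\sqrt2\,\|H_{q,c}^{-1}\|^{-1}$, where $P_{\ge J}$ is the projection onto these coordinates. Then the model sampling is bounded below on $P_{\ge J}\ell^2$ and is Bessel on all of $\ell^2$; the Bessel property also follows from Proposition~\ref{prop:mass}, since the mass up to $R$ is $O(R)$.

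The remaining finitely many coordinates $j<J$ correspond to eigenvalues at most $\lambda_J<1$. These are handled exactly as in Lemma~\ref{lem:spectral-completion}, with $P=E_A([\lambda_J,1])$. Set $\mu_h=\sum_{n=0}^{m-1}\delta_n+\Lambda\tau_m\nu$, where $\nu$ is the model measure. The Stein identity gives $G_{\mu_h}=S+A^m(\Lambda G_\nu-S)A^m$, and the lower bound on $PH$ together with $\|A(I-P)\|\le\lambda_{J-1}<1$ gives invertibility for large $m$ and $\Lambda$. The same argument works verbatim for the fixed system. Translation by $m$ and the prefix change $N$ by $O(1)$, and the nodes $cq^k$ give $N_{\mu_h}(R)=(\log R)/h+O_h(1)$. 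For $\widehat\mu_h$, round the translated times upward and use Lemma~\ref{s4-rounding}, exactly as in the proof of Theorem~\ref{s4-integer}, with a large shift $M$. Since $q^k(q-1)\to\infty$, only finitely many rounded nodes coincide, so the counting asymptotic persists.

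The main obstacle is the perturbation step. One must verify that the discrepancy between the exact entries $f_j\lambda_j^{t_k}$ and the idealized Toeplitz entries is small in operator norm on a tail of coordinates. The concern is that the relative error $cq^k(s_j-e^{-hj})$ grows in $k$. The plan is to absorb it using the factor $e^{-cq^{k-j}}$, splitting into $k\le j$, where the factors are harmless, and $k>j$, where superexponential decay dominates the relative error.
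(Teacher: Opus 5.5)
Your proof is correct, but it reaches the frame property by a different mechanism than the paper.

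\textbf{The paper's route.} The paper factors out the diagonal $D=\operatorname{diag}(f_jq^{j/2})$ exactly. What remains, $\sqrt c\,q^{(k-j)/2}\bigl(e^{-cr_jq^{k-j}}-e^{-cq^{k-j}}\bigr)D_{jj}$, is Hilbert--Schmidt. So the analysis operator of $\sum_k t_k\delta_{t_k}$ is a compact perturbation of the invertible operator $\sqrt c\,H_{q,c}D$, hence Fredholm. The paper then removes the finite-dimensional kernel of the Gramian by adjoining finitely many integer observations whose projections span that kernel. For the integer version it only needs the rounding error to be compact, which follows from Lemma~\ref{s4-rounding} on the tail coordinates, followed by the same kernel repair.

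\textbf{Your route.} You make the perturbation quantitatively small in operator norm on the spectral tail $E_A([\lambda_J,1])$ via a Schur estimate. You then handle the finitely many head eigenvalues with the Stein-identity completion of Lemma~\ref{lem:spectral-completion}. For rounding you use the translation-and-prefix scheme of Theorem~\ref{s4-integer}.

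\textbf{Comparison.} Your approach avoids Fredholm theory and the kernel-spanning argument, and in principle gives explicit frame bounds. The paper's approach needs no norm smallness, keeps the geometric nodes $cq^k$ unshifted, and needs only compactness of the rounding error rather than a large shift. Both rest on the invertibility of $H_{q,c}$ from Lemma~\ref{lem:geometric-reference}.

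\textbf{A small slip.} Since $s_j-q^{-j}\sim q^{-3j}/12$, the exponent error is $cq^k(s_j-q^{-j})=O(q^{k-3j})$. The correct bound is therefore $|\epsilon_{kj}|\lesssim q^{-j}\bigl(1+q^{k-j}q^{-j}\bigr)$, not the version with $e^{-2hj}$. This is still $\lesssim q^{-j}(1+q^{k-j})$, and the factor $e^{-cq^{k-j}}$ absorbs $q^{k-j}$ for $k>j$. So your Schur estimate $\|EP_{\geq J}\|=O(q^{-J})$ still holds, and the rest of the argument is unaffected.
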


\begin{proof}
Theorem~\ref{thm:local-density} gives the lower bound $1/h$
for both infima. We construct measures attaining it.

Put $q=e^h$, and retain
$d_j=q^{-j}/2$,
$\lambda_j=(1-d_j)/(1+d_j)$ and
$f_j=\sqrt{1-\lambda_j^2}$ from
Proposition~\ref{prop:cayley}.
Set $s_j=-\log\lambda_j$ and $r_j=s_jq^j$.
Then $r_j\geq1$ and $r_j-1=O_h(q^{-2j})$.
The diagonal operator
$D=\operatorname{diag}(f_jq^{j/2})$ is bounded and invertible,
since $f_jq^{j/2}=\sqrt2/(1+q^{-j}/2)$.

\smallskip
\noindent\emph{Sampling at geometric times.}
Choose $c$ as in Lemma~\ref{lem:geometric-reference}.
Set $t_k=cq^k$ and
$\tau=\sum_{k\geq1}t_k\delta_{t_k}$.
The analysis operator, initially defined on finitely supported
vectors by
$(\mathcal T x)_k=\sqrt{t_k}\langle x,A^{t_k}f\rangle$,
has matrix entries
\(
 \mathcal T_{kj}
 =\sqrt c\,q^{(k-j)/2}e^{-cr_jq^{k-j}}D_{jj}.
\)
Compare it with the invertible operator
$\mathcal T_0=\sqrt c\,H_{q,c}D$.
The mean value theorem and the bounds on $r_j$ give
\(
 |\mathcal T_{kj}-(\mathcal T_0)_{kj}|
 \leq C_hq^{-2j}q^{3(k-j)/2}e^{-cq^{k-j}}\) for \(j,k\geq1\).
Consequently,
\[
 \sum_{j,k\geq1}
 |\mathcal T_{kj}-(\mathcal T_0)_{kj}|^2
 \leq C_h^2
 \sum_{j\geq1}q^{-4j}
 \sum_{n\in\mathbb Z}q^{3n}e^{-2cq^n}
 <\infty.
\]
Thus $\mathcal T-\mathcal T_0$ extends to a compact operator,
and $\mathcal T$ is bounded and Fredholm.
Here a bounded operator is Fredholm if its range is closed
and both its kernel and the orthogonal complement of its
range are finite dimensional.

It follows that $G_\tau=\mathcal T^*\mathcal T$ is a positive
Fredholm operator. Its quadratic form is the sampled energy:
the coordinate formula for $\mathcal T$ extends from finitely
supported vectors to all of $\ell^2(\mathbb N)$ by continuity.

\smallskip
\noindent\emph{Adding finitely many integer times.}
Let $G$ be any positive Fredholm sampling Gramian for this
system, and set $E=\ker G$ and $m=\dim E$.
Write $P_E$ for the orthogonal projection onto $E$.
Since the original integer orbit is a frame,
$\{P_EA^nf:n\geq0\}$ spans $E$.
Choose integers $n_1,\ldots,n_m$ whose projected vectors
form a basis of $E$.

Adding these observations with weight one gives a positive
operator $\widetilde G$ with quadratic form
\[
 \langle\widetilde Gx,x\rangle
 =\langle Gx,x\rangle
  +\sum_{\ell=1}^m|\langle x,A^{n_\ell}f\rangle|^2.
\]
The operator $\widetilde G$ has zero kernel and remains
Fredholm because the added operator has finite rank.
It is therefore boundedly invertible.
If $m=0$, no additional observations are needed.

Applying this argument to $G_\tau$ gives a measure
$\mu_h$ for which the sampled family is a frame.
Its support consists of $\{cq^k:k\geq1\}$ and finitely
many integer times. Hence
$N_{\mu_h}(R)=(\log R)/h+O_h(1)$.

\smallskip
\noindent\emph{Sampling at integer times.}
Keep the weights $t_k$ and replace $t_k$ by $\lceil t_k\rceil$.
Using the same coordinate index $k$, define
$(\widehat{\mathcal T}x)_k
=\sqrt{t_k}\langle x,A^{\lceil t_k\rceil}f\rangle$.

The geometric sum gives
$\tau([0,R])\leq qR/(q-1)$ for $R>0$.
Let $Q_M$ be the orthogonal projection in the target space
$\ell^2(\mathbb N)$ onto the coordinates with $t_k\geq M$.
Applying Lemma~\ref{s4-rounding} to the restriction of $\tau$
to $[M,\infty)$ gives
\[
 \|Q_M(\widehat{\mathcal T}-\mathcal T)\|^2
 \leq\frac{b_hq}{(q-1)M}\longrightarrow0,
 \qquad M\to\infty,
\]
where $b_h=\|S_h\|$ and $S_h$ is the original frame operator
from Proposition~\ref{prop:cayley}.
Only finitely many coordinates satisfy $t_k<M$.
Thus $\widehat{\mathcal T}-\mathcal T$ is compact, and
$\widehat{\mathcal T}^*\widehat{\mathcal T}$ is positive
and Fredholm.

Adding finitely many integer observations as above makes
this Gramian boundedly invertible.
At repeated rounded times, combine the weights; this leaves
the Gramian unchanged.
Since $\lceil cq^k\rceil$ is eventually strictly increasing,
the resulting measure $\widehat\mu_h$ satisfies
$N_{\widehat\mu_h}(R)=(\log R)/h+O_h(1)$.
Together with the lower bound, this proves the theorem.
\end{proof}

The designs in Theorem~\ref{thm:fixed-model-optimum} are chosen
for one fixed system. In contrast,
Theorem~\ref{s5-nonattainment} requires a common design for
every system with a prescribed bound on the original frame
condition number. The two attainment conclusions are therefore
consistent.

\section{Prescribed supports: obstructions and sparsification}
\label{app:prescribed-supports}

The main text chooses the sampling times and weights together.
Here we prescribe the sampling times and ask whether strictly
positive weights on every point give a measure in
$\mathcal U_w(\kappa)$.
For $\kappa=1$, bounded ratios of consecutive positive times
give a complete criterion. For $\kappa>1$, the local counting
bounds are necessary but not sufficient.
We also show that every support of a measure in
$\mathcal U_w(\kappa)$ contains a subset with
$t_{j+1}\geq e^\eta t_j$ for some $\eta>0$ that still supports
a measure in the same class.

\subsection{Support criteria and obstructions}
\label{sec:clustered-supports}

\begin{corollary}
\label{cor:tight-support}
Let $T=\{0,t_1,t_2,\ldots\}$, where
$0<t_1<t_2<\cdots$ and $t_j\to\infty$.
There exists $\mu\in\mathcal U_w(1)$ with
$\operatorname{supp}\mu=T$ if and only if
$\sup_{j\geq1}t_{j+1}/t_j<\infty$.
\end{corollary}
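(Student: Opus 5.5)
The plan is to reduce everything to Proposition~\ref{prop:tight}: a measure $\mu$ with support $T$ belongs to $\mathcal U_w(1)$ if and only if $w_0>0$ and $M_\mu(R)\asymp R$. So the task is to decide when a prescribed support $T$ carries positive weights whose cumulative mass grows linearly.

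\emph{Necessity.} Suppose $\mu\in\mathcal U_w(1)$ has $\operatorname{supp}\mu=T$. By Proposition~\ref{prop:tight} there are $0<c_1\le c_2$ and $R_0\ge1$ with $c_1R\le M_\mu(R)\le c_2R$ for $R\ge R_0$. Fix $\Lambda>c_2/c_1$. Exactly as in the proof of Theorem~\ref{thm:tight-endpoint}, every interval $(R,\Lambda R]$ with $R\ge R_0$ must contain a point of $T$. Otherwise we would have $c_1\Lambda R\le M_\mu(\Lambda R)=M_\mu(R)\le c_2R$, which is impossible.
\begin{itemize}
\item If $t_j\ge R_0$, apply this with $R=t_j$. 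The next point then satisfies $t_{j+1}\le\Lambda t_j$.
\item If $t_j<R_0$, only finitely many indices occur, because the support is locally finite. For these indices the ratios $t_{j+1}/t_j$ are finite, and the last of them is bounded by $\max\{\Lambda,t_{j+1}/t_j\}$.
\end{itemize}
Together these give $\sup_j t_{j+1}/t_j<\infty$.

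\emph{Sufficiency.} Suppose $t_{j+1}\le Kt_j$ for all $j$. Choose the weights $w_0=1$ and $w_{t_j}=t_{j+1}-t_j>0$; this mimics $\nu_q$ and the grid in Lemma~\ref{s4-refining-grid}. Then telescoping gives, for $t_m\le R<t_{m+1}$,
\[
M_\mu(R)=1+t_{m+1}-t_1 .
\]
Since $R<t_{m+1}\le Kt_m\le KR$, we obtain
\[
1-t_1+R\;\le\; M_\mu(R)\;\le\; 1+KR .
\]
For $R<t_1$ the mass is $1$. Hence $M_\mu(R)\asymp R$ as $R\to\infty$, and Proposition~\ref{prop:tight} gives $\mu\in\mathcal U_w(1)$. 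Every point of $T$ has positive weight, so $\operatorname{supp}\mu=T$.

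\emph{Main obstacle.} The only delicate point is the necessity direction near small times, where $R_0$ may exceed several $t_j$. Local finiteness disposes of it, since only finitely many ratios occur there and each is finite. Neither direction requires any operator estimate beyond Proposition~\ref{prop:tight}.
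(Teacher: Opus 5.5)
Your proof is correct and follows the paper's argument. Necessity uses the fact that the intervals $(R,\Lambda R]$ cannot be gap-free, via Proposition~\ref{prop:tight}, and sufficiency assigns weights equal to the gaps and telescopes. The only difference is that you use forward differences $t_{j+1}-t_j$, so the ratio bound enters the upper mass estimate, whereas the paper uses backward differences $t_j-t_{j-1}$, so the ratio bound enters the lower estimate $M_\mu(R)\geq R/Q$; both choices work equally well.
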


\begin{proof}
Suppose first that such a measure exists.
Proposition~\ref{prop:tight} gives
$cR\leq M_\mu(R)\leq CR$ for all sufficiently large $R$.
Choose $\Lambda>C/c$.
Every interval $(R,\Lambda R]$ with sufficiently large $R$
contains a support point. Otherwise,
$c\Lambda R\leq M_\mu(\Lambda R)=M_\mu(R)\leq CR$,
a contradiction.
Taking $R=t_j$ shows that $t_{j+1}\leq\Lambda t_j$
for all sufficiently large $j$.
The finitely many remaining ratios are also bounded.

Conversely, suppose $t_{j+1}/t_j\leq Q$ for every $j\geq1$.
Set $t_0=0$ and
$\mu=\delta_0+\sum_{j\geq1}(t_j-t_{j-1})\delta_{t_j}$.
All weights are strictly positive.
If $t_j\leq R<t_{j+1}$, then $M_\mu(R)=1+t_j$.
Hence $R/Q\leq M_\mu(R)\leq1+R$ for $R\geq t_1$.
Proposition~\ref{prop:tight} gives $\mu\in\mathcal U_w(1)$.
\end{proof}

For larger bounds on the original frame condition number,
the geometric model in Proposition~\ref{prop:cayley} gives
a necessary local counting condition.
We use the lower logarithmic density
$\underline D_{\log}$ defined in
\eqref{eq:lower-log-density}.

\begin{corollary}
\label{cor:local-support-density}
Suppose that $\operatorname{cond}(S_h)\leq\kappa$ for the
model in Proposition~\ref{prop:cayley}.
Every support $T$ of a measure in $\mathcal U_w(\kappa)$
satisfies $\underline D_{\log}(T)\geq1/h$.
For $\kappa>1$, this gives
$\underline D_{\log}(T)\geq1/h_\kappa$, where
$h_\kappa=2\log(1+4(\kappa+1)/(\kappa-1))$.
A support of a fully universal measure satisfies
$\underline D_{\log}(T)=\infty$.
\end{corollary}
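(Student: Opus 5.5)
The plan is to deduce each assertion directly from Theorem~\ref{thm:local-density}, which gives a local window bound valid uniformly in $R\ge1$.

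\emph{First assertion.} Let $\mu\in\mathcal U_w(\kappa)$ with $T=\operatorname{supp}\mu$, and fix $h$ with $\operatorname{cond}(S_h)\le\kappa$. The class contains every positive system whose original frame bounds have ratio at most $\kappa$, and we may take the optimal bounds $\|S_h^{-1}\|^{-1}$ and $\|S_h\|$. Hence $\mu$ gives a frame for the system of Proposition~\ref{prop:cayley}. Theorem~\ref{thm:local-density} then yields
\[
\#\bigl(T\cap(R,e^LR]\bigr)\ge L/h-C_{\mu,h}
\]
for all $R\ge1$ and $L>0$. Taking the infimum over $R\ge1$, dividing by $L$, and letting $L\to\infty$ gives $\underline D_{\log}(T)\ge1/h$, since the constant does not depend on $R$.

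\emph{Second assertion.} For $\kappa>1$, I reuse the Schur-test computation from the proof of Corollary~\ref{cor:positive-lower}. With $\delta=(\kappa-1)/(\kappa+1)$ and $h_\kappa=2\log(1+4/\delta)$, that computation gives $(1-\delta)I\le S_{h_\kappa}\le(1+\delta)I$, so $\operatorname{cond}(S_{h_\kappa})\le\kappa$. I then check that $4/\delta=4(\kappa+1)/(\kappa-1)$, so this $h_\kappa$ is exactly the value in the statement. The first assertion now applies with $h=h_\kappa$.

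\emph{Third assertion.} Let $\mu\in\mathcal U_w$. For every $h>0$, the system of Proposition~\ref{prop:cayley} has finite condition number $\kappa_h=\operatorname{cond}(S_h)$, and $\mu\in\mathcal U_w(\kappa_h)$. The first assertion gives $\underline D_{\log}(T)\ge1/h$ for every $h>0$. Letting $h\downarrow0$ gives $\underline D_{\log}(T)=\infty$.

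No step is a real obstacle; the argument only relies on the uniformity in $R$ in \eqref{eq:local-density}, which Theorem~\ref{thm:local-density} already provides.
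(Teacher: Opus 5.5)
Your proposal is correct and follows the paper's own argument. You apply Theorem~\ref{thm:local-density} to the model, which lies in the class once the optimal bounds $\|S_h^{-1}\|^{-1}$ and $\|S_h\|$ are used. You then pass to $\underline D_{\log}$ via the uniformity of \eqref{eq:local-density} in $R$, take $\operatorname{cond}(S_{h_\kappa})\le\kappa$ from the proof of Corollary~\ref{cor:positive-lower}, and let $h\downarrow0$ for fully universal measures.
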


\begin{proof}
Every measure in $\mathcal U_w(\kappa)$ must preserve the
frame property of the stated model.
Taking the infimum over $R$ in \eqref{eq:local-density},
dividing by $L$ and letting $L\to\infty$ gives the first claim.
The proof of Corollary~\ref{cor:positive-lower} shows that
$\operatorname{cond}(S_{h_\kappa})\leq\kappa$,
which gives the explicit bound.
A fully universal measure must preserve every such model,
so its support has lower logarithmic density at least $1/h$
for every $h>0$.
\end{proof}

\begin{theorem}
\label{thm:shrinking-clusters}
Fix $h>0$ and the system in Proposition~\ref{prop:cayley}.
Let $q>1$ satisfy $\log q>h$, and suppose
$r_n\geq0$ and $r_n/q^n\to0$.
If a locally finite set $T$, apart from finitely many points,
is contained in
$\bigcup_{n\geq1}[q^n,q^n+r_n]$,
then no choice of strictly positive weights on $T$
makes the sampled family a frame for this system.
\end{theorem}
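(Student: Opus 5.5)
We argue by contradiction and adapt the dimension argument of Theorem~\ref{thm:local-density}. Suppose strictly positive weights on $T$ make the sampled family a frame for the system of Proposition~\ref{prop:cayley}. Call the resulting design $\mu$ and its frame bounds $a_\mu,b_\mu$. The first part of that proof gives a constant $K$ with $M_\mu(R)\le KR$ for $R\ge1$, and hence the tail estimate \eqref{eq:model-tail}. Fix a large parameter $D$ as there. For a window $(R,e^LR]$, let $E$ be the span of the $m\approx(L-2D)/h$ basis vectors $e_j$ with $e^{D}/(e^LR)\lesssim d_j\lesssim e^{-D}/R$. The left and right energy bounds \eqref{eq:model-left-energy} and \eqref{eq:model-right-energy} do not depend on the window, and for large $D$ their sum is below $a_\mu$. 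So every such window must contain at least $m$ \emph{effective} constraints. The cluster hypothesis makes this count too small, because clusters contribute roughly one constraint each, not one per point.

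\textbf{Replacing clusters by single times.} Take $R=q^{n_0}$ and $e^LR=q^{n_0+k}$. The window then meets about $k$ clusters $I_n=[q^n,q^n+r_n]$, up to boundary terms. We have $m\approx(k\log q-2D)/h$, and since $\log q>h$, this is $m\ge k+1$ once $k$ is large relative to $D$. The sampled energy on $I_n$ is
\[
\sum_{t\in T\cap I_n}w_t|\langle x,A^tf\rangle|^2 .
\]
For $x\in E$, the function $t\mapsto\langle x,A^tf\rangle$ is smooth, with derivative bounded by $\max_{j} s_j\,|\langle x,A^tf\rangle|$-type quantities. On $E$ we have $s_j\lesssim e^{-D}/R\lesssim q^{-n}$ for $n\ge n_0$, so
\[
|\langle x,A^tf\rangle-\langle x,A^{q^n}f\rangle|
\lesssim r_n s_{\max}\|x\|\,\|f|_E\|,
\]
which is $o(1)$ relative to the natural size. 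We therefore choose a unit $x\in E$ orthogonal to the $k$ vectors $P_EA^{q^n}f$, $n_0<n\le n_0+k$. This is possible because $m>k$. The remaining energy in the window comes from the perturbations only, and we must bound it by
\[
\sum_n M_\mu(I_n)\,(r_ns_{\max})^2\sup_j f_j^2 .
\]

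\textbf{Main obstacle.} The hard step is making this perturbation sum small uniformly in $k$. Using $M_\mu(I_n)\le Kq^{n+1}$, $f_j^2\le4d_j$, and the spectral scales, I expect each term to be at most
\[
K\,\frac{r_n^2}{q^{2n}}\cdot q^{n}s_{\max}\cdot\Bigl(\sum_{j\in E}f_j^2\, e^{-2s_jq^n}\Bigr)/f_{\min}^2 .
\]
The exponential factors force summability over $n$, with a $\sum_n e^{-c e^{\dots}}$ structure like that in \eqref{eq:model-right-energy}. The factor $r_n/q^n\to0$ then makes the total small once $n_0$ is large.

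To avoid losing in the $\sup_j$ step, the first thing I would try is to write $A^t=A^{q^n}A^{t-q^n}$ and apply the operator bound $\|A^{t-q^n}-I\|\le r_ns_{\max}$ on the relevant scales. Here the scales are the spectral subspaces where $d_j q^n\lesssim1$; on the others the exponential decay $e^{-2s_jq^n}$ already dominates. Combined with the tail estimate \eqref{eq:model-tail} applied cluster by cluster, this gives the perturbation energy as $o(1)\cdot K$. Choosing first $D$, then $n_0$, and then $k$ large, the total energy of $x$ falls below $a_\mu$. This contradicts the lower frame bound, so no positive weights on $T$ give a frame.
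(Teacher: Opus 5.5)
Your architecture is the paper's: a dimension argument in a window $(q^{n_0},q^{n_0+k}]$, a unit vector $x\in E$ with $v(t)=\langle x,A^tf\rangle$ vanishing at the cluster centers, a derivative bound for the samples inside the clusters, and the window-independent outer bounds \eqref{eq:model-left-energy} and \eqref{eq:model-right-energy}.

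\textbf{The gap.} It lies in the step you call the main obstacle, and your order of quantifiers creates it. You fix $n_0$ before $k$, and then you need the in-window energy to be small uniformly in $k$. The mechanism you offer is that the factors $e^{-2s_jq^n}$ make the cluster contributions summable in $n$. That is false. Take any cluster whose index is more than about $D/\log q$ from both ends of the window. Then $E$ contains basis vectors with $d_jq^n\asymp1$, so the available bound $|v'(u)|^2\le\sum_{j\in E}f_j^2s_j^2e^{-2s_ju}$ is of order $q^{-3n}$ on $I_n$, with no extra decay. Combined with $\mu(I_n)\le Kq^{n+1}$, this gives a bound of order $K(r_n/q^n)^2$ per cluster. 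Summing, you only get order $K\sum_{n_0\le n\le n_0+k}(r_n/q^n)^2$. This is not small uniformly in $k$ under the sole hypothesis $r_n/q^n\to0$; take $r_n/q^n=n^{-1/2}$. Relatedly, your inequality $e^{-D}/R\lesssim q^{-n}$ for $n\ge n_0$ goes the wrong way once $n>n_0+D/\log q$.

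\textbf{The fix.} No uniformity in $k$ is needed, because $k$ only has to be large relative to $D$. Choose $D$ first. Then fix $k$ with $k(\log q-h)\ge2D+3h$, which gives $m\ge k+2$. Only then let $n_0\to\infty$. With $L=k\log q$ fixed, the crude bound $|v'(u)|^2\le\sum_{j\in E}f_j^2s_j^2\lesssim e^{-3D}R^{-3}$ suffices; it follows from $f_j^2\le4d_j$, $s_j\le(8/3)d_j$ and $d_j\le e^{-D}/R$ on $E$. Together with $\mu((R,e^LR])\le Ke^LR$, it bounds the window energy by a constant times $Ke^{L-3D}(\Delta/R)^2$, where $\Delta=\max_{n_0\le n\le n_0+k}r_n$. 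Since $k$ is fixed, $\Delta/R\le q^k\max_{0\le\ell\le k}r_{n_0+\ell}/q^{n_0+\ell}\to0$. This is exactly how the paper concludes.

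\textbf{Two smaller repairs.}
\begin{itemize}
\item The part of the cluster $[q^{n_0},q^{n_0}+r_{n_0}]$ above $q^{n_0}$ lies inside the window. You therefore need $v(q^n)=0$ for every $n_0\le n\le n_0+k$, which is $k+1$ constraints, not $k$.
\item You must check that, for large $n_0$, no other points of $T$ enter the window. This uses $r_n<(q-1)q^n$ eventually and the finiteness of the exceptional set.
\end{itemize}
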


\begin{proof}
Suppose that a measure $\mu$ supported on $T$ gives a frame
with lower bound $a_\mu>0$.
The upper frame bound, as used in the proof of
Theorem~\ref{thm:local-density}, gives
$M_\mu(R)\leq KR$ for $R\geq1$.

Choose $D>h+\log2$ so large that the sum of the bounds in
\eqref{eq:model-left-energy} and
\eqref{eq:model-right-energy} is less than $a_\mu/2$.
Put $H_q=\log q$.
Since $H_q>h$, we can choose an integer $M\geq1$ such that
\[
 L=MH_q\geq2D+h,
 \qquad
 m=\left\lfloor\frac{L-2D-h}{h}\right\rfloor+1>M+1.
\]

Set $R_n=q^n$.
Let $j_n$ be the first index with
$d_{j_n}\leq e^{-D}/R_n$, and put
$J_n=\{j_n,\ldots,j_n+m-1\}$ and
$E_n=\operatorname{span}\{e_j:j\in J_n\}$.
Since $\dim E_n=m>M+1$, there is a unit vector $x_n\in E_n$
such that, with $v_n(t)=\langle x_n,A^tf\rangle$,
\[
 v_n(q^k)=0,
 \qquad k=n,\ldots,n+M.
\]
The centers $q^k$ need not belong to $T$.

For sufficiently large $n$, every sample in
$(R_n,e^LR_n]$ belongs to one of the clusters with indices
$n,\ldots,n+M$.
Indeed, $r_k/q^k<q-1$ eventually, so earlier clusters end
before $R_n$, while later clusters start after
$e^LR_n=q^{n+M}$.
The finitely many initial clusters and exceptional points
also lie before $R_n$ for large $n$.
Set $\Delta_n=\max_{n\leq k\leq n+M}r_k$.
Since $M$ is fixed,
\[
 \frac{\Delta_n}{R_n}
 =\max_{0\leq\ell\leq M}
   q^\ell\frac{r_{n+\ell}}{q^{n+\ell}}
 \longrightarrow0,
 \qquad n\to\infty.
\]

Write $s_j=-\log\lambda_j$.
The estimates $f_j^2\leq4d_j$ and
$s_j\leq(8/3)d_j$ from the proof of
Theorem~\ref{thm:local-density}, together with the
Cauchy--Schwarz inequality, give
\[
 |v_n'(u)|^2
 \leq\sum_{j\in J_n}f_j^2s_j^2e^{-2s_ju}
 \leq\frac{256}{9}\sum_{j\in J_n}d_j^3
 \leq C_h\frac{e^{-3D}}{R_n^3},
 \qquad u\geq0,
\]
where $C_h=256/[9(1-e^{-3h})]$.
Since $v_n$ vanishes at each relevant cluster center,
the fundamental theorem of calculus gives
$|v_n(t)|^2\leq C_he^{-3D}\Delta_n^2/R_n^3$
at every sample in the window.
Therefore,
\[
 \int_{(R_n,e^LR_n]}|v_n(t)|^2\,d\mu(t)
 \leq C_hKe^{L-3D}
       \left(\frac{\Delta_n}{R_n}\right)^2
 \longrightarrow0,
 \qquad n\to\infty.
\]

The bounds in \eqref{eq:model-left-energy} and
\eqref{eq:model-right-energy} apply to every unit vector
in this coordinate subspace, independently of its vanishing
conditions. Thus the energy outside the window is less than
$a_\mu/2$.
For large $n$, the total sampled energy is less than
$a_\mu$, contradicting $\|x_n\|=1$.
\end{proof}

\begin{corollary}
For every $\kappa>1$, there is a set
$T=\{0,t_1,t_2,\ldots\}\subset\mathbb N_0$, with
$0<t_1<t_2<\cdots$, such that
\[
 N_T(R)\sim(\log R)^2,
 \qquad
 \sup_{j\geq1}\frac{t_{j+1}}{t_j}<\infty,
 \qquad
 \underline D_{\log}(T)=\infty,
\]
where $N_T(R)=\#(T\cap[0,R])$.
This set is the support of a measure in $\mathcal U_w(1)$
but of no measure in $\mathcal U_w(\kappa)$.
\end{corollary}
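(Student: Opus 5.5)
The plan is to build $T$ as a union of clusters of integers placed near a geometric sequence, with cluster sizes growing without bound, and then to apply Theorem~\ref{thm:shrinking-clusters} for the negative statement. Fix $\kappa>1$ and take $h=h_\kappa=2\log(1+4(\kappa+1)/(\kappa-1))$. By the proof of Corollary~\ref{cor:positive-lower}, the model of Proposition~\ref{prop:cayley} with this $h$ satisfies $\operatorname{cond}(S_h)\le\kappa$. Hence every measure in $\mathcal U_w(\kappa)$ must give a frame for this system. Choose an integer $q\ge2$ with $\log q>h$. For $n\ge1$ let the $n$th cluster be $\{q^n,q^n+1,\ldots,q^n+n\}$, and let $T=\{0\}\cup\bigcup_{n\ge1}\{q^n+i:0\le i\le n\}$. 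Since $q^n+n<q^{n+1}$, the clusters are disjoint and lie in increasing order, so $T\subset\mathbb N_0$ is locally finite. Setting $r_n=n$ gives $r_n/q^n\to0$. Theorem~\ref{thm:shrinking-clusters} therefore shows that no strictly positive weights on $T$ make the sampled family a frame for the model. Consequently $T$ supports no measure in $\mathcal U_w(\kappa)$.

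Next come the counting and ratio claims, which are routine. For $q^J\le R<q^{J+1}$ we have $N_T(R)=1+\sum_{n<J}(n+1)+O(J)$, which equals $J^2/2+O(J)$. Since $J\sim\log R/\log q$, this gives $N_T(R)\asymp(\log R)^2$, and more precisely $N_T(R)\sim(\log R)^2/(2\log^2q)$. To get the exact normalization $\sim(\log R)^2$, I would change the cluster length to $\lfloor 2n\log^2q\rfloor$. This is still $o(q^n)$, so the theorem applies unchanged, and the asymptotic becomes exactly $(\log R)^2$. For consecutive ratios, inside a cluster $t_{j+1}/t_j\le2$. Between clusters the ratio is $q^{n+1}/(q^n+O(n))\le q$. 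So $\sup t_{j+1}/t_j<\infty$, and Corollary~\ref{cor:tight-support} gives a measure in $\mathcal U_w(1)$ with support exactly $T$.

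The remaining claim, $\underline D_{\log}(T)=\infty$, is the step needing the most care. I need $\inf_{R\ge1}\#(T\cap(R,e^LR])/L\to\infty$ as $L\to\infty$. Fix $L$ and $R\ge1$. The window $(R,e^LR]$ contains every full cluster with index $n$ satisfying $q^n>R$ and $q^n+O(n\log^2 q)\le e^LR$. The number of such $n$ is at least $\lfloor L/\log q\rfloor-2$ for large $L$. Each cluster has length at least its index $n$, and the smallest admissible index is at least $1$. The count is therefore at least $\sum_{k=1}^{\lfloor L/\log q\rfloor-2}k\gtrsim L^2$, uniformly in $R\ge1$. Dividing by $L$ gives a quantity tending to infinity, so $\underline D_{\log}(T)=\infty$.

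This last estimate is where I expect the only subtlety. The lower bound has to be uniform in $R$, and that works only because each cluster lies inside a single logarithmic window of size $O(1)$. Apart from that uniformity check, the proof is the direct combination of Theorem~\ref{thm:shrinking-clusters} with Corollary~\ref{cor:tight-support}.
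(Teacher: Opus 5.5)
Your proposal is correct and follows the paper's proof: the same clusters $\{q^n,\ldots,q^n+\lfloor 2n\log^2 q\rfloor\}$ with $\log q>h_\kappa$, Theorem~\ref{thm:shrinking-clusters} for the obstruction, Corollary~\ref{cor:tight-support} for membership in $\mathcal U_w(1)$, and the same count of complete clusters in each window, uniform in $R\ge1$, for $\underline D_{\log}(T)=\infty$. The one detail you leave implicit is disjointness of the clusters after you change their lengths. It holds because $h_\kappa>2\log 5$ forces $q\ge 26$, so $\lfloor 2n\log^2 q\rfloor\le nq\le q^n$, and each cluster therefore ends before $q^{n+1}$.
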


\begin{proof}
Choose an integer $q>\max\{16,e^{h_\kappa}\}$, where
$h_\kappa$ is defined in
Corollary~\ref{cor:local-support-density}.
Put $H_q=\log q$ and
\[
 m_n=\lfloor2H_q^2n\rfloor,
 \qquad
 T=\{0\}\cup
   \bigcup_{n\geq1}\{q^n,q^n+1,\ldots,q^n+m_n\}.
\]
Since $2(\log q)^2\leq q$ and $nq\leq q^n$,
we have $m_n\leq q^n$.
The clusters are therefore disjoint, and each ends before
$q^{n+1}$.
Consecutive positive sampling times have ratio at most $q$,
so Corollary~\ref{cor:tight-support} gives a measure in
$\mathcal U_w(1)$ with support $T$.

On the other hand, $m_n/q^n\to0$ and $\log q>h_\kappa$.
Theorem~\ref{thm:shrinking-clusters} rules out every weighting
that gives a frame for the model at step $h_\kappa$.
Its original frame condition number is at most $\kappa$.
Thus no measure in $\mathcal U_w(\kappa)$ has support $T$.

The first $n$ complete clusters contain
$\sum_{k=1}^n(m_k+1)=H_q^2n^2+O(n)$ points.
A partial cluster contributes only $O(n)$ points.
Since $\log R=H_qn+O(1)$ for $q^n\leq R<q^{n+1}$,
we obtain $N_T(R)\sim(\log R)^2$.

To verify the lower logarithmic density, fix $R\geq1$
and write $s=\log R$.
Set $n_0=\lfloor s/H_q\rfloor+1$ and
$p=\lfloor(L-\log2)/H_q\rfloor$.
For $L>\log2+H_q$, the $p$ complete clusters with indices
$n_0,\ldots,n_0+p-1$ lie in $(R,e^LR]$.
Their first center is greater than $R$, and the logarithm
of the last endpoint is at most
$s+pH_q+\log2\leq s+L$.
Since $m_k+1\geq2H_q^2k$ and $n_0\geq1$,
\[
 \#(T\cap(R,e^LR])
 \geq2H_q^2\sum_{k=1}^p k
 =H_q^2p(p+1).
\]
For $L\geq4H_q$, we have $p\geq L/(2H_q)$.
Hence
\[
 \inf_{R\geq1}
 \frac{\#(T\cap(R,e^LR])}{L}
 \geq\frac L4\longrightarrow\infty,
 \qquad L\to\infty.
\]
This proves $\underline D_{\log}(T)=\infty$.
\end{proof}

\subsection{Relative time perturbations and sparsification}
\label{s4-rt-section}

We now show that small relative changes of the sampling times
preserve membership in $\mathcal U_w(\kappa)$ for each fixed
finite $\kappa$.
This allows us to combine weights at nearby times and retain
only a subset of the original support.

As in Section~\ref{sec:preliminaries}, let
$L=-\log A$ on $H_+=(\ker A)^\perp$.
For $t>0$, expressions involving $L$ and $A^t$ are extended
by zero on $\ker A$.

\begin{lemma}
\label{s4-rt-derivative}
Let $\tau$ be a positive Borel measure on $(0,\infty)$
such that $\tau((0,r])\leq Kr$ for every $r>0$.
Every system satisfying \eqref{eq:frame} obeys
\[
 \int_{(0,\infty)}t^2\|CLA^tx\|^2\,d\tau(t)
 \leq(1+\sqrt3)bK\|x\|^2,
 \qquad x\in H.
\]
\end{lemma}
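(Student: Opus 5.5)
The plan is to reduce the weighted estimate to the Lebesgue estimate of Lemma~\ref{lem:energy} by a layer-cake argument, in the same way that Proposition~\ref{prop:mass} was obtained from the energy bound for $F_x$. Fix $x\in H_+$; by extension by zero on $\ker A$ this suffices. Set $g(t)=t^2\|CLA^tx\|^2$. I will check that the function $\Phi(t)=\|CLA^tx\|^2$ is nonincreasing in a suitable weak sense, or at least that $g$ is controlled by an average over a neighbouring interval. Since $A$ and $B$ need not commute, monotonicity of $\Phi$ cannot be expected pointwise. So I will instead bound $g(t)$ by an integral: for $t>0$ write $t\le 2(t-u)$ for $u\in[0,t/2]$, and use the semigroup structure $CLA^tx=CA^{t-u}(LA^ux)$.

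The cleanest concrete route is integration by parts. Put $W(t)=\tau((0,t])\le Kt$ and write
\[
 \int_{(0,\infty)}g\,d\tau=-\int_0^\infty W(t)g'(t)\,dt ,
\]
justified as in Proposition~\ref{prop:mass} by truncating to spectral support $[m^{-1},m]$ for $L$ and letting the boundary terms vanish. This gives the bound $K\int_0^\infty t|g'(t)|\,dt$. Now
\[
 g'(t)=2t\Phi(t)-2t^2\operatorname{Re}\langle CL^2A^tx,CLA^tx\rangle ,
\]
so that $t|g'(t)|\le 2t^2\Phi(t)+2t^3\|CL^2A^tx\|\|CLA^tx\|$. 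The first term is bounded by $b\|x\|^2$ by Lemma~\ref{lem:energy}. For the second I apply Cauchy--Schwarz, which calls for the estimate
\[
 \int_0^\infty t^4\|CL^2A^tx\|^2\,dt\le c\,b\|x\|^2 .
\]
This is proved exactly like the second inequality of Lemma~\ref{lem:energy}: the Stein identity gives the weight $t^4-(t-1)_+^4\le 4t^3$, after which the scalar integral $\int_0^\infty 4t^3\lambda^4e^{-2t\lambda}\,dt=3/2$ yields $c=3/2$.

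Combining the pieces gives the bound $K\bigl(b+2\sqrt{(b/2)(3b/2)}\bigr)\|x\|^2=(1+\sqrt3)bK\|x\|^2$, which is exactly the stated constant. This confirms the route. The remaining steps are then Fatou's lemma on $x_m=E_L([m^{-1},m])x\to x$, and the extension to $H$.

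The main obstacle is justifying the integration by parts, in particular showing that the boundary terms $W(t)g(t)$ vanish at $0$ and at $\infty$. At infinity this follows from $tg(t)\le\int_t^\infty s|g'(s)|\,ds$, since the right-hand side is finite by the bounds above. At $0$ it follows from $g(t)\le t^2\|C\|^2\|LA^tx_m\|^2\to0$ together with $W(t)\le Kt$. Working with the spectrally truncated vectors $x_m$ makes all these quantities finite, so the argument is carried out there and then passed to the limit.
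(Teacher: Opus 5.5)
Your proposal is correct and follows the paper's proof: the Stein-identity bound $\int_0^\infty t^4\|CL^2A^tx\|^2\,dt\le(3b/2)\|x\|^2$, the Cauchy--Schwarz estimate $\int_0^\infty t|g'(t)|\,dt\le(1+\sqrt3)b\|x\|^2$, and the transfer to $\tau$ through $W(t)\le Kt$ are exactly the paper's steps. The differences are only cosmetic. The paper writes $g(t)=-\int_t^\infty g'(s)\,ds$ and applies Fubini, so no boundary terms arise, and it uses Fatou only for the $t^4$ estimate. Your opening remarks about a layer-cake argument and monotonicity of $\Phi$ are not needed.
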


\begin{proof}
We first estimate the second derivative.
For $x\in H_+$, set
$x_m=E_L([m^{-1},m])x$.
By Lemma~\ref{lem:stein}, $E_A(\{1\})=0$, so $x_m\to x$.
The Stein identity and a change of variables give
\[
 \begin{aligned}
 \int_0^\infty t^4\|CL^2A^tx_m\|^2\,dt
 &=\int_0^\infty
   \bigl[t^4-(t-1)_+^4\bigr]
   \langle SL^2A^tx_m,L^2A^tx_m\rangle\,dt\\
 &\leq4b\int_0^\infty t^3\|L^2A^tx_m\|^2\,dt
 =\frac{3b}{2}\|x_m\|^2,
 \end{aligned}
\]
where $(t-1)_+=\max\{t-1,0\}$.
The last equality follows from the spectral theorem.
Fatou's lemma and the convention on $\ker A$ yield
$\int_0^\infty t^4\|CL^2A^tx\|^2\,dt
\leq(3b/2)\|x\|^2$ for every $x\in H$.

Set $H_x(t)=t^2\|CLA^tx\|^2$ for $t>0$.
Differentiation, the Cauchy--Schwarz inequality and
Lemma~\ref{lem:energy} give
\[
 \begin{aligned}
 \int_0^\infty t|H_x'(t)|\,dt
 &\leq2\int_0^\infty t^2\|CLA^tx\|^2\,dt\\
 &\quad
 +2\left(\int_0^\infty t^2\|CLA^tx\|^2\,dt\right)^{1/2}
   \left(\int_0^\infty t^4\|CL^2A^tx\|^2\,dt\right)^{1/2}\\
 &\leq(1+\sqrt3)b\|x\|^2.
 \end{aligned}
\]

The spectral theorem and dominated convergence give
$tLA^tx\to0$ as $t\to\infty$.
Thus $H_x(t)\to0$ and
$H_x(t)=-\int_t^\infty H_x'(s)\,ds$.
Writing $W(s)=\tau((0,s])\leq Ks$, we have
$\int_0^\infty W(s)|H_x'(s)|\,ds<\infty$.
Fubini's theorem therefore gives
\[
 \int_{(0,\infty)}H_x(t)\,d\tau(t)
 =-\int_0^\infty W(s)H_x'(s)\,ds
 \leq K\int_0^\infty s|H_x'(s)|\,ds.
\]
The preceding estimate proves the lemma.
\end{proof}

\begin{theorem}
\label{s4-rt-stability}
Let $\mu=\sum_{t\in T}w_t\delta_t$ be a sampling design
such that
$K_\mu=\sup_{u>0}\mu((0,u])/u<\infty$.
For $\eta\geq0$, choose $r(t)>0$ for every positive $t\in T$
with $|\log(r(t)/t)|\leq\eta$, and set $r(0)=0$
when $0\in T$.
Let $\mu_r=\sum_{t\in T}w_t\delta_{r(t)}$, with weights
at coincident times combined.

For every system satisfying \eqref{eq:frame}, the operators
into $\ell^2(T;Y)$ defined by
\(
 (\mathcal T_\mu x)_t=\sqrt{w_t}\,CA^tx,
 \
 (\mathcal T_r x)_t=\sqrt{w_t}\,CA^{r(t)}x
\)
are bounded and satisfy
\begin{equation}\label{s4-rt-perturbation-bound}
 \|\mathcal T_r-\mathcal T_\mu\|
 \leq\eta\sqrt{(1+\sqrt3)bK_\mu e^\eta}.
\end{equation}
If $\mu\in\mathcal U_w(\kappa)$ for a finite $\kappa\geq1$,
then $\mu_r\in\mathcal U_w(\kappa)$ whenever
$0\leq\eta<\eta_0$, for some $\eta_0>0$ depending only
on $\mu$ and $\kappa$.
\end{theorem}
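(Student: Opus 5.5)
The plan is to bound the difference atom by atom, writing $CA^{r(t)}x-CA^tx$ as the integral of a time derivative. Then we control the resulting weighted $L^2$ norm through Lemma~\ref{s4-rt-derivative}, applied to a measure built from $\mu$ and spread over logarithmic intervals.

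\emph{Boundedness.} Proposition~\ref{prop:mass} shows that $\mathcal T_\mu$ is bounded. Boundedness of $\mathcal T_r$ will follow once the difference is bounded. The atom at $0$ is unchanged, so only $t>0$ matters. For such $t$, parametrize the time by $u=t e^{\sigma}$ with $\sigma$ between $0$ and $\sigma_t=\log(r(t)/t)$, so that $|\sigma_t|\le\eta$. Then
\[
 CA^{r(t)}x-CA^tx=-\int_0^{\sigma_t} u\,CLA^{u}x\,d\sigma,\qquad u=te^\sigma .
\]
Cauchy--Schwarz in $\sigma$ gives
\[
 \|CA^{r(t)}x-CA^tx\|^2\le\eta\int_{-\eta}^{\eta}(te^\sigma)^2\|CLA^{te^\sigma}x\|^2\,d\sigma .
\]
Multiply by $w_t$, sum over $t$, and apply Tonelli. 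For each fixed $\sigma$, the pushforward $\tau_\sigma$ of $\mu|_{(0,\infty)}$ under $t\mapsto te^\sigma$ satisfies $\tau_\sigma((0,r])=\mu((0,re^{-\sigma}])\le K_\mu e^{-\sigma}r\le K_\mu e^{\eta} r$. Lemma~\ref{s4-rt-derivative} therefore bounds the inner sum by $(1+\sqrt3)bK_\mu e^\eta\|x\|^2$. Integrating over $\sigma\in[-\eta,\eta]$ gives $2\eta^2(1+\sqrt3)bK_\mu e^\eta$. The factor $2$ is the one place where the constant may not match \eqref{s4-rt-perturbation-bound}. To remove it, integrate only over the interval between $0$ and $\sigma_t$, which has length $|\sigma_t|\le\eta$. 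Then Cauchy--Schwarz gives $|\sigma_t|\int_{[0,\sigma_t]}$, and extending to the one-sided intervals $[0,\eta]$ and $[-\eta,0]$ according to the sign of $\sigma_t$ keeps total length $\eta$ per atom. Splitting $T$ into the atoms with $\sigma_t\ge0$ and those with $\sigma_t<0$, each part gives at most $\eta\cdot\eta\cdot(1+\sqrt3)bK_\mu e^\eta$. Summing the two parts seems to bring back a factor $2$. I expect this bookkeeping to be the main nuisance. If the stated constant can't be reached cleanly, I would accept a harmless absolute factor, since only smallness as $\eta\to0$ is used below.

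\emph{Stability.} Let $\mu\in\mathcal U_w(\kappa)$. By scaling, it suffices to treat $I\le S\le\kappa I$. There $\alpha I\le G_\mu\le\beta I$, so $\|\mathcal T_\mu x\|\ge\sqrt\alpha\|x\|$, and $b=\kappa$. Proposition~\ref{prop:tight} and local finiteness give $K_\mu<\infty$. Choose $\eta_0>0$, depending only on $\alpha,\kappa,K_\mu$, so that the right side of \eqref{s4-rt-perturbation-bound} is at most $\sqrt\alpha/2$ for $\eta<\eta_0$. The triangle inequality then yields $\|\mathcal T_rx\|\ge(\sqrt\alpha/2)\|x\|$ and $\|\mathcal T_rx\|\le(\sqrt\beta+\sqrt\alpha/2)\|x\|$.

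Combining weights at coincident times leaves the quadratic form unchanged, so $G_{\mu_r}=\mathcal T_r^*\mathcal T_r$. Local finiteness of $\operatorname{supp}\mu_r$ follows because $r(t)\ge e^{-\eta}t$, so only finitely many $t$ land in any bounded interval. Rescaling back to general $a,b$ gives $\mu_r\in\mathcal U_w(\kappa)$.
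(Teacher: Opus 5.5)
Your overall route matches the paper's: differentiate along a logarithmic path from $t$ to $r(t)$, apply Cauchy--Schwarz, sum with Tonelli, and feed a mass-bounded measure into Lemma~\ref{s4-rt-derivative}. Your stability half is also correct and matches the paper. It combines the triangle inequality with $\|\mathcal T_\mu x\|\ge\sqrt\alpha\|x\|$, local finiteness from $r(t)\ge e^{-\eta}t$, merging of coincident atoms, and rescaling.

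\textbf{The gap is the constant in \eqref{s4-rt-perturbation-bound}, which is part of the statement.} Your argument only gives $\|\mathcal T_r-\mathcal T_\mu\|\le\eta\sqrt{2(1+\sqrt3)bK_\mu e^\eta}$. Your sign-splitting fix does not remove the factor $2$, because each sign class gets its own full application of Lemma~\ref{s4-rt-derivative}. Falling back on ``a harmless absolute factor'' proves a weaker inequality than the one claimed.

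The missing observation is that Lemma~\ref{s4-rt-derivative} needs only the mass bound $\tau((0,u])\le Ku$. The measure does not have to be the image of $\mu$ under one common dilation $t\mapsto te^\sigma$; different atoms may be moved by different factors inside a single measure.

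The paper uses this by parametrizing each path over a unit interval. Set $d_t=\log(r(t)/t)$ and $s_\theta(t)=te^{\theta d_t}$ for $0\le\theta\le1$.
\begin{itemize}
\item Cauchy--Schwarz on $[0,1]$ gives $\|C(A^{r(t)}-A^t)x\|^2\le d_t^2\int_0^1 s_\theta(t)^2\|CLA^{s_\theta(t)}x\|^2\,d\theta$, and $d_t^2\le\eta^2$.
\item For each $\theta$, the measure $\tau_\theta=\sum_{t>0}w_t\delta_{s_\theta(t)}$ satisfies $\tau_\theta((0,u])\le\mu((0,e^\eta u])\le K_\mu e^\eta u$, because $s_\theta(t)\ge e^{-\eta}t$.
\item One application of the lemma for each $\theta$, integrated over $\theta\in[0,1]$, gives exactly $\eta^2(1+\sqrt3)bK_\mu e^\eta\|x\|^2$.
\end{itemize}

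You can equally repair your own split. For each $\sigma\in[0,\eta]$, place the atoms with $\sigma_t\ge0$ at $te^{\sigma}$ and the atoms with $\sigma_t<0$ at $te^{-\sigma}$, all in one measure. Its mass on $(0,u]$ is at most $\mu((0,e^\eta u])\le K_\mu e^\eta u$. A single application of the lemma for each $\sigma$, multiplied by your prefactor $\eta$ and integrated over an interval of length $\eta$, then gives the stated constant. The two halves no longer add.
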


\begin{proof}
Write $w_0=\mu(\{0\})$.
If $r(t)\leq R$, then $t\leq e^\eta R$.
Hence $\mu_r$ has locally finite support and finite positive
weights after coincident times are combined.
Also,
$\mu_r((0,u])\leq\mu((0,e^\eta u])\leq K_\mu e^\eta u$.
Proposition~\ref{prop:mass} gives
$\|\mathcal T_\mu\|^2\leq b(w_0+\sqrt2K_\mu)$ and
$\|\mathcal T_r\|^2\leq b(w_0+\sqrt2e^\eta K_\mu)$.

For $t>0$, put $d_t=\log(r(t)/t)$ and
$s_\theta(t)=te^{\theta d_t}$ for $0\leq\theta\leq1$.
Differentiation along this path and the Cauchy--Schwarz
inequality give
\[
 \|C(A^{r(t)}-A^t)x\|^2
 \leq\eta^2\int_0^1
 s_\theta(t)^2\|CLA^{s_\theta(t)}x\|^2\,d\theta.
\]
For each $\theta$, define
$\tau_\theta=\sum_{t\in T\setminus\{0\}}
w_t\delta_{s_\theta(t)}$.
Then
$\tau_\theta((0,u])
\leq\mu((0,e^\eta u])\leq K_\mu e^\eta u$.
Summing the preceding estimate with weights $w_t$ and
applying Tonelli's theorem and
Lemma~\ref{s4-rt-derivative} proves
\eqref{s4-rt-perturbation-bound}.
The observation at zero contributes no difference.

Now suppose $\mu\in\mathcal U_w(\kappa)$.
Proposition~\ref{prop:tight} ensures $K_\mu<\infty$.
For systems normalized by $I\leq S\leq\kappa I$,
choose uniform bounds
$\alpha I\leq G_\mu\leq\beta I$.
Set
$\varepsilon_\eta
=\eta\sqrt{(1+\sqrt3)\kappa K_\mu e^\eta}$,
and choose $\eta_0>0$ with
$\varepsilon_{\eta_0}<\sqrt\alpha$.
The triangle inequality and
\eqref{s4-rt-perturbation-bound} imply
\[
 (\sqrt\alpha-\varepsilon_\eta)^2I
 \leq G_{\mu_r}
 \leq(\sqrt\beta+\varepsilon_\eta)^2I,
 \qquad 0\leq\eta<\eta_0.
\]
Combining coincident weights preserves the squared analysis
norm, so these are the bounds for the merged measure.
They are uniform over the normalized systems.
For general bounds $aI\leq S\leq bI$ with $b/a\leq\kappa$,
apply the result to $C/\sqrt a$ and multiply the resulting
Gramian bounds by $a$.
Thus $\mu_r\in\mathcal U_w(\kappa)$.
\end{proof}

\begin{theorem}
\label{s4-rt-sparsification}
Let $1\leq\kappa<\infty$ and
$\mu\in\mathcal U_w(\kappa)$, with support $T$.
There exist $\eta>0$ and $\nu\in\mathcal U_w(\kappa)$ such that
\[
 \operatorname{supp}\nu=T'=\{0,t_1,t_2,\ldots\}\subseteq T,
 \qquad
 t_{j+1}\geq e^\eta t_j,
 \qquad j\geq1.
\]
Moreover, $D(\nu)\leq D(\mu)$ and
$N_\nu(R)\leq\eta^{-1}\log R+O_{\mu,\kappa}(1)$
as $R\to\infty$.
\end{theorem}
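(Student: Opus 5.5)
The plan is to use the relative time perturbation result, Theorem~\ref{s4-rt-stability}, to merge sampling times that are close in logarithmic scale.

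First fix $\eta_0=\eta_0(\mu,\kappa)>0$ from that theorem and choose $0<\eta<\eta_0/2$. Since $T$ is locally finite, its positive elements can be listed increasingly. The subset $T'$ is built greedily. Put $t_1=\min(T\setminus\{0\})$. Given $t_j$, let $t_{j+1}$ be the smallest element of $T$ with $t_{j+1}\geq e^\eta t_j$. Such an element exists because $T$ is unbounded: Proposition~\ref{prop:tight} gives $M_\mu(R)\asymp R$, and $\mu$ has finite weights on a locally finite support. By construction $t_{j+1}\geq e^\eta t_j$.

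Next define $r:T\to T'\cup\{0\}$ by $r(0)=0$. For $t\in T$ with $t_j\leq t<t_{j+1}$, set $r(t)=t_j$. The greedy rule ensures that no element of $T$ lies in $(t_j,e^\eta t_j)$ other than those assigned to $t_j$. Hence every $t\in[t_j,t_{j+1})$ satisfies $t<e^\eta t_j$, which gives $|\log(r(t)/t)|\leq\eta<\eta_0$. The only exceptional points would be positive points below $t_1$, and there are none since $t_1$ is the least positive point.

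Now apply Theorem~\ref{s4-rt-stability}. Its hypothesis $K_\mu<\infty$ follows from Proposition~\ref{prop:tight} together with local finiteness near zero. The theorem gives $\nu=\mu_r\in\mathcal U_w(\kappa)$, with weights combined at coincident times. Each point of $T'$ receives at least its own weight, so $\operatorname{supp}\nu=T'\subseteq T$, with $0\in T'$ when $0\in T$. The point $0$ must lie in $T$, because $w_0>0$ by Proposition~\ref{prop:tight}.

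The counting bounds follow directly. Since $T'\subseteq T$, we have $N_\nu\leq N_\mu$ and hence $D(\nu)\leq D(\mu)$. The geometric gaps give $t_j\geq e^{\eta(j-1)}t_1$, so
\[
N_\nu(R)\leq 2+\eta^{-1}\log(R/t_1),
\]
which is $\eta^{-1}\log R+O_{\mu,\kappa}(1)$.

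The only delicate point is verifying that every merged time moves by a relative factor within $e^{\pm\eta}$. The greedy construction together with the choice $\eta<\eta_0$ handles this, and all the analytic difficulty is already contained in Theorem~\ref{s4-rt-stability}.
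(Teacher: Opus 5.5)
Your proposal is correct and follows the paper's proof essentially step for step: the same greedy selection $t_{j+1}=\min\{t\in T:t\geq e^\eta t_j\}$, the same merging of each block $T\cap[t_j,t_{j+1})$ onto $t_j$ justified by Theorem~\ref{s4-rt-stability}, and the same counting from $t_j\geq e^{(j-1)\eta}t_1$. The only difference is that you restrict to $\eta<\eta_0/2$, which is harmless but unnecessary, since the greedy rule already gives $|\log(r(t)/t)|<\eta<\eta_0$.
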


\begin{proof}
By Proposition~\ref{prop:tight},
$0\in T$ and $M_\mu(R)\asymp R$.
Thus $T$ is unbounded.
Local finiteness gives a first positive point $t_1$.

Choose $0<\eta<\eta_0$ as in
Theorem~\ref{s4-rt-stability}, and define
$t_{j+1}=\min\{t\in T:t\geq e^\eta t_j\}$.
Unboundedness and local finiteness ensure that this minimum
exists at every step.
By construction, every point of $T\cap[t_j,t_{j+1})$
lies in $[t_j,e^\eta t_j)$.
Set
\(
 \nu=w_0\delta_0+
 \sum_{j\geq1}\mu([t_j,t_{j+1}))\delta_{t_j}.
\)
Every new positive weight is a finite sum of original weights,
and the weight at zero is unchanged.
Moving each time in $T\cap[t_j,t_{j+1})$ to $t_j$
changes its logarithm by less than $\eta$.
Theorem~\ref{s4-rt-stability} therefore gives
$\nu\in\mathcal U_w(\kappa)$.

Since $T'\subseteq T$, we have
$N_\nu(R)\leq N_\mu(R)$ and hence $D(\nu)\leq D(\mu)$.
Finally, $t_j\geq t_1e^{(j-1)\eta}$ gives the stated
counting bound.
\end{proof}

\begin{remark}
Fix $1\leq\kappa<\infty$, and let
$T\subset[0,\infty)$ be locally finite with $0\in T$.
There exists a measure in $\mathcal U_w(\kappa)$ with
support exactly $T$ if and only if $T$ contains the support
$T'=\{0,t_1,t_2,\ldots\}$ of a measure in
$\mathcal U_w(\kappa)$ with
$\inf_{j\geq1}\log(t_{j+1}/t_j)>0$.
\end{remark}

\begin{proof}
The forward implication is
Theorem~\ref{s4-rt-sparsification}.
Conversely, let $\nu\in\mathcal U_w(\kappa)$ have support $T'$.
Assign strictly positive summable weights to
$T\setminus T'$, and let $\sigma$ be the resulting measure.
If its total mass is $c<\infty$, then
$0\leq G_\sigma\leq bcI$, since
$\|C\|^2\leq b$ and $\|A^t\|\leq1$ for every $t\geq0$.
Thus $\nu+\sigma$ has support exactly $T$, retains the
uniform lower bound of $\nu$ and has a uniform upper bound.
Hence $\nu+\sigma\in\mathcal U_w(\kappa)$.
\end{proof}

\section{Periodic comparison and periodization in logarithmic coordinates}
\label{app:periodic-tools}

This appendix proves the scalar estimates used in
Subsection~\ref{s7-exact}.
We first compare positive atomic measures on a circle and
prove strict monotonicity of $\mathcal R$.
We then construct periodic measures from finite logarithmic
intervals, with uniform error bounds that imply the local
counting theorem.

\subsection{Periodic comparison and monotonicity}

Use the kernel $k$ and the periodic function $P_h$ from
\eqref{eq:tight-periodic-profile}, and write
$m_h=\min P_h$, $M_h=\max P_h$ and
$\mathcal R(h)=M_h/m_h$.
For a measure $\nu$ on $\mathbb R$, we use the convention
$(k*\nu)(y)=\int_{\mathbb R}k(y-x)\,d\nu(x)$.

If $\mu$ has an atom of mass $w$ at $t=e^x>0$, assign mass
$w/t$ to $x$, and denote the resulting measure by $\nu$.
With $u=e^{-y}$, this gives
$u\int_{(0,\infty)}e^{-ut}\,d\mu(t)=(k*\nu)(y)$.
For $L>0$, set $k_L(s)=\sum_{n\in\mathbb Z}k(s+nL)$.
This series and its derivatives converge uniformly on compact
sets, and $\int_0^Lk_L(s)\,ds=1$.
Convolution on the circle $\mathbb R/L\mathbb Z$ is taken
over one period.

For a real continuous function on a circle, let $V_c$ be
the largest even number of strictly alternating positive and
negative values that can be selected in cyclic order.
This number may be infinite; it is zero when no sign change
occurs, including for the zero function.
For a finite signed atomic measure, combine atoms at the
same point, omit zero masses and count the cyclic sign
changes of the remaining masses.

\begin{lemma}
\label{s7-variation}
For every $L>0$, every real continuous function $f$ of period
$L$, and every finite signed atomic measure $\sigma$ on
$\mathbb R/L\mathbb Z$,
\[
 V_c(k_L*f)\leq V_c(f),
 \qquad
 V_c(k_L*\sigma)\leq V_c(\sigma).
\]
\end{lemma}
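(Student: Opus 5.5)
The plan is to show that $k_L$ is a cyclic variation diminishing kernel, i.e.\ a periodic totally positive kernel of the kind treated by Schoenberg and by Mairhuber--Schoenberg--Williamson \cite{MSW}, and then to read off both inequalities. The key input is that $k(s)=\exp(-s-e^{-s})$ is a P\'olya frequency function. Substituting $v=e^{-s}$, its bilateral Laplace transform is
\[
 \int_{\mathbb R}k(s)e^{-zs}\,ds=\Gamma_{\!E}(1+z),
\]
so its reciprocal $1/\Gamma_{\!E}(1+z)$ is an entire function of Laguerre--P\'olya type. This Weierstrass product has real zeros $-1,-2,\dots$, no Gaussian factor and $\sum n^{-2}<\infty$, exactly as in the proof of Lemma~\ref{lem:geometric-reference}. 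Schoenberg's characterization \cite{Schoenberg1951} then gives total positivity of $k(x-y)$.

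The first step is to transfer total positivity to the circle. The transform of $k$ is a Laguerre--P\'olya reciprocal with only simple negative zeros, so $k$ is a limit of convolutions of one-sided exponentials $e^{-a s}\mathbf 1_{s\ge0}$ (suitably shifted). Each such factor periodizes to a kernel that is cyclically variation diminishing on odd orders. This is the classical statement, going back to P\'olya--Schoenberg and \cite{MSW}, that periodizations of P\'olya frequency functions are cyclic P\'olya frequency kernels: for every odd $m$ and cyclically ordered points $x_1<\dots<x_m<x_1+L$, $y_1<\dots<y_m<y_1+L$, one has $\det(k_L(x_i-y_j))\ge0$. I would cite this rather than reprove it, checking only that the infinite product converges uniformly enough for the determinant inequalities to pass to the limit. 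If that limit passage proves delicate, I would instead verify the determinant inequality directly from the odd-order case: the periodized kernel is a sum over $n$ of translates, and the Cauchy--Binet formula applies to this infinite sum of nonnegative minors.

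The second step derives the sign-change inequalities, starting with the measure case. Write $\sigma=\sum_{j=1}^p c_j\delta_{y_j}$ in cyclic order and suppose $k_L*\sigma$ takes strictly alternating signs at cyclically ordered points $x_1,\dots,x_{2r}$ with $2r>V_c(\sigma)$. The standard argument then applies. Combine consecutive atoms of equal sign into $V_c(\sigma)$ groups, choose one point among the $x_i$, and expand the corresponding determinant. This contradicts cyclic total positivity of odd order $2r'+1\le 2r+1$, with the usual bordering trick that adjoins a row of ones or the constant $\int k_L=1$ to reach odd size. The function case follows by approximating $f$ with atomic measures, namely Riemann sums $\sum f(y_j)\,\Delta y\,\delta_{y_j}$. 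These converge uniformly after convolution with the smooth $k_L$. For large refinement their cyclic sign changes are at most $V_c(f)$, and strict alternation of $k_L*f$ at finitely many points persists under uniform approximation. The case $V_c(f)=\infty$ is trivial.

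I expect the main obstacle to be the first step, establishing cyclic total positivity of odd order for the periodized kernel. The parity constraint matters here: cyclic sign counts are always even, which is why only odd-order minors are nonnegative. Getting this right needs the structure of $k$ as a Laguerre--P\'olya limit, and mere total positivity on $\mathbb R$ is not enough. I would rely on the theorem in \cite{MSW} covering exactly this class of periodized P\'olya frequency kernels.
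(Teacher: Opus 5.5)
Your overall plan (odd-order cyclic positivity of $k_L$, then a sign-change argument for atomic measures, then Riemann sums for functions) can be made to work. However, the step that carries the weight, deriving $V_c(k_L*\sigma)\le V_c(\sigma)$ from the determinant inequalities, does not go through as you describe it.

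\emph{Parity.} With $V_c(\sigma)=2r'$ you write $g=k_L*\sigma=\sum_{i=1}^{2r'}\epsilon_iu_i$ with nonnegative block columns $u_i$. The determinant $\det[u_1,\dots,u_{2r'},g]$ on $2r'+1$ alternation points vanishes, but expanding it along the $g$-column produces cofactors of even order $2r'$. Odd-order cyclic positivity says nothing about their signs: on the circle, a cyclic rotation of an even number of rows is an odd permutation.

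\emph{The bordering device.} Bordering with the constant $1=\int_0^Lk_L(\cdot-y)\,dy$ does not repair this. By multilinearity, its minors with the blocks are $\int_0^L\det[u_1,\dots,u_{2r'},k_L(\cdot-y)]\,dy$. The integrand changes sign according to which gap between blocks contains $y$, because moving the new column into its cyclic position is a cycle whose parity alternates with the gap. What does work is to adjoin one single-atom column $k_L(\cdot-y_*)$ with coefficient zero, with $y_*$ in a gap between atoms. Then $g$ lies in the span of $2r'+1$ cyclically ordered nonnegative columns, and on $2r'+2$ alternation points the cofactors have odd order $2r'+1$ and are $\ge0$.

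\emph{Strictness.} Even then the expansion only gives $\sum_j|g(x_j)|D_j=0$ with $D_j\ge0$. This is a contradiction only if some $D_j>0$. Nonnegativity of the minors, which is what you propose to cite, is not enough. You would need strict positivity of the odd-order cyclic minors of $k_L$ at distinct points, or a separate perturbation argument, and neither is in the proposal.

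\emph{The fallback in your first step.} This has a similar problem. Expanding $\det(k_L(x_i-y_j))$ by multilinearity gives terms $\det(k(x_i-y_j-n_jL))$ whose shifted column points are in general not increasing. These terms are therefore not all nonnegative, and a cancellation argument would be required.

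The paper avoids this determinant bookkeeping entirely. It uses total positivity of $k$ only to invoke Schoenberg's periodic convolution theorem, which already states $V_c(k_L*f)\le V_c(f)$ for continuous periodic $f$. It then deduces the measure case by replacing each atom with a smooth bump of the same sign and mass, on disjoint arcs in the original cyclic order. This gives $V_c(f_\epsilon)\le V_c(\sigma)$ and $k_L*f_\epsilon\to k_L*\sigma$ uniformly, so strict alternations of the limit persist. Your Riemann-sum step is correct and is the mirror image of that approximation. If you cite the variation-diminishing theorem itself (Schoenberg, or the Mairhuber--Schoenberg--Williamson characterization) rather than only the determinant inequalities, your argument closes without the gap.
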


\begin{proof}
The kernel $k$ is integrable and totally positive:
$\det(k(x_i-y_j))_{i,j=1}^m\geq0$
for every $m\geq1$ and strictly increasing real sequences
$(x_i)_{i=1}^m$ and $(y_j)_{j=1}^m$;
see \cite[p.~212, (3.15)]{Schoenberg1953}.
Schoenberg's periodic convolution theorem
\cite[pp.~225--226, (4.25)--(4.27)]{Schoenberg1953},
after rescaling the period, proves the assertion for functions.

For an atomic measure, replace each nonzero atom by a smooth
bump with the same sign and mass.
Choose the supporting arcs disjoint and in the original
cyclic order.
The resulting functions $f_\epsilon$ satisfy
$V_c(f_\epsilon)\leq V_c(\sigma)$ and converge weakly
to $\sigma$.
Uniform continuity of $k_L$ gives
$k_L*f_\epsilon\to k_L*\sigma$ uniformly.
Every finite collection of strictly alternating signs in
the limit persists for sufficiently small $\epsilon$.
The assertion for measures follows.
\end{proof}

We apply this variation bound to compare an arbitrary positive
atomic measure with an equally spaced measure having the same
number of atoms.

\begin{lemma}
\label{s7-periodic-optimum}
Let $\nu$ be a finite positive atomic measure with exactly
$N\geq1$ distinct atoms on $\mathbb R/L\mathbb Z$.
Then
\[
 \frac{\max_s(k_L*\nu)(s)}{\min_s(k_L*\nu)(s)}
 \geq\mathcal R(L/N).
\]
Equally spaced atoms with equal masses attain equality.
\end{lemma}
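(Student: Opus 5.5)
The plan is to compare $\nu$ with the equally spaced reference measure of the same total mass and to use the variation diminishing bound of Lemma~\ref{s7-variation}. Normalize so that $\nu$ has total mass $N$. Write $\nu=\sum_{i=1}^N w_i\delta_{x_i}$ with $x_1<\cdots<x_N$ in cyclic order, and set $h=L/N$.

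The equality case comes first. If $\nu_0=\sum_{n=0}^{N-1}\delta_{nh}$, then $k_L*\nu_0(s)=\sum_{m\in\mathbb Z}k(s-mh)=P_h(s)/h$. Hence the ratio for $\nu_0$ is exactly $\mathcal R(h)=\mathcal R(L/N)$.

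For the inequality, I argue by contradiction. Suppose $\max(k_L*\nu)/\min(k_L*\nu)<\mathcal R(h)$. Since both $k_L*\nu$ and $P_h/h$ have mean $N/L$ over a period, I rescale the comparison and choose a constant $c>0$ and a translate $\nu_0^{(\tau)}$ of $\nu_0$ such that the function
\[
g=k_L*\nu-c\,k_L*\nu_0^{(\tau)}
\]
changes sign many times. The choice of $c$ is such that $c\,P_h/h$ oscillates strictly more than $k_L*\nu$. Concretely, I take $c$ so that $c\max P_h/h>\max(k_L*\nu)$ and $c\min P_h/h<\min(k_L*\nu)$; the assumed ratio inequality makes this possible. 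The profile $cP_h(\cdot-\tau)/h$ has period $h$, so over one period $L$ it attains its maximum $N$ times and its minimum $N$ times, alternately. At each of these points $g$ takes the opposite sign, so $V_c(g)\ge 2N$.

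On the other hand, $g=k_L*\sigma$ with $\sigma=\nu-c\,\nu_0^{(\tau)}$, which has at most $2N$ atoms. Lemma~\ref{s7-variation} gives $V_c(g)\le V_c(\sigma)\le 2N$. This bound alone is not a contradiction, so the argument must be sharpened.

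The sharpening is the main obstacle. One route is to choose the translate $\tau$ so that some atom of $\nu_0^{(\tau)}$ coincides with an atom $x_1$ of $\nu$, and to adjust $c$ so that the masses cancel there. Then $\sigma$ has at most $2N-1$ nonzero atoms, hence $V_c(\sigma)\le 2N-2$, because $V_c$ is even and a sign pattern on $2N-1$ atoms cannot alternate fully. The difficulty is that the cancellation condition $c=w_1$ fixes $c$ in advance, so it may fail the oscillation requirement.

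If that happens, I will instead keep the strict inequalities on the extrema and vary $\tau$ continuously. If for every $\tau$ the atoms of $\sigma$ alternate perfectly, with $2N$ changes, the atoms of $\nu$ and of $\nu_0^{(\tau)}$ must interlace for every $\tau$. As $\tau$ moves through a full step $h$, an atom of $\nu_0^{(\tau)}$ passes an atom of $\nu$, and at that moment the two atoms coincide. This reduces the count to at most $2N-1$ atoms and forces $V_c(\sigma)\le 2N-2$.

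Meanwhile, the strict extremal inequalities keep $V_c(g)\ge 2N$ for all $\tau$, since the translate only shifts the alternation points. This contradiction proves $\max/\min\ge\mathcal R(L/N)$.
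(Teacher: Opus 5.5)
Your final argument is correct and is essentially the paper's proof. You translate the equally spaced measure so that one of its atoms lies on an atom of $\nu$. Then $\nu-c\,\nu_0^{(\tau)}$ has at most $2N-1$ atoms, so $V_c\le 2N-2$, while your choice of $c$ forces $V_c(g)\ge 2N$, contradicting Lemma~\ref{s7-variation}. Note that no mass cancellation is needed: coincident locations merge into a single atom for every $c>0$. Your ``first route'' therefore already works, and the sweep in $\tau$ with the interlacing discussion is unnecessary.
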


\begin{proof}
Set $h=L/N$.
Translate the equally spaced set so that one point coincides
with an atom of $\nu$, and put
$\chi=h\sum_{j=0}^{N-1}\delta_{\theta+jh}$.
Then $k_L*\chi=P_h(\cdot-\theta)$.
For every $c>0$, the measure $\nu-c\chi$ has at most
$2N-1$ nonzero atoms.
Its number of cyclic sign changes is even, so
$V_c(\nu-c\chi)\leq2N-2$.

If $P_h$ is constant, the inequality is immediate.
Otherwise, suppose it fails, and let $m$ and $M$ be the
minimum and maximum of $k_L*\nu$.
Choose $c$ with $M/M_h<c<m/m_h$.
Then $g=k_L*(\nu-c\chi)$ is negative at each translated
maximum of $P_h$ and positive at each translated minimum.
Choosing one maximum and one minimum in each period gives
$2N$ points with alternating signs in cyclic order.
Thus $V_c(g)\geq2N$, contradicting
Lemma~\ref{s7-variation}.
For equally spaced atoms with equal masses, the convolution
is a positive multiple of a translate of $P_h$,
which proves equality.
\end{proof}

To study $\mathcal R$, use the Fourier convention
$\widehat k(\xi)=\int_{\mathbb R}k(s)e^{-i\xi s}\,ds$.
Substitution of $v=e^{-s}$ gives
$\widehat k(\xi)=\int_0^\infty e^{-v}v^{i\xi}\,dv
=\Gamma_{\!E}(1+i\xi)$,
where $\Gamma_{\!E}$ denotes the Euler Gamma function.

\begin{lemma}
\label{s7-monotonicity}
The function $\mathcal R$ is a continuous strictly increasing
bijection from $(0,\infty)$ onto $(1,\infty)$.
\end{lemma}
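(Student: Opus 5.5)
The plan is to work on the Fourier side with Poisson summation. Since $k$ is smooth and all its derivatives decay rapidly, Poisson summation gives
\[
 P_h(s)=\sum_{n\in\mathbb Z}\widehat k(2\pi n/h)e^{2\pi ins/h}
 =\sum_{n\in\mathbb Z}\Gamma_{\!E}(1+2\pi in/h)e^{2\pi ins/h}.
\]
The coefficients satisfy $|\Gamma_{\!E}(1+iy)|^2=\pi y/\sinh(\pi y)$, so they decay like $e^{-\pi^2|n|/h}$ up to polynomial factors. With these formulas, continuity and the limits follow directly.

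\emph{Continuity.} On each compact subset of $h\in(0,\infty)$, the series for $P_h$ converges uniformly in $(h,s)$. Hence $m_h$ and $M_h$ are continuous and positive, and so is $\mathcal R$.

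\emph{Limit as $h\downarrow0$.} Here $\|P_h-1\|_\infty\le 2\sum_{n\ge1}|\Gamma_{\!E}(1+2\pi in/h)|\to0$. Therefore $\mathcal R(h)\to1$.

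\emph{Limit as $h\to\infty$.} Evaluate directly: $P_h(0)\ge hk(0)=he^{-1}$. At $s=h/2$, the terms satisfy $k(h/2-nh)\le\max k$ and decay rapidly in $|n|$, with every term tending to zero. So $P_h(h/2)=h\,o(1)$. Thus $\mathcal R(h)\to\infty$.

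\emph{$\mathcal R(h)>1$ for all $h$.} The function $P_h$ is nonconstant, because its first Fourier coefficient $\Gamma_{\!E}(1+2\pi i/h)$ is nonzero. Hence $M_h>m_h$.

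\emph{Strict monotonicity.} This is the main obstacle. My first approach uses Lemma~\ref{s7-periodic-optimum}. Fix $0<h<h'$ and compare at a common period. Suppose first that $h'/h=N'/N$ is rational with $N'<N$. Take $L=Nh=N'h'$. The measure $\nu$ with equal masses at $N'$ equally spaced points gives the ratio $\mathcal R(h')$.

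To compare against $N$ atoms, perturb: add $N-N'$ tiny atoms of mass $\epsilon$ to $\nu$. Lemma~\ref{s7-periodic-optimum} then yields $\mathcal R(h)\le\mathcal R(h')+O(\epsilon)$, which after $\epsilon\to0$ gives $\mathcal R(h)\le\mathcal R(h')$. Monotonicity (non-strict) for all $h<h'$ then follows from continuity and density of the rational ratios.

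For strictness, I would argue by contradiction. Suppose $\mathcal R$ is constant on an interval $[h_1,h_2]$. Take $h<h'$ in that interval with rational ratio and $N'<N$ as above. Then some $N$-atom measure of period $L$, namely the perturbed $\nu$ in the limit, attains the equality case of Lemma~\ref{s7-periodic-optimum}.

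I would then revisit the sign-change argument. In the equality case, with $c=M/M_h=m/m_h$, the function $g=k_L*(\nu-c\chi)$ is $\le0$ at the $N$ maxima of $P_h$ and $\ge0$ at the $N$ minima. Because $k$ is strictly totally positive, in the sense that $k_L$ convolution strictly reduces the count unless the measure is a multiple of $\chi$, this forces $\nu=c\chi$. That contradicts $\nu$ having only $N'<N$ atoms of non-negligible mass.

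Making this strictness rigorous is the delicate step: the limit $\epsilon\to0$ destroys distinctness of atoms. An alternative I would try is to take $N=N'+1$ and choose the atom positions directly, so that $\nu$ has $N$ genuine atoms. I would check that near-optimality of $\nu$ as $\epsilon\to0$ would contradict a uniform gap obtained from a compactness argument over configurations of $N$ atoms with mass ratios bounded away from equality.

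Finally, continuity, strict monotonicity and the two limits together show that $\mathcal R$ is a bijection from $(0,\infty)$ onto $(1,\infty)$.
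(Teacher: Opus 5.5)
Your continuity argument, the two limits, the bound $\mathcal R>1$, and the non-strict monotonicity via Lemma~\ref{s7-periodic-optimum} are sound. Two small corrections: $L=Nh=N'h'$ gives $h'/h=N/N'$, not $N'/N$. Also, the tiny extra atoms are unnecessary, because the sign-change proof of that lemma works verbatim for any measure with at most $N$ atoms.

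The gap is strict monotonicity, which is the real content of the lemma. In your equality case the extremal measure is the $N'$-atom measure $\nu$ itself. The function $g=k_L*(\nu-c\chi)$ is only \emph{weakly} alternating: $g\le0$ at the $N$ maxima and $g\ge0$ at the $N$ minima, with zeros allowed. Lemma~\ref{s7-variation} counts only strict sign changes, so it cannot exclude this pattern. Indeed it could not, since equally spaced measures realize exactly this pattern with equality.

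What you need is the strong cyclic variation-diminishing inequality. It says that if $k_L*\sigma\not\equiv0$, then the number of cyclic sign changes of $k_L*\sigma$ is at most $V_c(\sigma)$, even when each zero is assigned whichever sign maximizes the count. You would combine this with injectivity of $\sigma\mapsto k_L*\sigma$, which holds because the Fourier coefficients $\Gamma_{\!E}(1+2\pi im/L)$ never vanish. Together these would force $\nu=c\chi$.

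Your phrase ``$k_L$ convolution strictly reduces the count unless the measure is a multiple of $\chi$'' is not this statement, and read literally it is false. For example, $\sigma=\delta_0-\delta_{L/2}$ and $k_L*\sigma$ both have exactly two cyclic sign changes. The strong inequality is very plausibly true here, since $k(x-y)=e^{y-x}\exp(-e^{-x}e^{y})$ is strictly totally positive. But getting from strict total positivity of $k$ to the strong cyclic property of the periodized kernel is a genuine piece of total positivity theory. Your proposal asserts it rather than proves it, and nothing in the paper supplies it.

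Your fallback does not close the gap either. Limits of $N$-atom configurations in which some masses tend to zero are measures with fewer atoms. A uniform gap for those is precisely $\mathcal R(L/N')>\mathcal R(L/N)$, so the compactness argument is circular.

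The paper avoids extremal problems for this step. Fix $0<h_1<h_2$ and $c=h_1/h_2$. A result of Jurek says that $\Gamma_{\!E}(1+it)/\Gamma_{\!E}(1+ict)$ is the characteristic function of a random variable with an independent exponential component. The paper uses this to write $q_{h_1}=\sigma*q_{h_2}$ on the circle, with a probability measure $\sigma\ge\epsilon\,dt$ for some $\epsilon>0$. Since $q_{h_2}$ has mean one, this immediately gives $m_{h_1}\ge\epsilon+(1-\epsilon)m_{h_2}>m_{h_2}$ and $M_{h_1}\le\epsilon+(1-\epsilon)M_{h_2}<M_{h_2}$. Strictness comes for free from this convolution identity.
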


\begin{proof}
Set $q_h(t)=P_h(ht)$, which has period one.
Its Fourier coefficients are
\begin{equation}\label{s7-fourier-profile}
 \widehat q_h(m)
 =\Gamma_{\!E}\left(1+\frac{2\pi im}{h}\right),
 \qquad m\in\mathbb Z.
\end{equation}
The Gamma product formula gives
\begin{equation}\label{s7-gamma-modulus}
 |\Gamma_{\!E}(1+i\xi)|^2
 =\prod_{n=1}^\infty
  \left(1+\frac{\xi^2}{n^2}\right)^{-1}
 =\frac{\pi\xi}{\sinh(\pi\xi)},
 \qquad \xi\in\mathbb R,
\end{equation}
where the last expression is assigned the value $1$ at
$\xi=0$; see \cite[\S5.8]{DLMF}.
The first two factors imply
$|\widehat q_h(m)|\leq h^2/(2\pi^2m^2)$ for $m\neq0$.
Thus the Fourier series converges uniformly for $h$ in
compact subsets of $(0,\infty)$, and $q_h$ depends
continuously on $h$ in the uniform norm.
Consequently, $m_h$, $M_h$ and $\mathcal R(h)$ are continuous.
The same bound gives $q_h\to1$ uniformly as $h\downarrow0$.
Since $\widehat q_h(0)=1$ and $\widehat q_h(1)\neq0$,
we also have $m_h<1<M_h$.

To prove strict monotonicity, fix $0<h_1<h_2$ and set
$c=h_1/h_2$.
By \cite[Proposition~4]{Jurek2022}, with both parameters
equal to one, there is a real random variable $Z_c$ such that
\[
 \mathbb E e^{itZ_c}
 =\frac{\Gamma_{\!E}(1+it)}{\Gamma_{\!E}(1+ict)},
 \qquad t\in\mathbb R.
\]
Let $\sigma$ be the distribution of $V=-Z_c/h_1$ modulo one.
Comparison with \eqref{s7-fourier-profile} gives
$q_{h_1}=\sigma*q_{h_2}$.

The independent series in the same proposition gives
$V=BE/h_1+W$, where $B$, $E$ and $W$ are independent,
$B\in\{0,1\}$ with $\mathbb P(B=1)=1-c$, and $E$ has
the exponential distribution with rate one.
The distribution of $E/h_1$ modulo one has density
$r(t)=h_1e^{-h_1t}/(1-e^{-h_1})$ for $0\leq t<1$.
Its infimum is $h_1/(e^{h_1}-1)$.
Hence $\sigma\geq\epsilon\,dt$ on the unit circle, where
$\epsilon=(1-c)h_1/(e^{h_1}-1)\in(0,1)$.
Since $q_{h_2}$ has mean one,
\[
 m_{h_1}
 \geq\epsilon+(1-\epsilon)m_{h_2}>m_{h_2},\qquad
 M_{h_1}
 \leq\epsilon+(1-\epsilon)M_{h_2}<M_{h_2}.
\]
Therefore $\mathcal R(h_1)<\mathcal R(h_2)$.

The uniform convergence $q_h\to1$ gives
$\mathcal R(h)\to1$ as $h\downarrow0$.
Also, $P_h(0)\geq h/e$ and $m_h\leq1$, so
$\mathcal R(h)\geq h/e\to\infty$ as $h\to\infty$.
These limits complete the proof.
\end{proof}

\subsection{Periodization and local counting}

The next construction converts a finite logarithmic interval
into a periodic measure.
Its error estimates are uniform in the measure and the
location of the interval.

\begin{lemma}
\label{s7-periodization}
Let $\mu$ be a positive atomic measure with locally finite
support.
Suppose $0<a\leq\phi_\mu(z)\leq1$ for $0\leq z<1$,
where $\phi_\mu$ is defined in Subsection~\ref{s7-exact},
and let
$D=\limsup_{R\to\infty}N_\mu(R)/\log R<\infty$.
For every $0<\epsilon<a$, there are positive atomic measures
$\omega_L$ of period $L$, for all sufficiently large $L$,
with $n_L$ distinct atoms per period, such that
\[
 a-\epsilon\leq(k*\omega_L)(y)\leq1+\epsilon,
 \qquad y\in\mathbb R,
 \qquad
 \limsup_{L\to\infty}\frac{n_L}{L}\leq D+\epsilon.
\]
\end{lemma}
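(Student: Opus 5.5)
The plan is to pass to logarithmic coordinates, blend the transformed measure with a periodic filler near the ends of a long logarithmic window, and extend periodically.

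\emph{Logarithmic coordinates.} Transport $\mu$ as in Subsection~\ref{s7-exact}: an atom of mass $w$ at $t=e^x>0$ becomes an atom of mass $w/t$ at $x$. Call the result $\nu$. Then $(k*\nu)(y)=u\int_{(0,\infty)}e^{-ut}\,d\mu(t)$ with $u=e^{-y}$; the atom at $t=0$ only contributes $ue^{-y}\mu(\{0\})$, which I drop for large $y$. From $\phi_\mu(e^{-u})=\frac{1-e^{-u}}{u}\,u\int e^{-ut}\,d\mu$, the bounds $a\le\phi_\mu\le1$ give
\[
a-\epsilon/4\le(k*\nu)(y)\le1+\epsilon/4 \qquad\text{for } y\ge Y_0.
\]
I also need a uniform bound on the local mass of $\nu$. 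Proposition~\ref{prop:tight} gives $M_\mu(R)\le C(1+R)$. Hence $\nu([x,x+1])\le M_\mu(e^{x+1})/e^x\le C'$ for $x\ge0$. With $k(s)=\exp(-s-e^{-s})$, which decays like $e^{-s}$ as $s\to+\infty$ and doubly exponentially as $s\to-\infty$, this yields
\[
\int|y-x|\,k(y-x)\,d\nu(x)\le C''
\]
uniformly in $y\ge0$.

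\emph{Filler measure.} Fix a step $h'$ so small that $\mathcal R(h')$ is within $\epsilon/4$ of $1$; this uses Lemma~\ref{s7-monotonicity}, since $q_h\to1$ uniformly. Put $\rho=c\,h'\sum_n\delta_{nh'}$ with $c$ chosen so that $k*\rho=cP_{h'}$ lies in $[a-\epsilon/2,1+\epsilon/2]$. For the periodic version, take $h'_L=L/\lceil L/h'\rceil$, which lies in $[h'/2,h']$, so that $\rho$ has period $L$; continuity of $\mathcal R$ makes the bound uniform in $L$.

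\emph{Crossfade and periodization.} Fix $Y\ge Y_0$ and a transition length $T$. On the window $[Y,Y+L]$, take a cutoff $\chi$ with $0\le\chi\le1$ that equals $0$ within distance $T$ of both endpoints, equals $1$ in the middle, and has $\mathrm{Lip}(\chi)\le 2/T$. On one period set $\omega=\chi\nu+(1-\chi)\rho$, and let $\omega_L$ be its $L$-periodic extension. The key estimate is
\[
\bigl|k*(\chi\nu)(y)-\chi(y)\,(k*\nu)(y)\bigr|\le \mathrm{Lip}(\chi)\int|y-x|\,k(y-x)\,d\nu(x)\le 2C''/T,
\]
and the same bound holds with $\rho$ in place of $\nu$. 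Contributions to $k*\omega_L(y)$ from other periods enter only through the tails of $k$ across the zero region of $\chi$, where the measure is exactly the filler $\rho$. So these contributions fit into the same convex-combination estimate, up to an $O(e^{-T})$ error. Hence $k*\omega_L(y)$ equals, up to $O(1/T)$, a convex combination of values in $[a-\epsilon/2,1+\epsilon/2]$. Choosing $T$ large gives $a-\epsilon\le k*\omega_L\le1+\epsilon$.

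\emph{Counting.} Atoms of $\omega$ per period are at most the support points of $\nu$ in $[Y,Y+L]$ plus $O(T/h')$ filler atoms. This gives
\[
n_L\le N_\mu(e^{Y+L})+O(T/h'),
\]
and so $\limsup n_L/L\le D+\epsilon$, since $Y$ and $T$ are fixed. I expect the main obstacle to be the junction bookkeeping: checking that the tails of $k$ from neighbouring periods only interact with the filler, and that the Lipschitz commutator bound holds uniformly. For $\rho$ it is immediate; for $\nu$ it relies on the bound on $\nu$ per unit length.
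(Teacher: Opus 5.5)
Your proposal is correct and follows essentially the same route as the paper. You pass to logarithmic coordinates, blend $\nu$ with a filler using a cutoff of Lipschitz constant $O(1/T)$, bound the commutator by the first moment of $k$ against a measure of bounded mass on unit intervals, observe that copies from other periods lie at distance at least $T$ and contribute $O(e^{-T})$, and count the window atoms plus $O(T)$ boundary atoms.

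The only difference is cosmetic. You make the filler atomic from the start, as a fine grid $\rho$ with $k*\rho=cP_{h'}$ and $P_{h'}\to1$ uniformly by Lemma~\ref{s7-monotonicity}. The paper instead uses the Lebesgue filler $c\,dx$, whose convolution is exactly $c$, and then discretizes the boundary strips with an explicit $O(\delta)$ mean-value estimate.
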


\begin{proof}
Set $Q_\mu(u)=\int_{[0,\infty)}e^{-ut}\,d\mu(t)$.
The scalar upper bound gives
\[
 M_\mu(R)
 \leq eQ_\mu(1/R)
 \leq\frac{e}{1-e^{-1/R}}
 \leq2eR,
 \qquad R\geq1.
\]
Remove the atoms in $[0,1]$ and define
$\nu=\sum_{t>1}(\mu(\{t\})/t)\delta_{\log t}$.
Every interval of length one has $\nu$ mass at most
$B_0=2e^2$.
Indeed, for $x\geq0$,
$\nu([x,x+1])\leq e^{-x}M_\mu(e^{x+1})\leq2e^2$;
the same bound holds for intervals meeting the negative
axis because $\operatorname{supp}\nu\subset(0,\infty)$.

Let $F=k*\nu$.
For $y\geq0$ and $u=e^{-y}$,
\[
 F(y)
 =\frac{u}{1-e^{-u}}\phi_\mu(e^{-u})
  -u\int_{[0,1]}e^{-ut}\,d\mu(t).
\]
Since $\mu([0,1])\leq2e$ and
$1\leq u/(1-e^{-u})\leq1+u$, we obtain
$a-2e\,e^{-y}\leq F(y)\leq1+e^{-y}$.
Thus, for every $\eta>0$, we can choose $Y_\eta\geq0$,
independently of $\mu$, so that
$a-\eta\leq F(y)\leq1+\eta$ for $y\geq Y_\eta$.

Fix $Y\geq Y_\eta$, $W\geq2$ and $L>4W$.
Let $\theta=\theta_L$ vanish on
$[0,W]\cup[L-W,L]$, equal one on $[2W,L-2W]$,
and be linear on the intervening intervals.
Extend it by zero outside $[0,L]$.
Then $0\leq\theta\leq1$ and its Lipschitz constant
is at most $1/W$.
Write $\nu_Y(E)=\nu(Y+E)$ and $c=(a+1)/2$.
Periodically extend the measure
\(
 \theta(x)\,d\nu_Y(x)+c(1-\theta(x))\,dx\) for \(0\leq x<L\),
and denote it by $\widetilde\omega_L$.
Its continuous part is confined to the four boundary
intervals of length $W$.

We claim that
\begin{equation}\label{s7-boundary-error}
 \left|
 (k*\widetilde\omega_L)(y)
 -\bigl[c+\theta(y)(F(Y+y)-c)\bigr]
 \right|
 \leq\frac{C}{W}+Ce^{-W},
 \qquad 0\leq y<L,
\end{equation}
where $C$ depends only on $k$ and $B_0$.

The convolution equals
\[
 c+\sum_{j\in\mathbb Z}\int_0^L
 k(y-x-jL)\theta(x)\,d(\nu_Y-c\,dx)(x).
\]
For $j=0$, subtracting $\theta(y)(F(Y+y)-c)$ leaves
\[
 \int_{\mathbb R}
 k(y-x)(\theta(x)-\theta(y))\,d(\nu_Y-c\,dx)(x).
\]
Its absolute value is at most
\[
 \frac1W\int_{\mathbb R}|y-x|k(y-x)\,
 d(\nu_Y+c\,dx)(x)\leq\frac CW.
\]
To obtain the last bound, partition the line into unit
intervals, use the uniform mass bound and note that
$\sum_{j\in\mathbb Z}
\sup_{s\in[j,j+1]}(1+|s|)k(s)<\infty$.

For $j\neq0$, the translated supports of $\theta$ are
disjoint and lie at distance at least $W$ from
$y\in[0,L]$.
Their measures retain a uniform mass bound on unit intervals.
The exponential decay of $k$ at $+\infty$ and its faster
decay at $-\infty$ therefore bound their total contribution
by $Ce^{-W}$.
This proves \eqref{s7-boundary-error}.
Since $c\in[a,1]$, the convolution lies between
$a-\eta-C/W-Ce^{-W}$ and $1+\eta+C/W+Ce^{-W}$.

We next replace the continuous boundary measure by atoms.
Partition each of its four boundary intervals into pieces
of length $\delta\leq1$, allowing a shorter final piece,
and move the mass of each piece to one point in that piece.
Repeat the construction periodically.
After omitting zero masses and combining coincident atoms,
the resulting measure $\omega_L$ is positive and atomic.
At most $4W/\delta+4$ atoms have been added.

The mean value theorem bounds the uniform convolution error
by
\[
 \begin{aligned}
 c\delta\int_{\mathbb R}
 \sup_{|v-s|\leq\delta}|k'(v)|\,ds
 &\leq
 c\delta\bigl(\|k'\|_{L^1}
             +2\delta\|k''\|_{L^1}\bigr).
 \end{aligned}
\]
The last inequality follows from the fundamental theorem
of calculus and Fubini's theorem.
Both derivatives are integrable, so this error is
$O(\delta)$, independently of $L$, $W$, $Y$ and $\mu$.

Choose $\eta$ small, then $W$ large and finally $\delta$
small, so that the total error is at most $\epsilon$.
Fix these parameters and $Y$ before letting $L\to\infty$.
The construction and its error estimates use only local
finiteness and the scalar bounds; the assumption $D<\infty$
is needed only for the following count:
\[
 n_L
 \leq N_\mu(e^{Y+L})+4W/\delta+4
 \leq(D+\epsilon)(Y+L)+4W/\delta+4
\]
for all sufficiently large $L$.
Dividing by $L$ proves the claim.
\end{proof}

\begin{proof}[Proof of Theorem~\ref{s7-local-counting}]
Fix $\Gamma>1$ and $\gamma>\Gamma$.
Multiplying all weights by a common constant preserves
the support.
By \eqref{s7-scalar}, we may therefore assume
$a\leq\phi_\mu(z)\leq1$ for $0\leq z<1$,
where $a=1/\Gamma$.

Use the measures $\nu$ and functions $F=k*\nu$ from the
proof of Lemma~\ref{s7-periodization}.
The estimates there give a mass bound $2e^2$ on every
unit interval and, for each $\eta>0$, a number
$Y_\eta\geq0$ independent of $\mu$ such that
$a-\eta\leq F(y)\leq1+\eta$ for $y\geq Y_\eta$.

Choose $0<\zeta<a$ with
$(1+\zeta)/(a-\zeta)\leq\gamma$.
For each interval $[Y,Y+\ell]$ with
$Y\geq Y_\eta$ and $\ell>4W$, apply the construction in
the proof of Lemma~\ref{s7-periodization}.
By \eqref{s7-boundary-error} and the boundary discretization
estimate, its total error is at most
\[
 \eta+\frac CW+Ce^{-W}
 +c\delta\bigl(\|k'\|_{L^1}
              +2\delta\|k''\|_{L^1}\bigr),
 \qquad c=(a+1)/2.
\]
Choose $\eta$ small, then $W$ large and finally $\delta$
small, so that this is at most $\zeta$.
All parameters, including $Y_\eta$, now depend only on
$\Gamma$ and $\gamma$.

The resulting positive periodic atomic measure has
convolution between $a-\zeta$ and $1+\zeta$.
If it has $n$ distinct atoms per period, then
\(
 n\leq\#\bigl(T\cap(e^Y,e^{Y+\ell}]\bigr)+B,
 \ B=\lceil4W/\delta\rceil+4.
\)
At most $B$ atoms come from the boundary discretization.
Every other atom comes from the stated window, and the
cutoff removes its endpoints.
Local finiteness ensures that this number is finite;
no global bound on $N_\mu$ is needed.

Lemmas~\ref{s7-periodic-optimum} and
\ref{s7-monotonicity} give
$\mathcal R(\ell/n)\leq(1+\zeta)/(a-\zeta)\leq\gamma$,
and hence $n\geq\ell/h_\gamma$.
Thus \eqref{s7-local-window} holds with error $B$
when $\log R\geq Y_\eta$ and $\ell>4W$.

For general $R\geq1$, remove the portion of
$[\log R,\log R+\ell]$ below $Y_\eta$.
Its length is at most $Y_\eta$.
If the remaining interval has length greater than $4W$,
apply the preceding estimate to it.
Otherwise, $\ell\leq Y_\eta+4W$, and nonnegativity
of the count suffices.
Therefore \eqref{s7-local-window} holds for all
$R\geq1$ and $\ell\geq0$ with
$C_{\Gamma,\gamma}=B+(Y_\eta+4W)/h_\gamma$.

Divide \eqref{s7-local-window} by $\ell$, take the infimum
over $R\geq1$ and let $\ell\to\infty$.
Finally, let $\gamma\downarrow\Gamma$.
The continuity of $\mathcal R^{-1}$, given by
Lemma~\ref{s7-monotonicity}, proves
\eqref{s7-local-sharp-density}.
\end{proof}

\section{Endpoint expansions for tight frames}
\label{app:tight-endpoint-expansions}

We refine the exact formula in Theorem~\ref{s7-main}
as the prescribed bound $\Gamma$ on the condition number
after sampling approaches $1$ or infinity.

\begin{theorem}
As $\Gamma\downarrow1$, with
$L=\log(1/(\Gamma-1))$,
\(
 \pi^2C^*(1;\Gamma)
 =L+(\log L)/2+\log8+o(1).
\)
As $\Gamma\to\infty$,
\[
 \frac1{C^*(1;\Gamma)}
 =\log\Gamma+\log\log\Gamma+1+o(1).
\]
The same expansions hold for $C^*_{\mathbb Z}$.
\end{theorem}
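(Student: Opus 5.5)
Since Theorem~\ref{s7-main} gives $C^*(1;\Gamma)=C^*_{\mathbb Z}(1;\Gamma)=1/h_\Gamma$ with $\mathcal R(h_\Gamma)=\Gamma$, it suffices to expand $\mathcal R(h)$ as $h\downarrow0$ and as $h\to\infty$, and then invert. Lemma~\ref{s7-monotonicity} makes $\mathcal R$ a continuous increasing bijection, so $h_\Gamma\downarrow0$ as $\Gamma\downarrow1$ and $h_\Gamma\to\infty$ as $\Gamma\to\infty$. The integer statement then follows at once from the equality in Theorem~\ref{s7-main}.

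\emph{Small $h$.} I would use the Fourier series \eqref{s7-fourier-profile}, which gives
\[
q_h(t)=1+2\operatorname{Re}\sum_{m\geq1}\Gamma_{\!E}(1+2\pi im/h)e^{2\pi imt}.
\]
By \eqref{s7-gamma-modulus}, $|\widehat q_h(m)|\sim\sqrt{4\pi^2m/h}\,e^{-\pi^2m/h}$. Hence the terms with $m\geq2$ are $O(e^{-\pi^2/h})$ times the $m=1$ term. With $A_h=|\widehat q_h(1)|$, this gives $M_h=1+2A_h(1+o(1))$ and $m_h=1-2A_h(1+o(1))$. Therefore $\mathcal R(h)-1=4A_h(1+o(1))$. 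Setting $x=\pi^2/h$ yields
\[
\mathcal R(h)-1=8\sqrt x\,e^{-x}(1+o(1)).
\]
Taking logarithms, $L=x-\tfrac12\log x-\log8+o(1)$. Inverting gives $x=L+\tfrac12\log L+\log8+o(1)$. This is the first expansion, since $x=\pi^2/h_\Gamma=\pi^2C^*(1;\Gamma)$.

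\emph{Large $h$.} I would work directly with $P_h(s)=h\sum_n k(s-nh)$. The maximum comes from a single bump: $M_h=h(e^{-1}+O(e^{-h}))$. For the minimum, restrict to one period. There the only non-negligible contributions are the right tail $e^{-s}$ of the bump at $0$ and the left flank of the bump at $h$. Writing $v=e^{h-s}$, one obtains
\[
P_h(s)/h=v\bigl(e^{-h}+e^{-v}\bigr)\bigl(1+o(1)\bigr).
\]
Minimizing over $v$ gives the condition $(v-1)e^{-v}=e^{-h}$, so $v=h+\log h+o(1)$. The minimum value is then $e^{-h}(h+\log h+1+o(1))$. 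Consequently
\[
\log\mathcal R(h)=h-1-\log h+o(1).
\]
Inverting gives $h_\Gamma=\log\Gamma+\log\log\Gamma+1+o(1)$, as stated.

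The main obstacle is making the large-$h$ minimization rigorous with $o(1)$ control. Three things must be shown: the neglected bumps contribute only $o(he^{-h}h)$ relative error; the true minimizer lies in the window where the two-term approximation is accurate; and the $(1+o(1))$ factors in $k(s)=e^{-s}e^{-e^{-s}}$ are uniform there, which holds because $e^{-s}=ve^{-h}\to0$. For the small-$h$ case, the only point needing care is that the extrema of $1+2\operatorname{Re}(\widehat q_h(1)e^{2\pi it})$ are perturbed by only $O(e^{-2\pi^2/h})$; the uniform Fourier bound gives this.
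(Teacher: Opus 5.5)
Your proposal is correct and takes essentially the same route as the paper. Both arguments reduce the statement to $\mathcal R(h_\Gamma)=\Gamma$ via Theorem~\ref{s7-main}. For $h\downarrow0$, both use dominance of the first Fourier coefficient $A_h$ to get $\mathcal R(h)-1=4A_h(1+o(1))$. For $h\to\infty$, both use the two-term reduction $P_h(s)/h=f(v)+f(v/q)+O(q^{-1})$, with $f(v)=ve^{-v}$, $v=e^{h-s}$ and $q=e^h$.

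The one difference is the minimum for large $h$. You solve the critical-point equation exactly. The paper avoids the localization issue you flagged by using only monotonicity of $f$: splitting $[1,q]$ at $v=h$ gives $f(v)+f(v/q)\geq(h/q)e^{-h/q}$ everywhere, and evaluating at $v=h+2\log h$ gives the matching upper bound. That yields only $m_h=h^2e^{-h}(1+o(1))$, which already suffices for the $o(1)$ expansion.
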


\begin{proof}
By Theorem~\ref{s7-main},
$C^*(1;\Gamma)=C^*_{\mathbb Z}(1;\Gamma)=1/h_\Gamma$,
where $\mathcal R(h_\Gamma)=\Gamma$ and
$\mathcal R(h)=M_h/m_h$.
We therefore estimate $M_h$ and $m_h$ at the two endpoints.

First let $h\downarrow0$.
Recall that $q_h(t)=P_h(ht)$ has period one.
By \eqref{s7-fourier-profile} and
\eqref{s7-gamma-modulus}, its first nonconstant Fourier
coefficient has modulus
\[
 A_h
 =\left|\Gamma_{\!E}\left(1+\frac{2\pi i}{h}\right)\right|
 =\frac{2\pi}{\sqrt h}e^{-\pi^2/h}
  (1-e^{-4\pi^2/h})^{-1/2}.
\]
The same identity gives
\[
 \sum_{m=2}^\infty
 \left|\Gamma_{\!E}\left(1+\frac{2\pi im}{h}\right)\right|
 =O(h^{-1/2}e^{-2\pi^2/h})=o(A_h).
\]
Thus
$q_h(t)=1+2A_h\cos(2\pi t+\vartheta_h)+r_h(t)$
for some $\vartheta_h\in\mathbb R$, with
$\|r_h\|_\infty=o(A_h)$.
It follows that $M_h=1+2A_h+o(A_h)$ and
$m_h=1-2A_h+o(A_h)$, so
$\mathcal R(h)-1
=8\pi h^{-1/2}e^{-\pi^2/h}(1+o(1))$.

At $h=h_\Gamma$, put $Y=\pi^2/h_\Gamma$.
Taking logarithms yields
$L=Y-(\log Y)/2-\log8+o(1)$.
Hence $Y/L\to1$ and then
$Y=L+(\log L)/2+\log8+o(1)$.
This proves the first expansion.

Now let $h\to\infty$ and put $q=e^h$.
For $0\leq s\leq h$, set $v=e^{h-s}\in[1,q]$.
With $f(v)=ve^{-v}$, the definition of $P_h$ gives
$P_h(s)/h=\sum_{m\in\mathbb Z}f(vq^m)$.
Uniformly for $1\leq v\leq q$,
\[
 \sum_{m\leq-2}f(vq^m)
 \leq\frac{v}{q(q-1)}
 \leq\frac1{q-1},\qquad
 \sum_{m\geq1}f(vq^m)
 \leq\sum_{m\geq1}q^me^{-q^m}
 =O(qe^{-q}).
\]
Therefore
$P_h(s)/h=f(v)+f(v/q)+O(q^{-1})$,
and all omitted terms are nonnegative.

The function $f$ is increasing on $[0,1]$ and decreasing
on $[1,\infty)$.
Splitting $[1,q]$ at $v=h$ gives
\[
 \frac hq e^{-h/q}
 \leq f(v)+f(v/q)
 \leq\frac1e+\frac hq,
 \qquad 1\leq v\leq q,\quad h\geq1.
\]
At $v=1$, the sum is at least $1/e$.
Together with the uniform remainder bound, this proves
$M_h=(h/e)(1+o(1))$.

For the minimum, the preceding lower bound gives
$m_h/h\geq(h/q)e^{-h/q}$.
For an upper bound, choose $v=h+2\log h$, which belongs
to $[1,q]$ for sufficiently large $h$.
Then $f(v)=O((qh)^{-1})$ and
$f(v/q)=(h/q)(1+o(1))$.
Since $q^{-1}=o(h/q)$, the uniform remainder estimate gives
$m_h/h\leq(h/q)(1+o(1))$.
Thus $m_h=h^2e^{-h}(1+o(1))$, and consequently
$\mathcal R(h)=e^h/(eh)(1+o(1))$.

At $h=h_\Gamma$, this yields
$\log\Gamma=h_\Gamma-\log h_\Gamma-1+o(1)$.
Hence $h_\Gamma/\log\Gamma\to1$ and then
$h_\Gamma=\log\Gamma+\log\log\Gamma+1+o(1)$.
The identity $1/C^*(1;\Gamma)=h_\Gamma$ proves
the second expansion.
\end{proof}

\section{Radial gaps and the effect of additional spectral points}
\label{app:normal-radial-gaps}

We prove Example~\ref{geo:gap-counterexample} and the counting bound
stated at the end of Section~\ref{sec:spectral-geometry}.
The spectral set is independent of the bound $\kappa$ on the
condition number of the original frame, whereas the sampling
measure may depend on $\kappa$.

\begin{proof}[Proof of Example~\ref{geo:gap-counterexample}]
We construct decreasing numbers $s_k>0$, positive integers $q_k,T_k$,
and weights $w_k$. Set
\[
 q_k=\lfloor s_k^{-1/2}\rfloor,\qquad
 r_k=e^{-s_k},\qquad
 d_k=1-e^{-2s_kq_k},\qquad
 \Omega_k=\{r_ke^{2\pi i\ell/q_k}:0\leq\ell<q_k\}.
\]
We require $s_1<1/16$ and $s_{k+1}<s_k/4$, and choose $T_k$ to be
a positive multiple of $q_k$. Define
\[
 w_k=\frac{e^{2s_kT_k}}{d_k},\qquad
 \mu_k=w_k\sum_{n=0}^{q_k-1}\delta_{T_k+n},\qquad
 \mu=\sum_{k\geq1}\mu_k,\qquad
 \Omega_0=\bigcup_{k\geq1}\Omega_k.
\]
Writing $Q_k=\sum_{i=1}^kq_i$, we also require
\begin{equation}\label{geo:gap-delays}
 T_k>T_{k-1}+q_{k-1},\qquad
 \log T_k\geq Q_k^2,
\end{equation}
where the first inequality applies only for $k\geq2$.

Consider any normal system satisfying \eqref{geo:system} with
spectral measure carried by $\Omega_0$ and $I\leq S\leq\kappa I$.
Let $P_k=E_A(\Omega_k)$ and $H_k=P_kH$. Then
$H=\bigoplus_{k\geq1}H_k$.
Write $A_k=A|_{H_k}$, $S_{kk}=P_kS|_{H_k}$, and
$B_{kk}=P_kB|_{H_k}$, where $B=C^*C$.
Since $A_k^{q_k}=r_k^{q_k}I$, grouping the integer Gramian by
residues modulo $q_k$ gives
\[
 S_{kk}=\frac1{d_k}\sum_{n=0}^{q_k-1}
              (A_k^n)^*B_{kk}A_k^n,\qquad
 B_{kk}\leq d_kS_{kk}\leq\kappa d_kI.
\]
Thus $\mu_k$ reproduces the original Gramian on $H_k$:
\begin{equation}\label{geo:gap-exact}
 P_kG_{\mu_k}P_k|_{H_k}
 =w_kr_k^{2T_k}\sum_{n=0}^{q_k-1}
              (A_k^n)^*B_{kk}A_k^n
 =S_{kk}.
\end{equation}
For $l\ne k$, the bound $B_{ll}\leq\kappa d_lI$ gives
\begin{equation}\label{geo:gap-leakage}
 \|P_lG_{\mu_k}P_l|_{H_l}\|
 \leq\kappa e_{kl},\qquad
 e_{kl}:=q_kd_lw_ke^{-2s_lT_k}
       =\frac{q_kd_l}{d_k}e^{-2(s_l-s_k)T_k}.
\end{equation}

We choose the parameters independently of $\kappa$ so that
\begin{equation}\label{geo:gap-small-leakage}
 e_{kl}\leq2^{-2(k+l)},\qquad k\ne l.
\end{equation}
Suppose the parameters with indices less than $k$ have been chosen.
Since $d_k\leq2s_kq_k\leq2\sqrt{s_k}\to0$ as $s_k\downarrow0$,
we may choose $s_k<s_{k-1}/4$ so that
$q_iw_id_k\leq2^{-2(i+k)}$ for every $i<k$.
This implies $e_{ik}\leq2^{-2(i+k)}$.
Next choose a sufficiently large multiple $T_k$ of $q_k$ satisfying
\eqref{geo:gap-delays}.
For each $l<k$, the last expression in \eqref{geo:gap-leakage}
tends to zero as $T_k\to\infty$, because $s_l>s_k$.
Increasing $T_k$ therefore ensures the remaining inequalities
in \eqref{geo:gap-small-leakage}.
For $k=1$, choose any $s_1<1/16$ and omit the conditions involving
earlier indices. This constructs $\Omega_0$ and a locally finite
measure $\mu$ supported on the positive integers.

Fix $\kappa\geq1$. Define
\(
 \mathcal T_kx=
 \bigl(w_k^{1/2}CA^{T_k+n}x\bigr)_{0\leq n<q_k}
 \in Y^{q_k}.
\)
For $J\geq2$, let
$\mathcal T_{\geq J}x=(\mathcal T_kx)_{k\geq J}$,
$\mathcal Dx=(\mathcal T_kP_kx)_{k\geq J}$, and
$\mathcal E=\mathcal T_{\geq J}-\mathcal D$.
Initially, take $x$ to be a finite sum of vectors in the $H_l$.
Set $P=\sum_{k\geq J}P_k$ and $Q=I-P$.
By \eqref{geo:gap-exact},
$\|\mathcal D\|\leq\sqrt\kappa$ and
$\|\mathcal Dx\|\geq\|x\|$ for $x\in PH$.

Equations \eqref{geo:gap-leakage} and
\eqref{geo:gap-small-leakage} give
$\|\mathcal T_kP_l\|\leq\sqrt\kappa\,2^{-(k+l)}$ for $k\ne l$.
Applying the Cauchy--Schwarz inequality to
$\sum_l2^{-l}\|P_lx\|$ yields
\(
 \|\mathcal E\|\leq{\sqrt\kappa}/{3},\
 \|\mathcal EP\|
 \leq\sqrt\kappa\sum_{k\geq J}4^{-k}.
\)
These estimates extend the operators to all of $H$ by density.
Choose $J$ so that
$\sqrt\kappa\sum_{k\geq J}4^{-k}\leq1/2$.
For $\mu_{\geq J}=\sum_{k\geq J}\mu_k$, we obtain
\begin{equation}\label{geo:gap-tail}
 PG_{\mu_{\geq J}}P\geq\tfrac14P,\qquad
 0\leq G_{\mu_{\geq J}}\leq4\kappa I.
\end{equation}
Applying $\|u+v\|^2\geq(1/2)\|u\|^2-\|v\|^2$ to
$\mathcal T_{\geq J}Px$ and $\mathcal T_{\geq J}Qx$ gives
\(
 G_{\mu_{\geq J}}\geq(1/8)P-4\kappa Q.
\)

We now obtain a lower bound on the remaining subspace $QH$.
Let $r_Q=\max_{k<J}r_k=e^{-s_{J-1}}<1$.
Then $\|A|_{QH}\|\leq r_Q$, regardless of the spectral
multiplicities.
For an integer $L\geq1$, let $\tau_L$ translate a measure by $L$,
and set
\(
 \nu_\kappa=\sum_{n=0}^{L-1}\delta_n
              +8\kappa\,\tau_L\mu_{\geq J}.
\)
Writing $|A|=(A^*A)^{1/2}$, the integer Gramian identity gives
\[
\begin{aligned}
 G_{\nu_\kappa}
 &=S-(A^L)^*SA^L
       +8\kappa(A^L)^*G_{\mu_{\geq J}}A^L
 \geq I-\kappa|A|^{2L}
       +\kappa|A|^{2L}P-32\kappa^2|A|^{2L}Q\\
 &\geq I-(\kappa+32\kappa^2)r_Q^{2L}Q.
\end{aligned}
\]
Choose $L$ so that $(\kappa+32\kappa^2)r_Q^{2L}\leq1/2$.
The upper bound in \eqref{geo:gap-tail} then gives
\(
(1/2)I\leq G_{\nu_\kappa}\leq(\kappa+32\kappa^2)I.
\)
The choices of $J$ and $L$ depend only on $\kappa$ and the fixed
construction. Hence
$\nu_\kappa\in\mathcal U^{\mathbb Z}_{w,\Omega_0}(\kappa)$.

If $T_k\leq R<T_{k+1}$, then \eqref{geo:gap-delays} implies
\[
 \frac{N_\mu(R)}{\log R}
 \leq\frac{Q_k}{Q_k^2}
 =\frac1{Q_k}\longrightarrow0.
\]
Since $N_{\nu_\kappa}(R)\leq L+N_\mu(R)$, we also have
$N_{\nu_\kappa}(R)=o(\log R)$ and $D(\nu_\kappa)=0$.

Finally, choose $j_k$ so that $2^{-j_k}\leq s_k<2^{1-j_k}$.
The sets $\Omega_k$ lie in distinct annuli from
\eqref{geo:annuli}.
Their arguments have circular separation $2\pi/q_k$, whereas the
arcs defining $m_{j_k}(\Omega_0)$ have length $2^{-j_k}/10$.
Since $q_k\sim s_k^{-1/2}$, each such arc contains at most one
argument from $\Omega_k$ for large $k$. Therefore
\[
 m_{j_k}(\Omega_0)=q_k,\qquad
 \frac{m_{j_k}(\Omega_0)}{j_k}
 \asymp\frac{s_k^{-1/2}}{\log(1/s_k)}
 \longrightarrow\infty.
\]
Thus $\Omega_0$ does not satisfy \eqref{geo:cover-condition}.
\end{proof}

For the next proposition, consider the system with state space
$H=\mathbb C^{q_k}$ and observation space $Y=\mathbb C$ defined by
\(
 Ae_\ell=r_ke^{2\pi i\ell/q_k}e_\ell,\
 Ce_\ell=\sqrt{1-r_k^2}\) for  \(0\leq\ell<q_k\),
where $(e_\ell)_{\ell=0}^{q_k-1}$ is the standard basis.
Grouping the orbit vectors by residues modulo $q_k$ gives the
discrete Fourier basis. The integer Gramian $S_k$ therefore has
eigenvalues
\begin{equation}\label{geo:gap-circulant}
 \frac{q_k(1-r_k^2)}{1-r_k^{2q_k}}r_k^{2\ell},
 \qquad 0\leq\ell<q_k.
\end{equation}
In particular,
$\operatorname{cond}(S_k)=e^{2s_k(q_k-1)}\to1$.

Adding the positive spectral value $e^{-2s_k}$ for each $k$
changes the counting conclusion as follows.

\begin{proposition}\label{geo:radial-thickness}
Let $\Omega_+=\Omega_0\cup\{e^{-2s_k}:k\geq1\}$.
For every $\kappa>1$ and
$\mu\in\mathcal U_{w,\Omega_+}(\kappa)$, there exist $c_\mu>0$
and $R_k\to\infty$ such that
$N_\mu(R_k)\geq c_\mu\sqrt{R_k}$.
\end{proposition}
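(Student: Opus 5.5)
The plan is to imitate the proof of Lemma~\ref{geo:packet-window}, using as test systems the circulant systems on $\Omega_k$ described before the proposition. In those systems the angular points are separated and the frame is nearly tight. Fix $\kappa>1$ and $\mu\in\mathcal U_{w,\Omega_+}(\kappa)$ with common bounds $\alpha I\le G_\mu\le\beta I$.

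\emph{Step 1 (admissibility).} By \eqref{geo:gap-circulant}, $\operatorname{cond}(S_k)=e^{2s_k(q_k-1)}\to1$. For $k$ large, rescaling $C$ therefore gives a system on $\mathbb C^{q_k}$ with spectrum in $\Omega_k\subset\Omega_+$ and $I\le S\le\kappa I$. In this system $|Ce_\ell|^2\le2(1-r_k^2)\le4s_k$ and $|CA^te_\ell|^2=|Ce_\ell|^2e^{-2s_kt}$.

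\emph{Step 2 (mass bounds).} Two upper frame bounds constrain $\mu$.
\begin{itemize}
\item Testing the upper frame bound on the trace of the circulant system gives $q_k s_k\int e^{-2s_kt}\,d\mu(t)\lesssim\beta q_k$, so $\int e^{-2s_kt}\,d\mu\lesssim\beta/s_k$.
\item The scalar system at the added positive point $e^{-2s_k}$, with $C=(1-e^{-4s_k})^{1/2}$ and $S=1$, gives $\int e^{-4s_kt}\,d\mu\lesssim\beta/s_k$.
\end{itemize}
Either bound yields $M_\mu(a/s_k)\lesssim\beta/s_k$. Hence the energy of the circulant system on $[0,a/s_k)$ has trace at most $Cq_k\kappa\beta a$, which is below $\alpha q_k/8$ once $a$ is small.

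\emph{Step 3 (window count).} If $[a/s_k,b/s_k]$ contained fewer than $q_k/2$ sampling times, a subspace $V$ with $\dim V>q_k/2$ would be annihilated at those times. The trace argument of Lemma~\ref{geo:packet-window} would then give $\alpha\dim V\le\operatorname{Tr}G_{\mathrm{out}}\le\alpha q_k/4$, a contradiction. With $R_k=b/s_k$ and $q_k\sim s_k^{-1/2}$, this gives $N_\mu(R_k)\ge q_k/2\ge c_\mu\sqrt{R_k}$.

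\emph{Main obstacle: the upper tail $\int_{(b/s_k,\infty)}e^{-2s_kt}\,d\mu$.} Lemma~\ref{geo:packet-window} bounded this tail using $M_\mu(t)\le K(1+t)$ for all $t$, and that bound relied on radial density, which $\Omega_+$ lacks. Here there are only the discrete-scale estimates of Step 2. I would first try to extract decay of this tail from them: the Laplace-type bound at rate $4s_k$ from the positive point together with the bound at rate $2s_k$ from the circulant system. Alternatively, I would test the upper frame bound on direct sums of circulant systems over $\Omega_l$, $l\ge k$, to obtain $M_\mu(t)\lesssim t$ on the relevant range. If neither works, the fallback is to exploit $A^{q_k}=r_k^{q_k}I$ at integer times. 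This means shrinking the window so that the complementary tail energy is controlled through the periodic structure. It also means choosing $b$ depending only on $\alpha,\beta,\kappa$, so that the tail is at most $\alpha q_k/8$.
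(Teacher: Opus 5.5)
There is a genuine gap, and it is at the step you call the main obstacle. The upper tail of the circulant system's energy cannot be controlled from the inequalities your plan uses, so no choice of $b$, and none of your fallbacks, rescues Step~3.

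Your plan takes a lower frame bound only from the circulant systems, whose spectra lie in $\Omega_k\subset\Omega_0$. Everything else it uses is an upper bound: the circulant trace and the scalar systems at $r_k$ and $e^{-2s_k}$. The design $\nu_\kappa$ of Example~\ref{geo:gap-counterexample} satisfies all of these.
\begin{itemize}
\item It has both frame bounds on every system over $\Omega_0$.
\item At the scalar system $A=e^{-2s_l}$, each block contributes at most $32\kappa s_lw_kq_ke^{-4s_lT_k}$. Using $d_l\ge s_lq_l$ and \eqref{geo:gap-small-leakage}, this is at most $32\kappa$ for $k=l$ and at most $(32\kappa/q_l)e_{kl}$ for $k\ne l$.
\end{itemize}
Yet its circulant energy sits at times $T_k+L+n$ with $s_kT_k\to\infty$, and $N_{\nu_\kappa}(R)=o(\log R)$. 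So any argument built from these ingredients would prove something false. In particular, $M_\mu(t)\lesssim t$ beyond $b/s_k$ is not available, since the block $\mu_k$ has mass $q_ke^{2s_kT_k}/d_k$. Direct sums of circulant systems and the periodicity $A^{q_k}=r_k^{q_k}I$ only add further systems over $\Omega_0$, on which $\nu_\kappa$ is fine.

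Separately, Step~2 does not produce the factor $a$. The bound $M_\mu(a/s_k)\lesssim\beta/s_k$ only gives $O(\kappa\beta q_k)$ for the lower-window trace. You can avoid this by using the window $[0,b/s_k]$, since only $N_\mu(R_k)$ is needed.

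The missing idea is to take the \emph{lower} bound from the added positive points, which are the only difference between $\Omega_+$ and $\Omega_0$. The circulant system should be used only through its \emph{upper} bound, atom by atom.

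Put $r=e^{-s_k}$ and consider the scalar systems $A=r^2$, $C=\sqrt{1-r^4}$ and $A=r$, $C=\sqrt{1-r^2}$, both with $S=1$. They give $(1-r^4)\int e^{-4s_kt}\,d\mu\ge\alpha$ and $(1-r^2)\int e^{-2s_kt}\,d\mu\le\beta$. Since $e^{-4s_kt}\le e^{-2s_kR}e^{-2s_kt}$ for $t>R$, the first integral loses at most $2\beta e^{-2s_kR}$ beyond $R$. So the interval $[0,R_k]$ with $R_k=\log(4\beta/\alpha)/(2s_k)$ carries at least $\alpha/2$. This is the rate comparison you mention, but it controls the tail of the rate-$4s_k$ integral, not the rate-$2s_k$ one.

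On the rescaled circulant system, $A^t=e^{-s_kt}U_t$ with $U_t$ unitary, and $\|C\|^2\ge q_k(1-r^2)$. Since each sampled term is bounded above by $G_\mu\le\beta I$, this gives $w_te^{-2s_kt}\le\beta/(q_k(1-r^2))$. Each atom therefore contributes at most $2\beta/q_k$ to $(1-r^4)\int e^{-4s_kt}\,d\mu$. Hence $N_\mu(R_k)\ge\alpha q_k/(4\beta)\gtrsim\sqrt{R_k}$, with no window or dimension argument needed.
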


\begin{proof}
Let $0<\alpha\leq\beta<\infty$ be common bounds for $\mu$, so that
$\alpha I\leq G_\mu\leq\beta I$ for every system in the class.
For sufficiently large $k$, write $s=s_k$, $q=q_k$, and $r=e^{-s}$.
By \eqref{geo:gap-circulant}, the system on $\mathbb C^q$ defined
above has Gramian condition number at most $\kappa$.
Its smallest Gramian eigenvalue $a$ satisfies $a\leq1$, since
every diagonal entry of the Gramian is one.
Replacing the observation by $Ce_\ell=\sqrt{(1-r^2)/a}$ gives
$I\leq S\leq\kappa I$ and
$\|C\|^2=q(1-r^2)/a\geq q(1-r^2)$.

Under the convention for spectral powers in
Section~\ref{nrm-section}, $A^t=e^{-st}U_t$ with $U_t$ unitary.
Each sampled term is positive and bounded above by $G_\mu$.
Thus every atom of weight $w_t$ satisfies
\begin{equation}\label{geo:thickness-atom}
 w_te^{-2st}\leq\frac{\beta}{q(1-r^2)}.
\end{equation}

Next consider the two systems with $H=Y=\mathbb C$ given by
$A=r$, $C=\sqrt{1-r^2}$ and
$A=r^2$, $C=\sqrt{1-r^4}$.
Both have integer Gramian equal to one, and their spectral values
belong to $\Omega_+$. Their sampling bounds give
\[
 (1-r^2)\int e^{-2st}\,d\mu(t)\leq\beta,\qquad
 (1-r^4)\int e^{-4st}\,d\mu(t)\geq\alpha.
\]
Since $(1-r^4)/(1-r^2)=1+r^2\leq2$, the contribution from
times greater than $R$ to the second expression is at most
$2\beta e^{-2sR}$.
Set $R_k=\log(4\beta/\alpha)/(2s_k)$.
The contribution from $[0,R_k]$ is then at least $\alpha/2$.
By \eqref{geo:thickness-atom}, each atom contributes at most
$2\beta/q_k$. Consequently,
\[
 N_\mu(R_k)\geq\frac{\alpha q_k}{4\beta}
 \gtrsim_{\alpha,\beta}\sqrt{R_k}.
\]
Here $R_k\to\infty$ and $q_k\sim s_k^{-1/2}$.
This also excludes $N_\mu(R)=O(\log R)$ for both real and integer
sampling times.
\end{proof}

\end{document}